\numberwithin{equation}{section} 
\let\oldtocsection=\tocsection
\let\oldtocsubsection=\tocsubsection
\renewcommand{\tocsection}[2]{\hspace{0em}\oldtocsection{#1}{#2}}
\renewcommand{\tocsubsection}[2]{\hspace{3em}\oldtocsubsection{#1}{#2}}
\newcommand{\R}{\mathbb{R}}
\newcommand{\N}{\mathbb{N}}
\newcommand{\C}{\mathbb{C}} 
\newcommand{\Hil}{\mathcal{H}} 
\newcommand{\Hili}{\Hil_{\text{i}}} 
\newcommand{\Prob}{\mathcal{P}}
\newcommand{\Probec}{\Prob_{\mathrm{ec}}} 
\newcommand{\Leb}{\mathcal{L}} 
\newcommand{\M}{\mathcal{M}} 
\newcommand{\A}{\mathcal{A}} 
\newcommand{\g}{\mathfrak{g}} 
\newcommand{\Sph}{\mathbb{S}} 
\newcommand{\U}{\mathbb{B}} 
\newcommand{\F}{\mathcal{F}} 
\newcommand{\BT}{\mathrm{BT}} 
\newcommand{\mubf}{\bm{\mu}}
\newcommand{\nubf}{\bm{\nu}}
\newcommand{\vbf}{\mathbf{v}} 
\newcommand{\wbf}{\mathbf{w}} 
\newcommand{\Abf}{\mathbf{A}} 
\newcommand{\Bbf}{\mathbf{B}} 
\newcommand{\Cbf}{\mathbf{C}} 
\newcommand{\Dbf}{\mathbf{D}} 
\newcommand{\Em}{\mathbf{E}} 
\newcommand{\Or}{\mathring{\Omega}} 
\newcommand{\Dr}{\mathring{D}} 
\newcommand{\nD}{\mathbf{n}_D} 
\newcommand{\nO}{\mathbf{n}_{\Omega}} 
\DeclareMathOperator*{\esssup}{ess\,sup} 
\DeclareMathOperator*{\essinf}{ess\,inf} 
\DeclareMathOperator*{\argmin}{arg\,min} 
\newcommand{\Dir}{\mathrm{Dir}}
\newcommand{\ddr}{\mathrm{d}} 
\newcommand{\dr}{\partial}
\newcommand{\Id}{\mathrm{Id}}
\newcommand{\Tr}{\mathrm{Tr}} 
\newcommand{\cov}{\mathrm{cov}}
\newcommand{\1}{\mathbbm{1}} 
\newcommand{\dst}[1]{\displaystyle{#1}}
\newcommand{\bsl}{\backslash}
\newtheorem{theo}{Theorem}[section]
\newtheorem*{theo*}{Theorem}
\newtheorem{prop}[theo]{Proposition}
\newtheorem{crl}[theo]{Corollary}
\newtheorem{lm}[theo]{Lemma}
\newtheorem{defi}[theo]{Definition}
\newtheorem*{asmp*}{Assumption}
\begin{document}

\title{Harmonic mappings valued in the Wasserstein space}

\date{\today}
\author{Hugo Lavenant}
\address{Laboratoire de Math\'ematiques d'Orsay, Univ. Paris-Sud, CNRS, Universit\'e Paris-Saclay, 91405 Orsay Cedex, France}
\email{hugo.lavenant@u-psud.fr}

\begin{abstract}
We propose a definition of the Dirichlet energy (which is roughly speaking the integral of the square of the gradient) for mappings $\mubf : \Omega \to (\Prob(D), W_2)$ defined over a subset $\Omega$ of $\R^p$ and valued in the space $\Prob(D)$ of probability measures on a compact convex subset $D$ of $\R^q$ endowed with the quadratic Wasserstein distance. Our definition relies on a straightforward generalization of the Benamou-Brenier formula (already introduced by Brenier) but is also equivalent to the definition of Korevaar, Schoen and Jost as limit of approximate Dirichlet energies, and to the definition of Reshetnyak of Sobolev spaces valued in metric spaces. 

We study harmonic mappings, i.e. minimizers of the Dirichlet energy provided that the values on the boundary $\dr \Omega$ are fixed. The notion of constant-speed geodesics in the Wasserstein space is recovered by taking for $\Omega$ a segment of $\R$. As the Wasserstein space $(\Prob(D), W_2)$ is positively curved in the sense of Alexandrov we cannot apply the theory of Korevaar, Schoen and Jost and we use instead arguments based on optimal transport. We manage to get existence of harmonic mappings provided that the boundary values are Lipschitz on $\dr \Omega$, uniqueness is an open question. 

If $\Omega$ is a segment of $\R$, it is known that a curve valued in the Wasserstein space $\Prob(D)$ can be seen as a superposition of curves valued in $D$. We show that it is no longer the case in higher dimensions: a generic mapping $\Omega \to \Prob(D)$ cannot be represented as the superposition of mappings $\Omega \to D$. 

We are able to show the validity of a maximum principle: the composition $F \circ \mubf$ of a function $F : \Prob(D) \to \R$ convex along generalized geodesics and a harmonic mapping $\mubf : \Omega \to \Prob(D)$ is a subharmonic real-valued function. 

We also study the special case where we restrict ourselves to a given family of elliptically contoured distributions (a finite-dimensional and geodesically convex submanifold of $(\Prob(D), W_2)$ which generalizes the case of Gaussian measures) and show that it boils down to harmonic mappings valued in the Riemannian manifold of symmetric matrices endowed with the distance coming from optimal transport. 
\end{abstract}

\keywords{Wasserstein space; harmonic maps; Dirichlet problem}

\maketitle

\setcounter{tocdepth}{2}
\tableofcontents

\section{Introduction}

\subsection{Harmonic mappings}

If $f : \Omega \to \R$ is a real-valued function defined on a subset $\Omega$ of $\R^p$, one says that $f$ is \emph{harmonic} if
\begin{equation}
\label{equation_Laplace}
\Delta f = 0,
\end{equation}
where $\Delta = \sum_{\alpha = 1}^p \dr_{\alpha \alpha}$ denotes the Laplacian operator. Although this equation can be traced back to physics (for instance it corresponds to the equation satisfied by the electric potential in the absence of charge, or the one satisfied by the temperature in some homogeneous and isotropic medium when the permanent regime is reached), it has revealed to have its own mathematical interest \cite{Helein2008}. In particular it is associated to a concept of \emph{equilibrium}, as for an harmonic function $f$, the value of $f$ at a point $\xi \in \Omega$ is always equal to the mean of the values of $f$ on a ball centered at $\xi$. A whole line of research has been devoted to define harmonic mappings $f : X \to Y$ where $X$ and $Y$ are spaces without a structure as strong as the Euclidean one. If $X$ and $Y$ are Riemannian manifolds, one can define an analogue of \eqref{equation_Laplace} which involves the metric tensors of both $X$ and $Y$ (see for instance \cite{Eells1964} or, for a modern presentation, \cite{Jost2008, Helein2008}). The standard hypothesis to get existence results and nice properties of harmonic mappings is that $X$ has a positive curvature and $Y$ has a negative curvature. In the 90s, Korevaar and Schoen \cite{Korevaar1993} on one side and Jost \cite{Jost1994} on the other side, presented independently a more general setting and showed that one can define harmonic mappings $f : \Omega \to Y$ provided that $\Omega$ is a compact Riemannian manifold and $Y$ is a metric space with negative curvature in the sense of Alexandrov \cite[Section 2.1]{Korevaar1993}.

The most robust point of view for the definition of harmonic mappings valued in metric spaces is related to the Dirichlet problem. Indeed, if we go back to the case where $Y = \R$, a function $f : \Omega \to \R$ is harmonic if and only if it is a minimizer of the Dirichlet energy 
\begin{equation*}
\Dir(g) := \int_\Omega \frac{1}{2} |\nabla g(\xi)|^2 \ddr \xi
\end{equation*} 
among all functions $g : \Omega \to \R$ having the same values as $f$ on $\dr \Omega$ the boundary of $\Omega$. The main advantage of this formulation is that it involves only first order derivatives, and most of the concepts involving first order derivatives can be defined on metric spaces even without any vectorial structure \cite{Ambrosio2003}. Korevaar, Schoen and Jost proved that for every separable metric space $Y$, one can define the analogue of the Dirichlet energy of any mapping $f : \Omega \to Y$. Then under the assumption that $Y$ has a negative curvature in the sense of Alexandrov, they proved existence and uniqueness of a minimizer of the Dirichlet energy (provided that the values at the boundary $\dr \Omega$ are fixed), interior and boundary regularity of the minimizer and lots of other properties similar to harmonic mappings between manifolds. Most of the proofs mimic the ones in the Euclidean case and rely only on the curvature properties of the target space $Y$. To quote Korevaar and Schoen: ``\emph{We find the generality, elegance, and simplicity of the proofs presented here to be an indication that we have found the proper framework for their expression}'' \cite[p. 614]{Korevaar1993}. 

In this article, our goal is to define and to study harmonic mappings defined over a compact domain $\Omega$ of $\R^p$ and valued in the space of probability measures endowed with the distance coming from optimal transport, the so-called quadratic Wasserstein space \cite{Villani2003, Ambrosio2008, OTAM}. If $D$ is a convex compact domain of $\R^q$, and if $\mu, \nu$ are two probability measures on $D$ (the set of probability measures on $D$ is denoted by $\Prob(D)$) then the (quadratic) Wasserstein distance $W_2(\mu, \nu)$ between the two is defined as
\begin{equation*}
W_2(\mu, \nu) := \inf_\pi \sqrt{\iint_{D \times D} |x-y|^2 \ddr \pi(x,y)},
\end{equation*}  
where the infimum is taken over all transport plans $\pi \in \Prob(D \times D)$ whose marginals are $\mu$ and $\nu$. We will define the Dirichlet energy for mappings $\mubf : \Omega \to (\Prob(D), W_2)$ and study its minimizers under the constraint that the values at the boundary $\dr \Omega$ are fixed. It is known \cite[Section 7.3]{Ambrosio2008} that $(\Prob(D), W_2)$ is a positively curved space in the sense of Alexandrov, hence the whole theory of Korevaar, Schoen and Jost does not apply: we have to leave the world of ``\emph{generality, elegance and simplicity}''. Though we manage to develop a fairly satisfying theory of Dirichlet energy and harmonic mappings valued in the Wasserstein space, it is \emph{ad hoc}: it intensively relies on specific properties of $(\Prob(D), W_2)$ and is hardly generalizable to other positively curved spaces.  

\subsection{Related works}

This work can be seen as an extension of an article written by Brenier \cite{Brenier2003} almost 15 years ago. Recently, few articles \cite{Solomon2013, Solomon2014, Vogt2017, Lu2017} have been published on related topics even though none of them seems aware of Brenier's work.

In Section 3 of \cite{Brenier2003}, Brenier proposed a definition of what he called \emph{generalized harmonic functions} which is the same thing as our harmonic mappings valued in the Wasserstein space. He defined the Dirichlet energy for such mappings; proved the existence of harmonic mappings in some special cases and gave an explicit solution in the very special case where all measures on $\dr \Omega$ are Dirac masses; indicated the formulation of the dual problem; and formulated some conjectures. In the present article, we will rely on the same definition of Dirichlet energy as in Brenier's article, but we push the analysis much further: we provide a rigorous functional analysis framework; link the Dirichlet energy with already known notions of analysis in metric spaces (in particular with the definition of Korevaar, Schoen and Jost); prove the existence of harmonic mappings in a more general context; and answer Brenier's conjectures. 

In \cite{Solomon2013}, the authors study \emph{soft maps} (which are nothing more than maps $\Omega \to \Prob(D)$ except that $\Omega$ and $D$ are surfaces, i.e. Riemannian manifolds of dimension $2$) and define a Dirichlet energy in the same way as Korevaar, Schoen and Jost. These maps are seen as relaxations of ``classical'' maps $\Omega \to D$, and they focus on numerical computation and visualization of theses soft maps, see also \cite{Solomon2014} for applications to supervised learning. On the other hand, they do not analyze in detail the theoretical properties of the Dirichlet energy and harmonic mappings, which in contrast is the main topic of the present article. In \cite{Lu2017}, the author provides some theoretical analysis of soft maps by focusing on the cases where the boundary measures on $\dr \Omega$ are either Dirac masses or Gaussian measures.  

Finally, in \cite{Vogt2017} the authors also study mappings valued in the space of probability measures, but are rather interested in the bounded variation norm (the integral of the norm of the gradient) than in the Dirichlet energy. Their provide applications to the denoising of measure-valued images.   

Apart from these articles, let us underline the interest of our work by relating it to other already known concepts:  
\begin{itemize}
\item[•] It is well known that harmonic mappings defined over an interval of $\R$ and valued in a geodesic space are precisely the constant-speed geodesics, and it is the case with our definition. Thus our work can be seen as extending the definition of geodesics in the Wasserstein space, the latter being an object which is now well understood. 
\item[•] As we said above, our definition of Dirichlet energy coincides with the one of Korevaar, Schoen and Jost. In particular, our work shows that their definition can be applied to positively curved spaces and still get some non trivial result, even though we rely on the very special structure of the Wasserstein space.
\item[•] To study the regularity of minimal surfaces, Almgren proposed the notion of $Q$-valued functions (see \cite{Almgren2000} or \cite{DeLellis2011} for a clear and self-contained reference), which can be seen (up to renormalization) as mappings defined on $\Omega \subset \R^p$ and valued in the subset $\A_Q(D)$ (where $Q \geqslant 1$ is an integer) of the Wasserstein space $(\Prob(D), W_2)$ defined as 
\begin{equation*}
\A_Q(D) := \left\{  \frac{1}{Q} \sum_{i=1}^Q \delta_{x_i} \ : \ (x_1, x_2, \ldots, x_Q) \in D^Q \right\}. 
\end{equation*}
In other words, $\A_Q(D)$ is the set of probability measures which are combinations of at most $Q$ Dirac masses with weights which are multiples of $1/Q$, and is endowed with the Wasserstein distance $W_2$. To put it shortly, a $Q$-function is a function which in every point takes $Q$ unordered different values (counted with multiplicity). There exists a beautiful existence and regularity theory for harmonic $Q$-functions. As $\bigcup_{Q \geqslant 1} \A_Q(D)$ is dense in $\Prob(D)$, it would be tempting to see the Dirichlet problem for mappings valued in the Wasserstein space $\Prob(D)$ as the limit as $Q \to + \infty$ of the Dirichlet problem for $Q$-functions. However, it is not so obvious that this limit really holds, and most of the results in the theory of $Q$-functions are proved by induction on $Q$ through clever decompositions and combinatorial arguments, hence they depend heavily on $Q$ and not much can be passed to the limit $Q \to + \infty$. Notice that the space $\A_Q(D)$ is also positively curved in the sense of Alexandrov (the example in \cite[Section 7.3]{Ambrosio2008} lives in $\A_2(D)$), hence the theory of $Q$-functions is a theory of harmonic mappings valued in a positively curved space. However, it is known \cite[Theorem 2.1]{DeLellis2011} that $\A_Q(D)$ is in a bilipschitz bijection with a subset of $\R^N$ for some large $N$: with $Q$-functions we stay in the finite-dimensional world. On the contrary, in the present article, the target space $(\Prob(D), W_2)$ will be both positively curved and genuinely infinite-dimensional.      
\end{itemize}

\subsection{Main definitions and results}

Let us go into the details and summarize the content of this article as well as the key insights. In this discussion we will stay informal, with sometimes sloppy or non rigorous statements. 

\bigskip

In Section \ref{section_preliminairies}, we give our notations and collect some known facts about the Wasserstein space, which can be found in standard textbooks. In particular, we take $\Omega$ and $D$ two compact domains of respectively $\R^p$ and $\R^q$ and assume that $D$ is convex.

\bigskip

Section \ref{section_Dirichlet_energy} is devoted to the definition and properties of the Dirichlet energy of a mapping $\mubf : \Omega \to \Prob(D)$. 

The idea is to start from curves valued in the Wasserstein space and the so-called Benamou-Brenier formula \cite{Benamou2000}. If $I$ is a segment of $\R$ and $\mubf : I \to \Prob(D)$ is an absolutely continuous curve, then its Dirichlet energy, which is nothing else than the integral of the square of its metric derivative \cite[Section 1.1]{Ambrosio2008} is equal to 
\begin{equation*}
\Dir(\mubf) = \inf_{\vbf} \left\{ \int_I \left(\int_D \frac{1}{2} |\vbf(t,x)|^2 \mubf(t, \ddr x) \right) \ddr t \ : \ \vbf : I \times D \to \R^q \ \text{ and } \dr_t \mubf + \nabla \cdot(\mubf \vbf) = 0 \right\},
\end{equation*} 
which means that one minimizes the integral over time of the kinetic energy among all velocity fields $\vbf$ such that the continuity equation $\dr_t \mubf + \nabla \cdot(\mubf \vbf) = 0$ is satisfied. What Benamou and Brenier understood is that the correct variable is the momentum $\Em = \vbf \mubf$. Indeed, the continuity equation $\dr_t \mubf + \nabla \cdot \Em = 0$ becomes a linear constraint and 
\begin{equation*}
\int_I \left(\int_D \frac{1}{2} |\vbf(t,x)|^2 \mubf(t, \ddr x) \right) \ddr t = \iint_{I \times D} \frac{|\Em|^2}{2 \mubf}
\end{equation*}
is a convex function of the pair $(\mubf, \Em)$. In particular, to find the constant-speed geodesic between $\mu$ and $\nu \in \Prob(D)$, assuming that $I = [0,1]$, one minimizes the convex Dirichlet energy over the pairs $(\mubf, \Em)$ with linear constraints given by the continuity equation, that $\mubf(0) = \mu$ and that $\mubf(1) = \nu$. 

As noticed in \cite[Section 3]{Brenier2003}, this formulation can be directly extended to the case where the source space is no longer of dimension $1$: if $\Omega$ is a subset of $\R^p$, one can define a (generalized) continuity equation for the pair $\mubf : \Omega \to \Prob(D)$ and $\Em : \Omega \times D \to \R^{pq}$ by 
\begin{equation}
\label{equation_continuity_equation}
\nabla_\Omega \mubf + \nabla_D \cdot \Em = 0, 
\end{equation} 
where $\nabla_\Omega$ stands for the gradient w.r.t. variables in $\Omega$ and $\nabla_D \cdot$ stands for the divergence w.r.t. variables in $D$. More precisely if $(\Em^{\alpha i})_{1 \leqslant \alpha \leqslant p, 1 \leqslant i \leqslant q}$ denote the components of $\Em$, and if the derivatives w.r.t. $\Omega$ (resp. $D$) are denoted by $(\dr_\alpha)_{1 \leqslant \alpha \leqslant p}$ (resp. $(\dr_i)_{1 \leqslant i \leqslant q}$) then the the continuity equation reads: for any $\alpha \in \{ 1,2, \ldots, p \}$, 
\begin{equation*}
\dr_\alpha \mubf + \sum_{i=1}^q \dr_i \Em^{\alpha i} = 0. 
\end{equation*}
The Dirichlet energy of the pair $(\mubf, \Em)$ is defined as 
\begin{equation*}
\iint_{\Omega \times D} \frac{|\Em|^2}{2 \mubf} = \iint_{\Omega \times D} \sum_{\alpha = 1}^p \sum_{i=1}^q \frac{|\Em^{i \alpha}|^2}{2 \mubf},
\end{equation*}
and $\Dir(\mubf)$, the Dirichlet energy of $\mubf$, is the minimal Dirichlet energy of the pairs $(\mubf, \Em)$ among all $\Em$ such that the continuity equation is satisfied (Definition \ref{definition_dirichlet_mu}). It is a straightforward copy of the classical proofs of optimal transport to show that there exists a unique optimal momentum $\Em$ (which we call the tangent momentum) which is written $\Em = \vbf \mubf$ for some velocity field $\vbf : \Omega \times D \to \R^{pq}$, and that $\Dir$ is convex and lower semi-continuous (l.s.c.). 

We will prove that for $\mubf : \Omega \to \Prob(D)$, one has $\Dir(\mubf) < + \infty$ if and only if for any $u : \Prob(D) \to \R$ which is $1$-Lipschitz, one has that $u \circ \mubf$ belongs to $H^1(\Omega)$ with $|\nabla (u \circ \mubf)| \leqslant g$, where $g \in L^2(\Omega)$ is independent of $u$. Moreover, the minimal $g$ will be shown to be controlled from above and below by 
\begin{equation*}
\sqrt{\int_D |\vbf(\cdot, x)|^2 \mubf(\cdot, \ddr x)} \in L^2(\Omega),
\end{equation*} 
where $\Em = \vbf \mubf$ is the tangent momentum (Theorem \ref{theorem_equivalence_sobolev_metric}). This precisley shows that the space $\{ \mubf : \Omega \to \Prob(D) \ : \ \Dir(\mubf) < + \infty \}$ coincides with the set $H^1(\Omega, \Prob(D))$, where the latter is defined in the sense of Reshetnyak \cite{Reshetnyak1997}, and that the gradient of $\mubf$ in the sense of Reshetnyak (the minimal $g$ above) is related to the tangent velocity field $\vbf$. The Dirichlet energy is not equal to the $L^2$ norm of $g$, as it is already the case in the classical framework \cite{Chiron2007}: if we see $\vbf : \Omega \times D \to \R^{pq}$ as a matrix-valued field, the Benamou-Brenier definition measures the magnitude of $\vbf$ with the Hilbert-Schmidt norm, whereas the optimal $g$ from the definition of Reshetnyak is rather related to the operator norm of the matrices. Nevertheless, it implies for instance, it implies that Lipschitz mappings $\mubf : \Omega \to \Prob(D)$ (i.e. such that $W_2(\mubf(\xi), \mubf(\eta)) \leqslant C |\xi - \eta|$ for any $\xi, \eta \in \Omega$) have a finite Dirichlet energy.  

We will also prove that our Dirichlet energy coincides with the one of Korevaar and Schoen, as well as Jost. The idea of theses authors goes as follows: if $f : \Omega \to \R$ is smooth, then for any $\xi \in \R^p$, 
\begin{equation*}
|\nabla f(\xi)|^2 = \lim_{\varepsilon \to 0} C_p \int_{B(\xi, \varepsilon)} \frac{|f(\eta) - f(\xi)|^2}{\varepsilon^{p+2}} \ddr \eta,
\end{equation*}
for some constant $C_p$ which depends on the dimension $p$ of $\Omega$, where $B(\xi, \varepsilon)$ is the ball of center $\xi$ and radius $\varepsilon$. Thus, if $\varepsilon > 0$ is small, a good approximation of the Dirichlet energy of $f$ would be 
\begin{equation*}
\Dir(f) = \int_\Omega \frac{1}{2} |\nabla f(\xi)|^2 \ddr \xi \simeq C_p \iint_{\Omega \times \Omega} \frac{|f(\xi) - f(\eta)|^2}{2 \varepsilon^{p+2}} \1_{|\xi - \eta| \leqslant \varepsilon} \ddr \xi \ddr \eta.
\end{equation*}
Notice that the right hand side (r.h.s.) involves only metric quantities, thus its definition can be extended if $f : \Omega \to Y$ where $(Y,d)$ is an arbitrary metric space by replacing $|f(\xi) - f(\eta)|^2$ by $d(f(\xi), f(\eta))^2$: this is what is done and extensively studied in \cite[Section 1]{Korevaar1993} (curvature assumptions on $Y$ are not required for the definition of the Dirichlet energy, but are used to derive existence, uniqueness and properties of the minimizers). The counterpart in our case is to define the $\varepsilon$-Dirichlet energy of a mapping $\mubf : \Omega \to \Prob(D)$ by
\begin{equation*}
\Dir_\varepsilon(\mubf) := C_p \iint_{\Omega \times \Omega} \frac{W_2^2(\mubf(\xi), \mubf(\eta))}{2 \varepsilon^{p+2}} \1_{|\xi - \eta| \leqslant \varepsilon} \ddr \xi \ddr \eta. 
\end{equation*}  
We are able show that $\Dir_\varepsilon$ converges to $\Dir$ as $\varepsilon \to 0$: it holds pointwisely but also in the sense of $\Gamma$-convergence (Theorem \ref{theorem_equivalence_KS}). For both the equivalence with the definition of Korevaar, Schoen and Jost, or with the one of Reshetnyak, the difficulty is not to guess them (they are fairly simple at the formal level) but to conduct careful approximation arguments. 

To conclude the section, we will show how one can define values on $\dr \Omega$ for mappings $\mubf : \Omega \to \Prob(D)$ with finite Dirichlet energy. There already exists a trace theory in \cite{Korevaar1993}, however in view of the dual formulation for the Dirichlet problem, we prefer to define trace values by extending the continuity equation up to the boundary of $\Omega$. Indeed, multiplying \eqref{equation_continuity_equation} by a test function $\varphi \in C^1(\Omega \times D, \R^p)$ valued in $\R^p$, we get the following weak formulation: 
\begin{equation*}
\iint_{\Omega \times D} \nabla_\Omega \cdot \varphi \ddr \mubf + \iint_{\Omega \times D} \nabla_D \varphi \cdot \ddr \Em = \int_{\dr \Omega} \left( \int_D \varphi(\xi, x) \cdot \nO(\xi) \mubf(\xi, \ddr x) \right) \sigma(\ddr \xi),
\end{equation*} 
where $\nO$ is the outward normal to $\dr \Omega$ and $\sigma$ the surface measure. We will show that, if $\Dir(\mubf) < + \infty$, then the r.h.s. can always be defined as a finite vector-valued measure acting on $\varphi$ called $\BT_{\mubf}$ (Theorem \ref{theorem_boundary_values}). Two mappings will have the same values on the boundary $\dr \Omega$ if, by definition, they have the same boundary term. 

\bigskip

In Section \ref{section_Dirichlet_problem} we define the Dirichlet problem and establish its dual formulation. This is fairly classic in optimal transport theory, our proofs do not bring any new ideas.  

To define the Dirichlet problem, we assume that a mapping $\mubf_b : \Omega \to \Prob(D)$ with finite Dirichlet energy is given and we study
\begin{equation*}
\min_{\mubf} \{ \Dir(\mubf) \ : \ \mubf = \mubf_b \text{ on } \dr \Omega \}.
\end{equation*}
Thanks to the Benamou-Brenier formulation, existence of a solution is a straightforward application of the direct method of calculus of variations (Theorem \ref{theorem_existence_solution_Dirichlet}). As we discuss it in the core of the article, we do not know if uniqueness holds. Only in some particular case where the boundary values belong to a family of elliptically contoured distributions we are able to prove uniqueness (see below in the introduction).  

In the formulation of the Dirichlet problem, we define the boundary conditions through a mapping $\mubf_b$ defined on the whole $\Omega$. A natural question arises: if $\mubf_b : \dr \Omega \to \Prob(D)$ is given, is it possible to extend it on $\Omega$ in such a way that $\Dir(\mubf_b) < + \infty$? We will show that the answer to this question is positive if $\mubf_b$ is Lipschitz on $\dr \Omega$, indeed in this case one can extend it as a Lipschitz mapping on $\Omega$. The question of the existence of a Lipschitz extension for mappings $f : Z \to Y$, where $Z \subset X$ and $X,Y$ are metric spaces has been intensively studied, see for instance \cite{Lang1997,Ohta2009} and references therein. The general philosophy is that lower bounds on the curvature are required for the source space $X$, whereas upper bounds on the curvature are required for the target space $Y$. In our case, there are no upper bounds for the curvature of the target space $\Prob(D)$, hence we cannot apply classical results. However, we use the fact that we want to extend Lipschitz mappings defined not on an arbitrary closed subset of $\Omega$, but on the boundary $\dr \Omega$ which has some regularity. By some \emph{ad hoc} construction, we are able to treat the case where $\Omega$ is a ball, but we cannot control the Lipschitz constant of the extension on $\Omega$ by the Lipschitz constant of the mapping on $\dr \Omega$. Nevertheless, we can conclude for smooth domains, as they can be cut in a finite number of pieces, each piece being in a bilipschitz bijection with a ball (Theorem \ref{theorem_lipschitz_extension}).  

Let us establish here the dual formulation \emph{via} a formal $\inf - \sup$ exchange, it was already done in \cite{Brenier2003}. Indeed, given the definition of $\Dir$ and the weak formulation of the continuity equation, 
\begin{align*}
\min_{\mubf} \{ \Dir(\mubf)&  \ : \ \mubf = \mubf_b \text{ on } \dr \Omega \} \\
& = \inf_{\mubf, \vbf} \left[ \iint_{\Omega \times D} \frac{1}{2} |\vbf|^2 \mubf + \sup_{\varphi \in C^1(\Omega \times D, \R^p)} \left( \BT_{\mubf_b}(\varphi) - \iint_{\Omega \times D} \nabla_\Omega \cdot \varphi \ddr \mubf - \iint_{\Omega \times D} \nabla_D \varphi \cdot \vbf \mubf \right) \right] \\
& = \sup_{\varphi \in C^1(\Omega \times D, \R^p)} \left[ \BT_{\mubf_b}(\varphi) + \inf_{\mubf, \vbf} \iint_{\Omega \times D} \left(  \frac{1}{2} |\vbf|^2 - \nabla_D \varphi \cdot \vbf - \nabla_\Omega \cdot \varphi \right) \mubf \right].
\end{align*}
Optimizing in $\vbf$, we have that $\vbf = \nabla_D \varphi$, and then the infimum in $\mubf$ is translated into the constraint $\nabla_\Omega \cdot \varphi + \frac{1}{2} |\nabla_D \varphi|^2 \leqslant 0$. Hence, we have (formally, and it is proved rigorously in the core of the article, see Theorem \ref{theorem_duality}) the following identity: 
\begin{equation*}
\sup_\varphi \Bigg\{ \BT_{\mubf_b}(\varphi) \ : \ \varphi \in C^1(\Omega \times D, \R^p) \ \text{ and } \ \nabla_\Omega \cdot \varphi + \frac{|\nabla_D \varphi|^2}{2} \leqslant 0 \Bigg\} 
 = \min_{\mubf} \{ \Dir(\mubf)  \ : \ \mubf = \mubf_b \text{ on } \dr \Omega \}.
\end{equation*}
We do not have an existence result for solutions $\varphi$ of the dual problem. Notice that $\varphi$ is a vector-valued function, but there is only a scalar constraint on it: the dual problem looks harder than in the case where $\Omega$ is a segment of $\R$. Formally, as it is done in \cite{Brenier2003}, one can get optimality conditions out of the dual formulation. Indeed, we have that $\vbf = \nabla_D \varphi$ and, from the optimization in $\mubf$, that $\nabla_\Omega \cdot \varphi + \frac{1}{2} |\nabla_D \varphi|^2 = 0$ $\mubf$-a.e. If we assume that $\mubf$ is strictly positive a.e., we end up with the following system for $\vbf$ (the first equation is just a rewriting of the fact that $\vbf$ is a gradient, the second one is obtained by differentiating $\nabla_\Omega \cdot \varphi + \frac{1}{2} |\nabla_D \varphi|^2 = 0$ w.r.t. $D$): 
\begin{equation*}
\begin{cases}
\dr_i \vbf^{\alpha j} = \dr_j \vbf^{\alpha i} & \text{for } \alpha \in \{ 1,2, \ldots, p \} \text{ and } i,j \in \{1,2, \ldots,q \}, \\
\dst{\sum_{\alpha = 1}^p \dr_\alpha \vbf^{\alpha i} + \sum_{\alpha = 1}^p \sum_{j = 1}^q \vbf^{\alpha j} \dr_j \vbf^{\alpha i}} = 0 & \text{for } i \in \{1,2, \ldots,q \}.
\end{cases}
\end{equation*} 
However, we will not push the analysis further and try to derive a rigorous version of theses optimality conditions, it might be the topic of an other study.  

\medskip

In Section \ref{section_superposition_principle}, we answer to a a problem formulated by Brenier \cite[Problem 3.1]{Brenier2003}. The question is the following: if $\mubf : \Omega \to \Prob(D)$, does there exists a probability $Q$ over functions $f : \Omega \to D$ such that $\mubf$ is represented by $Q$, i.e. 
\begin{equation*}
\int_D a(x) \mubf(\xi, \ddr x) = \int a(f(\xi)) Q(\ddr f)
\end{equation*}
for all $a \in C(D)$ continuous and $\xi \in \Omega$; and such that the Dirichlet energy is the mean of the Dirichlet energy of the $f$: 
\begin{equation*}
\Dir(\mubf) = \int \left( \int_\Omega \frac{1}{2} |\nabla f(\xi)|^2 \ddr \xi \right) Q(\ddr f)?
\end{equation*}
If $\Omega$ is a segment of $\R$ the answer is positive as shown in \cite[Section 8.2]{Ambrosio2008} (it is known as the probabilistic representation or the superposition principle). However, as soon as $\Omega$ is two or more dimensional (in fact it already fails if $\Omega$ is a circle), the answer becomes negative. We will provide a counterexample and explain the obstruction. 

The main consequence is the following: there is no Lagrangian formulation for mappings $\mubf : \Omega \to \Prob(D)$. There can be no static formulation of the Dirichlet problem analogue to transport plans or multimarginal formulation. One is forced to work only with the Eulerian formulation, namely the Benamou-Brenier formula. It explains why it is substantially more difficult to study mappings $\mubf : \Omega \to \Prob(D)$ as soon as the dimension of $\Omega$ is larger than $2$, as most of the difficult results of optimal transport are proved thanks to the Lagrangian point of view. 

\bigskip

In Section \ref{section_Ishihara}, we prove a maximum principle (more specifically a Ishihara-type property) for harmonic mappings, meaning roughly speaking that \emph{harmonic mappings reach their maximum on the boundary of the domain $\Omega$}. Of course, there is no canonical order on the Wasserstein space, thus this assertion does not really make sense: only the composition of a (real-valued) geodesically convex function over $\Prob(D)$ with an harmonic mapping will satisfy the maximum principle. In particular, it allows us to give a positive answer to \cite[Conjecture 3.1]{Brenier2003}

If $f : \Omega \to \R$ is a real-valued harmonic function, then $(F \circ f) : \Omega \to \R$ is a subharmonic function for every $F : D \to \R$ convex, which means that $\Delta(F \circ f) \geqslant 0$. It can be checked by a direct computation using the chain rule. If we take $f : X \to Y$, where $X$ and $Y$ are two Riemannian manifolds, then the result still holds (provided that harmonicity, subharmonicity and convexity are properly defined through the Riemannian structures) and it is even a characterization of harmonic mappings: this was first remark by Ishihara \cite{Ishihara1979} (hence the denomination ``Ishihara type property''), one can find a statement and a proof in \cite[Corollary 8.2.4]{Jost2008}. Extensions of this result when the target is a metric space with negative curvature are available, see for instance \cite[Section 7]{Sturm2005}. 

In the Wassertein space, mappings which are convex w.r.t. the metric structure, which means convex along geodesics, are well understood (see for instance \cite[Chapter 9]{Ambrosio2008} or \cite[Chapter 7]{OTAM}). Actually, we will need something a little stronger, which is convexity along generalized geodesics \cite[Section 9.2]{Ambrosio2008}. In our case the Ishihara property reads: if $F : \Prob(D) \to \R$ is convex along generalized geodesics and if $\mubf : \Omega \to \Prob(D)$ is a solution of the Dirichlet problem, then $(F \circ \mubf ) : \Omega \to \R$ is subharmonic (Theorem \ref{theorem_ishihara}). 

The proof of geodesic convexity usually relies on the Lagrangian formulation, which, as we said above, is not available in our case. To overcome this difficulty, we use the approximate Dirichlet energies $\Dir_\varepsilon$ as a substitute for $\Dir$. Indeed, as explained by Jost \cite{Jost1994}, if $\mubf_\varepsilon$ is a minimizer of $\Dir_\varepsilon$ (with for instance fixed values around the boundary $\dr \Omega$), then for a.e. $\xi \in \Omega$, $\mubf_\varepsilon(\xi)$ is a minimizer of 
\begin{equation*}
\nu \mapsto \int_{B(\xi, \varepsilon)} W_2^2(\nu, \mubf_\varepsilon(\eta)) \ddr \eta,
\end{equation*}
in other words $\mubf_\varepsilon(\xi)$ is a barycenter of the $\mubf_\varepsilon(\eta)$, for $\eta \in B(\xi, \varepsilon)$ (for barycenters in the Wasserstein space, see \cite{Agueh2011} for the finite case and \cite{Bigot2012, Kim2017} for the infinite case). Notice that if $f : \Omega \to \R$ is real-valued and harmonic, then for any $\varepsilon > 0$ $f(\xi)$ is the barycenter of $f(\eta)$ for $\eta \in B(\xi, \varepsilon)$, while in the metric case this property only holds asymptotically as $\varepsilon \to 0$. For barycenters in the Wasserstein space, there exists a generalized Jensen inequality: it was already proved in the finite case by Agueh and Carlier \cite[Proposition 7.6]{Agueh2011} under the assumption that $F$ is convex along generalized geodesics, and in a more general case (in particular with an infinite numbers of measures defined on a compact manifold, whereas Agueh and Carlier worked in the Euclidan space) by Kim and Pass \cite[Section 7]{Kim2017}, but with rather strong regularity assumptions on the measures. We reprove this Jensen inequality in a case adapted to our context by letting the barycenter $\mubf_\varepsilon(\xi)$ follow the gradient of the functional $F$ (for gradients flows in the Wasserstein space see \cite{Ambrosio2008}) and use the result as a competitor: through arguments first advanced in \cite{Matthes2009} in a very different context under the name of \emph{flow interchange}, one can show (estimating the derivative of the Wasserstein distance along the flow of $F$ with the so-called (EVI) inequality) that for a.e. $\xi \in \Omega$
\begin{equation}
\label{equation_subharmonic_approximate_intro}
\int_{B(\xi, \varepsilon)} [F(\mubf_\varepsilon(\eta)) - F(\mubf_\varepsilon(\xi))] \ddr \eta \geqslant 0.
\end{equation}  
Then, as $\Dir_\varepsilon$ $\Gamma$-converges to $\Dir$, one knows that $\mubf_\varepsilon$ converges to $\mubf$ a solution of the Dirichlet problem. Passing in the limit \eqref{equation_subharmonic_approximate_intro}, one concludes that $(F \circ \mubf)$ is subharmonic in the sense of distributions. Actually, for technical reasons, we do not minimize exactly the $\varepsilon$-Dirichlet energy, hence using the \emph{flow interchange} leads to an inequality reminiscent of Jensen's one, but not exactly the same.  

Let us make a few comments. The main drawback of the proof, as we proceed by approximation and that uniqueness in the Dirichlet problem is not known, is that we are only able to show subharmonicity of $F \circ \mubf$ for one solution of the Dirichlet problem (which moreover depends on $F$), and not for all. To overcome this limitation, the best thing to do would be to prove uniqueness in the Dirichlet problem. Let us also discuss the regularity that we need on $F$. Either we require $F$ to be continuous (which is very restrictive: it excludes the internal energies); or, if $F$ is only lower semi-continuous, we need $F$ to be bounded on bounded subsets of $L^\infty(D) \cap \Prob(D)$ (which is not very restrictive), but we also need the weak lower semi-continuity of 
\begin{equation*}
\mubf \mapsto \int_\Omega F(\mubf(\xi)) \ddr \xi.
\end{equation*}
More precisely, a mapping $\mubf : \Omega \to \Prob(D)$ can be seen as an element of $\Prob(\Omega \times D)$ (by ``fubinization'') and we require lower semi-continuity of $\mubf \mapsto \int_\Omega (F \circ \mubf)$ w.r.t. the weak convergence on $\Prob(\Omega \times D)$. This weak lower semi-continuity holds heuristically if $F$ is convex for the usual (and not geodesic) convexity on $\Prob(D)$. Indeed, even if the Dirichlet energy has a nice behavior w.r.t. geodesic convexity, the approximate Dirichlet energies $\Dir_\varepsilon$ behave well w.r.t. usual convexity. At the end of the day, the Ishihara property works for potential energies (for a convex, $L^1$ and lower semi-continuous potential), for internal energies (which have a super linear growth and satisfy McCann's conditions) and for the interaction energies (but only for a convex continuous interaction potential). Indeed, for a generic lower semi-continuous potential, the interaction energy $W$ is itself lower semi-continuous on $(\Prob(D), W_2)$, but $\mubf \mapsto \int_\Omega (W \circ \mubf)$ is not. Finally, notice that we do not have the converse statement: we do not know if the fact that $F \circ \mubf$ is subharmonic for any $F$ convex along generalized geodesics is enough to prove that $\mubf$ is harmonic. To prove such a result, one would need a better understanding of the optimality conditions of the Dirichlet problem.  

\bigskip

Finally, we conclude in Section \ref{section_examples} with two examples. 

The first one is the case where the set $D$, on which the target space $\Prob(D)$ is modeled, is a segment of $\R$. In this case, the Wasserstein space $(\Prob(D), W_2)$ is in an isometric bijection with a convex subset of the Hilbert space $L^2([0,1])$. Hence, the Dirichlet problem reduces to the study of the Dirichlet problem for mappings valued in a Hilbert space, which is fairly simple. 

The second one is the case where we restrict our attention to a family of \emph{elliptically contoured distributions}. This terminology comes from \cite{Gelbrich1990} and denotes a generalization of the family of Gaussian measures. In statistics this type of family is sometimes called a \emph{location-scatter family}. More precisely, we take $\rho \in L^1(\R^q)$ a positive and compactly supported function such that the measure $\rho(x) \ddr x$ has a unit mass, zero mean, and the identity matrix as covariance matrix. The family of elliptically contoured distributions built on $\rho$ is nothing else than the sets of measures obtained as image measures from $\rho(x) \ddr x$ by symmetric positive linear transformations. For instance, if $\rho$ is the indicator function of a ball, the family of elliptically contoured distributions built on $\rho$ consists in probability measures uniformly distributed on centered ellipsoids (in general the level sets of the density are ellipsoids, hence the terminology). The Gaussian case would be obtained by taking for $\rho(x) \ddr x$ a centered standard Gaussian, but this probability measure is not compactly supported (recall that we work in $\Prob(D)$ where $D \subset \R^q$ is compact). As in the Gaussian case, the elements of the family of elliptically contoured distributions are parametrized by their covariance matrix. Notice that it is already known that the geodesic between Gaussian measures and more generally the barycenter of Gaussian measures stay in the Gaussian family \cite[Section 6.3]{Agueh2011}. If the boundary values $\mubf : \dr \Omega \to \Prob(D)$ are valued in a family of elliptically contoured distributions, we show that there exists at least one solution of the Dirichlet problem which takes values in the same family everywhere on $\Omega$ (Theorem \ref{theorem_gaussian_existence}). 

Under the additional assumption that the covariance matrices on the boundary $\dr \Omega$ are non singular we are able to show much more (Theorem \ref{theorem_dirichlet_problem_location_scatter}). It implies that there is a solution of the Dirichlet problem with covariance matrices non singular everywhere in $\Omega$: to prove it we use the maximum principle for the Boltzmann entropy, which translates in a minimum principle for the determinant of the covariance matrices. From this we are able to derive the Euler-Lagrange equation satisfied by the covariance matrix. Moreover we can show the uniqueness of the solution to the Dirichlet problem even in the class of mappings not necessarily valued in the family of elliptically contoured distributions. Let us give the structure of the proof as it is almost the only case where we know how to prove uniqueness. The observation is that all solutions of the Dirichlet problem must have the same tangent velocity field. Indeed, if $\varphi$ is a solution of the dual problem, from optimality the tangent velocity field to any solution must be equal to $\nabla_D \varphi$. Now, if the velocity field $\nabla_D \varphi$ is regular enough (namely Lipschitz w.r.t. variables in $D$), then the solution of the ($1$-dimensional) continuity equation with velocity field $\nabla_D \varphi$ is unique. As the (generalized) continuity equation implies the $1$-dimensional one, and as all solutions of the Dirichlet problem coincide on $\dr \Omega$ they must be equal everywhere. In the case of a family of elliptically contoured distributions the tangent velocity field is linear w.r.t. variables in $D$ with some uniform bounds which allow us to make this argument rigorous. If we leave the world of families of elliptically contoured distributions, we do not think that we could get enough regularity on the tangent velocity field for this strategy to work. 

Still under this additional assumption, we are also able to show the regularity of the minimizer: as the problem boils down to the study of Dirichlet minimizing mappings valued in a Riemannian manifold, the only thing to show, following the theory of Schoen and Uhlenbeck \cite{Schoen1982, Schoen1983} is the absence of non-constant tangent minimizing mappings. We prove the latter property with the help of the maximum principle: even though the Wasserstein space is positively curved, there is a lot of functionals convex along geodesics defined on it.

In summary, under the assumption that the covariance matrices on the boundary $\dr \Omega$ are non singular we are able to give a full solution to the problem: existence, uniqueness regularity and Euler-Lagrange equation.

\bigskip

Let us comment on the somehow restrictive framework that we have chosen. The compactness assumption of $\Omega$ and $D$ allows to simplify proofs by avoiding tails estimates: we believe that there is enough technical difficulties and non trivial statements even in this case, and that the key features of the Dirichlet problem are captured, which is the reason why we have restricted ourselves to the compact case. Although we have stuck to the Euclidean case, we see no deep reason which would prevent our definitions and results to be applied to the case where $\Omega$ and $D$ are compact Riemannian manifolds. In particular, our regularization procedures rely on heat flows which are available in Riemannian manifolds. Finally, we have stick to the quadratic Wasserstein distance. We believe that if $p \in (1, + \infty)$ is given, the machinery that we use can be adapted in a straightforward way to define 
\begin{equation*}
\int_\Omega \frac{1}{p} |\nabla \mubf|^p,
\end{equation*}      
where $\mubf : \Omega \to \Prob(D)$ but $\Prob(D)$ is endowed with the $p$-Wassertsein distance. However the Ishihiara type property is related to the Riemannian framework; also the explicit computations in the case of a family of elliptically contoured distributions are no longer avalaible. The case $p=1$ which corresponds the \emph{total variation} of $\mubf : \Omega \to \Prob(D)$ (where $\Prob(D)$ is equipped with the $1$-Wasserstein distance) has been defined and studied very recently \cite{Vogt2017} in the context of image denoising. 

\bigskip

To conclude the introduction, let us explain the connection between the different parts of the paper. If one just wants to understand the definition of the Dirichlet problem, then Subsections \ref{subsection_BB}, \ref{subsection_boundary_values} and Section \ref{section_Dirichlet_problem} are enough. Section \ref{section_superposition_principle}, about the failure of the superposition principle, can be read independently from the rest of the article (except for Subsection \ref{subsection_BB} to get the definition of the objects involved). To have the full proof of the Ishihara property in Section \ref{section_Ishihara}, one needs also to read entirely Section \ref{section_Dirichlet_energy} and Subsections \ref{subsection_Dirichlet_problem}, \ref{subsection_Lipschitz_extension} as some necessary results are proved there. To understand the examples in Section \ref{section_examples}, the reading of Section \ref{section_Dirichlet_energy} is advised.   

\section{Preliminaries}
\label{section_preliminairies}

\subsection{Notations}

Let $p$ and $q$ be two integers larger than $1$. The space $\R^p$ and $\R^q$ are endowed with their Euclidean structure: the scalar product is denoted by $\cdot$ and the norm by $| \cdot| $. The closed ball of center $\xi$ and radius $r$ is denoted by $B(\xi,r)$. We will take $\Omega \subset \R^p$ and $D \subset \R^q$ two \emph{compact} domains, their interior, assumed to be non empty, are denoted by $\Or$ and $\Dr$. The outward normal vector to $\dr \Omega$ (resp. $\dr D$) is denoted by $\nO$ (resp. $\nD$). In general, all elements related to $\Omega$ will be denoted with Greek letters, and those related to $D$ with Latin ones. For instance, points in $\Omega$ (resp. $D$) will be denoted by $\xi, \eta$ (resp. $x,y$), and $(e_\alpha)_{1 \leqslant \alpha \leqslant p}$ (resp. $(e_i)_{1 \leqslant i \leqslant q}$) is the canonical basis of $\R^p$ (resp. $\R^q$). We make the following regularity assumptions:

\begin{asmp*}
We assume that $\Omega$ is a connected compact subset of $\R^p$. Moreover, $\dr \Omega$ is assumed to be Lipschitz, which means that around any point of $\dr \Omega$, up to a rotation, $\Omega$ is the epigraph of a Lipschitz function. 

We assume that $D$ is a convex compact subset of $\R^q$ . 
\end{asmp*}

\noindent Notice that we assume more regularity on $D$ than on $\Omega$. We will consider mappings $\Omega \to \Prob(D)$ with prescribed values on $\dr \Omega$, the regularity of the latter is important. On the contrary, we assume that $D$ is convex, which translates in the fact that $(\Prob(D), W_2)$ is a geodesic space: in some sense, the boundary $\dr D$ of $D$ will be invisible.

The restriction of the Lebesgue measure on $\R^p$ (resp. $\R^q$) to $\Omega$ (resp. $D$) will be denoted by $\Leb_\Omega$ (resp. $\Leb_D$). To avoid normalization constants, we assume that $\Omega$ has unit mass, thus $\Leb_\Omega$ is a probability measure.

If $X$ is a polish space (metric, complete and separable), it is endowed with its Borel $\sigma$-algebra. We define $\Prob(X)$ as the space of Borel positive measure with unit mass. It is endowed with the topology of weak convergence, which means convergence in duality with $C(X)$ the space of continuous bounded and real-valued functions defined on $X$. We also define $\M(X, \R^n)$, for $n \geqslant 1$ as the space of Borel (vectorial) measures valued in $\R^n$ with finite mass, still endowed with the topology of weak convergence. In the case $n=1$, we use the shortcut $\M(X) := \M(X,\R)$. If $\mu \in \Prob(X)$ or $\M(X, \R^n)$, integration w.r.t. $\mu$ is denoted by $\ddr \mu$, or by $\mu(\ddr x)$ if the variable cannot be omitted. If no measure is specified or simply $\ddr \xi$ or $\ddr x$ is used, it means that the integration is performed w.r.t. the Lebesgue measure. If $x \in X$, the Dirac mass at point $x$ is denoted by $\delta_x$. The indicator function of a set $X$ will be denoted by $\1_X$.   

If $T : X \to Y$ is a measurable application between two measurable spaces $X$ and $Y$ and $\mu$ is a measure on $X$, then the image measure of $\mu$ by $T$, denoted by $T \# \mu$, is the measure defined on $Y$ by $(T \# \mu)(B) = \mu(T^{-1}(B))$ for any measurable set $B \subset Y$. It can also be defined by 
\begin{equation*}
\int_Y a(y) (T \# \mu)(\ddr y) := \int_X a(T(x)) \mu(\ddr x), 
\end{equation*} 
this identity being valid as soon as $a : Y \to \R$ is an integrable function \cite[Section 5.2]{Ambrosio2008}.

If $(X, \mu)$ is a measured space and $(Y,d)$ is any metric separable space, $L^2_\mu(X,Y)$ will denote the space of measurable mappings $f : X \to Y$ for which $d(f,y)^2$ integrable w.r.t. $\mu$ for some $y \in Y$. If $Y = \R$, then the letter $Y$ is omitted, and if $\mu$ is the Lebesgue measure, then the letter $\mu$ is omitted. If $Y$ is an Euclidean space, then we set 
\begin{equation*}
\| f \|^2_{L^2_\mu(X, Y)} := \int_X |f(x)|^2 \mu(\ddr x). 
\end{equation*}
If $X$ is an Euclidean space, the space $H^1(\Omega,X)$ is the set of functions $f : \Omega \to X$ such that both $f$ and $\dr_\alpha f$, for $1 \leqslant \alpha \leqslant p$ are in $L^2(\Omega,X)$. 

If $X$ and $Y$ are two subsets of Euclidean spaces, the $L^\infty$ norm of a measurable function $f : X \to Y$ is defined as $\| f \|_\infty := \esssup_{x \in X} |f(x)|$, where the essential supremum is taken w.r.t. the Lebesgue measure.   

If $X$ and $Y$ are two subsets of Euclidean spaces, $C(X,Y)$ and $C^1(X,Y)$ will denote respectively the continuous and $C^1$ functions defined on $X$ and valued in $Y$. If $Y=\R$, then the target space is omitted and we use $C(X)$ or $C^1(X)$. On the space $C^1(\Omega \times D, Y)$ the following differential operators can be defined. The derivatives w.r.t. variables in $\Omega$ will be denoted by $\nabla_\Omega$, or simply $(\dr_\alpha)_{1 \leqslant \alpha \leqslant p}$, and those w.r.t. variables in $D$ by $\nabla_D$, or simply $(\dr_i)_{1 \leqslant i \leqslant q}$. If $X$ is of dimension $1$, the derivative of a function $f$ will be denoted $\dot{f}$. The gradient will be denoted by $\nabla$, and the divergence by $\nabla \cdot$. As an example, if $\varphi \in C^1(\Omega \times D, \R^p)$, with components $(\varphi^{\alpha})_{1 \leqslant \alpha \leqslant p}$, then $\nabla_\Omega \cdot \varphi \in C(\Omega \times D)$ is defined as 
\begin{equation*}
\nabla_\Omega \cdot \varphi(\xi, x) = \sum_{\alpha = 1}^p \dr_\alpha \varphi^\alpha(\xi, x),
\end{equation*}   
for all $\xi \in \Omega$ and $x \in D$; and $\nabla_D \varphi \in C(\Omega \times D, \R^{pq})$ is defined as, for any $\alpha \in \{ 1,2, \ldots, p\}$ and $i \in \{ 1,2, \ldots, q \}$, 
\begin{equation*}
(\nabla_D \varphi)^{\alpha i} (\xi, x) = \dr_i \varphi^\alpha(\xi, x) \in \R.  
\end{equation*}
The notation $C^1_c(\Or \times D, Y)$ will stand for the smooth functions which are compactly supported in $\Or$ but not necessarily in $D$ (and valued in $Y$): if $\varphi \in C^1_c(\Or \times D, Y)$, it means that there exists a compact set $X \subset \Or$ such that $\varphi(\xi, x) = 0$ as soon as $\xi \notin X$.

\subsection{The Wasserstein space}

We recall well known facts about the Wasserstein space. All these results can be found in classical books like \cite{Villani2003, OTAM, Ambrosio2008}.

We endow the space $\Prob(D)$ with the $L^2$-Wasserstein distance $W_2$. If $\mu$ and $\nu$ are elements of $\Prob(D)$, then 
\begin{equation*}
W_2(\mu, \nu) := \sqrt{ \min_\pi \left\{ \iint_{D \times D} |x-y|^2 \pi(\ddr x, \ddr y) \ : \ \pi \in \Pi(\mu, \nu) \right\}},
\end{equation*} 
where $\Pi(\mu, \nu)$ is the set of transport plans, i.e. of probability measures on $D \times D$ which have $\mu$ and $\nu$ as marginals. There exists at least one $\pi \in \Pi(\mu, \nu)$ realizing the infimum, it is called an optimal transport plan. The Wasserstein distance admits a dual formulation which reads 
\begin{equation*}
\frac{W^2_2(\mu, \nu)}{2} = \max_{\varphi, \psi} \left\{ \int_D \varphi(x) \mu(\ddr x) + \int_D \psi(x) \nu(\ddr x) \ : \ \varphi, \psi \in C(D) \text{ and } \forall x,y \in D,  \varphi(x) + \psi(y) \leqslant \frac{|x-y|^2}{2}  \right\}.
\end{equation*} 
Notice that we have inserted a factor $2$, it slightly simplifies the expressions in the sequel. There exists at least one solution of the dual problem, and any pair $(\varphi, \psi)$ which is a solution is called a pair of Kantorovicth potentials. Moreover, if $\mu$ has a density w.r.t. $\Leb_D$ and $(\varphi, \psi)$ is a solution of the dual problem, there exists a unique optimal transport plan $\pi$ and it is given by $\pi = (\Id, \Id - \nabla \varphi) \# \mu$. Notice that thanks to the dual formulation, we see that $W_2^2 : \Prob(D) \times \Prob(D) \to \R$ is the supremum of continuous affine functionals, hence it is convex for the affine structure on $\Prob(D)$ (and continuous by definition). In particular, there is a Jensen's inequality: if $\mubf$ and $\nubf$ are measurable mappings defined on $\Omega$ and valued in $\Prob(D)$, and if $f : \Omega \to \R$ is a weight, i.e. a positive measurable function whose integral is $1$, then 
\begin{equation*}
W^2_2 \left( \int_\Omega  \mubf(\xi) f(\xi) \ddr \xi, \int_\Omega \nubf(\xi) f(\xi) \ddr \xi \right) \leqslant \int_\Omega W_2^2(\mubf(\xi), \nubf(\xi)) f(\xi) \ddr \xi. 
\end{equation*} 
In the formula above the integral $\int_\Omega \mubf f \in \Prob(D)$ is defined according to the affine structure on $\Prob(D)$ for instance by duality: for any $a \in C(D)$, 
\begin{equation}
\label{equation_definition_integral_affine}
\int_D a \ddr \left[ \int_\Omega  \mubf(\xi) f(\xi) \ddr \xi \right] := \int_\Omega \left( \int_D a \ddr \mubf(\xi) \right) f(\xi) \ddr \xi. 
\end{equation}

The space $(\Prob(D), W_2)$ is a metric space whose topology is the one of weak convergence. In particular, according to Prokhorov's theorem, it is a compact separable space. The space $(\Prob(D), W_2)$ is a geodesic space. If $\mu, \nu \in \Prob(D)$ and $\pi \in \Pi(\mu, \nu)$ is an optimal transport plan between $\mu$ and $\nu$, then a constant speed geodesic $\mubf : [0,1] \to \Prob(D)$ joining $\mu$ to $\nu$ is given by $\mubf(t) := f_t \# \pi$ where $f_t : (x,y) \in D \times D \mapsto (1-t)x + ty \in D$ (Notice that we have assumed $D$ to be convex). 

We will briefly use the $1$-Wasserstein distance $W_1$ in the proof of Proposition \ref{proposition_Rellich_approximate}. The definition by duality will be enough: if $\mu$ and $\nu$ are probability measures on $D$, 
\begin{equation*}
W_1(\mu, \nu) := \max_\varphi \left\{ \int_D \varphi(x) \mu(\ddr x) - \int_D \varphi(x) \nu(\ddr x) \ : \ \varphi \in C(D) \text{ and } \varphi \text{ is } 1\text{-Lipschitz} \right\}.
\end{equation*} 
Moreover, as $D$ is compact, there exists a constant $C$ such that $W_2 \leqslant C \sqrt{W_1}$. 

\subsection{Absolutely continuous curves in the Wasserstein space}

A central tool when one is studying the infinitesimal properties of the Wasserstein space is the concept of ($2$-)absolutely continuous curves valued in the Wasserstein space. Let $I$ be a segment of $\R$. A curve $\mubf : I \to \Prob(D)$ is said to be absolutely continuous if there exists $g \in L^2(I)$ such that for any $s < t$ elements of $I$, 
\begin{equation}
\label{equation_IAF_1D}
W_2(\mubf(t), \mubf(s)) \leqslant \int_s^t g(r) \ddr r. 
\end{equation} 
Let us recall the following result, which holds in fact for absolutely continuous curves valued in arbitrary metric spaces, see \cite[Theorem 1.1.2]{Ambrosio2008}. 
\begin{theo}
If $\mubf : I \to \Prob(D)$ is an absolutely continuous curve, then the quantity 
\begin{equation*}
|\dot{\mubf}|(t) := \lim_{h \to 0} \frac{W_2(\mubf(t+h), \mubf(t))}{|h|}
\end{equation*}
exists and is finite for a.e. $t \in I$. Moreover, $|\dot{\mubf}| \leqslant g$ a.e. on $I$ for all $g$ such that \eqref{equation_IAF_1D} holds.
\end{theo}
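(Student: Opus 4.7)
The plan is to adapt the general argument for absolutely continuous curves in arbitrary separable metric spaces, exploiting the fact that $(\Prob(D),W_2)$ is separable (as recalled in the preliminaries, since $D$ is compact). The key idea is to reduce the computation of the metric derivative to the derivation of a countable family of real-valued absolutely continuous functions.

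First, I would pick a countable dense subset $(\mu_n)_{n \in \N}$ of $(\Prob(D),W_2)$ and, for each $n$, consider the function $\phi_n : I \to \R$ defined by $\phi_n(t) := W_2(\mubf(t),\mu_n)$. Because $W_2$ is $1$-Lipschitz in each variable, the triangle inequality together with \eqref{equation_IAF_1D} gives $|\phi_n(t) - \phi_n(s)| \leqslant W_2(\mubf(t),\mubf(s)) \leqslant \int_s^t g(r) \ddr r$ for $s \leqslant t$. Hence each $\phi_n$ is absolutely continuous on $I$ with $|\dot{\phi}_n(t)| \leqslant g(t)$ for a.e. $t \in I$. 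I then set $m(t) := \sup_n |\dot{\phi}_n(t)|$, which is measurable, satisfies $m \leqslant g$ a.e., and therefore belongs to $L^2(I)$.

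The heart of the proof is to establish that $m$ is in fact the metric derivative of $\mubf$. The lower bound is straightforward: from $W_2(\mubf(t),\mubf(s)) \geqslant |\phi_n(t) - \phi_n(s)|$ and taking the ratio over $|t - s|$, one gets, at every Lebesgue point of $\dot{\phi}_n$, $\liminf_{h \to 0} W_2(\mubf(t+h),\mubf(t))/|h| \geqslant |\dot{\phi}_n(t)|$; taking the sup over $n$ (countable) gives the lower bound on a set of full measure. For the upper bound, I would show the integral inequality $W_2(\mubf(t),\mubf(s)) \leqslant \int_s^t m(r) \ddr r$ for all $s \leqslant t$ in $I$: fix $s$ and, given $\epsilon > 0$, use the density of $(\mu_n)$ to select $n$ with $W_2(\mubf(s),\mu_n) \leqslant \epsilon$; the triangle inequality yields $W_2(\mubf(t),\mubf(s)) \leqslant \phi_n(t) - \phi_n(s) + 2\epsilon \leqslant \int_s^t |\dot{\phi}_n(r)| \ddr r + 2\epsilon \leqslant \int_s^t m(r) \ddr r + 2\epsilon$, and one lets $\epsilon \to 0$. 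Evaluating the resulting inequality at a Lebesgue point $t$ of $m$ gives $\limsup_{h \to 0} W_2(\mubf(t+h),\mubf(t))/|h| \leqslant m(t)$, which combined with the lower bound proves both the existence of $|\dot{\mubf}|(t)$ and the equality $|\dot{\mubf}| = m \leqslant g$ a.e.

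The main obstacle in this scheme is the upper bound: there one cannot simply work with a fixed countable family, because the optimal choice of the approximating $\mu_n$ depends on the base point $s$. The density of $(\mu_n)$, together with the fact that $(\Prob(D),W_2)$ is separable, is precisely what allows this dependence to be absorbed into an $\epsilon$ that can be sent to zero, so that the integral bound by $m$ survives uniformly in $s$. Everything else is a routine application of Lebesgue differentiation.
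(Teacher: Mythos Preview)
Your proof is correct and is precisely the standard argument for the existence of the metric derivative in separable metric spaces. The paper does not actually prove this theorem: it is stated as a recalled fact with the reference \cite[Theorem 1.1.2]{Ambrosio2008}, whose proof is essentially the one you have written.
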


In the Wasserstein space, absolutely continuous curves are related to solutions of the continuity equation: see \cite[Chapter 8]{Ambrosio2008}. 
\begin{theo}
\label{theorem_AGS_831}
Let $\mubf : I \to \Prob(D)$ an absolutely continuous curve. If $(\vbf_t)_{t \in I}$ is a measurable family such that $\int_I \| \vbf_t \|_{L^2_{\mubf(t)}(D, \R^q)} \ddr t < + \infty$ and the equation $\dr_t \mubf + \nabla_D \cdot(\vbf \mubf) = 0$ is satisfied in a weak sense on $I \times D$ with no-flux boundary conditions on $D$, then one has 
\begin{equation}
\label{equation_metric_derivative_v_1D}
|\dot{\mubf}|(t) \leqslant \sqrt{\int_D |\vbf_t|^2 \ddr \mubf(t)}
\end{equation}
for a.e. $t \in I$. 

Moreover, there exists a unique (for a.e. $t \in I$, $\vbf_t$ is unique $\mubf(t)$ a.e.) family $(\vbf_t)_{t \in I}$ for which equality holds in \eqref{equation_metric_derivative_v_1D} for a.e. $t \in I$. This optimal family is characterized by the fact that for a.e. $t \in I$, there exists a sequence $(\psi_n)_{n \in \N}$ of elements of $C^1(D)$ such that $(\nabla \psi)_{n \in \N}$ converges to $\vbf_t$ in $L^2_{\mubf(t)}(D, \R^q)$.  
\end{theo}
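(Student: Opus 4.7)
The plan is to follow the classical approach of Ambrosio, Gigli and Savar\'e (\cite[Theorem 8.3.1]{Ambrosio2008}), adapted to the compact convex domain $D$ with no-flux boundary conditions. The proof splits naturally into the upper bound on the metric derivative and the identification of the minimal-norm velocity field.

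For the inequality $|\dot{\mubf}|(t) \leqslant \sqrt{\int_D |\vbf_t|^2 \ddr \mubf(t)}$, the cleanest route is a superposition argument. Since $(\mubf, \vbf \mubf)$ is a distributional solution of the continuity equation with $\int_I \| \vbf_r \|_{L^2_{\mubf(r)}} \ddr r < + \infty$, it can be lifted to a probability measure $Q$ concentrated on absolutely continuous curves $\gamma : I \to D$ that are integral curves of $\vbf$, i.e.\ $\dot\gamma(r) = \vbf_r(\gamma(r))$ for a.e.\ $r$, and satisfying $(e_r)_\# Q = \mubf(r)$ for every $r$. The convexity of $D$ combined with the no-flux condition ensures that characteristics never leave $D$, so the usual $\R^q$ statement carries over. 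The coupling $(e_s, e_t)_\# Q$ is then a transport plan from $\mubf(s)$ to $\mubf(t)$, and using Cauchy-Schwarz curve by curve together with Fubini yields
\[
W_2^2(\mubf(s), \mubf(t)) \leqslant \int |\gamma(t) - \gamma(s)|^2 \ddr Q(\gamma) \leqslant (t-s) \int_s^t \|\vbf_r\|_{L^2_{\mubf(r)}}^2 \ddr r,
\]
and the pointwise estimate on $|\dot{\mubf}|(t)$ follows for a.e.\ $t$ by Lebesgue's differentiation theorem.

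For the characterization of the optimal $\vbf_t$, define the tangent space $\mathcal{T}_{\mubf(t)} \subset L^2_{\mubf(t)}(D,\R^q)$ as the closure of $\{ \nabla \psi : \psi \in C^1(D)\}$. The key point is that, by the no-flux boundary condition, a vector field $\wbf \in L^2_{\mubf(t)}$ satisfies $\nabla_D \cdot(\wbf \mubf(t)) = 0$ weakly on $D$ if and only if $\int \nabla \psi \cdot \wbf \ddr \mubf(t) = 0$ for every $\psi \in C^1(D)$, i.e.\ if and only if $\wbf \perp \mathcal{T}_{\mubf(t)}$. Hence, if $\vbf_t = \vbf_t^{\text{tan}} + \vbf_t^{\perp}$ is the orthogonal decomposition at each $t$ (measurability of $t \mapsto \vbf_t^{\text{tan}}$ being obtained by testing against countably many $\psi$), the component $\vbf_t^{\perp}$ is divergence-free, so $\vbf_t^{\text{tan}}$ alone still satisfies the continuity equation, and by the Pythagorean identity $\|\vbf_t^{\text{tan}}\|^2_{L^2_{\mubf(t)}} + \|\vbf_t^{\perp}\|^2_{L^2_{\mubf(t)}} = \|\vbf_t\|^2_{L^2_{\mubf(t)}}$. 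Minimizing the norm therefore forces $\vbf_t^{\perp} = 0$, yielding both uniqueness and the claimed description as a limit of gradients.

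To verify that equality in the metric-derivative bound is achieved by the tangent representative, I would use Kantorovich duality: testing the continuity equation against $\psi \in C^1(D)$ gives $\frac{\ddr}{\ddr t}\int \psi \ddr \mubf(t) = \int \nabla \psi \cdot \vbf_t \ddr \mubf(t)$, and choosing $\psi$ close to a Kantorovich potential between $\mubf(t)$ and $\mubf(t+h)$ and then sending $h \to 0^+$ produces the matching lower bound $|\dot\mubf|(t) \geqslant \|\vbf_t^{\text{tan}}\|_{L^2_{\mubf(t)}}$ for a.e.\ $t$. The main technical obstacle is the superposition step: while the corresponding result on $\R^q$ is classical, its adaptation to a compact domain with no-flux boundary requires verifying that characteristic curves remain inside $D$, which is exactly what the convexity of $D$ provides.
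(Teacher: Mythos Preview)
The paper does not prove this theorem: it is stated in the preliminaries and attributed to \cite[Chapter 8]{Ambrosio2008} without further argument. Your sketch is essentially the standard Ambrosio--Gigli--Savar\'e approach (superposition for the upper bound on $|\dot\mubf|$, Hilbert-space orthogonal decomposition in $L^2_{\mubf(t)}$ for the minimal-norm representative, Kantorovich potentials for the matching lower bound), and is correct in outline; the only point to watch is the one you already flag, namely that the superposition principle is usually stated on $\R^q$, but extending $\mubf$ and $\vbf$ by zero outside $D$ and noting that the resulting curves are a.e.\ supported in $D$ handles this.
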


\subsection{Gradient flows}

At some point (in Section \ref{section_Ishihara}) we will need the notion of gradient flow in the Wassertsein space. Roughly speaking, if $F : \Prob(D) \to \R \cup \{ + \infty \}$ is a given functional, a gradient flow is a curve $\mubf : [0, + \infty) \to \Prob(D)$ along which $F$ decreases ``the most'' w.r.t. the Wasserstein distance, in a formal way it can be written 
\begin{equation}
\label{equation_gradient_flow_euclidian}
\frac{\ddr \mubf}{\ddr t}(t) = - \nabla F(\mubf(t)).
\end{equation}
Of course nor the notion of gradient or of time derivative make sense as vectors in the Wassertsein space. In \cite{Ambrosio2008} (see also \cite[Chapter 8]{OTAM}), it is shown how the notion of gradient flow can still be defined through the use of metric quantities only. 

A standard assumption to ensure the existence and uniqueness of a gradient flow with a given value $\mubf(0)$ is that $F$ is convex along generalized geodesic. If $\mu, \nu$ and $\mu_0$ are three probability measures on $D$, one can always build a transport plan $\pi \in \Pi(\mu_0, \mu, \nu) \subset \Prob(D \times D \times D)$ such that the $2$-marginals are optimal transport plans between $\mu_0, \mu$ on the one hand and $\mu_0, \nu$ on the other hand (notice that in general the last $2$-marginal is not an optimal plan between $\mu$ and $\nu$). Then, the generalized geodesic $\mubf : [0,1] \to \Prob(D)$ between $\mu$ and $\nu$ with base point $\mu_0$ is defined as $\mubf(t) := f_t \# \pi$, with $f_t : (x,y,z) \in D^3 \mapsto (1-t) y + tz \in D$. A functional $F : \Prob(D) \to \R \cup \{ + \infty \}$ is said convex along generalized geodesics if for any points $\mu_0, \mu$ and $\nu$, there exists a generalized geodesic $\mubf$ joining $\mu$ to $\nu$ with base point $\mu_0$ such that $F \circ \mubf : [0,1] \to \R \cup \{ + \infty \}$ is a convex function.   

The only result that we will need is called the Evolution Variational Inequality (EVI) formulation of gradient flows (which is a way to make sense of \eqref{equation_gradient_flow_euclidian} in the metric framework). It is summarized in the following theorem, whose proof can be found in \cite[Theorem 11.2.1]{Ambrosio2008}.

\begin{theo}
\label{theorem_EVI}
Let $F : \Prob(D) \to \R \cup \{ + \infty \}$ a functional l.s.c. and convex along generalized geodesics. Then, for any $\mu \in \Prob(D)$ such that $F(\mu) < + \infty$, there exists an absolutely continuous curve $t \in [0, + \infty) \mapsto S^F_t \mu \in \Prob(D)$ such that $S^F_0 \mu = \mu $ and for any $t \geqslant 0$ and any $\nu$ such that $F(\nu) < + \infty$ 
\begin{equation*}
\limsup_{h \to 0, \ h>0} \frac{W_2^2(S^F_{t+h} \mu, \nu) - W^2_2(S_t^F \mu, \nu)}{2h} \leqslant F(\nu) - F(S_t^F \mu).  
\end{equation*}
Moreover, the function $t \mapsto F \left( S^F_t \mu \right)$ is decreasing. 
\end{theo}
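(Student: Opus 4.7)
The plan is to construct $S^F_t \mu$ via the minimizing movement (JKO) scheme and to pass to the continuous-time limit, exploiting convexity along generalized geodesics to establish the EVI already at the discrete level. For a step size $\tau > 0$ and $\mu_0^\tau := \mu$, I would iteratively define
$$\mu_{k+1}^\tau \in \argmin_{\rho \in \Prob(D)} \left\{ F(\rho) + \frac{W_2^2(\rho, \mu_k^\tau)}{2\tau} \right\},$$
whose existence at each step follows from the lower semi-continuity of $F$ and the compactness of $(\Prob(D), W_2)$. The piecewise constant interpolation $\mubf_\tau(t) := \mu_{\lfloor t/\tau \rfloor}^\tau$ is then my approximation to $S^F_\bullet \mu$, and summing the minimality inequalities produces the telescoping bound $\sum_k W_2^2(\mu_{k+1}^\tau, \mu_k^\tau) \leqslant 2\tau (F(\mu) - \inf F)$, which controls the total squared length of the discrete trajectory.

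The core estimate is the discrete EVI: for any $\nu$ with $F(\nu) < +\infty$ and any $k$,
$$\frac{W_2^2(\mu_{k+1}^\tau, \nu) - W_2^2(\mu_k^\tau, \nu)}{2\tau} \leqslant F(\nu) - F(\mu_{k+1}^\tau).$$
To derive it I would test the minimality of $\mu_{k+1}^\tau$ against the generalized geodesic $t \mapsto \mubf_t$ joining $\mu_{k+1}^\tau$ to $\nu$ with base point $\mu_k^\tau$. Convexity along generalized geodesics gives $F(\mubf_t) \leqslant (1-t) F(\mu_{k+1}^\tau) + t F(\nu)$, while the three-plan construction of the generalized geodesic yields the $2$-convexity estimate
$$W_2^2(\mubf_t, \mu_k^\tau) \leqslant (1-t) W_2^2(\mu_{k+1}^\tau, \mu_k^\tau) + t W_2^2(\nu, \mu_k^\tau) - t(1-t) W_2^2(\mu_{k+1}^\tau, \nu).$$
Combining these two inequalities with the minimality of $\mu_{k+1}^\tau$, dividing by $t > 0$, letting $t \to 0^+$, and discarding a negative term proportional to $W_2^2(\mu_{k+1}^\tau, \mu_k^\tau)$ from the right-hand side yields the announced inequality. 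This is the one place where the geometric assumption on $F$ is genuinely used.

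It remains to pass to the limit $\tau \to 0$. The a priori bound above together with a classical interpolation argument produce, along a subsequence, an absolutely continuous limit curve $S^F_\bullet \mu$. I expect the main obstacle to be passing the discrete EVI pointwise to the limit: $F(\mu_{k+1}^\tau)$ on the right-hand side is only lower semi-continuous in its argument, so one cannot identify its limit directly. The standard remedy is to integrate the discrete EVI against nonnegative test functions in time, pass to the limit in the integrated form using weak convergence of $\mubf_\tau$ and lower semi-continuity of $F$, and then recover the pointwise EVI via a Lebesgue differentiation argument. Uniqueness of the limit (hence convergence of the whole family, not only of subsequences) can be obtained by applying the discrete EVI twice with step sizes $\tau$ and $\tau'$ and base points along the two schemes, leading to a Cauchy estimate of contractivity type between $\mubf_\tau$ and $\mubf_{\tau'}$. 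Monotonicity of $t \mapsto F(S^F_t \mu)$ is immediate at the discrete level from the scheme and passes to the limit by lower semi-continuity.
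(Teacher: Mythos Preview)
The paper does not give its own proof of this statement: it is quoted in the preliminaries as a known result, with the reference \cite[Theorem 11.2.1]{Ambrosio2008} (Ambrosio--Gigli--Savar\'e). Your proposal is a faithful outline of precisely the minimizing-movement construction carried out in that reference: the JKO scheme, the discrete EVI obtained by testing minimality against a generalized geodesic with base point $\mu_k^\tau$ (this is where convexity along \emph{generalized} geodesics, and the associated $2$-convexity of $W_2^2(\cdot,\mu_k^\tau)$, are genuinely needed), and the passage to the limit via the integrated EVI plus a contractivity/Cauchy argument. So there is nothing to contrast; your sketch matches the standard proof the paper is citing.
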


\noindent The curve $S^F \mu$ (which can be shown to be unique) is nothing else than the gradient flow of $F$ source form $\mu$.

\subsection{Heat flow}

To regularize probability measures the main tool will be the heat flow. We recall in this subsection classical results that we will use in the sequel. We will denote by $\Phi^D : [0, + \infty) \times \Prob(D) \to \Prob(D)$ the heat flow with Neumann boundary conditions acting on $D$. For a proper definition, one can view $\Phi^D$ as the gradient flow of the Boltzmann entropy, which is convex along generalized geodesics because $D$ is convex, see \cite{Ambrosio2008}. If $\mu \in \Prob(D)$ and $t > 0$, then $\Phi^D_t \mu \in \Prob(D)$ is defined as the measure $u(t,x) \ddr x$ with a density $u : (0, + \infty) \times D \to \R$ which is the solution of the Cauchy Problem
\begin{equation*}
\begin{cases}
\dr_s u(s,x) = \Delta u(s,x) & \text{if } (s,x) \in (0, + \infty) \times \Dr, \\
\nabla u(s,x) \cdot \mathbf{n}_D(x) = 0 & \text{if } (s,x) \in (0, + \infty) \times \dr D, \\
\dst{\lim_{s \to 0}} [ u(s,x) \ddr x] = \mu & \text{in } \Prob(D), 
\end{cases}
\end{equation*}
where $\mathbf{n}_D$ is the outward normal to $D$.

\begin{prop}
\label{proposition_heat_flow_properties}
The heat flow $\Phi^D$ satisfies the following properties: 
\begin{itemize}
\item[(i)] For any $\mu \in \Prob(D)$ and any $t > 0$, the measure $\Phi^D_t \mu$ has a density w.r.t. $\Leb_D$ which is bounded from below by a strictly positive constant and belongs to $C^1(\Dr)$. 

\item[(ii)] For any $t> 0$, the density of $\Phi^D_t \mu$ w.r.t. $\Leb_D$ is bounded in $L^\infty(D)$ by a constant that depends on $t$, but not on $\mu \in \Prob(D)$.

\item[(iii)] For a fixed $t > 0$ and for any $\mu \in \Prob(D)$ and $a \in C(D)$, one has 
\begin{equation*}
\int_D a \ddr \left( \Phi^D_t \mu \right) = \int_D  \left( \Phi^D_t a \right) \ddr \mu. 
\end{equation*} 

\item[(iv)] For any $\mu, \nu \in \Prob(D)$ and any $t \geqslant 0$, 
\begin{equation}
\label{I_equation_heat_flow_contraction}
W_2(\Phi^D_t \mu, \Phi^D_t \nu) \leqslant W_2(\mu, \nu).
\end{equation} 
\end{itemize}
\end{prop}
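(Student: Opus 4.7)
My plan is to split the four assertions into a PDE/kernel part (namely (i)--(iii)) and a metric/gradient-flow part (namely (iv)).

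For (i)--(iii), I would invoke the standard theory of the Neumann heat kernel $p_t(x,y)$ on the bounded convex (hence Lipschitz) domain $D$: for $t>0$ and $x,y \in \Dr$ the kernel $p_t$ is smooth, symmetric in $(x,y)$, satisfies $\int_D p_t(x,y)\,\ddr y = 1$, and by the strong maximum principle together with compactness of $D \times D$ is bounded above and below by strictly positive constants depending only on $t$. Property (i) follows immediately: since $u(t,x) = \int_D p_t(x,y)\,\mu(\ddr y)$ is the density of $\Phi^D_t \mu$, it inherits $C^1(\Dr)$ regularity in $x$ and the uniform positive lower bound from $p_t$. Property (ii) follows from the uniform $L^\infty$-bound on $p_t$ at fixed $t$. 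Property (iii) is a direct consequence of Fubini's theorem combined with the symmetry $p_t(x,y) = p_t(y,x)$, itself the measure-theoretic expression of the self-adjointness of $-\Delta$ with Neumann boundary conditions on $L^2(D)$.

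For (iv), I would use the gradient flow interpretation: since $D$ is convex, the Boltzmann entropy $F$ on $\Prob(D)$ is lower semi-continuous and convex along generalized geodesics, so Theorem \ref{theorem_EVI} applies and delivers the EVI with parameter $0$. From this EVI, the $W_2$-contraction is a classical consequence: applying it once to evolve $\mu$ with test point $\Phi^D_t \nu$ and once to evolve $\nu$ with test point $\Phi^D_t \mu$, and adding the two resulting inequalities, one obtains that the upper right Dini derivative of $t \mapsto W_2^2(\Phi^D_t \mu, \Phi^D_t \nu)$ is nonpositive, and the map is therefore nonincreasing. To remove the finite-entropy assumption from Theorem \ref{theorem_EVI} and handle arbitrary $\mu,\nu \in \Prob(D)$, I would regularize by setting $\mu^\varepsilon := \Phi^D_\varepsilon \mu$ and $\nu^\varepsilon := \Phi^D_\varepsilon \nu$, which by (ii) have bounded densities and hence finite entropy, apply the contraction to them, and then send $\varepsilon \to 0$ using the weak continuity of $\Phi^D$ and the fact that $W_2$ metrizes weak convergence on the compact set $\Prob(D)$.

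The only real obstacle is bookkeeping: making the symmetric EVI argument in (iv) genuinely rigorous requires either invoking the standard $\lambda$-contraction theorem for EVI-gradient flows or carrying out a careful direct argument, together with the self-regularization step above to go beyond measures of finite entropy. Everything else reduces to standard parabolic theory and the machinery already summarized in the preliminaries.
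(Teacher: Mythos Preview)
Your proposal is correct and follows essentially the same approach as the paper: for (i)--(iii) the paper simply invokes standard parabolic regularity, the $L^\infty$--$L^1$ estimates for the Neumann Laplacian, and self-adjointness, which is exactly what your heat-kernel argument makes explicit; for (iv) the paper cites the convexity along generalized geodesics of the Boltzmann entropy and the EVI machinery in \cite{Ambrosio2008}, which is precisely your gradient-flow argument. Your added regularization step (replacing $\mu,\nu$ by $\Phi^D_\varepsilon\mu,\Phi^D_\varepsilon\nu$ to ensure finite entropy before invoking the EVI contraction) is a nice point of rigor that the paper leaves implicit in its citation.
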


\begin{proof}
Point (i) is standard interior parabolic regularity. Point (ii) comes from $L^\infty - L^1$ estimates for the Neumann Laplacian, see \cite[Section 7]{Arendt2004}. Point (iii) just states that the heat flow is self-adjoint. Point (iv) comes from the convexity along generalized geodesics of the Boltzmann entropy and the fact that the heat flow is the gradient flow of the latter, see \cite[Example 11.2.4 and Theorem 11.2.1]{Ambrosio2008}. 
\end{proof}

\noindent We mention that except for (iv), all of the statements of Proposition \ref{proposition_heat_flow_properties} remain true if we drop the convexity assumption on $D$, and only assume that $D$ is connected and has a Lipschitz boundary. 

With the help of the last point, we can prove this uniform estimate about the behavior of the heat flow for small values of $t$. 

\begin{prop}
\label{proposition_heat_fow_uniform_0}
There exists a function $\omega^D : [0,+ \infty) \to \R$, continuous and with $\omega(0) = 0$ such that, for any $\mu \in \Prob(D)$ and any $t \geqslant 0$, 
\begin{equation*}
W_2(\Phi^D_t \mu, \mu) \leqslant \omega^D(t).
\end{equation*}
\end{prop}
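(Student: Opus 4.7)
The plan is to combine pointwise continuity of $t \mapsto \Phi^D_t \mu$ at $t = 0$ with the compactness of $(\Prob(D), W_2)$, the contraction estimate (iv) from Proposition \ref{proposition_heat_flow_properties} being precisely what upgrades pointwise to uniform.

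First, for each fixed $\mu \in \Prob(D)$, I would argue that $W_2(\Phi^D_t \mu, \mu) \to 0$ as $t \to 0^+$. For measures of finite Boltzmann entropy, this follows directly from the absolute continuity of the gradient flow curve given by Theorem \ref{theorem_EVI}. For a general $\mu \in \Prob(D)$, weak convergence $\Phi^D_t \mu \to \mu$ as $t \to 0$ holds from the classical parabolic theory (weak formulation of the heat equation), and because $D$ is compact, weak convergence in $\Prob(D)$ is equivalent to $W_2$-convergence.

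Next I would use compactness to turn this into a uniform estimate. Fix $\varepsilon > 0$. Since $(\Prob(D), W_2)$ is compact, cover it by finitely many balls $B(\mu_i, \varepsilon/3)$, $i = 1, \ldots, N_\varepsilon$. By the previous paragraph, there exists $\tau_i > 0$ such that $W_2(\Phi^D_t \mu_i, \mu_i) \leqslant \varepsilon/3$ for all $t \in [0, \tau_i]$. Set $\tau := \min_i \tau_i > 0$. For an arbitrary $\mu \in \Prob(D)$, pick $\mu_i$ with $W_2(\mu, \mu_i) \leqslant \varepsilon/3$; then by the triangle inequality and the contraction \eqref{I_equation_heat_flow_contraction},
\begin{equation*}
W_2(\Phi^D_t \mu, \mu) \leqslant W_2(\Phi^D_t \mu, \Phi^D_t \mu_i) + W_2(\Phi^D_t \mu_i, \mu_i) + W_2(\mu_i, \mu) \leqslant 2 W_2(\mu, \mu_i) + \frac{\varepsilon}{3} \leqslant \varepsilon
\end{equation*}
for every $t \in [0, \tau]$. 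Hence $\psi(t) := \sup_{\mu \in \Prob(D)} W_2(\Phi^D_t \mu, \mu)$ tends to $0$ as $t \to 0^+$.

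Finally, to produce a genuinely continuous modulus, observe that $\psi$ is bounded by the (finite) diameter of $D$ and satisfies $\lim_{t \to 0^+} \psi(t) = 0$; any continuous nondecreasing function $\omega^D : [0, +\infty) \to \R$ with $\omega^D(0) = 0$ that majorizes $\psi$ will do (such a function is easily built by linear interpolation between points $(\delta_n, 1/n)$ with $\delta_n$ chosen so that $\psi \leqslant 1/n$ on $[0, \delta_n]$). There is no real obstacle in this argument: the compactness of $(\Prob(D), W_2)$ coming from the compactness of $D$ is essential, and the contraction property (iv) is exactly the structural input needed to transfer a pointwise bound to a uniform one.
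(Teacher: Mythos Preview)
Your proof is correct and follows essentially the same approach as the paper: both rely on the compactness of $(\Prob(D), W_2)$ together with the contraction property (iv) to upgrade pointwise continuity of $t \mapsto \Phi^D_t\mu$ at $t=0$ to a uniform estimate. The only cosmetic difference is that the paper argues by contradiction via sequential compactness (extract a convergent subsequence from a hypothetical bad sequence and apply the same triangle-inequality-plus-contraction bound), whereas you use a finite $\varepsilon$-cover directly; these are the two standard and interchangeable ways to prove that a continuous function on a compact set is uniformly continuous.
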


\begin{proof}
The only thing to check is that $\omega^D$ is continuous in $0$. Assume by contradiction that it is not the case. We can find $(\mu_n)_{n \in \N}$ a sequence in $\Prob(D)$ and $(t_n)_{n \in \N}$ a sequence that tends to $0$ such that, for some $\delta > 0$, there holds $W_2(\Phi^D_{t_n} \mu_n, \mu_n) \geqslant \delta$. Up to extraction, we can assume that $\mu_n$ converges to some limit $\mu$. We can write
\begin{equation*}
W_2(\Phi^D_{t_n} \mu_n, \mu_n) \leqslant W_2(\Phi^D_{t_n} \mu_n, \Phi^D_{t_n} \mu) + W_2(\Phi^D_{t_n} \mu,  \mu) + W_2(\mu, \mu_n) \leqslant  W_2(\Phi^D_{t_n} \mu,  \mu) + 2W_2(\mu, \mu_n), 
\end{equation*}
where we have used the last point of Proposition \ref{proposition_heat_flow_properties}. But then it is clear that the two terms of the r.h.s. tend to $0$, which is a contradiction.
\end{proof}

\section{The Dirichlet energy and the space \texorpdfstring{$H^1(\Omega, \Prob(D))$}{H1(Omega,P(D))}}
\label{section_Dirichlet_energy}

In this section, we define the Dirichlet energy of a function $\mubf \in L^2(\Omega, \Prob(D))$ following the idea of \cite[Section 3]{Brenier2003}. We relate the space of $\mubf$ with finite Dirichlet energy with $H^1(\Omega, \Prob(D))$ using the theory of Sobolev spaces valued into metric spaces of Reshetnyak \cite{Reshetnyak1997, Reshetnyak2004}, and we also prove that this Dirichlet energy coincides with the limit of $\varepsilon$-Dirichlet energies introduced by Korevaar, Schoen and Jost \cite{Korevaar1993, Jost1994}. 

Let us first define the space $L^2(\Omega, \Prob(D))$. As $\Prob(D)$ is bounded, it coincides with the measurable mappings valued in $\Prob(D)$.

\begin{defi}
We denote by $L^2(\Omega, \Prob(D))$ the quotient space of measurable mappings $\mubf : \Omega \to \Prob(D)$ by the equivalence relation of being equal $\Leb_\Omega$-a.e. This space is endowed with the distance $d_{L^2}$ defined by: for any $\mubf$ and $\nubf$ in $L^2(\Omega, \Prob(D))$, 
\begin{equation*}
d_{L^2}^2(\mubf, \nubf) := \int_\Omega W_2^2(\mubf(\xi), \nubf(\xi)) \ddr \xi.
\end{equation*} 
\end{defi}

If $\mubf \in L^2(\Omega, \Prob(D))$, we can define a probability measure on $\Omega \times D$, that we will call temporary $\bar{\mubf}$, in the following way: for any $a \in C(\Omega \times D)$, 
\begin{equation}
\label{equation_definition_disintegration}
\iint_{\Omega \times D} a \ddr \bar{\mubf} := \int_\Omega \left( \int_D a(\xi, \cdot) \ddr \mubf(\xi) \right) \ddr \xi.
\end{equation}
As we have assumed (without any loss of generality) that the Lebesgue measure of $\Omega$ is $1$, the measure $\bar{\mubf}$ is an actual probability measure. If we take a function $a \in C(\Omega)$ which depends only on variables in $\Omega$, one can see that 
\begin{equation}
\label{equation_marginal_equals_Lebesgue}
\iint_{\Omega \times D} a \ddr \bar{\mubf} = \int_\Omega a(\xi) \ddr \xi.
\end{equation}
In other words, the marginal of $\bar{\mubf}$ is the Lebesgue measure $\Leb_\Omega$. We will denote by $\Prob_0(\Omega \times D)$ the subspace of $\Prob(\Omega \times D)$ such that \eqref{equation_marginal_equals_Lebesgue} is satisfied for all $a \in C(\Omega)$. Thanks to the disintegration Theorem \cite[Theorem 5.3.1]{Ambrosio2008}, one can see that, reciprocally, to each $\bar{\mubf} \in \Prob_0(\Omega \times D)$, one can associate a unique element $\mubf$ of $L^2(\Omega, \Prob(D))$ such that \eqref{equation_definition_disintegration} holds. In all the sequel, we will drop the ``bar'' on $\bar{\mubf}$ and use the same letter $\mubf$ to denote an element of $L^2(\Omega, \Prob(D))$ and its counterpart in $\Prob_0(\Omega \times D)$ through the bijection that we have just described. Any $\mubf \in L^2(\Omega, \Prob(D))$ can be seen in two different ways: either as a mapping $\Omega \to \Prob(D)$, or as a probability measure on $\Omega \times D$, and we will very often switch between the two points of view. To clarify the notations:
\begin{itemize}
\item[•] if $\mubf \in L^2(\Omega, \Prob(D))$, then $\mubf(\xi)$ or $\mubf(\xi, \ddr x)$, which is an element of $\Prob(D)$, will denote the mapping $\mubf$ evaluated at $\xi$; 
\item[•] $\mubf(\ddr \xi, \ddr x)$ will indicate that we consider $\mubf$ as an element of $\Prob_0(\Omega \times D)$, integration on $\Omega \times D$ will be denoted by $\ddr \mubf$ or $\mubf(\ddr \xi, \ddr x)$, notice that we have the following relation: $\mubf(\ddr \xi, \ddr x) = \mubf(\xi, \ddr x) \ddr \xi$; 
\item[•] the mapping $\mubf \in L^2(\Omega, \Prob(D))$ is said continuous (resp. Lipschitz) if there is one representative of $\mubf$ such that $W_2(\mubf(\xi), \mubf(\eta))$ goes to $0$ if $\eta \to \xi$ (resp. is bounded by $C|\xi - \eta|$ for some $C < + \infty$).   
\end{itemize}
The topologies on $L^2(\Omega, \Prob(D))$ are defined as follows. 

\begin{defi}
The strong topology on $L^2(\Omega, \Prob(D))$ is the one induced by the distance $d_{L^2}$, and the weak topology is the one induced on $\Prob_0(\Omega \times D)$ by the weak topology on $\Prob(\Omega \times D)$.
\end{defi}

\begin{prop}
W.r.t. the strong topology, $L^2(\Omega, \Prob(D))$ is a polish space. W.r.t. the weak topology, $L^2(\Omega, \Prob(D))$ is a separable compact space. Moreover, the strong topology is finer than the weak topology. 
\end{prop}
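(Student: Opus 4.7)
The statement has three parts which I would treat in sequence.

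\emph{Strong topology.} First I would check that $d_{L^2}$ is genuinely a distance: triangle inequality follows by applying the triangle inequality for $W_2$ pointwise in $\xi$ and then Minkowski in $L^2(\Omega)$, while the quotient by $\Leb_\Omega$-a.e.\ equality handles separation. For completeness, from a Cauchy sequence $(\mubf_n)$ I extract a subsequence $(\mubf_{n_k})$ with $\sum_k d_{L^2}(\mubf_{n_{k+1}}, \mubf_{n_k}) < +\infty$; monotone convergence then forces $\sum_k W_2(\mubf_{n_{k+1}}(\xi), \mubf_{n_k}(\xi)) < +\infty$ for $\Leb_\Omega$-a.e. $\xi$, so $(\mubf_{n_k}(\xi))$ is Cauchy in the complete space $(\Prob(D), W_2)$ and converges to some $\mubf(\xi)$. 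Using that $W_2 \leqslant \mathrm{diam}(D)$, dominated convergence gives $d_{L^2}(\mubf_{n_k}, \mubf) \to 0$, and the standard Cauchy argument upgrades this to convergence of the whole sequence. For separability, I would fix a countable dense sequence $(\nu_n)$ in $(\Prob(D), W_2)$; for $\mubf \in L^2(\Omega, \Prob(D))$ and $\varepsilon > 0$, I partition $\Prob(D)$ into disjoint Borel sets $(B_n)$ of diameter $< \varepsilon$ containing $\nu_n$ and define the piecewise constant approximant $\sum_n \nu_n \1_{\mubf^{-1}(B_n)}$, which is within $\varepsilon$ of $\mubf$ in $d_{L^2}$; a further piecewise-constant approximation on a rational polyhedral partition of $\Omega$ produces a countable dense family.

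\emph{Weak topology.} I would use the bijection between $L^2(\Omega, \Prob(D))$ and $\Prob_0(\Omega \times D)$ already established. Since $\Omega \times D$ is compact and metrizable, Prokhorov's theorem implies that $\Prob(\Omega \times D)$ is itself compact and metrizable, hence separable. The subset $\Prob_0(\Omega \times D)$ is defined by the family of linear conditions
\begin{equation*}
\iint_{\Omega \times D} a(\xi) \, \mubf(\ddr \xi, \ddr x) = \int_\Omega a(\xi) \, \ddr \xi, \qquad a \in C(\Omega),
\end{equation*}
each of which is closed in the weak topology, so $\Prob_0(\Omega \times D)$ is a closed subset of a compact separable space, hence itself compact and separable.

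\emph{Comparison of topologies.} Let $\mubf_n \to \mubf$ strongly and fix $a \in C(\Omega \times D)$; I want $\iint a \, \ddr \mubf_n \to \iint a \, \ddr \mubf$. By the standard subsequence-of-subsequence trick it suffices to produce, out of any subsequence, a further subsequence along which convergence holds. Since $\int_\Omega W_2^2(\mubf_n(\xi), \mubf(\xi)) \, \ddr \xi \to 0$, I can extract a further subsequence along which $W_2(\mubf_n(\xi), \mubf(\xi)) \to 0$ for $\Leb_\Omega$-a.e. $\xi$. For each such $\xi$, $W_2$-convergence in $\Prob(D)$ implies weak convergence, so $\int_D a(\xi, \cdot) \, \mubf_n(\xi, \ddr x) \to \int_D a(\xi, \cdot) \, \mubf(\xi, \ddr x)$, and these integrands are bounded by $\| a \|_\infty$. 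Dominated convergence over $\Omega$ concludes. The only delicate point is the separability of the strong topology (essentially a Lusin-type approximation for measurable mappings into a separable metric space); everything else is a routine combination of Prokhorov's theorem, Minkowski's inequality and dominated convergence.
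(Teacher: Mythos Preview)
Your proposal is correct and follows essentially the same route as the paper's proof: the weak-topology part is identical (Prokhorov plus closedness of $\Prob_0(\Omega\times D)$), and the comparison of topologies is the same a.e.-extraction plus dominated-convergence argument, with your subsequence-of-subsequence bookkeeping being slightly more explicit than the paper's. The only difference is that for the strong topology the paper simply cites \cite[Section 1.1]{Korevaar1993} (the general fact that $L^2$ of a polish target is polish), whereas you spell out the completeness and separability arguments by hand; both are fine.
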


\begin{proof}
The statement concerning the strong topology is a consequence of the fact that $\Prob(D)$ is itself a polish space, see for instance \cite[Section 1.1]{Korevaar1993}. As $\Prob_0(\Omega \times D)$ is closed in $\Prob(\Omega \times D)$, for the second statement we simply use the fact that $\Prob(\Omega \times D)$ is itself a separable compact space. 

To compare the topologies we take a sequence $(\mubf_n)_{n \in \N}$ which converges strongly to some $\mubf \in L^2(\Omega, \Prob(D))$. Up to extraction, we know that we can assume that $\mubf_n(\xi)$ converges in $\Prob(D)$ to $\mubf(\xi)$ for a.e. $\xi \in \Omega$. In particular, if $a \in C(\Omega \times D)$, we have that $\int_D a(\xi, \cdot) \ddr \mubf_n(\xi)$ converges to $\int_D a(\xi, \cdot) \ddr \mubf(\xi)$ for a.e. $\xi \in \Omega$. With the help of Lebesgue dominated convergence Theorem, we see that 
\begin{equation*}
\lim_{n \to + \infty} \iint_{\Omega \times D} a \ddr \mubf_n = \lim_{n \to + \infty} \int_\Omega \left( \int_D a(\xi, \cdot) \ddr \mubf_n(\xi) \right) \ddr \xi =  \int_\Omega \left( \int_D a(\xi, \cdot) \ddr \mubf(\xi) \right) \ddr \xi = \iint_{\Omega \times D} a \ddr \mubf.  
\end{equation*} 
As $a$ is arbitrary, this allows us to conclude that $(\mubf_n)_{n \in \N}$ converges to $\mubf$ for the weak topology.
\end{proof}

\subsection{A Benamou-Brenier type definition}
\label{subsection_BB}

We are now ready to define the Dirichlet energy. The first step is to define the (generalized) continuity equation. Recall that $C^1_c(\Or \times D, \R^p)$ is the set of $C^1$ functions defined on $\Omega \times D$ and valued in $\R^p$, whose support is compactly included in $\Or$, but not necessarily in $D$, and $\M(\Omega \times D, \R^{pq})$ denotes the space of vector-valued measures on $\Omega \times D$ with finite mass.  

\begin{defi}
If $\mubf \in L^2(\Omega, \Prob(D))$ and if $\Em \in \M(\Omega \times D, \R^{pq})$, we say that the pair $(\mubf, \Em)$ satisfies the continuity equation if, for every $\varphi \in C^1_c(\Or \times D, \R^p)$, one has 
\begin{equation*}
\iint_{\Omega \times D} \nabla_\Omega \cdot \varphi \ddr \mubf + \iint_{\Omega \times D} \nabla_D \varphi \cdot \ddr \Em = 0. 
\end{equation*}
\end{defi}

\noindent In other words, the pair $(\mubf, \Em)$ satisfies the continuity equation if the equation
\begin{equation*}
\nabla_\Omega \mubf + \nabla_D \cdot \Em = 0.
\end{equation*}
with no-flux boundary conditions on $\dr D$ is satisfied in a weak sense. If we develop in coordinates, it means that for every $\alpha \in \{ 1,2, \ldots, p \}$, one has $\dr_\alpha \mubf + \sum_{i=1}^q \partial_i \Em^{i \alpha} = 0$. If the pair $(\mubf, \Em)$ satisfies the continuity equation, we want to define its Dirichlet energy by $\iint_{\Omega \times D} \frac{|\Em|^2}{2\mubf}$. It is well known in optimal transport that this definition can be made by duality. 

\begin{defi}
\label{definition_dirichlet_mu_E}
If $(\mubf, \Em)$ satisfies the continuity equation, we define its Dirichlet energy $\Dir(\mubf, \Em)$ by
\begin{equation*}
\Dir(\mubf, \Em) := \sup_{a,b} \left\{ \iint_{\Omega \times D} a \ddr \mubf + \iint_{\Omega \times D} b \cdot \ddr \Em  \ : \ (a,b) \in C(\Omega \times D, \mathcal{K}) \right\}, 
\end{equation*}
where $\mathcal{K} \subset \R^{1+pq}$ is the set of pair $(x,y)$ with $x \in \R$ and $y \in \R^{pq}$ such that $x + \frac{1}{2} |y|^2 \leqslant 0$. 
\end{defi}

\noindent Note that $|y|$ is the euclidean norm of $y \in \R^{pq}$. In other words, if $y$ is seen as $p \times q$ matrix, $|y|$ is the Hilbert-Schmidt norm of the matrix. The following proposition is identical to the case of the Benamou-Brenier formula.

\begin{prop}
\label{proposition_v_density_E}
If $(\mubf, \Em)$ satisfies the continuity equation and $\Dir(\mubf, \Em) < + \infty$, then $\Em$ is absolutely continuous w.r.t. $\mubf$, and if $\vbf : \Omega \times D \to \R^{pq}$ is the density of $\Em$ w.r.t. $\mubf$, then one has 
\begin{equation*}
\Dir(\mubf, \Em) = \Dir(\mubf, \vbf \mubf) = \iint_{\Omega \times D} \frac{1}{2} |\vbf|^2 \ddr \mubf.
\end{equation*}
\end{prop}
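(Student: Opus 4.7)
The plan is to adapt the standard Benamou-Brenier duality argument, essentially bootstrapping from the dual definition to a Cauchy-Schwarz-type inequality that forces $\Em$ to be absolutely continuous with respect to $\mubf$.

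First I would restrict the supremum in Definition \ref{definition_dirichlet_mu_E} to test functions of the form $(a, b) = (-\tfrac{1}{2}|b|^2, b)$ with $b \in C(\Omega \times D, \R^{pq})$: this pair saturates the constraint $a + \tfrac{1}{2}|b|^2 \leqslant 0$, and since $\mubf$ is a positive measure, any admissible $a$ with the same $b$ gives a smaller value of $\iint a \, \ddr \mubf$. Thus
\begin{equation*}
\Dir(\mubf, \Em) = \sup_{b \in C(\Omega \times D, \R^{pq})} \iint_{\Omega \times D} b \cdot \ddr \Em - \iint_{\Omega \times D} \frac{1}{2}|b|^2 \ddr \mubf.
\end{equation*}
Replacing $b$ by $\lambda b$ for $\lambda \in \R$ and optimizing the resulting quadratic in $\lambda$, the finiteness of $\Dir(\mubf, \Em)$ yields the Cauchy-Schwarz-type bound
\begin{equation*}
\left| \iint_{\Omega \times D} b \cdot \ddr \Em \right| \leqslant \sqrt{2 \Dir(\mubf, \Em)} \, \sqrt{\iint_{\Omega \times D} |b|^2 \ddr \mubf}
\end{equation*}
for every $b \in C(\Omega \times D, \R^{pq})$.

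Next I would use this bound to obtain the density. The estimate shows that the linear map $b \mapsto \iint b \cdot \ddr \Em$ is continuous on $C(\Omega \times D, \R^{pq})$ for the $L^2_{\mubf}$ seminorm. Since $C(\Omega \times D, \R^{pq})$ is dense in $L^2_{\mubf}(\Omega \times D, \R^{pq})$ (the measure $\mubf$ being a Borel probability on a compact metric space), this linear map extends uniquely to a continuous linear functional on the Hilbert space $L^2_{\mubf}(\Omega \times D, \R^{pq})$. By Riesz representation there exists $\vbf \in L^2_{\mubf}(\Omega \times D, \R^{pq})$ such that
\begin{equation*}
\iint_{\Omega \times D} b \cdot \ddr \Em = \iint_{\Omega \times D} b \cdot \vbf \, \ddr \mubf
\end{equation*}
for all $b \in C(\Omega \times D, \R^{pq})$. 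Taking $b$ compactly supported and varying it, this identity of measures gives $\Em = \vbf \mubf$, hence $\Em \ll \mubf$ with density $\vbf$.

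Finally, to identify the Dirichlet energy, I would combine the two directions. The pointwise Young inequality $b \cdot \vbf - \tfrac{1}{2}|b|^2 \leqslant \tfrac{1}{2}|\vbf|^2$, valid for all $b, \vbf \in \R^{pq}$, integrated against $\mubf$, gives
\begin{equation*}
\Dir(\mubf, \Em) \leqslant \iint_{\Omega \times D} \frac{1}{2}|\vbf|^2 \ddr \mubf.
\end{equation*}
For the reverse inequality, one approximates $\vbf \in L^2_{\mubf}$ by a sequence $(b_n)_{n \in \N}$ in $C(\Omega \times D, \R^{pq})$ in $L^2_{\mubf}$, so that
\begin{equation*}
\iint_{\Omega \times D} \left( b_n \cdot \vbf - \frac{1}{2}|b_n|^2 \right) \ddr \mubf \longrightarrow \iint_{\Omega \times D} \frac{1}{2}|\vbf|^2 \ddr \mubf,
\end{equation*}
and each term on the left is a legitimate competitor in the supremum defining $\Dir(\mubf, \Em) = \Dir(\mubf, \vbf \mubf)$.

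The only mildly delicate step is the density of $C(\Omega \times D, \R^{pq})$ in $L^2_{\mubf}$; this is standard since $\Omega \times D$ is a compact metric space and $\mubf$ is a finite Borel measure, so I would simply cite it rather than reprove it.
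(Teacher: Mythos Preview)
Your proof is correct and is precisely the standard Benamou--Brenier duality argument that the paper defers to by citation (it simply points to \cite[Proposition 5.18]{OTAM} rather than writing out the details). The steps---reducing to $a = -\tfrac{1}{2}|b|^2$, extracting the Cauchy--Schwarz bound by scaling $b$, Riesz representation in $L^2_{\mubf}$, and Young's inequality plus approximation for the energy identity---match that reference essentially verbatim.
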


\begin{proof}
There is nothing to add to the proof of this when $\Omega$ is $1$-dimensional, and such a proof can be found for instance in \cite[Proposition 5.18]{OTAM}. 
\end{proof}

\begin{defi}
\label{definition_dirichlet_mu}
Let $\mubf \in L^2(\Omega, \Prob(D))$. Its Dirichlet energy $\Dir(\mubf)$ is defined by 
\begin{equation*}
\Dir(\mubf) := \inf_{\Em} \left\{ \Dir(\mubf, \Em) \ : \ \Em \in \M(\Omega \times D, \R^{pq}) \text{ and } (\mubf, \Em) \text{ satisfies the continuity equation} \right\}.
\end{equation*}
\end{defi}

\noindent Let us underline that if there exists no $\Em \in \M(\Omega \times D, \R^{pq})$ such that $(\mubf, \Em)$ satisfies the continuity equation, then by convention $\Dir(\mubf) = + \infty$. To be sure that it is written somewhere, let us state the following proposition which identifies the Dirichlet energy if $\Omega$ is a segment of $\R$. It is a consequence of Theorem \ref{theorem_AGS_831} and of the above definitions (see \cite[Theorem 5.28]{OTAM}). 

\begin{prop}
\label{proposition_Benamou_Brenier_1D}
Assume that $I$ is a segment of $\R$ and let $\mubf \in L^2(I, \Prob(D))$. Then $\Dir(\mubf) < + \infty$ if and only if $\mubf$ is absolutely continuous, and in this case
\begin{equation*}
\Dir(\mubf) = \int_I \frac{1}{2} |\dot{\mubf}|^2(t) \ddr t.
\end{equation*}
\end{prop}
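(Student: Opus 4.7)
The plan is to observe that when $\Omega = I \subset \R$ is one-dimensional (so $p=1$), the generalized continuity equation $\nabla_\Omega \mubf + \nabla_D \cdot \Em = 0$ reduces precisely to the classical continuity equation $\partial_t \mubf + \nabla_D \cdot \Em = 0$ on $I \times D$, and the Dirichlet energy $\Dir(\mubf)$ reduces to the Benamou–Brenier energy. Then the statement follows by directly plugging in Theorem \ref{theorem_AGS_831} and Proposition \ref{proposition_v_density_E}. I would split the argument into the two implications.

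For the direction $\Dir(\mubf) < +\infty \Rightarrow \mubf$ absolutely continuous and $\int_I \tfrac{1}{2}|\dot\mubf|^2 \leq \Dir(\mubf)$: pick any admissible $\Em \in \M(I \times D, \R^q)$ with $\Dir(\mubf, \Em) < +\infty$. By Proposition \ref{proposition_v_density_E} we may write $\Em = \vbf \mubf$ for some Borel $\vbf : I \times D \to \R^q$ with $\iint \tfrac{1}{2}|\vbf|^2 \,\ddr \mubf = \Dir(\mubf, \Em)$. Since the paper's continuity equation tested against $\varphi \in C^1_c(\mathring{I} \times D)$ is precisely the 1D continuity equation (with no-flux on $\dr D$) in the sense of \cite[Chapter 8]{Ambrosio2008}, Theorem \ref{theorem_AGS_831} applies and yields absolute continuity of $\mubf$ together with the pointwise bound
\begin{equation*}
|\dot\mubf|(t) \leq \sqrt{\int_D |\vbf_t|^2 \, \ddr \mubf(t)} \quad \text{for a.e. } t \in I.
\end{equation*}
Squaring, integrating over $I$ and taking the infimum over admissible $\Em$ gives $\int_I \tfrac{1}{2}|\dot\mubf|^2 \leq \Dir(\mubf)$.

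For the reverse direction, assume $\mubf$ is absolutely continuous. Again by Theorem \ref{theorem_AGS_831}, there exists a measurable family $(\vbf_t)_{t \in I}$ with $\int_I \|\vbf_t\|_{L^2_{\mubf(t)}} \, \ddr t < +\infty$ solving the 1D continuity equation in the weak sense, and realizing the equality $|\dot\mubf|(t) = \sqrt{\int_D |\vbf_t|^2 \ddr\mubf(t)}$ for a.e.\ $t$. Setting $\Em := \vbf \mubf \in \M(I \times D, \R^q)$, the pair $(\mubf, \Em)$ satisfies the continuity equation in the sense of this paper (the weak formulations coincide), so by Definition \ref{definition_dirichlet_mu} and Proposition \ref{proposition_v_density_E},
\begin{equation*}
\Dir(\mubf) \leq \Dir(\mubf, \Em) = \iint_{I \times D} \tfrac{1}{2}|\vbf|^2 \, \ddr \mubf = \int_I \tfrac{1}{2} |\dot\mubf|^2(t) \, \ddr t.
\end{equation*}
Combining both inequalities gives $\Dir(\mubf) = \int_I \tfrac{1}{2}|\dot\mubf|^2$, and both quantities are finite iff $\mubf$ is absolutely continuous.

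The only genuinely delicate point is matching the two weak formulations of the continuity equation (the one from Definition \ref{definition_dirichlet_mu_E}, with test functions in $C^1_c(\mathring{I} \times D, \R)$, and the one used in Theorem \ref{theorem_AGS_831}); but both are the distributional identity $\partial_t \mubf + \nabla_D \cdot(\vbf \mubf) = 0$ on $\mathring{I} \times D$ with Neumann conditions on $\dr D$, which are equivalent by standard density of test functions. Everything else is a direct citation of results recalled earlier in the excerpt.
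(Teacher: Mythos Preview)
Your proposal is correct and matches the paper's approach exactly: the paper does not write out a proof but simply states that the result ``is a consequence of Theorem \ref{theorem_AGS_831} and of the above definitions (see \cite[Theorem 5.28]{OTAM}),'' and your argument is precisely the unpacking of that citation via the two inequalities coming from the existence of the optimal velocity field and the pointwise metric-derivative bound. There is nothing to add.
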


Now, let us show easy properties, which are straightforward adaptations on the ones for curves.

\begin{prop}
\label{proposition_existence_optimal_v}
If $\mubf \in L^2(\Omega, \Prob(D))$ is such that $\Dir(\mubf) < + \infty$, then there exists a unique $\Em \in \M(\Omega \times D, \R^{pq})$ such that $(\mubf, \Em)$ satisfies the continuity equation and $\Dir(\mubf) = \Dir(\mubf, \Em)$. 
\end{prop}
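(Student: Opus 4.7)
The plan is the direct method of calculus of variations combined with strict convexity, i.e. a straightforward transposition of the argument used for the one-dimensional Benamou--Brenier problem (cf.\ \cite[Proposition~5.18]{OTAM}).

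For existence, I pick a minimizing sequence $(\Em_n)_{n \in \N}$ with $\Dir(\mubf, \Em_n) \to \Dir(\mubf) < + \infty$. Proposition~\ref{proposition_v_density_E} ensures that $\Em_n = \vbf_n \mubf$ with $\iint_{\Omega \times D} |\vbf_n|^2 \ddr \mubf = 2 \Dir(\mubf, \Em_n)$, so Cauchy--Schwarz gives the total-variation bound
\[
|\Em_n|(\Omega \times D) \;=\; \iint_{\Omega \times D} |\vbf_n| \ddr \mubf \;\leqslant\; \sqrt{2\, \Dir(\mubf, \Em_n) \cdot \mubf(\Omega \times D)},
\]
which is uniformly bounded in $n$. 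Since $\Omega \times D$ is compact, Banach--Alaoglu on $\M(\Omega \times D, \R^{pq})$ (dual of $C(\Omega \times D, \R^{pq})$) yields a subsequence converging weakly to some $\Em \in \M(\Omega \times D, \R^{pq})$. The continuity equation is tested against functions $\varphi \in C^1_c(\Or \times D, \R^p)$, for which $\nabla_\Omega \cdot \varphi \in C(\Omega \times D)$ and $\nabla_D \varphi \in C(\Omega \times D, \R^{pq})$; as $\mubf$ itself is fixed, the only term depending on $n$ is $\iint \nabla_D \varphi \cdot \ddr \Em_n$, which passes to the weak limit. Finally, Definition~\ref{definition_dirichlet_mu_E} presents $\Em \mapsto \Dir(\mubf, \Em)$ (with $\mubf$ held fixed) as a supremum of functionals of the form $\Em \mapsto \iint a \ddr \mubf + \iint b \cdot \ddr \Em$ indexed by $(a,b) \in C(\Omega \times D, \mathcal{K})$, each continuous for weak convergence of $\Em$, hence it is weakly lower semi-continuous; so $\Dir(\mubf, \Em) \leqslant \liminf_n \Dir(\mubf, \Em_n) = \Dir(\mubf)$, and $\Em$ is a minimizer.

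For uniqueness, suppose $\Em_1$ and $\Em_2$ are both minimizers. By Proposition~\ref{proposition_v_density_E}, $\Em_i = \vbf_i \mubf$, and the convex combination $\tilde{\Em} := (\Em_1 + \Em_2)/2 = \tfrac{1}{2}(\vbf_1 + \vbf_2) \mubf$ still satisfies the (linear) continuity equation. Strict convexity of $v \mapsto |v|^2/2$ on $\R^{pq}$ then yields
\[
\Dir(\mubf, \tilde{\Em}) \;=\; \iint_{\Omega \times D} \tfrac{1}{2} \left| \tfrac{\vbf_1 + \vbf_2}{2} \right|^2 \ddr \mubf \;\leqslant\; \tfrac{1}{2}\bigl( \Dir(\mubf, \Em_1) + \Dir(\mubf, \Em_2) \bigr) \;=\; \Dir(\mubf),
\]
with strict inequality unless $\vbf_1 = \vbf_2$ holds $\mubf$-a.e., which in turn forces $\Em_1 = \Em_2$ as vector measures. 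I do not expect any serious obstacle: the proof is essentially the classical Benamou--Brenier argument; the only delicate point to verify is that Definition~\ref{definition_dirichlet_mu_E} really yields weak lower semi-continuity of $\Dir(\mubf, \cdot)$ on $\M(\Omega \times D, \R^{pq})$, which is clear once one observes that the test pair $(a,b)$ produces a functional linear and weakly continuous in $\Em$ (at fixed $\mubf$), so $\Dir(\mubf, \cdot)$ is a supremum of weakly continuous affine maps.
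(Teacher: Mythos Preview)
Your argument is correct: the compactness/lower-semicontinuity route for existence and the strict convexity argument for uniqueness both go through as written, and the point you flag about weak lower semicontinuity of $\Dir(\mubf,\cdot)$ is indeed settled by Definition~\ref{definition_dirichlet_mu_E}.

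The paper, however, takes a shorter and more structural route. It observes that once $\Dir(\mubf)<+\infty$, Proposition~\ref{proposition_v_density_E} lets one work entirely in the Hilbert space $L^2_{\mubf}(\Omega\times D,\R^{pq})$: the set of velocity fields $\vbf$ with $(\mubf,\vbf\mubf)$ satisfying the continuity equation is a nonempty closed affine subspace, and minimizing $\Dir(\mubf,\vbf\mubf)=\tfrac12\|\vbf\|^2_{L^2_{\mubf}}$ over it is precisely the orthogonal projection of $0$ onto this subspace. Existence and uniqueness are then immediate from Hilbert space geometry, with no need to extract weak limits in $\M(\Omega\times D,\R^{pq})$ or invoke lower semicontinuity. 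What this buys the paper, beyond brevity, is that the same projection picture simultaneously delivers the characterization of the tangent velocity field (Proposition~\ref{proposition_characterization_v_tangent}): the minimizer is orthogonal to the direction of the affine set, hence lies in the $L^2_{\mubf}$-closure of $\{\nabla_D\psi : \psi\in C^1(\Omega\times D,\R^p)\}$. Your approach would require a separate argument for that.
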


\begin{defi}
If $\mubf \in L^2(\Omega, \Prob(D))$ and if $\Em = \vbf \mubf$ is such that $(\mubf, \Em)$ satisfies the continuity equation and $\Dir(\mubf) = \Dir(\mubf, \Em) < + \infty$, then $\Em$ and $\vbf$ are said \emph{tangent} to $\mubf$. 
\end{defi}

\noindent The terminology \emph{tangent} comes from \cite{Ambrosio2008}. As in the case of absolutely continuous curves, there is a characterization of the tangent velocity field $\vbf$ which looks like the one of Theorem \ref{theorem_AGS_831}. 

\begin{prop}
\label{proposition_characterization_v_tangent}
Let $\mubf \in L^2(\Omega, \Prob(D))$ such that $\Dir(\mubf)< + \infty$ and $\vbf \in L^2_{\mubf}(\Omega \times D, \R^{pq})$ such that $(\mubf, \vbf \mubf)$ satisfies the continuity equation. Then $\vbf$ is tangent to $\mubf$ if and only if there exists a sequence $(\psi_n)_{n \in \N}$ in $C^1(\Omega \times D, \R^p)$ such that $(\nabla_D \psi_n)_{n \in \N}$ converges to $\vbf$ in $L^2_{\mubf}(\Omega \times D, \R^{pq})$.
\end{prop}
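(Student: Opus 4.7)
The plan is to recast the statement as an orthogonal projection in the Hilbert space $\mathcal{H} := L^2_\mubf(\Omega \times D, \R^{pq})$. Let $\mathcal{V}$ denote the closure in $\mathcal{H}$ of the set $\{\nabla_D \psi : \psi \in C^1(\Omega \times D, \R^p)\}$, and let $\mathcal{C} \subset \mathcal{H}$ be the affine subset of those $\vbf$ for which $(\mubf, \vbf\mubf)$ satisfies the continuity equation; the hypothesis $\Dir(\mubf) < +\infty$ together with Proposition \ref{proposition_v_density_E} ensures $\mathcal{C}$ is non-empty. Combining Propositions \ref{proposition_v_density_E} and \ref{proposition_existence_optimal_v}, the tangent velocity field is the unique $\vbf^\ast \in \mathcal{C}$ of minimal $\mathcal{H}$-norm, i.e.\ the orthogonal projection of $0 \in \mathcal{H}$ onto $\mathcal{C}$. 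The proposition thus reduces to identifying the direction of the affine subspace $\mathcal{C}$ as being exactly $\mathcal{V}^\perp$: once this is known, the projection of $0$ onto $\vbf_0 + \mathcal{V}^\perp$ lies in $(\mathcal{V}^\perp)^\perp = \mathcal{V}$ (using that $\mathcal{V}$ is closed), while any $\vbf \in \mathcal{C} \cap \mathcal{V}$ differs from $\vbf^\ast$ by an element of $\mathcal{V}^\perp \cap \mathcal{V} = \{0\}$, hence equals $\vbf^\ast$.

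It therefore remains to prove $\mathcal{C} - \mathcal{C} = \mathcal{V}^\perp$. The inclusion $\supset$ is immediate: if $\wbf \in \mathcal{V}^\perp$ and $\vbf \in \mathcal{C}$, then for every $\varphi \in C^1_c(\Or \times D, \R^p) \subset C^1(\Omega \times D, \R^p)$ one has $\iint \nabla_D \varphi \cdot \wbf \, \ddr \mubf = 0$, and adding this to the continuity equation for $\vbf$ yields $\vbf + \wbf \in \mathcal{C}$. For the inclusion $\subset$, I would take $\vbf_1, \vbf_2 \in \mathcal{C}$ and any $\psi \in C^1(\Omega \times D, \R^p)$, then pick a sequence of cutoffs $\chi_n \in C^1_c(\Or)$ with $0 \leqslant \chi_n \leqslant 1$ and $\chi_n \to 1$ pointwise on $\Or$. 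Each $\chi_n \psi$ is admissible as a test function in $C^1_c(\Or \times D, \R^p)$; applying the continuity equation to both $\vbf_1$ and $\vbf_2$ and subtracting, and noting that $\nabla_D(\chi_n \psi) = \chi_n(\xi) \nabla_D \psi$, I obtain
\begin{equation*}
\iint_{\Omega \times D} \chi_n(\xi) \, \nabla_D \psi(\xi,x) \cdot (\vbf_1 - \vbf_2)(\xi, x) \, \mubf(\ddr \xi, \ddr x) = 0.
\end{equation*}
Since the first marginal of $\mubf$ is $\Leb_\Omega$ and $\dr \Omega$ has Lebesgue measure zero (because it is Lipschitz), the integrand converges $\mubf$-almost everywhere to $\nabla_D \psi \cdot (\vbf_1 - \vbf_2)$, and as $\nabla_D \psi$ is bounded on the compact set $\Omega \times D$ it is dominated by $\|\nabla_D \psi\|_\infty \, |\vbf_1 - \vbf_2| \in L^1_\mubf$; dominated convergence then gives $\vbf_1 - \vbf_2 \perp \nabla_D \psi$ in $\mathcal{H}$, and hence $\vbf_1 - \vbf_2 \in \mathcal{V}^\perp$.

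The main delicacy I expect is precisely this cutoff step, which bridges the gap between the test fields used to define the continuity equation (compactly supported in $\Or$) and the wider class of gradients of $C^1$ functions on all of $\Omega \times D$ appearing in the statement. The rest is purely linear-algebraic manipulation in a Hilbert space and should not present any serious difficulty.
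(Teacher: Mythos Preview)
Your proposal is correct and follows essentially the same route as the paper: both recast the problem as an orthogonal projection in $L^2_\mubf(\Omega\times D,\R^{pq})$, identify the direction of the affine constraint set with the orthogonal complement of the gradients, and use a cutoff argument to pass from test functions in $C^1_c(\Or\times D,\R^p)$ to $C^1(\Omega\times D,\R^p)$. The paper compresses the cutoff step into the single sentence ``an easy argument involving cutoff functions shows that this closure is the same,'' whereas you spell it out carefully via dominated convergence; otherwise the arguments match.
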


\begin{proof}[Proof of Proposition \ref{proposition_existence_optimal_v} and Proposition \ref{proposition_characterization_v_tangent}]
In the Hilbert space $L^2_{\mubf}(\Omega \times D, \R^{pq})$ the set $X$ of $\vbf$ such that $(\mubf, \vbf \mubf)$ satisfies the continuity equation is clearly an affine set, and it is not empty as $\Dir(\mubf) < + \infty$. Denoting by $Y = \{ \nabla \psi \ : \ \psi \in C^1(\Or \times D, \R^p) \}$, it is clear that $X$ is parallel to $Y^\perp$.

Thanks to Proposition \ref{proposition_v_density_E}, the problem of calculus of variations in Definition \ref{definition_dirichlet_mu} corresponds to finding the orthogonal projection of the vector $0 \in L^2_{\mubf}(\Omega \times D, \R^{pq})$ on the set of $X$, i.e. Proposition \ref{proposition_existence_optimal_v} is proved. 

It is well known that the projection $\vbf$ is characterized by the fact that $\vbf$ is orthogonal to any vector in the linear space parallel to $X$. In other words, $\vbf$ is characterized (beside the fact that it satisfies the continuity equation) by $\vbf \in X^\perp = (Y^\perp)^\perp$. The latter is nothing else than the closure in $L^2_{\mubf}(\Omega \times D, \R^{pq})$ of $Y$. An easy argument involving cutoff functions shows that this closure is the same as the closure of the set of $\nabla_D \psi$ for $\psi \in C^1(\Omega \times D, \R^p)$, hence Proposition \ref{proposition_characterization_v_tangent} is proved.    
\end{proof}

As an immediate corollary, Proposition \ref{proposition_characterization_v_tangent} implies a localization property: the tangent velocity field $\vbf$, depends only locally on the values of $\mubf$. In the next proposition, $ \mubf |_{\tilde{\Omega}}$ and $\vbf |_{\tilde{\Omega}}$ will denote the restrictions of $\mubf$ and $\vbf$ to a subset $\tilde{\Omega}$ of $\Omega$.  

\begin{crl}
\label{corollary_localization}
Let $\mubf \in L^2(\Omega, \Prob(D))$ such that $\Dir(\mubf) < + \infty$ and let $\vbf \in L^2_{\mubf}(\Omega \times D, \R^{pq})$ be tangent to $\mubf$. Then, if $\tilde{\Omega}$ is any subdomain compactly supported in $\Or$, $\vbf |_{\tilde{\Omega}}$ is tangent to $\mubf |_{\tilde{\Omega}}$.   
\end{crl}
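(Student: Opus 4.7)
The plan is to verify the characterization of Proposition \ref{proposition_characterization_v_tangent} on the smaller domain $\tilde{\Omega}$ and conclude immediately. First I would check that the pair $(\mubf|_{\tilde{\Omega}}, \vbf|_{\tilde{\Omega}} \mubf|_{\tilde{\Omega}})$ satisfies the continuity equation on $\tilde{\Omega}$. This is where the assumption that $\tilde{\Omega}$ is compactly contained in $\Or$ is used: any test function $\varphi \in C^1_c(\mathring{\tilde{\Omega}} \times D, \R^p)$ can be extended by zero to an element of $C^1_c(\Or \times D, \R^p)$, which is then an admissible test function for the continuity equation satisfied by $(\mubf, \vbf \mubf)$ on $\Omega$. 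Since the extension vanishes outside $\tilde{\Omega}$, the integrals over $\Omega \times D$ coincide with those over $\tilde{\Omega} \times D$, so the continuity equation on $\Omega$ directly yields the one on $\tilde{\Omega}$.

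Second, I would produce an approximating sequence for $\vbf|_{\tilde{\Omega}}$ by simple restriction. Applying Proposition \ref{proposition_characterization_v_tangent} to $(\mubf, \vbf)$ on $\Omega$, we obtain a sequence $(\psi_n)_{n \in \N}$ in $C^1(\Omega \times D, \R^p)$ with $\nabla_D \psi_n \to \vbf$ in $L^2_{\mubf}(\Omega \times D, \R^{pq})$. The restrictions $\psi_n|_{\tilde{\Omega}}$ belong to $C^1(\tilde{\Omega} \times D, \R^p)$, and since
\begin{equation*}
\iint_{\tilde{\Omega} \times D} |\nabla_D \psi_n - \vbf|^2 \, \ddr \mubf \leqslant \iint_{\Omega \times D} |\nabla_D \psi_n - \vbf|^2 \, \ddr \mubf \underset{n \to \infty}{\longrightarrow} 0,
\end{equation*}
we get $\nabla_D(\psi_n|_{\tilde{\Omega}}) \to \vbf|_{\tilde{\Omega}}$ in $L^2_{\mubf|_{\tilde{\Omega}}}(\tilde{\Omega} \times D, \R^{pq})$. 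Applying now the reverse implication of Proposition \ref{proposition_characterization_v_tangent} on the domain $\tilde{\Omega}$ (whose Lebesgue measure need not be normalized to $1$ for the statement to hold, as the Benamou--Brenier construction and the orthogonal projection argument work verbatim on any compact subdomain) concludes the proof.

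There is no genuine obstacle here; the argument is a direct corollary of the characterization of tangent velocity fields as $L^2_\mubf$-limits of gradients of $C^1$ functions. The only subtlety worth highlighting in the write-up is the extension-by-zero step for the continuity equation, which is precisely why the hypothesis of compact inclusion $\tilde{\Omega} \Subset \Or$ cannot be dropped: if $\tilde{\Omega}$ touched $\dr \Omega$, boundary contributions could arise and the restricted pair would not in general satisfy the continuity equation with no-flux conditions.
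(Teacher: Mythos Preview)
Your proof is correct and is precisely the argument the paper has in mind: the corollary is stated as an immediate consequence of Proposition \ref{proposition_characterization_v_tangent}, and you have spelled out exactly how that implication works (restriction of the approximating sequence and extension by zero of test functions). Nothing more is needed.
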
 

Still building from Proposition \ref{proposition_characterization_v_tangent}, we can build some sort of dual representation for the Dirichlet energy. Namely, we can say that 
\begin{equation}
\label{equation_dirichlet_energy_dual}
\Dir(\mubf) = \sup_\varphi \left\{ - \iint_{\Omega \times D} \left( \nabla_\Omega \cdot \varphi + \frac{1}{2} |\nabla_D \varphi|^2 \right) \ddr \mubf \ : \ \varphi \in C^1_c(\Or \times D, \R^p) \right\}.
\end{equation}
Indeed, if $\vbf$ is the tangent velocity field to $\mubf$, given the continuity equation and elementary algebra, 
\begin{align*}
- \iint_{\Omega \times D} \left( \nabla_\Omega \cdot \varphi + \frac{1}{2} |\nabla_D \varphi|^2 \right) \ddr \mubf & =  \iint_{\Omega \times D} \left( \nabla_D \varphi \cdot \vbf  - \frac{1}{2} |\nabla_D \varphi|^2 \right) \ddr \mubf \\
& = \Dir(\mubf) - \frac{1}{2} \iint_{\Omega \times D} |\nabla_D \varphi - \vbf|^2 \ddr \mubf.
\end{align*}
Hence the l.h.s. is always smaller than $\Dir(\mubf)$, and we can make the discrepancy arbitrary small thanks to Proposition \ref{proposition_characterization_v_tangent}.

\begin{prop}
\label{proposition_lsc_convex_Dir}
The mapping $\Dir : L^2(\Omega, \Prob(D)) \to \R \cup \{ + \infty \}$ is l.s.c. w.r.t. weak convergence. Moreover it is convex: for any $\mubf$ and $\nubf$ in $L^2(\Omega, \Prob(D))$ and any $t \in [0,1]$, 
\begin{equation*}
\Dir((1-t) \mubf + t \nubf) \leqslant (1-t) \Dir(\mubf) + t \Dir(\nubf).
\end{equation*}
\end{prop}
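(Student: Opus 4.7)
My plan is to derive both statements from the dual representation \eqref{equation_dirichlet_energy_dual} of the Dirichlet energy, with convexity admitting also an alternative direct argument via the primal formulation in Definition \ref{definition_dirichlet_mu_E}.

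For lower semi-continuity, I would fix $\varphi \in C^1_c(\Or \times D, \R^p)$ and note that, since $\varphi$ is compactly supported in $\Or$ (but not necessarily in $D$) and $D$ is compact, the function $\nabla_\Omega \cdot \varphi + \tfrac12 |\nabla_D \varphi|^2$ belongs to $C(\Omega \times D)$ (it is bounded and continuous on $\Omega \times D$). Therefore the map
\begin{equation*}
\mubf \longmapsto - \iint_{\Omega \times D} \left( \nabla_\Omega \cdot \varphi + \frac{1}{2} |\nabla_D \varphi|^2 \right) \ddr \mubf
\end{equation*}
is continuous for the weak topology on $L^2(\Omega,\Prob(D))$, which by definition is the weak topology inherited from $\Prob(\Omega \times D)$. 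Since by \eqref{equation_dirichlet_energy_dual} the Dirichlet energy is the pointwise supremum of this family of weakly continuous (indeed, affine) functionals indexed by $\varphi \in C^1_c(\Or \times D, \R^p)$, it is weakly lower semi-continuous as a supremum of continuous functions.

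For convexity, the cleanest route is to exploit the primal definition. Take $\mubf,\nubf \in L^2(\Omega,\Prob(D))$ and $t \in [0,1]$; the inequality is trivial unless both have finite Dirichlet energy, so assume $\Dir(\mubf),\Dir(\nubf)<+\infty$. By Proposition \ref{proposition_existence_optimal_v}, there exist tangent momenta $\Em_{\mubf}, \Em_{\nubf} \in \M(\Omega \times D,\R^{pq})$ with $\Dir(\mubf,\Em_{\mubf})=\Dir(\mubf)$ and analogously for $\nubf$. The continuity equation being linear in the pair $(\mubf,\Em)$, the pair $((1-t)\mubf + t\nubf, (1-t)\Em_{\mubf} + t\Em_{\nubf})$ still satisfies the continuity equation. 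Moreover, Definition \ref{definition_dirichlet_mu_E} writes $\Dir(\mubf,\Em)$ as a supremum of linear functionals of $(\mubf,\Em)$, hence $\Dir(\cdot,\cdot)$ is convex on its domain. Combining these observations,
\begin{equation*}
\Dir((1-t)\mubf + t\nubf) \leq \Dir((1-t)\mubf + t\nubf,(1-t)\Em_{\mubf} + t\Em_{\nubf}) \leq (1-t)\Dir(\mubf) + t\Dir(\nubf),
\end{equation*}
which is the desired inequality.

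No step looks like a serious obstacle: the only point deserving a brief check is that the integrand $\nabla_\Omega \cdot \varphi + \tfrac12 |\nabla_D \varphi|^2$ is genuinely in $C(\Omega \times D)$ for $\varphi \in C^1_c(\Or \times D,\R^p)$, which is immediate from the compactness of the support in $\Or$ and the compactness of $D$. Alternatively, one could bypass the primal argument and deduce convexity directly from \eqref{equation_dirichlet_energy_dual}, since a supremum of affine (in $\mubf$) functionals is automatically convex; I would mention this as a remark, but I prefer the primal argument because it also produces an explicit competitor $(1-t)\Em_{\mubf} + t\Em_{\nubf}$ which may be useful later.
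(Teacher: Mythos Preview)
Your proposal is correct and follows essentially the same approach as the paper: the paper's proof is a single sentence invoking \eqref{equation_dirichlet_energy_dual} to write $\Dir$ as a supremum of weakly continuous affine functionals, from which both lower semi-continuity and convexity follow at once. Your lower semi-continuity argument is exactly this, and while you choose the primal route for convexity (linearity of the continuity equation plus joint convexity of $\Dir(\cdot,\cdot)$), you already note the dual alternative, which is precisely what the paper uses.
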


\begin{proof}
From \eqref{equation_dirichlet_energy_dual}, we see that $\Dir$ is the supremum of linear and continuous (w.r.t. weak convergence) functionals on $L^2(\Omega, \Prob(D))$. Hence it is convex and continuous.
\end{proof}

We will conclude this subsection by showing the following approximation result, which will be useful to prove the equivalences with the metric definitions. We will not be able to regularize up to the boundary of $\Omega$, though it will be sufficient for our purpose. 

\begin{theo}
\label{theorem_approximation}
Fix $\tilde{\Omega} \subset \Or$ compactly embedded in $\Or$. Let $\mubf \in L^2(\Omega, \Prob(D))$ with $\Dir(\mubf) < + \infty$. Then there exists a sequence $\mubf_n \in L^2(\tilde{\Omega}, \Prob(D))$ with the following properties: 
\begin{itemize}
\item[(i)] For any $n \in \N$, $\mubf_n (\ddr \xi, \ddr x) = \rho_n(\xi, x) \ddr \xi \ddr x$, where the density $\rho_n$ of $\mubf_n$ w.r.t. to $\Leb_{\tilde{\Omega}} \otimes \Leb_D$ satisfies $\rho_n \in C^\infty(\tilde{\Omega}, L^\infty(D))$ and $\essinf_{\tilde{\Omega} \times D} \rho_n > 0$.
\item[(ii)] The sequence $(\mubf_n)_{n \in \N}$ converges weakly to $\mubf$ in $L^2(\tilde{\Omega}, \Prob(D))$.
\item[(iii)] There holds
\begin{equation*}
\lim_{n \to + \infty} \Dir( \mubf_n ) = \Dir(  \mubf |_{\tilde{\Omega}} ).
\end{equation*}
\end{itemize}
\end{theo}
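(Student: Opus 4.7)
I propose a double regularization: a heat flow on the $D$-variable followed by a convolution on the $\xi$-variable, with both parameters tending to zero.

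Let $(\chi_n)_{n \in \N}$ be standard non-negative smooth mollifiers on $\R^p$ with $\int \chi_n = 1$ and $\supp \chi_n \subset B(0, \varepsilon_n)$, and let $t_n \to 0^+$. Taking $n$ large enough so that $\varepsilon_n < \mathrm{dist}(\tilde{\Omega}, \dr \Omega)$, I define
\[
\rho_n(\xi, x) := \int_{\Omega} \chi_n(\xi - \eta) \, [\Phi^D_{t_n} \mubf(\eta)](x) \, \ddr \eta, \qquad \mubf_n(\xi, \ddr x) := \rho_n(\xi, x) \, \ddr x,
\]
for $\xi \in \tilde{\Omega}$, where $[\Phi^D_{t_n} \mubf(\eta)]$ is the density of the heat-flowed measure. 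Point (i) then follows directly from Proposition \ref{proposition_heat_flow_properties}: the uniform $L^\infty$-bound from part (ii), strict positivity from the (uniform in $\eta$) lower bound on the Neumann heat kernel $p_{t_n}$ for fixed $t_n > 0$, and $C^\infty$ dependence on $\xi$ from the convolution.

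For point (ii), testing against an arbitrary $a \in C(\tilde{\Omega} \times D)$ and applying Fubini together with the self-adjointness of the heat flow (Proposition \ref{proposition_heat_flow_properties} (iii)) reduces weak convergence to showing that $\chi_n *_\xi [\Phi^D_{t_n} a]$ converges uniformly to $a$ on $\tilde{\Omega} \times D$, which in turn follows from Proposition \ref{proposition_heat_fow_uniform_0} and standard mollification estimates.

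The core of the proof is point (iii). Let $\vbf$ be the tangent velocity field to $\mubf$ and set $\Em := \vbf \mubf$; define $\Em_n$ analogously by applying heat flow and mollification to $\Em$. Since both operations are linear and commute with $\nabla_\Omega$ and $\nabla_D \cdot$, the pair $(\mubf_n, \Em_n)$ still satisfies the continuity equation on $\tilde{\Omega}$. Two successive applications of Jensen's inequality for the jointly convex and positively $1$-homogeneous functional $(\mu, E) \mapsto |E|^2/(2\mu)$ — first against the probability kernel $p_{t_n}(x, \cdot) \ddr x$, then against the probability density $\chi_n(\xi - \cdot)$ — yield
\[
\Dir(\mubf_n) \leq \Dir(\mubf_n, \Em_n) \leq \int_{\Omega_n} \int_D \tfrac{1}{2} |\vbf(\eta, x)|^2 \, \mubf(\eta, \ddr x) \, \ddr \eta,
\]
where $\Omega_n := (\tilde{\Omega} + B(0, \varepsilon_n)) \cap \Omega$ decreases to $\tilde{\Omega}$ as $n \to \infty$. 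Monotone convergence together with Corollary \ref{corollary_localization}, which identifies $\vbf|_{\tilde{\Omega}}$ as the tangent field of $\mubf|_{\tilde{\Omega}}$, gives $\limsup_n \Dir(\mubf_n) \leq \Dir(\mubf|_{\tilde{\Omega}})$. The converse $\liminf_n \Dir(\mubf_n) \geq \Dir(\mubf|_{\tilde{\Omega}})$ follows from the already-proved weak convergence and the lower semi-continuity in Proposition \ref{proposition_lsc_convex_Dir}.

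The main technical point to watch is the two-step Jensen estimate: one needs to check that after heat flow the new momentum remains absolutely continuous with respect to the new measure, with a density $\tilde{\vbf}$ satisfying $|\tilde{\vbf}|^2 \Phi^D_{t_n} \mubf \leq \Phi^D_{t_n}(|\vbf|^2 \mubf)$ pointwise in $x$. This is exactly the conditional Cauchy--Schwarz inequality against the probability kernel $p_{t_n}(x, \cdot) \ddr x$, and the same estimate with $\chi_n$ in place of $p_{t_n}$ handles the mollification step.
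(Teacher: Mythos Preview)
Your construction is identical to the paper's, and your arguments for (i) and (ii) are correct. For (iii), the mollification step is also fine: Jensen's inequality for the jointly convex functional $(\mu,E)\mapsto |E|^2/(2\mu)$ against the convolution kernel is exactly what the paper uses there.

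The gap is in the heat-flow step. You assert that applying $\Phi^D_{t_n}$ componentwise to $\Em$ yields a pair $(\Phi^D_{t_n}\mubf,\Phi^D_{t_n}\Em)$ that still satisfies the continuity equation, because ``both operations \ldots commute with $\nabla_D\cdot$''. On a bounded domain with Neumann boundary conditions this commutation fails: the Neumann heat semigroup does not commute with partial derivatives. A one-line check on $D=[0,1]$ with $u(x)=x$ already shows $\Phi^D_t(\partial_x u)=\Phi^D_t(1)\equiv 1$ whereas $\partial_x(\Phi^D_t u)\to 0$ as $t\to\infty$. Hence your regularized momentum is not admissible in the definition of $\Dir(\Phi^D_{t_n}\mubf)$, and the Jensen bound you derive for $\Dir(\mubf_n,\Em_n)$ does not control $\Dir(\mubf_n)$.

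The paper handles this by switching to the dual representation \eqref{equation_dirichlet_energy_dual} and invoking the Bakry--\'Emery estimate $\tfrac12|\nabla_D(\Phi^D_t\varphi)|^2\le \Phi^D_t\bigl(\tfrac12|\nabla_D\varphi|^2\bigr)$, which holds precisely because $D$ is convex. This inequality is the correct replacement for the commutation you assumed: plugged into \eqref{equation_dirichlet_energy_dual} it gives $\Dir(\Phi^D_t\mubf)\le\Dir(\mubf)$ directly, without ever needing an explicit admissible momentum for the heat-flowed measure. Your ``conditional Cauchy--Schwarz'' intuition is the primal shadow of Bakry--\'Emery, but to turn it into a proof you must first produce an admissible momentum for $\Phi^D_t\mubf$ --- and $\Phi^D_t\Em$ is not one.
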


\noindent Notice that $\mubf_n$ is defined only on $\tilde{\Omega}$, i.e. not on the full domain $\Omega$. 

\begin{proof}
On $\Omega$, we will regularize with a convolution kernel $\chi$. Specifically, we fix $\chi : \R^p \to [0,1]$ a smooth function, radial, compactly supported in $B(0,1)$ and of total integral $1$, and we set $\chi_n(\xi) = n^p \chi(n \xi )$. On the other hand, on $D$ we will regularize with the heat flow that we denote by $\Phi^D$. We set $\tilde{\mubf}_n(\xi) := [\Phi^D_{1/n}] [\mubf(\xi)]$ for any $\xi \in \Omega$. Hence $\tilde{\mubf}_n \in L^2(\Omega, \Prob(D))$ is defined on the whole $\Omega$. For $n$ large enough and $\xi \in \tilde{\Omega}$ we define 
\begin{equation*}
\mubf_n (\xi) := \int_{\Omega} \chi_n(\xi - \eta) \tilde{\mubf}_n(\eta) \ddr \eta,
\end{equation*}   
where here we do the usual (Euclidean) mean of probability measures. In short, $\mubf_n = \chi_n \star_\Omega\tilde{\mubf}_n$. We need $n$ such that the support of $\chi_n$ is small compared to the distance between $\tilde{\Omega}$ and $\dr \Omega$. 

Assertion (i) holds because of the regularization properties of the convolution and the lower bound on the solution of the heat flow.  

Assertion (ii) is standard: if we fix $a \in C(\tilde{\Omega} \times D)$, given the self-adjacency of the heat flow and the symmetry of the heat kernel, 
\begin{equation*}
\iint_{\tilde{\Omega} \times D} a \ddr \mubf_n = \iint_{\tilde{\Omega} \times D} \Phi^D_{1/n} [ \chi_n \star_\Omega a ] ~ \ddr \mubf
\end{equation*} 
and the r.h.s. converges strongly to the integral of $a$ against $\mubf$ because of standard functional analysis. 

Assertion (iii) is slightly trickier. As we have already seen in Proposition \ref{proposition_heat_flow_properties}, applying the heat flow decreases the Wasserstein distance. Provided we admit the representation given below by Theorem \ref{theorem_equivalence_KS} and the contraction property of the heat flow, it is straightforward that we should have $\Dir(\tilde{\mubf}_n) \leqslant \Dir(\mubf)$. But the current theorem will be used to prove Theorem \ref{theorem_equivalence_KS}, hence we cannot invoke it. We adopt a different strategy: we start with the ``dual'' representation for the Dirichlet energy given by \eqref{equation_dirichlet_energy_dual}. We want to show that $\Dir(\tilde{\mubf}_n) \leqslant \Dir(\mubf)$. For any fixed $\varphi \in C^1_c(\Or \times D, \R^p)$, and given that the heat flow is self-adjoint, 
\begin{equation*}
\iint_{\Omega \times D} \left( \nabla_\Omega \cdot \varphi + \frac{1}{2} |\nabla_D \varphi|^2 \right) \ddr \tilde{\mubf}_n = \iint_{\Omega \times D} \left( \nabla_\Omega \cdot (\Phi^D_{1/n}\varphi) + \Phi^D_{1/n} \left( \frac{1}{2} |\nabla_D \varphi|^2 \right) \right) \ddr \mubf.
\end{equation*}
Notice that we used the property that the heat flow acting on $D$ commutes with $\nabla_\Omega \cdot$. Now, the key point is the so-called Bakery-\'Emery estimate 
\begin{equation*}
\frac{1}{2} \left| \nabla \left( \Phi^D_{1/n} \varphi \right) \right|^2 \leqslant \Phi^D_{1/n} \left( \frac{1}{2} |\nabla_D \varphi|^2 \right)
\end{equation*}
which is valid because $D$ is a convex domain \cite[Equation (2.4)]{Gladbach2018}. Hence
\begin{equation*}
\iint_{\Omega \times D} \left( \nabla_\Omega \cdot \varphi + \frac{1}{2} |\nabla_D \varphi|^2 \right) \ddr \tilde{\mubf}_n \geqslant \iint_{\Omega \times D} \left( \nabla_\Omega \cdot (\Phi^D_{1/n}\varphi) + \frac{1}{2} \left| \nabla \left( \Phi^D_{1/n} \varphi \right) \right|^2 \right) \ddr \mubf \geqslant - \Dir(\mubf),
\end{equation*} 
where the last inequality comes from \eqref{equation_dirichlet_energy_dual}. Taking the supremum in $\varphi$ and using the representation formula \eqref{equation_dirichlet_energy_dual} we conclude that $\Dir(\tilde{\mubf}_n) \leqslant \Dir(\mubf)$. Now we want to control the Dirichlet energy of $\mubf_n$ with the one of $\tilde{\mubf}_n$. Recall that $\Dir$ is a convex function. But $\mubf_n$ is the average, w.r.t. to the weights $\chi_n(\eta)$, of the mappings $\xi \mapsto  \tilde{\mubf}_n(\xi - \eta)$. Hence, by Jensen's inequality, 
\begin{equation*}
\Dir( \mubf_n ) \leqslant \int_{B(0,1/n)} \chi_n(\eta) \Dir\left( \left. \tilde{\mubf}_n \right|_{\tilde{\Omega}} (\cdot - \eta) \right). 
\end{equation*}
Hence, calling $\Omega_n$ the set of points which are distant at most $1/n$ from $\tilde{\Omega}$, one has $\Dir(\mubf_n) \leqslant \Dir( \tilde{\mubf}_n |_{\Omega_n} )$. Sending $n$ to $+ \infty$ and using the lower semi-continuity of $\Dir$ and assertion (ii) to get the reverse inequality, we get (iii).
\end{proof}

\subsection{The smooth case}
\label{subsection_smooth_case}

In this subsection, we will briefly study the smooth case, i.e. the one where $\mubf$ has a smooth and strictly positive density w.r.t. $\Leb_\Omega \otimes \Leb_D$. It will help us to understand the meaning of the continuity equation and we will use it in the sequel when reasoning by approximation. 

\begin{defi}
\label{definition_smooth}
A mapping $\mubf \in L^2(\Omega, \Prob(D))$ with $\Dir(\mubf)< + \infty$ is said smooth if it admits a density $\rho$ w.r.t. $\Leb_\Omega \otimes \Leb_D$ satisfying $\rho \in C^\infty(\Omega, L^\infty(D))$ and uniformly bounded from below.
\end{defi}

\noindent In particular, it implies $\rho$ is uniformly bounded (from above) on the closed set $\Omega$. Notice that Theorem \ref{theorem_approximation} says that any $\mubf \in L^2(\Omega, \Prob(D))$ with finite Dirichlet energy can be approximated by a sequence of smooth functions (only in the interior of $\Omega$) according to Definition \ref{definition_smooth}. Let us start by explaining how, in the smooth case, one can compute the tangent velocity field. 

\begin{prop}
\label{proposition_smooth_velocity_def}
Let $\mubf \in L^2(\Omega, \Prob(D))$ be smooth. Then, for every $\xi \in \Or$, there exists a unique $\varphi(\xi, \cdot) \in H^1(D,\R^p)$ with $0$-mean solution to the elliptic equation 
\begin{equation}
\label{equation_elliptic}
\begin{cases}
\nabla_D \cdot (  \rho(\xi,\cdot) \nabla_D \varphi(\xi,\cdot)  ) = - \nabla_\Omega \rho(\xi,\cdot) & \text{in } \Dr \\
\nabla_D \varphi(\xi, \cdot ) \cdot \nD = 0 & \text{on } \dr D.
\end{cases}
\end{equation}
Moreover $\nabla_D \varphi \in L^2_{\mubf}(\Omega \times D, \R^{pq})$ is the tangent velocity field to $\mubf$ and it is continuous as a mapping from $\Omega$ to $L^2(D, \R^{pq})$.
\end{prop}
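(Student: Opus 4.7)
My plan is to handle each scalar component $\varphi^\alpha$ ($\alpha = 1, \dots, p$) of $\varphi$ separately by Lax--Milgram in the zero-mean subspace of $H^1(D)$, then derive strong $H^1$-continuity in $\xi$ from a standard energy estimate, and finally use this continuity to verify both the continuity equation and the density property needed to invoke Proposition \ref{proposition_characterization_v_tangent}.

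\textbf{Existence and continuity in $\xi$.} Fix $\alpha \in \{1, \dots, p\}$ and work in $H := \{u \in H^1(D) : \int_D u\, \ddr x = 0\}$ endowed with the norm $\|\nabla u\|_{L^2(D)}$, equivalent to $\|u\|_{H^1(D)}$ by Poincar\'e--Wirtinger on the connected Lipschitz domain $D$. For $\xi \in \Or$ the bilinear form $B_\xi(u, v) := \int_D \rho(\xi, x) \nabla u \cdot \nabla v\, \ddr x$ is continuous and coercive on $H$ thanks to the uniform upper and lower bounds on $\rho$, and the compatibility condition $\int_D \dr_\alpha \rho(\xi, \cdot)\, \ddr x = 0$ follows by differentiating $\int_D \rho(\xi, \cdot)\, \ddr x = 1$ in $\xi^\alpha$ (legal since $\rho \in C^\infty(\Omega, L^\infty(D))$); Lax--Milgram then yields a unique $\varphi^\alpha(\xi, \cdot) \in H$ solving \eqref{equation_elliptic} weakly. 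Setting $u^\alpha_h := \varphi^\alpha(\xi+h, \cdot) - \varphi^\alpha(\xi, \cdot) \in H$ and subtracting the two weak formulations, testing with $u^\alpha_h$ gives
\begin{equation*}
\int_D \rho(\xi+h, \cdot) |\nabla u^\alpha_h|^2 \, \ddr x = \int_D [\dr_\alpha \rho(\xi+h, \cdot) - \dr_\alpha \rho(\xi, \cdot)]\, u^\alpha_h\, \ddr x - \int_D [\rho(\xi+h, \cdot) - \rho(\xi, \cdot)]\, \nabla_D \varphi^\alpha(\xi, \cdot) \cdot \nabla_D u^\alpha_h\, \ddr x.
\end{equation*}
The lower bound on $\rho$, Poincar\'e--Wirtinger, and the continuity of $\rho$ and $\dr_\alpha \rho$ as maps from $\Omega$ into $L^\infty(D)$ imply $\|u^\alpha_h\|_{H^1(D)} \to 0$; testing \eqref{equation_elliptic} against $\varphi^\alpha(\xi, \cdot)$ itself yields the uniform bound $\|\nabla_D \varphi^\alpha(\xi, \cdot)\|_{L^2(D)} \leqslant C \|\dr_\alpha \rho(\xi, \cdot)\|_\infty$, which is bounded on the compact $\Omega$. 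Hence $\nabla_D \varphi \in L^2_\mubf(\Omega \times D, \R^{pq})$ (using also the upper bound on $\rho$) and $\xi \mapsto \varphi(\xi, \cdot)$ is continuous from $\Omega$ into $H^1(D, \R^p)$.

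\textbf{Tangency.} First, for $\psi \in C^1_c(\Or \times D, \R^p)$, integration by parts in $D$ using the Neumann condition and then applying the PDE \eqref{equation_elliptic} rewrites $\iint \nabla_D \psi \cdot \nabla_D \varphi\, \ddr \mubf$ as $\iint \psi \cdot \nabla_\Omega \rho\, \ddr \xi\, \ddr x$, which cancels $\iint (\nabla_\Omega \cdot \psi)\, \ddr \mubf$ after integration by parts in $\Omega$ (using that $\psi$ is compactly supported in $\Or$); so $(\mubf, (\nabla_D \varphi) \mubf)$ satisfies the continuity equation. To identify $\nabla_D \varphi$ as the tangent field via Proposition \ref{proposition_characterization_v_tangent}, fix $\varepsilon > 0$, choose a smooth partition of unity $\{\theta_k\}$ on $\Omega$ subordinate to a covering by balls of radius small enough (given the uniform continuity of $\xi \mapsto \varphi(\xi,\cdot)$ into $H^1$) with centers $\xi_k$, pick $\tilde\psi_k \in C^\infty(D, \R^p)$ with $\|\tilde\psi_k - \varphi(\xi_k, \cdot)\|_{H^1(D)} < \varepsilon$ (density of smooth functions in $H^1(D, \R^p)$, valid since $\dr D$ is Lipschitz), and set $\psi_\varepsilon(\xi, x) := \sum_k \theta_k(\xi)\, \tilde\psi_k(x) \in C^\infty(\Omega \times D, \R^p)$. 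Since $\sum_k \theta_k \equiv 1$, Jensen's inequality gives $\|\nabla_D \psi_\varepsilon(\xi, \cdot) - \nabla_D \varphi(\xi, \cdot)\|_{L^2(D)}^2 \leqslant \sum_k \theta_k(\xi) \|\nabla_D \tilde\psi_k - \nabla_D \varphi(\xi, \cdot)\|_{L^2(D)}^2 \leqslant (2\varepsilon)^2$ uniformly in $\xi$, proving $L^2_\mubf$-convergence $\nabla_D \psi_\varepsilon \to \nabla_D \varphi$ as $\varepsilon \to 0$.

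\textbf{Main obstacle.} The crux is the density argument: we need $\psi_n \in C^1(\Omega \times D, \R^p)$ approximating $\nabla_D \varphi$ jointly in both variables, even though $\rho$ (hence $\varphi$) is merely $L^\infty$ (resp. $H^1$) in $x$. The partition-of-unity construction decouples smoothness in $\xi$ from smoothness in $x$; it requires strong $H^1$-continuity of $\xi \mapsto \varphi(\xi, \cdot)$, which is why the energy estimate above is the workhorse of the proof. That estimate relies crucially on the uniform lower bound on $\rho$ (for coercivity of $B_\xi$) and on the $C^\infty(\Omega, L^\infty(D))$-regularity of $\rho$ (for continuity of both the coefficient and the source term in the $L^\infty$-norm); both are hypotheses baked into Definition \ref{definition_smooth}.
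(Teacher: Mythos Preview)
Your proof is correct and follows essentially the same approach as the paper: Lax--Milgram for existence and uniqueness, the energy estimate (subtracting the two weak formulations and testing against the difference) for $H^1$-continuity in $\xi$, and the uniform bound from testing the equation against $\varphi^\alpha(\xi,\cdot)$ itself. The paper's proof is terser on two points where you are more explicit: it just says the continuity equation holds ``by construction'' without writing out the integration, and it does not spell out the approximation of $\nabla_D\varphi$ by gradients of $C^1(\Omega\times D,\R^p)$ functions needed to invoke Proposition~\ref{proposition_characterization_v_tangent}. Your partition-of-unity argument for that last step is a clean way to fill this gap, and it is exactly the kind of construction the paper presumably has in mind but omits.
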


\begin{proof}
The existence of a unique solution to the elliptic equation \eqref{equation_elliptic} derives from standard arguments. Notice that $\nabla_\Omega \rho(\xi, \cdot)$ has always $0$-mean on $D$, hence the equation is well-posed. In particular, as $\rho$ is bounded from below, the equation is uniformly elliptic. We have the usual estimate 
\begin{equation*}
\| \nabla_D \varphi(\xi, \cdot) \|_{L^2(D, \R^{pq})} \leqslant C \| \nabla_D \varphi(\xi, \cdot) \|_{L^2_{\rho(\xi, \cdot)}(D, \R^{pq})} \leqslant C \| \nabla_\Omega \rho(\xi, \cdot) \|_\infty,
\end{equation*} 
which tells us that $\nabla_D \varphi(\xi, \cdot)$ is uniformly bounded (w.r.t. $\xi$) in $L^2(D, \R^{pq})$. By construction, $\vbf := \nabla_D \varphi$ is such that $(\mubf, \vbf \mubf)$ satisfies the continuity equation. 

To prove continuity of $\xi \mapsto \nabla_D \varphi(\xi, \cdot)$, let us fix $\xi \in \Or$ and a sequence $\xi_n$ which converges to $\xi$. We use momentarily the compact notations $\bar{\varphi} = \varphi(\xi, \cdot) \in H^1(D, \R^{pq})$ and $\varphi_n = \varphi(\xi_n, \cdot) \in H^1(D, \R^{pq})$. Similarly, we set $\bar{\rho} = \rho(\xi, \cdot)$ and $\rho_n = \rho(\xi_n, \cdot)$. The r.h.s. of the elliptic equations will be $\bar{h} = -\nabla_\Omega \rho(\xi, \cdot)$ and $h_n = -\nabla_\Omega \rho(\xi_n, \cdot)$. We want to show that $\varphi_n$ converges to $\bar{\varphi}$ in $H^1(D, \R^{pq})$, while we know that $\bar{\rho}, \rho_n$ are uniformly bounded from below and above, and that $\rho_n$ (resp. $h_n$) converges to $\bar{\rho}$ (resp. $\bar{h}$) in $L^\infty(D)$. Clearly, $\varphi_n - \bar{\varphi}$ satisfies the elliptic equation 
\begin{equation*}
\nabla_D \cdot ( \bar{\rho} \nabla_D ( \varphi_n - \bar{\varphi} )  ) = h_n - \bar{h} + \nabla_D \cdot ( (\rho_n - \bar{\rho}) \nabla_D  \varphi_n   )
\end{equation*} 
with Neumann boundary conditions. Testing this equation against $\varphi_n - \bar{\varphi}$, we deduce that 
\begin{equation*}
\| \nabla_D( \varphi_n - \bar{\varphi}  ) \|_{L^2(D, \R^{pq})} \leqslant C \left( \| h_n - \bar{h} \|_{L^2(D)} + \| \rho_n - \bar{\rho} \|_\infty \| \nabla_D \varphi_n \|_{L^2(D, \R^{pq})} \right).
\end{equation*}
We can use the convergence of $\rho_n$ to $\bar{\rho}$, $h_n$ to $\bar{h}$ and the fact that $\| \nabla_D \varphi_n \|_{L^2(D, \R^{pq})}$ is uniformly bounded in $n$ to conclude that the l.h.s. goes to $0$ as $n \to + \infty$. 
\end{proof}

Now take $\mubf \in L^2(\Omega, \Prob(D))$ smooth and denote by $\vbf = \nabla_D \varphi$ its tangent velocity field. If $\gamma : I \to \Or$ is a smooth curve going from an interval of $\R$ to $\Or$, then, multiplying \eqref{equation_elliptic} by $\dot{\gamma}$, one can see that $\mubf^\gamma = \mubf \circ \gamma : I \to \Prob(D)$ defines a curve valued in the Wasserstein space for which the (classical) continuity equation $\dr_t \mubf^\gamma + \nabla \cdot (\vbf^\gamma \mubf^\gamma)$ with Neumann boundary conditions is satisfied (at least in a weak sense), provided that we define $\vbf^\gamma := \vbf \cdot \dot{\gamma} : I \times D \to \R^q$. More precisely, if $i \in \{1,2, \ldots, q \}$, the $i$-the component of $\vbf^\gamma$ at time $t \in I$ and at the point $x \in D$ is 
\begin{equation*}
\left(\vbf^\gamma(t,x) \right)^i = \sum_{\alpha = 1}^p \vbf(\gamma(t),x)^{i \alpha} \dot{\gamma}^\alpha(t).
\end{equation*} 
In other words, the (generalized) continuity equation implies that we get (classical) continuity equation for every curve of $\Omega$. In some sense, the (generalized) continuity equation is much stronger in higher dimensions.

As we recalled previously, the velocity field $\vbf^\gamma$ is related to the metric derivative of the curve $\mubf^\gamma$ in the Wasserstein space. As the tangent velocity field $\vbf \in L^2(\Omega \times D, \R^{pq})$ is the gradient of a function $\nabla_D \varphi$, by Proposition \ref{proposition_characterization_v_tangent} $\vbf^\gamma$ is the tangent velocity field to the curve $\mubf^\gamma$. Using Theorem \ref{theorem_AGS_831}, we see that for all $s \leqslant t \in I$, 
\begin{equation}
\label{equation_control_distance_curve}
\frac{W^2_2(\mubf(\gamma(t), \mubf(\gamma(s)) )}{t-s} \leqslant \int_s^t \int_D |\vbf^\gamma(r,x)|^2 \mubf(\gamma(r), \ddr x) \ddr r. 
\end{equation} 
But in fact, we can say more and go from a global estimate to a local one, this is the object of the following proposition.

\begin{prop}
\label{proposition_upper_gradient_smooth}
Let $\mubf \in L^2(\Omega, \Prob(D))$ be smooth and let $\vbf \in C(\Omega, L^2(D, \R^{pq}))$ its tangent velocity field. 

Then the function $\mubf$ is Lipschitz. Moreover, if $\xi \in \Or$ and $\eta \in \R^p$, 
\begin{equation}
\label{equation_limsup_smooth_case}
\lim_{\varepsilon \to 0} \frac{W_2(\mubf(\xi + \varepsilon \eta), \mubf(\xi))}{|\varepsilon|} = \sqrt{\int_D |\vbf(\xi,x) \cdot \eta|^2 \mubf(\xi, \ddr x)}. 
\end{equation} 
\end{prop}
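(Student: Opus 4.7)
The plan is to reduce the bivariate Wasserstein distance to a family of one-dimensional curves along the straight segments $\gamma(t) := \xi + t\eta$ (which stay in $\Omega$ for small $|\varepsilon|$ since $\xi \in \Or$) and then to bracket $W_2(\mubf(\xi+\varepsilon\eta),\mubf(\xi))$ by matching upper and lower bounds. Since by Proposition \ref{proposition_smooth_velocity_def} $\vbf = \nabla_D \varphi$, contracting the elliptic equation \eqref{equation_elliptic} with $\eta$ shows that the pair $(\mubf^\gamma,\vbf^\gamma \mubf^\gamma)$, with $\mubf^\gamma(t) := \mubf(\gamma(t))$ and $\vbf^\gamma(t,x) := \vbf(\gamma(t),x)\cdot\eta$, satisfies the classical one-dimensional continuity equation $\dr_t \mubf^\gamma + \nabla_D\cdot(\vbf^\gamma \mubf^\gamma)=0$ with no-flux boundary condition on $\dr D$. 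Since $\vbf^\gamma(t,\cdot) = \nabla_D(\varphi(\gamma(t),\cdot)\cdot\eta)$ is a spatial gradient, Theorem \ref{theorem_AGS_831} identifies it as the tangent velocity field of $\mubf^\gamma$.

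For the upper bound I will combine Theorem \ref{theorem_AGS_831} with the standard absolute continuity estimate \eqref{equation_IAF_1D} to obtain
\begin{equation*}
W_2(\mubf(\xi+\varepsilon\eta),\mubf(\xi)) \leq \int_0^\varepsilon \sqrt{\int_D|\vbf(\gamma(t),x)\cdot\eta|^2 \mubf(\gamma(t),\ddr x)}\, \ddr t.
\end{equation*}
Continuity of $\vbf:\Omega \to L^2(D,\R^{pq})$ combined with the smoothness of $\mubf$ makes the integrand continuous in $t$, so dividing by $|\varepsilon|$ and sending $\varepsilon\to 0$ yields the $\limsup$ half of the identity. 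The same integrand is bounded uniformly by $|\eta|\sqrt{\|\rho\|_\infty}\sup_{\zeta\in\Omega}\|\vbf(\zeta,\cdot)\|_{L^2(D,\R^{pq})}$, which is finite by continuity of $\vbf$ on the compact $\Omega$; integrating this bound along rectifiable paths joining arbitrary points of the quasi-convex Lipschitz domain $\Omega$ gives the claimed Lipschitzness of $\mubf$.

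For the matching lower bound I will use Kantorovich duality. Fix a smooth $f:D\to\R$ and extend it smoothly to $\R^q$; define $\phi_\varepsilon(x) := \inf_{y\in\R^q}\{|x-y|^2/2 - \varepsilon f(y)\}$. Implicit differentiation at the minimizer $y^* = x + \varepsilon\nabla f(x) + O(\varepsilon^2)$ yields the uniform expansion $\phi_\varepsilon(x) = -\varepsilon f(x) - (\varepsilon^2/2)|\nabla f(x)|^2 + O(\varepsilon^3)$ on $D$. The pair $(\phi_\varepsilon,\varepsilon f)$ is admissible in the dual formulation of $W_2^2/2$; combined with the one-dimensional continuity equation along $\gamma$ (which gives $\int f\,\ddr\mubf(\gamma(\varepsilon)) - \int f\,\ddr\mubf(\gamma(0)) = \int_0^\varepsilon\int\nabla f\cdot(\vbf(\gamma(t),\cdot)\cdot\eta)\,\ddr\mubf(\gamma(t))\,\ddr t$), it produces, on dividing by $2\varepsilon^2$ and letting $\varepsilon\to 0$,
\begin{equation*}
\liminf_{\varepsilon\to 0}\frac{W_2^2(\mubf(\xi+\varepsilon\eta),\mubf(\xi))}{2\varepsilon^2} \geq \int \nabla f\cdot(\vbf(\xi,\cdot)\cdot\eta)\,\ddr\mubf(\xi) - \frac{1}{2}\int|\nabla f|^2\,\ddr\mubf(\xi).
\end{equation*}
Since $\vbf(\xi,\cdot)\cdot\eta = \nabla_D(\varphi(\xi,\cdot)\cdot\eta)$ with $\varphi(\xi,\cdot)\cdot\eta \in H^1(D)$, density of smooth functions in $H^1(D)$ combined with the two-sided bounds on $\rho(\xi,\cdot)$ allow me to approximate $\vbf(\xi,\cdot)\cdot\eta$ by $\{\nabla f\}$ in $L^2_{\mubf(\xi)}(D,\R^q)$; taking the supremum over $f$ and completing the square gives the matching lower bound $\tfrac{1}{2}\int|\vbf(\xi,x)\cdot\eta|^2\,\mubf(\xi,\ddr x)$. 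The only non-routine step is the uniform $O(\varepsilon^3)$-remainder in the expansion of $\phi_\varepsilon$, which requires $C^2$-regularity of the extended $f$ together with compactness of $D$; the rest is a routine combination of Theorem \ref{theorem_AGS_831} and Kantorovich duality.
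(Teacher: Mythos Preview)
Your upper bound and Lipschitz arguments coincide with the paper's. For the lower bound you take a genuinely different route. The paper extracts the \emph{optimal} Kantorovich potential $\varepsilon_n\psi_n$ between $\mubf(\xi)$ and $\mubf(\xi+\varepsilon_n\eta)$ along a sequence realizing the $\liminf$, uses the already-established Lipschitz bound to obtain $\|\nabla_D\psi_n\|_{L^2_{\mubf(\xi)}}\leqslant C$, passes to a weak $H^1$-limit $\psi$ (here the uniform lower bound on $\rho$ enters), and then identifies $\nabla_D\psi=\vbf(\xi,\cdot)\cdot\eta$ by checking, through the pushforward relation $(\Id-\varepsilon_n\nabla_D\psi_n)\#\mubf(\xi)=\mubf(\xi+\varepsilon_n\eta)$ and a Taylor expansion, that $\psi$ is a weak solution of the elliptic equation \eqref{equation_elliptic} contracted with $\eta$; uniqueness of that equation closes the argument. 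You instead fix a smooth test function $f$, use a second-order Hopf--Lax expansion of its $c$-transform, combine it with the one-dimensional continuity equation, and only at the end take the supremum over $f$ by density of smooth gradients near $\vbf(\xi,\cdot)\cdot\eta$. Your argument is correct and arguably more elementary: it avoids the weak-compactness step and the identification of the limit via the elliptic PDE, trading them for a routine Taylor remainder control. The paper's approach, on the other hand, is more intrinsic to the optimal-transport structure and yields as a byproduct the weak convergence of the rescaled Kantorovich potentials to the tangent potential $\varphi(\xi,\cdot)\cdot\eta$.
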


\noindent The important point of this proposition is that the estimate holds for \emph{all} points of $\Omega$, there is no ``almost everywhere'' in the statement. 

\begin{proof}
We fix $\xi \in \Or$ and use $\gamma(t) := \xi + t \eta$ which is defined for $t$ sufficiently close to $0$. Notice that $\vbf^\gamma(t,x) = \vbf(\xi + t \eta, x) \cdot \eta$. 

We denote by $\rho$ the density of $\mubf$ w.r.t. $\Leb_\Omega \times \Leb_D$. To prove that $\mubf$ is Lipschitz, we use \eqref{equation_control_distance_curve} and the fact that $\rho \in L^\infty(\Omega \times D)$ and $\vbf \in L^\infty(\Omega, L^2(D, \R^{pq}))$.

The fact the l.h.s. of \eqref{equation_limsup_smooth_case} (provided the $\lim$ is replaced by a $\limsup$) is bounded by the r.h.s. comes directly from \eqref{equation_control_distance_curve} and the continuity of $\vbf : \Omega \to L^2(D, \R^{pq})$.

To prove the reverse inequality, take a sequence $(\varepsilon_n)_{n \in \N}$ realizing the $\liminf$ for the l.h.s. of \eqref{equation_limsup_smooth_case}. Call $\psi_n \in C^0(D)$ the function with $0$-mean such that $\varepsilon_n \psi_n$ is the Kantorovich potential from $\mubf(\xi)$ to $\mubf(\xi + \varepsilon_n \eta)$, it is unique because $\mubf(\xi)$ is supported on the whole $D$, see for instance \cite[Proposition 7.18]{OTAM}. As $\Id - \varepsilon_n \nabla_D \psi_n$ is the optimal transport map from $\mubf(\xi)$ onto $\mubf(\xi + \varepsilon_n \eta)$, there holds
\begin{equation*}
\varepsilon_n \| \nabla_D \psi_n \|_{L^2_{\mubf(\xi)}(D)} =  W_2(\mubf(\xi), \mubf(\xi + \varepsilon_n \eta )) \leqslant C \varepsilon_n,
\end{equation*}
where $C$ is the Lipschitz constant of $\mubf$. In particular, using the lower bound on $\rho$, one sees that, up to a subsequence, $(\psi_n)_{n \in \N}$ converges weakly in $H^1_{\mubf(\xi)}(D)$ to some function $\psi$ such that
\begin{equation*}
\sqrt{\int_D | \nabla_D \psi(x)|^2 \mubf(\xi, \ddr x)} =  \| \nabla_D \psi \|_{L^2_{\mubf(\xi)}(D)} \leqslant \liminf_{n \to + \infty} \frac{W_2(\mubf(\xi), \mubf(\xi + \varepsilon_n e ))}{\varepsilon_n}.
\end{equation*}
Thus, to conclude, it is enough to show that $\nabla_D \psi = -\vbf(\xi, \cdot) \cdot \eta$.

As $\Id - \varepsilon_n \nabla_D \psi_n$ transports $\mubf(\xi)$ onto $\mubf(\xi + \varepsilon_n \eta)$, for any $f \in C^1(D)$, one has 
\begin{equation*}
\int_D f(x - \varepsilon_n \nabla_D \psi_n(x)) \rho(\xi, x) \ddr x = \int_D f(x) \rho(\xi + \varepsilon_n \eta, x) \ddr x.
\end{equation*} 
Using a Taylor expansion on $f$ and dividing by $\varepsilon_n$, 
\begin{equation*}
\left| \int_D \nabla_D \psi_n(x) \cdot \nabla_D f(x) \rho(\xi,x) \ddr x + \int_D \frac{\rho(\xi +  \varepsilon_n \eta ,x) - \rho(\xi,x)}{\varepsilon_n} f(x) \ddr x \right| \leqslant C \varepsilon_n \int_D |\nabla_D \psi_n(x)|^2 \ddr x,
\end{equation*}
where the constant $C$ is a bound on the second derivative of $f$. Using the $H^1$ bound on $\psi_n$ and the weak convergence to $\psi$, as well as the fact that $\rho$ is differentiable w.r.t. variables in $\Omega$, we conclude that $\psi$ solves weakly the elliptic equation
\begin{equation*}
\nabla_D \cdot ( \rho(\xi, \cdot) \nabla_D \psi  ) = - \nabla_\Omega \rho(\xi, \cdot) \cdot \eta.
\end{equation*}
Using the uniqueness (recall that $\psi_n$ has $0$-mean, hence $\psi$ too) result for equation \eqref{equation_elliptic}, this allows to conclude that $\nabla_D \psi = \vbf(\xi, \cdot) \cdot \eta$ where $\vbf$ is the tangent velocity field to $\mubf$, hence the proposition is proved. 
\end{proof}

\subsection{Equivalence with Sobolev spaces valued in metric spaces} 
\label{subsection_equivalence_Sobolev_metric}

Until now, we have not discussed the existence of solutions to the (generalized) continuity equation: this notion could be too strong or too loose. In this subsection, we will show that the set of $\mubf$ with finite Dirichlet energy coincides with an already known definition of Sobolev spaces valued in metric spaces given by Reshetnyak \cite{Reshetnyak1997, Reshetnyak2004}. This definition is restricted to the case where the source space has a smooth structure (which is precisely our framework), but can be seen as particular case of a more general definition given by Haj\l asz (a pedagogic and clear introduction to the latter can be found in \cite[Chapter 5]{Ambrosio2003}).

\begin{defi}
\label{definition_sobolev_R}
Let $\mubf \in L^2(\Omega, \Prob(D))$. For any $\nu \in \Prob(D)$, define $[\mubf]_\nu \in L^2(\Omega)$ by $[\mubf]_\nu(\xi) := W_2(\mubf(\xi), \nu)$. We say that $\mubf \in H^1(\Omega, \Prob(D))$ if there exists a countable family $(\nu_n)_{n \in \N}$ dense in $\Prob(D)$ such that $[\mubf]_{\nu_n} \in H^1(\Omega)$ for all $n \in \N$ and there exists a function $g \in L^2(\Omega)$ such that, for every $n \in \N$, 
\begin{equation}
\label{equation_definition_Sobolev_R}
|\nabla [\mubf]_{\nu_n}| \leqslant g 
\end{equation}
a.e. on $\Omega$. The smallest $g$ for which \eqref{equation_definition_Sobolev_R} holds is called the metric gradient of $\mubf$ and is denoted by $g_{\mubf}$.  
\end{defi}  

\noindent Notice that $g_{\mubf} = \sup_n |\nabla [\mubf]_{\nu_n}|$. The definition looks slightly different than in \cite{Reshetnyak1997}. However, it is equivalent because of the following result: 
\begin{prop}
Let $\mubf \in H^1(\Omega, \Prob(D))$ and $g_{\mubf} \in L^2(\Omega)$ be its metric gradient. Then for all mappings $u : \Prob(D) \to \R$ which are $C$-Lipschitz, $u \circ \mubf \in H^1(\Omega)$ and $|\nabla (u \circ \mubf)| \leqslant C g_{\mubf}$ a.e. on $\Omega$.
\end{prop}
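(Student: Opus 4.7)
The plan is to reduce any Lipschitz $u$ to the ``canonical'' Lipschitz test functions $W_2(\cdot, \nu_n)$ appearing in Definition \ref{definition_sobolev_R}. Since $u$ is $C$-Lipschitz and $(\nu_n)_{n \in \N}$ is dense in $\Prob(D)$, I would first observe the McShane-type identity: for every $\mu \in \Prob(D)$,
\begin{equation*}
u(\mu) = \inf_{n \in \N} \bigl[ u(\nu_n) + C W_2(\mu, \nu_n) \bigr].
\end{equation*}
Indeed, the inequality $\leqslant$ comes from $u(\mu) \leqslant u(\nu_n) + C W_2(\mu, \nu_n)$, and the reverse is obtained by picking a subsequence $\nu_{n_k} \to \mu$ so that the right-hand side converges to $u(\mu)$ (using continuity of $u$ and of $W_2(\mu, \cdot)$). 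Composing with $\mubf$, this means that $u \circ \mubf = \inf_{n} f_n$ where $f_n := u(\nu_n) + C [\mubf]_{\nu_n}$. By Definition \ref{definition_sobolev_R} we know that each $f_n \in H^1(\Omega)$ and $|\nabla f_n| \leqslant C g_{\mubf}$ a.e., and since $u$ is continuous on the compact space $(\Prob(D), W_2)$ the sequence $(f_n)$ is uniformly bounded in $L^\infty(\Omega)$.

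Next I would pass from the countable infimum to an $H^1$ statement in two stages. For any finite $N$, $F_N := \min_{n \leqslant N} f_n$ lies in $H^1(\Omega)$ with $|\nabla F_N| \leqslant C g_{\mubf}$ a.e., using the standard Sobolev identity $\min(f, \tilde f) = \tfrac{1}{2}(f + \tilde f) - \tfrac{1}{2}|f - \tilde f|$ and the fact that on $\{ f \leqslant \tilde f \}$ one has $\nabla \min(f, \tilde f) = \nabla f$ a.e. (and symmetrically). The sequence $F_N$ is uniformly bounded in $L^\infty(\Omega)$ and decreases pointwise to $u \circ \mubf$; by dominated convergence, $F_N \to u \circ \mubf$ in $L^2(\Omega)$.

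Finally, since $\|\nabla F_N\|_{L^2(\Omega, \R^p)} \leqslant C \| g_{\mubf} \|_{L^2(\Omega)}$ uniformly in $N$, up to extracting a subsequence $\nabla F_N$ converges weakly in $L^2(\Omega, \R^p)$ to some limit $G$. Comparing with the $L^2$-convergence $F_N \to u \circ \mubf$ and passing to the limit in the definition of the distributional gradient shows $G = \nabla (u \circ \mubf)$, so in particular $u \circ \mubf \in H^1(\Omega)$. The set $\{ h \in L^2(\Omega, \R^p) : |h| \leqslant C g_{\mubf} \text{ a.e.} \}$ being convex and strongly closed, it is weakly closed, so the a.e.\ bound $|\nabla(u \circ \mubf)| \leqslant C g_{\mubf}$ passes to the weak limit, which concludes the proof.

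The main obstacle is just the McShane-type representation above; once that is recognized, everything else is a routine weak-compactness argument in $H^1(\Omega)$, and the pointwise (as opposed to merely $L^2$) control of $|\nabla(u \circ \mubf)|$ is obtained for free by the convexity of the constraint set.
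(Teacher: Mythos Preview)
Your proof is correct and is essentially the argument of \cite[Theorem 5.1]{Reshetnyak1997} that the paper cites: represent $u$ via the McShane formula $u(\mu)=\inf_n[u(\nu_n)+CW_2(\mu,\nu_n)]$, pass to finite minima in $H^1$, and conclude by weak compactness and the weak closedness of $\{|h|\leqslant Cg_{\mubf}\}$. The paper does not spell this out but simply observes that Reshetnyak's proof uses only the countable dense family $(\nu_n)_{n\in\N}$, which is exactly what you exploit.
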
 

\begin{proof}
Is is enough to copy the proof of \cite[Theorem 5.1]{Reshetnyak1997}. Indeed, in this proof, one only uses the functions $[\mubf]_\nu$ for measures $\nu$ belonging to a dense and countable subset of $\Prob(D)$.  
\end{proof}

\noindent In particular, if $\mubf \in H^1(\Omega, \Prob(D))$, then $[\mubf]_\nu \in H^1(\Omega)$ with gradient bounded by $g_{\mubf}$ for all $\nu \in \Prob(D)$. Notice that the definition above can be stated for mappings valued in arbitrary metric spaces (separability of the target space is required). The main theorem of this subsection is the following, which states that the framework that we have developed coincides with the one of Reshetnyak. 

\begin{theo}
\label{theorem_equivalence_sobolev_metric}
Let $\mubf \in L^2(\Omega, \Prob(D))$. Then $\mubf \in H^1(\Omega, \Prob(D))$ if and only if $\Dir(\mubf)<+\infty$. Moreover, if $\mubf \in H^1(\Omega, \Prob(D))$ and if $\vbf$ is tangent to $\mubf$, then for a.e. $\xi \in \Omega$, 
\begin{equation*}
g_{\mubf}(\xi) \leqslant \sqrt{\int_D |\vbf(\xi, x)|^2 \mubf(\xi, \ddr x)} \leqslant \sqrt{p} g_{\mubf}(\xi). 
\end{equation*}
\end{theo}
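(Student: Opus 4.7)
The plan is to prove both inequalities simultaneously by slicing $\Omega$ along lines parallel to the coordinate axes, which reduces the problem to the 1D situation where Theorem \ref{theorem_AGS_831}, Proposition \ref{proposition_Benamou_Brenier_1D}, and the Kuratowski-type characterization of metric derivatives by $\sup_\nu |\partial_t [\mubf]_\nu|$ all apply. The bridge between the slicewise statements and the global ones is a careful Fubini argument on $\Omega$, together with the fact that testing the generalized continuity equation against a function of the form $\varphi(\xi,x) = \psi(\xi)\phi(x) e_\alpha$ yields a ``partial'' continuity equation $\partial_\alpha \mubf + \nabla_D \cdot (\vbf^\alpha \mubf) = 0$ in the direction $e_\alpha$, where $\vbf^\alpha$ denotes the $\alpha$-th row of the matrix field $\vbf$.

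For the direction $\Dir(\mubf)<+\infty \Rightarrow \mubf \in H^1(\Omega,\Prob(D))$ with the lower bound, I would start from a tangent $\vbf$. Disintegrating the partial continuity equation above along lines in direction $e_\alpha$, Theorem \ref{theorem_AGS_831} tells us that on Lebesgue-a.e.\ such line, $\mubf$ is an absolutely continuous curve with metric derivative bounded by $\sqrt{\int_D |\vbf^\alpha|^2 \ddr \mubf(\xi)}$. Composing with the $1$-Lipschitz map $W_2(\cdot,\nu)$ turns this into a pointwise bound $|\partial_\alpha [\mubf]_\nu(\xi)| \leqslant \sqrt{\int_D |\vbf^\alpha(\xi,\cdot)|^2\ddr \mubf(\xi)}$ for a.e.\ $\xi$. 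Choosing a countable dense family $(\nu_n)$ in $\Prob(D)$ and summing over $\alpha$ gives $\mubf \in H^1(\Omega,\Prob(D))$ with $g_\mubf \leqslant \sqrt{\int_D |\vbf|^2 \ddr \mubf(\xi)}$.

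For the converse direction, I would reverse the slicing. Given $\mubf \in H^1$, a countable dense family $(\nu_n)$ makes each $[\mubf]_{\nu_n}$ belong to $H^1(\Omega)$ with weak gradient bounded by $g_\mubf$. On a.e.\ line in direction $e_\alpha$, each $[\mubf]_{\nu_n}|_{\text{slice}}$ is therefore absolutely continuous with derivative bounded by $g_\mubf$ restricted to that slice; by the Kuratowski embedding, $\mubf|_{\text{slice}}$ is itself absolutely continuous with metric derivative $|\partial_\alpha \mubf|(\xi) \leqslant g_\mubf(\xi)$. Theorem \ref{theorem_AGS_831} then provides a unique 1D tangent $\vbf^\alpha$ on the slice with $\int_D|\vbf^\alpha|^2\ddr \mubf(\xi) = |\partial_\alpha \mubf|^2(\xi)$. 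After a measurable selection (made canonical by the uniqueness statement in Theorem \ref{theorem_AGS_831}) I assemble the $\vbf^\alpha$ into $\vbf$, verify the generalized continuity equation by Fubini on tensor-product test functions, and conclude $\Dir(\mubf)\leqslant \frac{p}{2}\int_\Omega g_\mubf^2<+\infty$. To upgrade the resulting integral estimate to the pointwise bound on the tangent $\vbf$ of the statement, I use Proposition \ref{proposition_characterization_v_tangent}: write $\vbf = \lim \nabla_D \psi_n$ in $L^2_\mubf$ with $\psi_n \in C^1(\Omega\times D,\R^p)$, extract a subsequence so that on a.e.\ slice in direction $e_\alpha$ the component $\nabla_D \psi_n^\alpha|_{\text{slice}}$ still converges in $L^2_\mubf$, and deduce that $\vbf^\alpha|_{\text{slice}}$ is the 1D tangent on the slice. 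This forces $\int_D|\vbf^\alpha|^2\ddr\mubf(\xi) = |\partial_\alpha \mubf|^2(\xi) \leqslant g_\mubf^2(\xi)$ pointwise a.e., and summing over $\alpha$ gives $\int_D|\vbf|^2\ddr\mubf(\xi)\leqslant p\, g_\mubf^2(\xi)$.

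The main obstacles are technical rather than conceptual: first, the careful Fubini disintegration showing that the generalized continuity equation implies, and is implied by, the 1D equations on a.e.\ coordinate slice, which relies on tensor-product test functions and the density of such tensors in $C^1_c(\Or \times D)$; second, the measurable choice of $\vbf^\alpha$ from the slicewise definition, which works because the tangent field is unique; and third, the identification of the global tangent with the slicewise tangents via Proposition \ref{proposition_characterization_v_tangent}, which is what ultimately produces the pointwise upper bound in the statement rather than merely an integrated one.
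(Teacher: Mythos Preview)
Your proposal is correct and follows essentially the same slicing strategy as the paper: reduce to the 1D case via Fubini along coordinate lines, invoke Theorem~\ref{theorem_AGS_831} and Proposition~\ref{proposition_Benamou_Brenier_1D} on each slice, then reassemble. The paper carries this out first on a cube (Proposition~\ref{proposition_equivalence_sobolev_cube}) and then patches over a countable cover of $\Or$, a reduction you leave implicit but which is routine.

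The one place where your argument diverges slightly is the pointwise upper bound. You identify the global tangent $\vbf$ with the slicewise tangents by passing the approximation $\nabla_D\psi_n \to \vbf$ from Proposition~\ref{proposition_characterization_v_tangent} through a subsequence on a.e.\ slice; the paper explicitly mentions this route as viable but instead takes a shortcut via Corollary~\ref{corollary_localization}: since the tangent field localizes, on any small cube $\tilde\Omega$ one has $\iint_{\tilde\Omega\times D}|\vbf|^2\,\ddr\mubf \leqslant \iint_{\tilde\Omega\times D}|\vbf'|^2\,\ddr\mubf$ for the assembled field $\vbf'$, and Lebesgue differentiation turns this into the pointwise inequality. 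Your route is a little more work but equally valid, and has the mild advantage of actually establishing that the assembled field \emph{is} the tangent field, not merely that it dominates it.
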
 

The inequalities are sharp. The function $g_{\mubf}$ measures the norm of the gradient of $\mubf$ as an operator norm, whereas the norm of the velocity field $\vbf$ is measured with an Hilbert-Schmidt norm, which explains the discrepancy, see \cite{Chiron2007} for a more detailed discussion. 

We will prove this theorem in three steps. The first one is to prove it if $\Omega$ is a segment of $\R$ (Proposition \ref{proposition_equivalence_sobolev_1D}). It is just a rewriting of the definition of Reshetnyak and does not rely of the special structure of the Wasserstein space. The second step is to say that, roughly speaking, a function is in $H^1(\Omega)$ if it is in $H^1$ for a.e. lines, with some uniform control on the gradients. It enables us to get the result if $\Omega$ is a cube (Proposition \ref{proposition_equivalence_sobolev_cube}). The third step is simply to write that every domain can be written as a (countable) union of cubes.

\begin{prop}
\label{proposition_equivalence_sobolev_1D}
Theorem \ref{theorem_equivalence_sobolev_metric} holds if $\Omega$ is a segment of $\R$.
\end{prop}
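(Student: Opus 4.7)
The plan is to reduce the $p=1$ case of Theorem \ref{theorem_equivalence_sobolev_metric} to the characterization of absolutely continuous curves in the Wasserstein space that we already have at our disposal. Indeed, when $\Omega = I \subset \R$ the two inequalities of the theorem pinch together and we must show the single identity
\begin{equation*}
g_{\mubf}(t) = \sqrt{\int_D |\vbf(t,x)|^2 \mubf(t, \ddr x)}
\end{equation*}
for a.e.\ $t \in I$, with both sides being finite for the same class of mappings. By Proposition \ref{proposition_Benamou_Brenier_1D} and Theorem \ref{theorem_AGS_831}, the right hand side equals the metric derivative $|\dot \mubf|(t)$, and the condition $\Dir(\mubf) < +\infty$ is equivalent to $\mubf$ being an absolutely continuous curve in $(\Prob(D), W_2)$. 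Hence the whole proposition boils down to the statement: $\mubf \in H^1(I, \Prob(D))$ in the sense of Reshetnyak if and only if $\mubf$ is absolutely continuous, and then $g_{\mubf} = |\dot \mubf|$ a.e.

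For the direction $H^1 \Rightarrow$ absolute continuity, I would use the extension of Reshetnyak's property that was recalled right after Definition \ref{definition_sobolev_R}: for every $1$-Lipschitz function $u : \Prob(D) \to \R$, $u \circ \mubf$ belongs to $H^1(I)$ with $|\nabla(u \circ \mubf)| \leqslant g_{\mubf}$ a.e. Applying this to the $1$-Lipschitz map $u_s : \nu \mapsto W_2(\nu, \mubf(s))$ for a fixed $s \in I$, we get that $t \mapsto W_2(\mubf(t), \mubf(s))$ is absolutely continuous on $I$ and, for $s \leqslant t$,
\begin{equation*}
W_2(\mubf(t), \mubf(s)) = u_s(\mubf(t)) - u_s(\mubf(s)) \leqslant \int_s^t g_{\mubf}(r) \, \ddr r,
\end{equation*}
which shows that $\mubf$ is absolutely continuous with $|\dot \mubf| \leqslant g_{\mubf}$ a.e.

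For the converse, assume $\mubf$ is absolutely continuous and pick any countable dense family $(\nu_n)_{n \in \N}$ in $\Prob(D)$. The triangle inequality
\begin{equation*}
\bigl| W_2(\mubf(t), \nu_n) - W_2(\mubf(s), \nu_n) \bigr| \leqslant W_2(\mubf(t), \mubf(s)) \leqslant \int_s^t |\dot \mubf|(r) \, \ddr r
\end{equation*}
shows that each $[\mubf]_{\nu_n}$ belongs to $H^1(I)$ with $|\nabla [\mubf]_{\nu_n}| \leqslant |\dot \mubf|$ a.e. Hence $\mubf \in H^1(I, \Prob(D))$ with $g_{\mubf} \leqslant |\dot \mubf|$ a.e., and the two bounds combine into the desired equality $g_{\mubf} = |\dot \mubf|$.

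There is no real obstacle here beyond checking that Reshetnyak's property can indeed be applied to the specific $1$-Lipschitz function $u_s$ used above, which is precisely what the proposition recalled after Definition \ref{definition_sobolev_R} guarantees. The argument uses only metric features of $(\Prob(D), W_2)$, in line with the remark that this first step is independent of the specific structure of the Wasserstein space.
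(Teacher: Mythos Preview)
Your proposal is correct and follows essentially the same route as the paper: reduce to showing $g_{\mubf} = |\dot\mubf|$ via Proposition \ref{proposition_Benamou_Brenier_1D} and Theorem \ref{theorem_AGS_831}, and obtain the two inequalities from the $1$-Lipschitzness of distance functions. The only cosmetic difference is that for the direction $H^1 \Rightarrow$ absolute continuity the paper stays at the level of Definition \ref{definition_sobolev_R} (working with the countable dense family $(\nu_n)$ and letting $\nu_n \to \mubf(t)$), whereas you invoke the already-stated extension to \emph{all} $1$-Lipschitz $u$ applied to $u_s = W_2(\cdot,\mubf(s))$; this is the same density argument packaged one level higher.
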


\begin{proof}
Assume $\Omega = I$ is a segment of $\R$. The set of curves with finite Dirichlet energy coincides with the set of absolutely continuous curves, see Proposition \ref{proposition_Benamou_Brenier_1D}. Given Theorem \ref{theorem_AGS_831}, we want to prove the equality $g_{\mubf} = |\dot{\mubf}|$ a.e. on $I$. 

Assume that $\Dir(\mubf) < + \infty$ and take $\nu \in \Prob(D)$. Then, as $W_2(\cdot, \nu)$ is $1$-Lipschitz, for all $s < t$ elements of $I$, 
\begin{equation*}
|[\mubf]_\nu(t) - [\mubf]_\nu(s)| \leqslant W_2(\mubf(t), \mubf(s)) \leqslant \int_s^t |\dot{\mubf}|(r) \ddr r.
\end{equation*} 
It shows that the function $[\mubf]_\nu$ is in $H^1(I)$ and its gradient is smaller than $|\dot{\mubf}|$. Hence, as $\nu$ is arbitrary, $\mubf \in H^1(I, \Prob(D))$ and $g_{\mubf} \leqslant |\dot{\mubf}|$.  

Reciprocally, assume $\mubf \in H^1(I, \Prob(D))$, take $(\nu_n)_{n \in \N}$ countable and dense in $\Prob(D)$ such that $[\mubf]_{\nu_n} \in H^1(I)$ for every $n \in \N$ with gradient bounded by $g_{\mubf}$. In particular, for any $n \in \N$ and any $s < t$ elements of $I$, 
\begin{equation*}
|[\mubf]_{\nu_n}(t) - [\mubf]_{\nu_n}(s)|\leqslant \int_s^t g_{\mubf}(r) \ddr r. 
\end{equation*}
Then we choose $\nu_n$ arbitrary close to $\mubf(t)$: the r.h.s. is unchanged and the l.h.s. is arbitrary close to $W_2(\mubf(t), \mubf(s))$. Hence we conclude that 
\begin{equation*}
W_2(\mubf(s), \mubf(t)) \leqslant \int_t^s g_{\mubf}(r) \ddr r,
\end{equation*}
which is enough to say that $\mubf$ is an absolutely continuous curve and $|\dot{\mubf}| \leqslant g_{\mubf}$ a.e. on $I$ by minimality of $|\dot{\mubf}|$.  
\end{proof}

Now we will prove Theorem \ref{theorem_equivalence_sobolev_metric} at least locally, which means in the case where $\Omega$ is a cube. Up to an isometry and a dilatation, we can assume that $\Omega$ is the unit cube of $\R^p$. Recall that $(e_\alpha)_{1 \leqslant \alpha \leqslant p}$ is the canonical basis of $\R^p$. In the sequel, we will denote by $\Omega_\alpha \subset \R^p$ the $\alpha$-face of the cube, which means the set of $(\xi^1, \ldots, \xi^{\alpha - 1}, 0, \xi^{\alpha + 1}, \ldots, \xi^p)$, with $0 \leqslant \xi^\beta \leqslant 1$ for all $\beta \neq \alpha$. The measure on $\Omega_\alpha$ will be the $p-1$-dimensional Lebesgue measure. If $f : \Omega \to X$ is a given mapping (where $X$ is any set) and $\xi \in \Omega_\alpha$ is fixed, then $f_\xi : [0,1] \to X$ is defined by $f_\xi(t) = f(\xi + t e_\alpha)$: it is the restriction of $f$ to a line directed by $e_\alpha$ and crossing $\Omega_\alpha$ at $\xi$. Recall the following characterization for real-valued mappings: 

\begin{prop}
\label{proposition_sobolev_line}
Assume $\Omega$ is the unit cube of $\R^p$ and let $f \in L^2(\Omega)$ be a given function. The function $f$ belongs to $H^1(\Omega)$ if and only if for any $\alpha \in \{ 1,2, \ldots, p \}$, for a.e. $\xi \in \Omega_\alpha$, the function $f_{\xi}$ is in $H^1([0,1])$ and 
\begin{equation*}
\int_{\Omega_\alpha} \left( \int_0^1 |\dot{f}_\xi(t)|^2 \ddr t \right) \ddr \xi < + \infty.
\end{equation*}
Moreover, for a.e. $\xi \in \Omega_\alpha$ and a.e. $t \in [0,1]$, 
\begin{equation*}
(\dr_\alpha f)(\xi + t e_\alpha) = \dot{f}_\xi(t). 
\end{equation*}
\end{prop}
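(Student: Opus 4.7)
The plan is to prove this through the standard ACL (absolutely continuous on lines) characterization of Sobolev functions, treating the two implications separately; neither direction uses any special structure beyond Fubini's theorem and integration by parts on $[0,1]$.

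For the ``only if'' direction, I would start with $f \in H^1(\Omega)$ and pick a sequence $(f_n)_{n \in \N} \subset C^\infty(\bar{\Omega})$ with $f_n \to f$ in $H^1(\Omega)$, which exists since $\Omega$ is a cube. For each $\alpha \in \{1,\ldots,p\}$, Fubini's theorem gives
\begin{equation*}
\int_{\Omega_\alpha} \left( \int_0^1 |(f_n)_\xi(t) - f_\xi(t)|^2 \ddr t + \int_0^1 |\dot{(f_n)}_\xi(t) - (\dr_\alpha f)(\xi + te_\alpha)|^2 \ddr t \right) \ddr \xi \;\xrightarrow[n \to +\infty]{}\; 0.
\end{equation*}
Passing to a subsequence, for a.e. $\xi \in \Omega_\alpha$ the restriction $(f_n)_\xi$ converges to $f_\xi$ in $L^2([0,1])$ while $\dot{(f_n)}_\xi$ converges to $\dr_\alpha f(\xi + \cdot\, e_\alpha)$ in $L^2([0,1])$. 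By closedness of the weak derivative, $f_\xi \in H^1([0,1])$ with derivative equal to $\dr_\alpha f(\xi + \cdot\, e_\alpha)$ a.e., and one recovers the integrability condition directly from Fubini applied to $|\dr_\alpha f|^2$.

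For the ``if'' direction, assume the ACL condition holds for every $\alpha$ and define $g_\alpha \in L^2(\Omega)$ by $g_\alpha(\xi + t e_\alpha) := \dot{f}_\xi(t)$; this is well-defined a.e. and belongs to $L^2(\Omega)$ by the integrability assumption together with Fubini. I would then check that $g_\alpha$ is the weak $\alpha$-partial derivative of $f$: for any test function $\varphi \in C^\infty_c(\Or)$, Fubini gives
\begin{equation*}
\int_\Omega f(\xi) \dr_\alpha \varphi(\xi) \ddr \xi = \int_{\Omega_\alpha} \left( \int_0^1 f_\xi(t) \dot{\varphi}_\xi(t) \ddr t \right) \ddr \xi,
\end{equation*}
and on a.e. line the one-dimensional integration by parts is legal since $f_\xi \in H^1([0,1])$ and $\varphi_\xi$ has compact support in $(0,1)$. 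This yields
\begin{equation*}
\int_\Omega f \, \dr_\alpha \varphi \ddr \xi = - \int_{\Omega_\alpha} \left( \int_0^1 \dot{f}_\xi(t) \varphi_\xi(t) \ddr t \right) \ddr \xi = - \int_\Omega g_\alpha \varphi \ddr \xi,
\end{equation*}
so $\dr_\alpha f = g_\alpha \in L^2(\Omega)$ in the distributional sense. Combined over all $\alpha$, this gives $f \in H^1(\Omega)$ and the pointwise identification $(\dr_\alpha f)(\xi + t e_\alpha) = \dot{f}_\xi(t)$ follows from the construction of $g_\alpha$.

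The main delicacy lies in the ``if'' direction, specifically in checking that $g_\alpha$ is genuinely well-defined as an $L^2(\Omega)$ function rather than merely a measurable object on lines: one must argue that measurability of $\xi \mapsto \dot{f}_\xi$ as a map into $L^2([0,1])$ transfers to joint measurability on $\Omega$ via the natural identification $\Omega \simeq \Omega_\alpha \times [0,1]$, and this is exactly where Fubini's theorem together with the assumed integrability $\int_{\Omega_\alpha} \int_0^1 |\dot{f}_\xi|^2 < +\infty$ closes the argument. The rest is the standard one-dimensional integration by parts applied slice by slice.
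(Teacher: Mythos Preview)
Your proof is correct and follows the standard ACL (absolutely continuous on lines) argument. The paper itself does not give a proof at all: it simply cites \cite[Section 4.9]{Evans1992}, so there is nothing substantive to compare against. What you wrote is essentially the argument one finds in that reference, and your care about the joint measurability of $g_\alpha$ is appropriate.
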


\begin{proof}
One can look at \cite[Section 4.9]{Evans1992}.  
\end{proof}

\begin{prop}
\label{proposition_equivalence_sobolev_cube}
Theorem \ref{theorem_equivalence_sobolev_metric} holds if $\Omega$ is the unit cube of $\R^p$.
\end{prop}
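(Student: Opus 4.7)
The strategy is to reduce to the one-dimensional case by slicing $\Omega$ by straight lines parallel to the coordinate axes. For each $\alpha \in \{1, \ldots, p\}$ and each $\xi \in \Omega_\alpha$, denote by $\mubf_\xi(t) := \mubf(\xi + t e_\alpha)$ the restriction of $\mubf$ to the line through $\xi$ in direction $e_\alpha$, and similarly for $\vbf$ and test functions.

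For the direct implication, assume $\Dir(\mubf) < +\infty$ and let $\vbf$ be the tangent velocity field. Test the generalized continuity equation against functions of the form $\varphi^\beta(\xi + t e_\alpha, x) := \delta_{\alpha \beta} \psi(\xi) \phi(t,x)$, where $\psi \in C^1_c(\mathring{\Omega}_\alpha)$ and $\phi \in C^1_c((0,1) \times D)$: a Fubini argument shows that for a.e. $\xi \in \Omega_\alpha$, the pair $\bigl( \mubf_\xi, (\vbf \cdot e_\alpha)_\xi \mubf_\xi \bigr)$ solves the one-dimensional continuity equation on $(0,1) \times D$ with no-flux boundary conditions (and the right $L^2$ integrability in $t$ by Fubini). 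By Theorem \ref{theorem_AGS_831}, $\mubf_\xi$ is absolutely continuous with
\[
|\dot{\mubf}_\xi|(t)^2 \leqslant \int_D |\vbf(\xi + te_\alpha, x) \cdot e_\alpha|^2 \, \mubf(\xi + te_\alpha, \ddr x)
\]
for a.e. $t$. For any $\nu \in \Prob(D)$, the function $t \mapsto [\mubf]_\nu(\xi + te_\alpha)$ is then $1$-Lipschitz w.r.t. the metric derivative, so by Proposition \ref{proposition_sobolev_line} the function $[\mubf]_\nu$ belongs to $H^1(\Omega)$ with $|\dr_\alpha [\mubf]_\nu(\xi)|^2 \leqslant \int_D |\vbf(\xi,x) \cdot e_\alpha|^2 \mubf(\xi, \ddr x)$ for a.e. $\xi$. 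Summing over $\alpha$, and using that the right-hand side is independent of $\nu$, we conclude $\mubf \in H^1(\Omega, \Prob(D))$ with $g_{\mubf}(\xi)^2 \leqslant \int_D |\vbf(\xi,x)|^2 \mubf(\xi, \ddr x)$.

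For the converse, assume $\mubf \in H^1(\Omega, \Prob(D))$ with metric gradient $g_{\mubf}$, and let $(\nu_n)_{n \in \N}$ be the dense family from Definition \ref{definition_sobolev_R}. Fix $\alpha$. By Proposition \ref{proposition_sobolev_line}, for a.e. $\xi \in \Omega_\alpha$, for every $n$ the function $t \mapsto [\mubf]_{\nu_n}(\xi + te_\alpha)$ is in $H^1([0,1])$ with derivative bounded a.e. by $(g_{\mubf})_\xi(t) := g_{\mubf}(\xi + te_\alpha)$ (a countable union of null sets is still null). Hence $\mubf_\xi \in H^1([0,1], \Prob(D))$, and by Proposition \ref{proposition_equivalence_sobolev_1D} it is absolutely continuous with $|\dot{\mubf}_\xi|(t) \leqslant (g_{\mubf})_\xi(t)$ a.e. Theorem \ref{theorem_AGS_831} then provides a unique tangent velocity field $\wbf^\alpha_\xi : (0,1) \times D \to \R^q$ for $\mubf_\xi$, with $|\dot{\mubf}_\xi|(t)^2 = \int_D |\wbf^\alpha_\xi(t,x)|^2 \mubf_\xi(t, \ddr x)$ for a.e. $t$. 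Standard arguments based on the variational characterization of $\wbf^\alpha_\xi$ ensure measurable dependence of $\wbf^\alpha_\xi$ in $\xi$, so that we may glue these fields into $\vbf \in L^2_{\mubf}(\Omega \times D, \R^{pq})$ by setting $(\vbf(\xi,x) \cdot e_\alpha)^i := (\wbf^\alpha_{\xi_\alpha})^i(t_\alpha, x)$ (where $\xi_\alpha \in \Omega_\alpha$ and $t_\alpha \in [0,1]$ are the coordinates of $\xi$ along and across direction $e_\alpha$). Testing against separable test functions as in the forward direction and using Fubini shows $(\mubf, \vbf \mubf)$ solves the generalized continuity equation. Finally,
\[
\int_D |\vbf(\xi, x)|^2 \mubf(\xi, \ddr x) = \sum_{\alpha=1}^p \int_D |\vbf(\xi,x) \cdot e_\alpha|^2 \mubf(\xi, \ddr x) = \sum_{\alpha=1}^p |\dot{\mubf}_{\xi_\alpha}|(t_\alpha)^2 \leqslant p \, g_{\mubf}(\xi)^2
\]
for a.e. $\xi$, which gives $\Dir(\mubf) \leqslant \frac{p}{2} \| g_{\mubf} \|_{L^2(\Omega)}^2 < +\infty$. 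Since the tangent velocity field minimizes the $L^2_{\mubf}$-norm among competitors, the same bound holds with $\vbf$ replaced by the tangent velocity field, which yields the second inequality of the theorem.

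The main technical obstacle is the measurability and the Fubini manipulation needed to pass between the generalized continuity equation on $\Omega \times D$ and the one-dimensional continuity equations along a.e. line. Once that is clean, everything else is a direct combination of Proposition \ref{proposition_sobolev_line}, Proposition \ref{proposition_equivalence_sobolev_1D} and Theorem \ref{theorem_AGS_831}, together with the orthogonality of the canonical directions which produces the $\sqrt{p}$ gap between the Hilbert--Schmidt norm used in the Dirichlet energy and the operator-norm behaviour captured by $g_{\mubf}$.
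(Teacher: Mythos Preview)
Your proposal is correct and follows essentially the same route as the paper: slice the cube along coordinate lines, use the one-dimensional theory (Proposition~\ref{proposition_equivalence_sobolev_1D} and Theorem~\ref{theorem_AGS_831}) on each line, and reassemble via Proposition~\ref{proposition_sobolev_line} and Fubini. The paper makes the measurable selection of the $\wbf^\alpha_\xi$ explicit via Proposition~\ref{proposition_Aliprantis1819} and closes the last bound with Corollary~\ref{corollary_localization}, but these are exactly the ``standard arguments'' and minimality of the tangent field you invoke.
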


\begin{proof}

\emph{Implication $\Dir(\mubf) < + \infty \Rightarrow \mubf \in H^1(\Omega, \Prob(D))$}. Assume first that $\mubf \in L^2(\Omega, \Prob(D))$ is such that $\Dir(\mubf) < + \infty$ and take $\vbf \in L^2_{\mubf}(\Omega \times D, \R^{pq})$ the velocity field tangent to $\mubf$. Fix $\alpha \in \{ 1,2, \ldots, p \}$. Take two compactly supported test functions $\psi \in C^1_c(]0,1[ \times D)$ and $a \in C^1_c(\Omega_\alpha)$. As a test function $\varphi \in C^1_c(\Omega \times D, \R^p)$ in the weak formulation of the continuity equation, choose $\varphi(\xi + t e_\alpha, x) := (0, 0, \ldots, 0, \psi(t, x) a(\xi), 0, \ldots, 0)$ for $\xi \in \Omega_\alpha$ and $t \in [0,1]$ (only the $\alpha$-th component of $\varphi$ is not $0$). If we expand we find that $\nabla_\Omega \cdot \varphi = a \dr_t \psi$ hence 
\begin{align*}
0 = \iint_{\Omega \times D}  \nabla_\Omega \cdot \varphi \ddr \mubf + \iint_{\Omega \times D} \nabla_D & \varphi \cdot  \vbf \ddr \mubf  = \int_{\Omega_\alpha} \left( \iint_{[0,1] \times D} \dr_t \psi(t,x) \ddr t \mubf(\xi + t e_\alpha, \ddr x)  \right) a(\xi) \ddr \xi \\
& + \int_{\Omega_\alpha} \left( \iint_{[0,1] \times D} \nabla_D \psi(t,x) \cdot (\vbf(\xi + t e_\alpha, x) \cdot e_\alpha) \ddr t \mubf(\xi + t e_\alpha, \ddr x)  \right) a(\xi) \ddr \xi. 
\end{align*}  
Using the arbitrariness of $a$, we deduce that for a.e. $\xi \in \Omega_\alpha$, and for a fixed $\psi \in C^1_c(]0,1[ \times D, \R^p)$, 
\begin{equation}
\label{equation_continuity_1D}
\iint_{[0,1] \times D} \dr_t \psi(t,x) \ddr t \mubf(\xi + t e_\alpha, \ddr x) + \iint_{[0,1] \times D} \nabla_D \psi(t,x) \cdot (\vbf(\xi + t e_\alpha, x) \cdot e_\alpha) \ddr t \mubf(\xi + t e_\alpha, \ddr x) = 0.  
\end{equation}
Now, taking a sequence $(\psi_n)_{n \in \N}$ which is dense in $C^1_c(]0,1[ \times D, \R^p)$, we can say that for a.e. $\xi \in \Omega_\alpha$, for all $\psi \in C^1_c(]0,1[ \times D, \R^p)$, \eqref{equation_continuity_1D} holds. For $\xi \in \Omega_\alpha$ define $\mubf_\xi : [0,1] \to \Prob(D)$ by $\mubf_\xi(t) = \mubf(\xi + t e_\alpha)$ and $\vbf_\xi : [0,1] \times D \to \R^q$ by $\vbf_\xi(t,x) = \vbf(\xi + t e_\alpha,x) \cdot e_\alpha$. By Fubini's theorem, for a.e. $\xi \in \Omega_\alpha$, $\vbf_\xi \in L^2_{\mubf_\xi}([0,1] \times D, \R^q)$. Hence \eqref{equation_continuity_1D} rewrites as: for a.e. $\xi \in \Omega_\alpha$, the curve $\mubf_\xi$ is an absolutely continuous curve in the Wasserstein space with a velocity field given by $\vbf_{\xi}$. By Proposition \ref{proposition_equivalence_sobolev_1D}, if $\nu \in \Prob(D)$, then the function $[\mubf_\xi]_{\nu}$ is in $H^1([0,1])$ and 
\begin{equation*}
|\dr_t [\mubf_\xi]_{\nu}(t)| \leqslant \sqrt{\int_D |\vbf_\xi(t,x)|^2 \mubf_\xi(t, \ddr x)} = \sqrt{\int_D |\vbf(\xi + t e_\alpha,x) \cdot e_\alpha|^2 \mubf(\xi + t e_\alpha, \ddr x)}.
\end{equation*} 
As the r.h.s. is integrable over $[0,1] \times \Omega_\alpha$ and $\alpha$ is arbitrary, we can use Proposition \ref{proposition_sobolev_line} to see that $[\mubf]_\nu \in H^1(\Omega)$. Moreover, taking the square of the previous equation and summing over $\alpha \in \{ 1,2, \ldots, p \}$, we see that for a.e. $\xi \in \Omega$ 
\begin{equation*}
|\nabla [\mubf]_\nu(\xi)|^2 \leqslant \int_D |\vbf(\xi, x)|^2 \mubf(\xi, \ddr x). 
\end{equation*}
Thus, we conclude that $\mubf \in H^1(\Omega, \Prob(D))$ and for a.e. $\xi \in \Omega$, 
\begin{equation}
\label{equation_zz_aux_17}
g_{\mubf}(\xi) \leqslant \sqrt{\int_D |\vbf(\xi, x)|^2 \mubf(\xi, \ddr x)}. 
\end{equation}

\medskip

\emph{Implication $\mubf \in H^1(\Omega, \Prob(D)) \Rightarrow \Dir(\mubf) < + \infty$}. Let $\mubf \in H^1(\Omega, \Prob(D))$. Take $(\nu_n)_{n \in \N}$ a sequence which is dense in $\Prob(D)$. For any $n \in \N$, the function $[\mubf]_{\nu_n}$ belongs to $H^1(\Omega)$. Fix $\alpha \in \{ 1,2, \ldots, p \}$. For any $n \in \N$, for a.e. $\xi \in \Omega_\alpha$, the function $[\mubf_\xi]_{\nu_n} : t \mapsto W_2(\mubf(\xi + t e_\alpha), \nu_n)$ is in $H^1([0,1])$ with a gradient bounded by $g_{\mubf}(\xi + t e_\alpha)$. As $\N$ is countable, we can exchange the ``for a.e. $\xi \in \Omega_\alpha$'' and the ``for all $n \in \N$''. Hence, for a.e. $\xi \in \Omega_\alpha$, the function $\mubf_\xi : [0,1] \to \Prob(D)$ belongs to $H^1([0,1], \Prob(D))$ with a gradient bounded by $g_{\mubf}(\xi + t e_\alpha)$. For a given $\xi \in \Omega_\alpha$, we can use Proposition \ref{proposition_equivalence_sobolev_1D} and Theorem \ref{theorem_AGS_831} to get the existence of a velocity field $\wbf_\xi^\alpha \in L^2_{\mubf_\xi}([0,1] \times D, \R^q)$ such that $(\mubf_\xi, \wbf_\xi^\alpha \mubf_\xi)$ satisfies the ($1$-dimensional) continuity equation and for a.e. $t \in [0,1]$, 
\begin{equation}
\label{equation_zz_aux_15}
\sqrt{\int_D |\wbf^\alpha_\xi(t,x)|^2 \mubf(\xi + t e_\alpha, \ddr x)} \leqslant |\dot{\mubf}_\xi(t)|=  |g_{\mubf_\xi}(t)| \leqslant g_{\mubf}(\xi + t e_\alpha). 
\end{equation} 
Now, do this for a.e. $\xi \in \Omega_\alpha$ and then for any $\alpha \in \{1,2, \ldots, p \}$. Define the velocity field $\vbf : \Omega \times D \to \R^{pq}$ component by component, the $\alpha$-th component at the point $\xi + t e_\alpha$ (with $\xi \in \Omega_\alpha$) being defined as $\wbf_\xi^\alpha(t)$. To justify that $\vbf$ is measurable, notice that $\wbf^\alpha_\xi$ is the solution of an optimization problem \cite[Equation (8.3.11)]{Ambrosio2008} which depends in a measurable way of $\xi$, thus one can apply Proposition \ref{proposition_Aliprantis1819}. By the bound \eqref{equation_zz_aux_15}, it is clear that $\vbf \in L^2_{\mubf}(\Omega \times D, \R^{pq})$. Moreover, if $\varphi \in C^1_c(\Omega \times D, \R^p)$, 
\begin{align*}
\iint_{\Omega \times D} \nabla_\Omega \cdot \varphi \ddr \mubf & = \sum_{\alpha = 1}^p \iint_{\Omega \times D} \dr_\alpha \varphi^\alpha(\xi, x) \mubf(\ddr \xi, \ddr x) \\
& = \sum_{\alpha = 1}^p \int_{\Omega_\alpha} \left( \int_0^1 \dr_\alpha \varphi^\alpha(\xi + t e_\alpha, x) \mubf(\xi + t e_\alpha, \ddr x) \ddr t \right) \ddr \xi \\
& = - \sum_{\alpha = 1}^p \int_{\Omega_\alpha} \left( \int_0^1 \nabla_D \varphi^\alpha(\xi + t e_\alpha, x) \cdot \wbf^{\alpha}_\xi(t,x) \mubf(\xi + t e_\alpha, \ddr x) \ddr t \right) \ddr \xi \\
& = - \sum_{\alpha = 1}^p \int_{\Omega_\alpha} \left( \int_0^1 \nabla_D \varphi^\alpha(\xi + t e_\alpha, x) (\vbf(\xi + t e_\alpha, x) \cdot e_\alpha) \mubf(\xi + t e_\alpha, \ddr x) \ddr t \right) \ddr \xi \\
& = - \iint_{\Omega \times D} \nabla_D \varphi \cdot \vbf \ddr \mubf. 
\end{align*} 
(The second and last inequalities are Fubini's theorem and the third one comes from the $1$-dimensional continuity equations). Hence, we see that $(\mubf, \vbf \mubf)$ satisfies the continuity equation.

To conclude, we need to show a control of $\vbf$ by $g_{\mubf}$. If $\alpha \in \{ 1,2, \ldots, p \}$, for a.e. $\xi \in \Omega_\alpha$ and a.e. $t \in [0,1]$, one has, by definition of $g_{\mubf_\xi}$ and Proposition \ref{proposition_equivalence_sobolev_1D},
\begin{equation*}
\sqrt{\int_D |\wbf^\alpha_\xi(t,x)|^2 \mubf(\xi + t e_\alpha, \ddr x)} =  g_{\mubf_\xi}(t) = \sup_{n \in \N} \left| \dr_\alpha [\mubf]_{\nu_n}(\xi + t e_\alpha) \right|,  
\end{equation*} 
which can be rewritten as: for a.e. $\xi \in \Omega$, for all $\alpha \in \{ 1,2, \ldots, p \}$, 
\begin{equation}
\label{equation_zz_aux_20}
\sqrt{\int_D |\vbf (\xi,x) \cdot e_\alpha|^2 \mubf(\xi, \ddr x)} = \sup_{n \in \N} \left|  \nabla [\mubf]_{\nu_n}(\xi) e_\alpha \right| \leqslant g_{\mubf}(\xi). 
\end{equation}
Squaring, summing on $\alpha$ and taking the square root, we see that for a.e. $\xi \in \Omega$
\begin{equation*}
\sqrt{\int_D |\vbf(\xi,x)|^2 \mubf(\xi, \ddr x)} \leqslant \sqrt{p} g_{\mubf}(\xi).
\end{equation*}
Even though one could prove that $\vbf$ is the tangent velocity field (using the fact that the $\wbf^\alpha$ are and the characterization given in Proposition \ref{proposition_characterization_v_tangent}), it is enough to use Corollary \ref{corollary_localization} to see that the l.h.s. is a.e. larger than the $L^2_{\mubf(\xi)}(D,\R^{pq})$-norm of the tangent velocity field.
\end{proof}

\noindent To conclude the proof of the theorem, we just have to justify that we can put the pieces together. 

\begin{proof}[Proof of Theorem \ref{theorem_equivalence_sobolev_metric}]
The domain $\Or$ can be cut in a (countable) number of cubes $(\Omega_m)_{m \in \N}$. The boundary $\dr \Omega$ does not play any role as $\Leb_\Omega(\dr \Omega) = 0$. 

\emph{Implication $\Dir(\mubf) < + \infty \Rightarrow \mubf \in H^1(\Omega, \Prob(D))$}. Assume first that $\mubf \in L^2(\Omega, \Prob(D))$ is such that $\Dir(\mubf) < + \infty$ and take $\vbf \in L^2_{\mubf}(\Omega \times D, \R^{pq})$ the velocity field tangent to $\mubf$. Fix $n \in \N$. On each cube $\Omega_m$, we know that the function $[\mubf]_{\nu_n}$ is in $H^1(\Omega_m)$ with a gradient which is bounded by a function which does not depend on $n$ and is in $L^2(\Omega)$, which is sufficient to say that $[\mubf]_{\nu_n} \in H^1(\Omega)$ with a gradient bounded by a function which does not depend on $n \in \N$. 

\medskip

\emph{Implication $\mubf \in H^1(\Omega, \Prob(D)) \Rightarrow \Dir(\mubf) < + \infty$}. Assume that $\mubf \in H^1(\Omega, \Prob(D))$. For any $m \in \N$, one can construct a tangent velocity field $\vbf \in L^2_{\mubf}(\Omega_m \times D, \R^{pq})$. Combining Proposition \ref{proposition_existence_optimal_v} giving the uniqueness $\mubf$-a.e. of the tangent velocity field and Corollary \ref{corollary_localization} which enables to localize, one sees that if $\Omega_{m_1} \cap \Omega_{m_2} \neq \emptyset$, then the tangent velocity fields $\vbf_1 \in L^2_{\mubf}(\Omega_{m_1} \times D, \R^{pq})$ and $\vbf_2 \in L^2_{\mubf}(\Omega_{m_2} \times D, \R^{pq})$ coincide $\mubf$-a.e. on $\Omega_{m_1} \cap \Omega_{m_2}$. Thus, one can define a velocity field $\vbf$ on the whole $\Omega$, and it is straightforward to check that $\vbf$ is tangent to $\mubf$.   
\end{proof}

\subsection{Equivalence with Dirichlet energy in metric spaces}
\label{subsection_equivalence_Dirichlet_metric}

In this subsection we will show that our definition coincides with the one of Korevaar, Schoen, and Jost \cite{Korevaar1993, Jost1994}. As explained in the introduction, their formulation is related to the following object.

\begin{defi}
Let $\varepsilon > 0$ and $\mubf \in L^2(\Omega, \Prob(D))$. We define the $\varepsilon$-Dirichlet energy of $\mubf$ by 
\begin{equation*}
\Dir_\varepsilon(\mubf) := C_p \iint_{\Omega \times \Omega} \frac{W_2^2(\mubf(\xi), \mubf(\eta))}{2\varepsilon^{p+2}} \1_{|\xi - \eta| \leqslant \varepsilon} \ddr \xi \ddr \eta,
\end{equation*}
where the normalization constant $C_p$ is defined as $C_p := |\eta|^2 \left( \int_{B(0,1)} |\xi \cdot \eta|^2 \ddr \xi \right)^{-1}$.
\end{defi} 

One can notice that the $\varepsilon$-Dirichlet energy is always finite as $\Prob(D)$ has a finite diameter, but it can blow up when $\varepsilon \to 0$. The goal is to prove that $\Dir_\varepsilon$ is a good approximation of $\Dir$ if $\varepsilon$ is small enough. Before stating the main result, let us do the following observation, which will be useful in the sequel. 

\begin{prop}
\label{proposition_Dir_eps_lsc}
Let $\varepsilon > 0$ be fixed. Then the functional $\Dir_\varepsilon : L^2(\Omega, \Prob(D)) \to \R$ is continuous w.r.t. strong convergence and l.s.c. w.r.t. the weak convergence.
\end{prop}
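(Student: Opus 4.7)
This rests on the simple inequality
\[
\bigl| W_2^2(\mu_1, \nu_1) - W_2^2(\mu_2, \nu_2) \bigr| \leqslant 2 \, \mathrm{diam}(\Prob(D)) \, [W_2(\mu_1, \mu_2) + W_2(\nu_1, \nu_2)],
\]
a consequence of the triangle inequality for $W_2$ and $|a^2 - b^2| \leqslant (a+b)|a-b|$. Plugging it into the definition of $\Dir_\varepsilon$, symmetrising in $(\xi, \eta)$, applying Cauchy--Schwarz and using $|B(\xi,\varepsilon) \cap \Omega| \leqslant c_p \varepsilon^p$ yields $|\Dir_\varepsilon(\mubf_n) - \Dir_\varepsilon(\mubf)| \leqslant C \varepsilon^{-2} \, d_{L^2}(\mubf_n, \mubf) \to 0$ whenever $\mubf_n \to \mubf$ strongly.

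\textbf{Weak lower semi-continuity via Kantorovich duality.} The plan is to exhibit $\Dir_\varepsilon$ as a supremum of weakly continuous linear functionals of $\mubf$ viewed as an element of $\Prob_0(\Omega \times D)$. For any continuous pair $\Phi_1, \Phi_2 : \Omega \times \Omega \times D \to \R$ with $\Phi_1(\xi, \eta, x) + \Phi_2(\xi, \eta, y) \leqslant |x-y|^2/2$ for all $\xi, \eta \in \Omega$ and $x, y \in D$, the dual formulation of $W_2^2$ gives the pointwise bound
\[
2 \left[ \int_D \Phi_1(\xi, \eta, \cdot) \, \mubf(\xi) + \int_D \Phi_2(\xi, \eta, \cdot) \, \mubf(\eta) \right] \leqslant W_2^2(\mubf(\xi), \mubf(\eta)).
\]
Integrating against $\1_{|\xi-\eta| \leqslant \varepsilon} \ddr\xi \ddr\eta$ and applying Fubini yields
\[
\frac{\varepsilon^{p+2}}{C_p} \Dir_\varepsilon(\mubf) \geqslant \iint_{\Omega \times D} \tilde{\Phi}_1 \, \ddr\mubf + \iint_{\Omega \times D} \tilde{\Phi}_2 \, \ddr\mubf,
\]
where $\tilde{\Phi}_1(\xi, x) := \int_\Omega \Phi_1(\xi, \eta, x) \1_{|\xi-\eta| \leqslant \varepsilon} \ddr\eta$ and symmetrically for $\tilde{\Phi}_2$. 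Since $\Phi_1$ is continuous on a compact set and $\xi \mapsto \1_{B(\xi, \varepsilon) \cap \Omega}$ varies $L^1$-continuously (two nearby balls have vanishing symmetric difference), the kernels $\tilde{\Phi}_1, \tilde{\Phi}_2$ are continuous on $\Omega \times D$, so the right-hand side is weakly continuous in $\mubf$.

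\textbf{Continuous selection and conclusion.} To close the argument, I must show that the supremum of such lower bounds over admissible continuous $\Phi$ actually recovers $(\varepsilon^{p+2}/C_p) \Dir_\varepsilon(\mubf)$. Fix a countable dense family $(\varphi_k, \psi_k)_{k \in \N}$ in the Kantorovich constraint set $K := \{(\varphi, \psi) \in C(D)^2 : \varphi(x) + \psi(y) \leqslant |x-y|^2/2 \text{ for all } x, y \in D\}$, so that $W_2^2(\mubf(\xi), \mubf(\eta)) = \sup_k 2[\int \varphi_k \mubf(\xi) + \int \psi_k \mubf(\eta)]$ pointwise. For $\delta > 0$, a measurable selection $k : \Omega \times \Omega \to \N$ realises this pointwise supremum up to $\delta$; since $k$ is integer-valued, Lusin's theorem produces an open set $U \subset \Omega \times \Omega$ of complementary measure less than $\delta$ on which $k$ is locally constant, and a continuous partition of unity $(\rho_j)_{j \leqslant N}$ subordinated to the open pieces $\{k \equiv k_j\}$ defines the admissible continuous competitor $\Phi_\ell := \sum_j \rho_j (\varphi_{k_j}, \psi_{k_j})$ for $\ell = 1, 2$ (admissibility is preserved because $K$ is convex). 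Letting $\delta \to 0$ concludes. The main obstacle is precisely this continuous-selection step: the approximation must preserve the non-linear constraint $\Phi_1 + \Phi_2 \leqslant |x-y|^2/2$, which is why passing through a \emph{convex combination} in $K$ rather than a strict selection is essential; an additional subtlety is the discontinuity of the cutoff $\1_{|\xi-\eta| \leqslant \varepsilon}$, which can be dealt with by replacing it from below by a continuous approximation $\beta_\delta^-$ and invoking monotone convergence at the very end.
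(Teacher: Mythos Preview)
Your proof is correct and follows the same overall strategy as the paper: Kantorovich duality to bound $\Dir_\varepsilon$ from below by linear functionals of $\mubf \in \Prob_0(\Omega\times D)$, a Lusin-type argument to replace measurable potentials by continuous ones while preserving the constraint, and Fubini to test against weak convergence. The implementation of the Lusin step differs slightly. The paper selects, for each $(\xi,\eta)$, an optimal pair of Kantorovich potentials for the limit $\mubf$ (measurable in $(\xi,\eta)$, uniformly Lipschitz in $D$ after the double $c$-transform), applies Lusin to the $C(D)$-valued map $(\xi,\eta)\mapsto\varphi(\xi,\eta,\cdot)$, and extends continuously by Tietze. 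You instead discretise the constraint set by a countable dense family, apply Lusin to the integer-valued near-optimal index $k(\xi,\eta)$, and glue by a partition of unity, the convexity of the constraint set keeping the combination admissible; this is arguably more elementary (Lusin for $\N$-valued rather than $C(D)$-valued maps) at the price of the extra glueing step. One small imprecision: Lusin produces a \emph{compact} set on which $k$ is continuous (hence locally constant with finite range), not an open one, so you need disjoint open neighbourhoods of the finitely many compact level sets to build the $\rho_j$, together with a residual $\rho_0 = 1-\sum_j\rho_j$ assigned to any fixed admissible pair. Your closing worry about the indicator $\1_{|\xi-\eta|\leqslant\varepsilon}$ is unnecessary: it is integrated out in forming $\tilde\Phi_1,\tilde\Phi_2$, whose continuity you already established.
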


\begin{proof}
The continuity w.r.t. strong convergence is simple: recall that $\Prob(D)$ has a finite diameter, thus Lebesgue dominated convergence theorem is enough. The lower semi-continuity relies on the fact that $W_2^2$ is a supremum of continuous linear functionals, thus is l.s.c. and convex. 

More precisely, fix $\mubf \in L^2(\Omega, \Prob(D))$ and a sequence $(\mubf_n)_{n \in \N}$ which converges weakly to $\mubf$ in $L^2(\Omega, \Prob(D))$. If $\xi$ and $\eta$ are points of $\Omega$, take $(\varphi(\xi, \eta, \cdot), \psi(\xi, \eta, \cdot))$ a pair of Kantorovitch potential between $\mubf(\xi)$ and $\mubf(\eta)$. In other words, $\varphi(\xi, \eta, \cdot)$ and $\psi(\xi, \eta, \cdot)$ are continuous functions (in fact uniformly Lipschitz), such that $\varphi(\xi, \eta, x) + \psi(\xi, \eta, y) \leqslant |x-y|^2/2$ for any $x,y \in D$, and such that 
\begin{equation}
\label{equation_zz_aux_6}
\frac{W_2^2(\mubf(\xi), \mubf(\eta))}{2} = \int_D \varphi(\xi, \eta, x) \mubf(\xi, \ddr x) + \int_D \psi(\xi, \eta, y) \mubf(\eta, \ddr y). 
\end{equation}
One can do that in such a way that $\varphi : \Omega \times \Omega \to C(D)$ and $\psi : \Omega \times \Omega \to C(D)$ are measurable. Indeed, for fixed $\xi$ and $\eta$, $(\varphi(\xi, \eta, \cdot), \psi(\xi, \eta, \cdot)) \in C(D) \times C(D)$ is a maximizer a functional which is continuous on $C(D) \times C(D)$ and which depends on $\xi$ and $\eta$ in a measurable way: hence we can apply Proposition \ref{proposition_Aliprantis1819}. Then, using the double convexification trick (see \cite[Section 2.1]{Villani2003}) which is a measurable operation, we can assume that $(\varphi, \psi)$ are uniformly (w.r.t. $\xi$ and $\eta$) Lipschitz and bounded as elements of $C(D)$. By the Kantorovitch duality, for every $n \in \N$, 
\begin{equation}
\label{equation_zz_aux_5}
\Dir_\varepsilon(\mubf_n) \geqslant \frac{C_p}{\varepsilon^{p+2}} \iint_{\Omega \times \Omega} \1_{|\xi - \eta| \leqslant \varepsilon}  \left( \int_D \varphi(\xi, \eta, x) \mubf_n(\xi, \ddr x) + \int_D \psi(\xi, \eta, y) \mubf_n(\eta, \ddr y) \right) \ddr \xi \ddr \eta.
\end{equation} 
Now, apply Lusin's theorem to the mapping $\varphi : \Omega \times \Omega \to C(D)$ (for Lusin's theorem to other spaces than $\R$, see for instance \cite[Box 1.6]{OTAM}). For any $\delta > 0$, we can find a compact $X \subset \Omega \times \Omega$ such that $\Leb_\Omega \otimes \Leb_\Omega ([\Omega \times \Omega] \bsl X) \leqslant \delta$ and $\varphi : X \to C(D)$ is continuous on $X$. Now notice, as $|\varphi(\xi, \eta, x) - \varphi(\xi, \eta, y)| \leqslant C|x-y|$ uniformly in $\xi$ and $\eta$, that $\varphi : X \times D \to \R$ is a continuous function for the product topology on $X \times D \subset \Omega \times \Omega \times D$. This function can be extended in a function $\tilde{\varphi} \in C(\Omega \times \Omega \times D)$. To summarize, there exists a continuous function $\tilde{\varphi}$, which coincides with $\varphi$ on $X \times D$ (the important point is that there is coincidence on all $D$). Thus, denoting by $C$ a uniform bound of $\varphi$ and $\tilde{\varphi}$, one has that for every $\nubf \in L^2(\Omega, \Prob(D))$,  
\begin{equation}
\label{equation_zz_aux4}
\left| \iint_{\Omega \times \Omega} \1_{|\xi - \eta| \leqslant \varepsilon}  \left( \int_D \varphi(\xi, \eta, x) \nubf(\xi, \ddr x) \right) \ddr \xi \ddr \eta - \iint_{\Omega \times \Omega} \1_{|\xi - \eta| \leqslant \varepsilon}  \left( \int_D \tilde{\varphi}(\xi, \eta, x) \nubf(\xi, \ddr x) \right) \ddr \xi \ddr \eta \right| \leqslant C \delta. 
\end{equation} 
On the other hand, using Fubini's theorem one sees that
\begin{equation*}
\iint_{\Omega \times \Omega} \1_{|\xi - \eta| \leqslant \varepsilon}  \left( \int_D \tilde{\varphi}(\xi, \eta, x) \mubf_n(\xi, \ddr x) \right) \ddr \xi \ddr \eta = \iint_{\Omega \times D} \left( \int_{B(\xi, \varepsilon) \cap \Omega} \tilde{\varphi}(\xi, \eta, x) \ddr \eta \right) \mubf_n(\ddr \xi, \ddr x).
\end{equation*}
As $\tilde{\varphi}$ is continuous and bounded, it is not difficult to see that 
\begin{equation*}
(\xi, x) \in \Omega \times D \mapsto \int_{B(\xi, \varepsilon) \cap \Omega} \tilde{\varphi}(\xi, \eta, x) \ddr \eta \in \R
\end{equation*}
is continuous. Hence, using the weak convergence of $(\mubf_n)_{n \in \N}$, 
\begin{equation*}
\lim_{n \to + \infty} \iint_{\Omega \times \Omega} \1_{|\xi - \eta| \leqslant \varepsilon}  \left( \int_D \tilde{\varphi}(\xi, \eta, x) \mubf_n(\xi, \ddr x) \right) \ddr \xi \ddr \eta = \iint_{\Omega \times \Omega} \1_{|\xi - \eta| \leqslant \varepsilon}  \left( \int_D \tilde{\varphi}(\xi, \eta, x) \mubf(\xi, \ddr x) \right) \ddr \xi \ddr \eta. 
\end{equation*}
Using equation \eqref{equation_zz_aux4} with both $\mubf_n$ and $\mubf$ as $\nubf$, and using moreover the arbitrariness of $\delta$, we conclude that we can replace $\tilde{\varphi}$ by $\varphi$ in the equation above: 
\begin{equation*}
\lim_{n \to + \infty} \iint_{\Omega \times \Omega} \1_{|\xi - \eta| \leqslant \varepsilon}  \left( \int_D \varphi(\xi, \eta, x) \mubf_n(\xi, \ddr x) \right) \ddr \xi \ddr \eta = \iint_{\Omega \times \Omega} \1_{|\xi - \eta| \leqslant \varepsilon}  \left( \int_D \varphi(\xi, \eta, x) \mubf(\xi, \ddr x) \right) \ddr \xi \ddr \eta. 
\end{equation*}
Of course there is exactly the same statement with $\psi$. With the help of this information, combining \eqref{equation_zz_aux_5} and \eqref{equation_zz_aux_6}, we reach the conclusion that  
\begin{align*}
\liminf_{n \to + \infty} & \ \Dir_\varepsilon(\mubf_n) \\
& \geqslant \liminf_{n \to + \infty} \frac{C_p}{\varepsilon^{p+2}} \iint_{\Omega \times \Omega} \1_{|\xi - \eta| \leqslant \varepsilon}  \left( \int_D \varphi(\xi, \eta, x) \mubf_n(\xi, \ddr x) + \int_D \psi(\xi, \eta, y) \mubf_n(\eta, \ddr y) \right) \ddr \xi \ddr \eta \\
& = \frac{C_p}{\varepsilon^{p+2}} \iint_{\Omega \times \Omega} \1_{|\xi - \eta| \leqslant \varepsilon}  \left( \int_D \varphi(\xi, \eta, x) \mubf(\xi, \ddr x) + \int_D \psi(\xi, \eta, y) \mubf(\eta, \ddr y) \right) \ddr \xi \ddr \eta \\
& = \Dir_\varepsilon(\mubf). \qedhere
\end{align*}

\end{proof}

We are now ready to state and prove the main theorem of this subsection.

\begin{theo}
\label{theorem_equivalence_KS}
Let $\mubf \in L^2(\Omega, \Prob(D))$. Then $\Dir_\varepsilon(\mubf)$ converges to $\Dir(\mubf)$ as $\varepsilon \to 0$, and the sequence $(\Dir_{2^{-n} \varepsilon_0} (\mubf))_{n \in \N}$ is increasing for any $\varepsilon_0 > 0$. 

In addition for any $\varepsilon_0 > 0$, $\Dir_{2^{-n} \varepsilon_0}$ $\Gamma$-converges to $\Dir$ on the space $L^2(\Omega, \Prob(D))$ endowed with the weak topology as $n \to + \infty$. 
\end{theo}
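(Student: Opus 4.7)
The plan is to establish three separate ingredients: (a) pointwise convergence $\lim_{\varepsilon \to 0} \Dir_\varepsilon(\mubf) = \Dir(\mubf)$ for smooth $\mubf$ in the sense of Definition \ref{definition_smooth}; (b) the dyadic monotonicity $\Dir_\varepsilon(\mubf) \leqslant \Dir_{\varepsilon/2}(\mubf)$; and (c) the matching lower bound $\liminf_{\varepsilon \to 0} \Dir_\varepsilon(\mubf) \geqslant \Dir(\mubf)$ for every $\mubf$. Combined with the weak lower semi-continuity of each $\Dir_\varepsilon$ (Proposition \ref{proposition_Dir_eps_lsc}) and the smooth approximation of Theorem \ref{theorem_approximation}, these three facts will deliver both the pointwise convergence and the $\Gamma$-convergence.

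For (a), the change of variables $\eta = \xi + \varepsilon \eta'$ with $\eta' \in B(0,1)$ rewrites $\Dir_\varepsilon(\mubf)$ as $\frac{C_p}{2}\int_\Omega \int_{B(0,1)} \frac{W_2^2(\mubf(\xi), \mubf(\xi + \varepsilon \eta'))}{\varepsilon^2} \1_{\xi + \varepsilon \eta' \in \Omega} \ddr \eta' \ddr \xi$. Proposition \ref{proposition_upper_gradient_smooth} simultaneously provides a uniform Lipschitz bound on $\mubf$ (making the integrand uniformly bounded) and identifies the pointwise limit of $W_2^2(\mubf(\xi), \mubf(\xi + \varepsilon \eta'))/\varepsilon^2$ as $\int_D |\vbf(\xi, x) \cdot \eta'|^2 \mubf(\xi, \ddr x)$ at every $\xi \in \Or$. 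Lebesgue's dominated convergence theorem yields the limit, and the identity $C_p \int_{B(0,1)} \eta'_\alpha \eta'_\beta \ddr \eta' = \delta_{\alpha\beta}$ (which holds exactly by definition of $C_p$ and rotational symmetry) combined with the Hilbert--Schmidt interpretation of $|\vbf|$ collapses the resulting integral to $\Dir(\mubf)$.

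For (b), I apply the triangle inequality $W_2(\mubf(\xi), \mubf(\eta)) \leqslant W_2(\mubf(\xi), \mubf(\zeta)) + W_2(\mubf(\zeta), \mubf(\eta))$ at the midpoint $\zeta = (\xi+\eta)/2$, square via $(a+b)^2 \leqslant 2(a^2 + b^2)$, and change variables $(\xi, \eta) \mapsto (\xi, \zeta)$, whose Jacobian is $2^p$ and which sends $\{|\xi - \eta| \leqslant \varepsilon\}$ to $\{|\xi - \zeta| \leqslant \varepsilon/2\}$; the two triangle halves contribute equally by symmetry, and the constant $2 \cdot 2^p \cdot (\varepsilon/2)^{p+2}/\varepsilon^{p+2}$ collapses to exactly $1$, yielding $\Dir_\varepsilon(\mubf) \leqslant \Dir_{\varepsilon/2}(\mubf)$. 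The main obstacle, which I expect to be the most delicate point, is that $\zeta$ need not belong to $\Omega$ since $\Omega$ is only Lipschitz; I would address this by extending $\mubf$ to $\R^p$ through a Lipschitz retraction onto $\Omega$ (available by the Lipschitz regularity of $\dr \Omega$) and checking that the monotonicity on the extension descends to the original mapping, or by localizing the computation in dyadic cubes well inside $\Omega$ and absorbing the boundary contribution.

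For (c), given $\varphi \in C^1_c(\Or \times D, \R^p)$ and setting $h = \eta - \xi$, I apply Kantorovich duality with the pair $a_h(x) = -h \cdot \varphi(\xi, x)$ and $b_h(y) = h \cdot \varphi(\xi, y) - \frac{1}{2}|\nabla_D(h \cdot \varphi(\xi, \cdot))(y)|^2$; a Taylor expansion in the $D$-variable together with Young's inequality shows $a_h(x) + b_h(y) \leqslant |x-y|^2/2$ up to an $O(|h|^3)$ correction, so the induced lower bound on $W_2^2(\mubf(\xi), \mubf(\xi+h))$ holds. Integrating over $|h| \leqslant \varepsilon$ and $\xi \in \Omega$, the terms odd in $h$ vanish; a Taylor expansion of $\varphi$ in the $\Omega$-variable combined with the moment identity $\int_{B(0,\varepsilon)} h_\alpha h_\beta \ddr h = \delta_{\alpha \beta} \varepsilon^{p+2}/C_p$ produces $-\iint \nabla_\Omega \cdot \varphi \ddr \mubf$, while the quadratic-in-$h$ piece yields $-\frac{1}{2}\iint |\nabla_D \varphi|^2 \ddr \mubf$ by the same moment computation. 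Taking the supremum over $\varphi$ and invoking the dual formula \eqref{equation_dirichlet_energy_dual} gives $\liminf_\varepsilon \Dir_\varepsilon(\mubf) \geqslant \Dir(\mubf)$. The reverse inequality $\Dir_\varepsilon(\mubf) \leqslant \Dir(\mubf)$ follows by approximating $\mubf$ by smooth $\mubf_n$ via Theorem \ref{theorem_approximation} (exhausting $\Or$ with subdomains $\tilde \Omega$), observing that $\Dir_\varepsilon(\mubf_n) \leqslant \Dir(\mubf_n)$ by (a) and (b), and passing to the weak l.s.c. limit. Finally the $\Gamma$-convergence is immediate: the constant sequence is a recovery sequence for the $\Gamma$-$\limsup$ (using the pointwise convergence just established), while the $\Gamma$-$\liminf$ follows from the chain $\liminf_n \Dir_{\varepsilon_n}(\mubf_n) \geqslant \liminf_n \Dir_{\varepsilon_k}(\mubf_n) \geqslant \Dir_{\varepsilon_k}(\mubf)$ for any fixed $k$ (the first step by monotonicity, the second by weak l.s.c.), then letting $k \to \infty$.
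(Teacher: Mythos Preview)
Your proposal is essentially correct, and for parts (a), (b), and the $\Gamma$-convergence you follow the paper almost verbatim. Your concern in (b) about the midpoint $\zeta$ leaving $\Omega$ when $\Omega$ is non-convex is legitimate; the paper simply cites Jost for the general metric-space version of this inequality rather than working it out, so your proposed extension/localization fix is reasonable but not strictly needed if one is willing to quote that reference.

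Where you diverge from the paper is in the two inequalities of (c). For the \emph{lower bound} $\liminf_\varepsilon \Dir_\varepsilon(\mubf) \geqslant \Dir(\mubf)$, the paper does not argue by duality at all: it instead regularizes $\mubf$ via heat flow plus convolution (Theorem \ref{theorem_approximation}), uses that both operations contract $W_2$ and hence $\Dir_\varepsilon$, applies the smooth case to the regularized $\mubf_n$, and passes to the limit $n\to\infty$ using (iii) of Theorem \ref{theorem_approximation}. Your direct Kantorovich argument is more elementary and self-contained; just note that the Taylor remainder in your constraint $a_h(x)+b_h(y) \leqslant \tfrac{1}{2}|x-y|^2$ is $O(|h|\,|x-y|^2)$, not $O(|h|^3)$ --- you absorb it as a multiplicative factor $(1+C\varepsilon)$ on $\tfrac{1}{2}|x-y|^2$, which is harmless in the limit. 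For the \emph{upper bound}, the paper invokes external Korevaar--Schoen/Reshetnyak theory (finiteness of the limit and absolute continuity of the energy density) to control the boundary layer $\Omega\setminus\tilde\Omega$; your route via $\Dir_\varepsilon(\mubf|_{\tilde\Omega}) \leqslant \liminf_n \Dir_\varepsilon(\mubf_n) \leqslant \Dir(\mubf|_{\tilde\Omega}) \leqslant \Dir(\mubf)$ followed by $\tilde\Omega \nearrow \Omega$ (using that the $\Dir_\varepsilon$-integrand is bounded) avoids these citations entirely and is cleaner. The trade-off is that the paper's approximation machinery is reused heavily later (e.g.\ in Section \ref{section_Ishihara}), so its proof keeps the argument uniform, whereas yours gives a more transparent stand-alone proof of the present theorem.
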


\noindent In the case of a smooth mapping $\mubf$, the equivalence will directly derives from Proposition \ref{proposition_upper_gradient_smooth}. The difficulty of the proof is to study the behavior of $\Dir_\varepsilon$ w.r.t. approximations.

\begin{proof}

\emph{Monotonicity of $\Dir_\varepsilon$}. If $\mubf \in L^2(\Omega, \Prob(D))$, $\varepsilon > 0$ and $\lambda \in (0,1)$ then one has 
\begin{equation*}
\Dir_\varepsilon(\mubf) \leqslant \lambda \Dir_{\lambda \varepsilon}(\mubf) + (1 - \lambda) \Dir_{(1-\lambda) \varepsilon}(\mubf).
\end{equation*}
Indeed, this is a consequence of the triangle inequality and is valid for mappings valued in arbitrary metric spaces, see for instance \cite[Example 1) (i)]{Jost1994} or \cite[Equation (8.3.4)]{Jost2008} for a proof. In particular, by taking $\lambda = 1/2$, we see that the sequence $(\Dir_{2^{-n} \varepsilon_0} (\mubf))_{n \in \N}$ is increasing for any $\varepsilon_0 > 0$. Moreover, with well chosen $\lambda$, one sees that for a fixed $\mubf \in L^2(\Omega, \Prob(D))$ the function $\varepsilon \mapsto \varepsilon \Dir_\varepsilon(\mubf)$ is subadditive, which is enough to ensure the convergence of $\Dir_\varepsilon(\mubf)$ to some limit in $[0, + \infty]$ as $\varepsilon \to 0$.  

\medskip

\emph{The smooth case}. Let $\mubf \in H^1(\Omega, \Prob(D))$ be smooth in the sense of Definition \ref{definition_smooth}. Let $\vbf$ be its tangent velocity field, by Proposition \ref{proposition_smooth_velocity_def}, there holds $\vbf \in C(\Omega, L^2(D, \R^{pq}))$. We will show that the limit of $\Dir_\varepsilon(\mubf)$ is equal to $\Dir(\mubf)$. Indeed, one can write 
\begin{equation*}
\Dir_\varepsilon(\mubf) = \int_{\Omega} \mathrm{dir}_\varepsilon(\xi) \ddr \xi,
\end{equation*}  
where 
\begin{equation*}
\mathrm{dir}_\varepsilon(\xi) := C_p \int_{\Omega \cap B(\xi, \varepsilon)} \frac{W_2^2(\mubf(\xi), \mubf(\eta))}{2\varepsilon^{p+2}} \ddr \eta.
\end{equation*}
If $\xi \notin \dr \Omega$ (it happens for a.e. $\xi$), for $\varepsilon$ small enough, $B(\xi, \varepsilon) \subset \Omega$ and we can perform the following change of variables in spherical coordinates: denoting by $\Sph^{p-1}$ the unit sphere of $\R^p$ and $\sigma$ its surface measure, 
\begin{equation*}
\mathrm{dir}_\varepsilon(\xi) = \frac{C_p}{2} \int_{\Sph^{d-1}} \left( \int_0^1 \frac{W_2^2(\mubf(\xi), \mubf(\xi + r \varepsilon \theta))}{\varepsilon^2} r^{p-1} \ddr r \right) \sigma(\ddr \theta). 
\end{equation*}
Thanks to Proposition \ref{proposition_upper_gradient_smooth} we have the pointwise limit of the integrand, and we can pass to the limit as $\varepsilon \to 0$: recall that $\mubf$ is Lipschitz, which gives a uniform bound from above of the Wasserstein distances. Hence, for a.e. $\xi \in \Omega$, 
\begin{align*}
\lim_{\varepsilon \to 0} \mathrm{dir}_\varepsilon(\xi) & = \frac{C_p}{2} \int_{\Sph^{d-1}} \left[ \int_0^1 \left( \int_D |\vbf(\xi, x) \cdot (r \theta)|^2 \mubf(\xi, \ddr x) \right) r^{p-1}  \ddr r \right] \sigma(\ddr \theta) \\
& = \frac{C_p}{2} \int_D \left( \int_{B(0,1)} |\vbf(\xi, x) \cdot \eta|^2 \ddr \eta \right) \mubf(\xi, \ddr x) \\
& = \frac{1}{2} \int_D |\vbf(\xi, x)|^2 \mubf(\xi, \ddr x),
\end{align*} 
where the last inequality comes from the definition of $C_p$. To integrate this equality over $\Omega$, we still use the fact that $\mubf$ is Lipschitz to get the appropriate bounds, hence 
\begin{equation*}
\lim_{\varepsilon \to 0} \Dir_\varepsilon(\mubf)  
= \int_\Omega \left( \lim_{\varepsilon \to 0} \mathrm{dir}_\varepsilon(\xi) \right) \ddr \xi 
=  \int_\Omega \left(  \int_D \frac{1}{2} |\vbf(\xi, x)|^2 \mubf(\xi, \ddr x) \right) \ddr \xi 
= \Dir(\mubf, \vbf \mubf) = \Dir(\mubf). 
\end{equation*}

\medskip

\emph{General case: $\lim_\varepsilon \Dir_\varepsilon \leqslant \Dir$}. Let $\mubf \in H^1(\Omega, \Prob(D))$. As $\mubf$ is in $H^1$ in the sense of Reshetnyak, and using the main result of \cite{Reshetnyak2004}, we know that $l := \lim_\varepsilon \Dir_\varepsilon(\mubf)$ is finite. It implies, thanks to the theory of Korevaar and Schoen \cite[Theorem 1.10]{Korevaar1993}, that the so-called energy density is absolutely continuous w.r.t. $\Leb_\Omega$ which means $\lim_\varepsilon \Dir_\varepsilon(\mubf)$ does not decrease too much if we restrict $\mubf$ to a domain $\tilde{\Omega}$ slightly smaller than $\Omega$. More precisely, it implies that for any $\delta$ there exists $\tilde{\Omega}$ compactly embedded in $\Or$ such that, for some $\varepsilon_0$ small enough, 
\begin{equation*}
l - \delta \leqslant \Dir_{\varepsilon_0}( \mubf |_{\tilde{\Omega}}  ) \leqslant l. 
\end{equation*}  
Let $(\mubf_n)_{n \in \N}$ the sequence of elements of $L^2(\tilde{\Omega}, \Prob(D))$ given by Theorem \ref{theorem_approximation}. We choose $n$ large enough so that $\Dir(\mubf_n) \leqslant \Dir(\mubf |_{\tilde{\Omega}}) + \delta$ and $\Dir_{\varepsilon_0}(\mubf_n) \geqslant \Dir_{\varepsilon_0}(\mubf|_{\tilde{\Omega}}) - \delta$: it is possible because $\Dir_{\varepsilon_0}$ is lower semi-continuous w.r.t. weak convergence on $L^2(\tilde{\Omega}, \Prob(D))$. Hence, 
\begin{equation*}
l \leqslant \Dir_{\varepsilon_0}(\mubf|_{\tilde{\Omega}}) + \delta \leqslant \Dir_{\varepsilon_0}(\mubf_n) + 2 \delta \leqslant  \Dir(\mubf_n) + 2 \delta \leqslant \Dir( \mubf |_{\tilde{\Omega}}) + 3 \delta \leqslant \Dir(\mubf) + 3 \delta,
\end{equation*}  
where the third inequality comes from monotonicity and the smooth case treated above. As $\delta$ is arbitrary, we get that $l \leqslant \Dir(\mubf)$, which means
\begin{equation*}
\lim_{\varepsilon \to 0} \Dir_{\varepsilon}(\mubf) \leqslant \Dir(\mubf).
\end{equation*}
This equation still holds if $\mubf \notin H^1(\Omega, \Prob(D))$ as the r.h.s. is infinite. 

\medskip

\emph{General case: $\lim_\varepsilon \Dir_\varepsilon \geqslant \Dir$}. For this part, we need to control in a fine way the behavior of $\Dir_\varepsilon$ w.r.t. the approximation procedure of Theorem \ref{theorem_approximation}. Let $\mubf \in H^1(\Omega, \Prob(D))$ be given. Fix $\tilde{\Omega} \subset \Or$ compactly included and let $\tilde{\mubf}_n, \mubf_n$ the sequences used in the proof of Theorem \ref{theorem_approximation}. We recall that they are defined by 
\begin{equation*}
\tilde{\mubf}_n(\xi) := [\Phi^D_{1/n}] [\mubf(\xi)], \ \ \mubf_n (\xi) := \int_{\Omega} \chi_n(\xi - \eta) \tilde{\mubf}_n(\eta) \ddr \eta,
\end{equation*}
where $\chi_n : \R^p \to \R$ is a compactly supported convolution kernel and $\mubf_n$ is defined only over $\tilde{\Omega}$. Using the result for the smooth case, 
\begin{equation}
\label{equation_zz_aux_r_01}
\lim_{\varepsilon \to 0} \Dir_\varepsilon(\mubf_n|_{\tilde{\Omega}}) = \Dir(\mubf_n|_{\tilde{\Omega}}). 
\end{equation}
As the heat flow is a contraction in the Wasserstein space (Proposition \ref{proposition_heat_flow_properties}), we know that $\Dir_\varepsilon(\tilde{\mubf}_n) \leqslant \Dir_\varepsilon(\mubf)$. As $W_2^2$ is jointly convex w.r.t. to its two arguments, the function $\Dir_\varepsilon$ is convex for the affine structure on $L^2(\tilde{\Omega},\Prob(D))$. Hence, exactly by the same argument than in the proof of Theorem \ref{theorem_approximation},   
\begin{equation*}
\Dir_\varepsilon(\mubf_n) \leqslant \Dir_\varepsilon( \tilde{\mubf}_n) \leqslant \Dir_\varepsilon(\mubf),
\end{equation*}
and the important point is that the r.h.s. does not depend on $n$. Taking the limit $\varepsilon \to 0$ and using equation \eqref{equation_zz_aux_r_01}, we see that 
\begin{equation*}
\Dir(\mubf_n) = \lim_{\varepsilon \to 0} \Dir_\varepsilon(\mubf_n) \leqslant \lim_{\varepsilon \to 0} \Dir_\varepsilon(\mubf). 
\end{equation*} 
Now we can send $n \to + \infty$ and use Theorem \ref{theorem_approximation} to say that the l.h.s. converges to $\Dir(\mubf|_{\tilde{\Omega}})$. As $\tilde{\Omega}$ is now arbitrary, it yields the result
\begin{equation*}
\Dir(\mubf) \leqslant \lim_{\varepsilon \to 0} \Dir_\varepsilon(\mubf).
\end{equation*} 
In the case $\mubf \notin H^1(\Omega, \Prob(D))$, to justify that $\lim_{\varepsilon \to 0} \Dir_\varepsilon(\mubf)$, we can use for instance \cite[Proposition 4]{Chiron2007} which is valid for mappings valued in arbitrary metric spaces.

\medskip

\emph{The $\Gamma$-convergence}. The statement of $\Gamma$-convergence is now easy. To summarize, until now we have proved the monotonicity and that 
\begin{equation*}
\Dir(\mubf) = \lim_{\varepsilon \to 0} \Dir_\varepsilon(\mubf)
\end{equation*}
for every $\mubf \in L^2(\Omega, \Prob(D))$. It is an exercise that we leave to the reader to check that any sequence of functionals which are l.s.c. (which is the case for the $\Dir_\varepsilon$, see Proposition \ref{proposition_Dir_eps_lsc}) and which converges in a increasing way in fact $\Gamma$-converges.  
\end{proof}

\subsection{Boundary values}
\label{subsection_boundary_values}

It is well known that it is possible to make sense of the values of a $H^1$ real-valued function on hypersurfaces, in particular to give a meaning to the values of such a function on the boundary of a domain. As we want to define the Dirichlet problem, which consists in minimizing the Dirichlet energy with fixed values on the boundary $\dr \Omega$, we need to give a meaning to the boundary values of elements of $H^1(\Omega, \Prob(D))$. Korevaar and Schoen have already developed a trace theory in a fairly general context \cite[Section 1.12]{Korevaar1993}. However, in our specific situation and in view of proving the dual formulation of the Dirichlet problem, we will define the boundary values by showing how one can extend the continuity equation for test functions $\varphi \in C^1(\Omega \times D , \R^p)$ which are no longer compactly supported in $\Or$. Even if we do not prove it in this article, our definition of trace coincides with the one of \cite{Korevaar1993}: to be convinced one can look at Proposition \ref{proposition_extension_boundary_values} and compare it to \cite[Theorem 1.12.3]{Korevaar1993}. Recall that $\nO$ denotes the outward normal to $\dr \Omega$.   

\begin{theo}
\label{theorem_boundary_values}
Let $\mubf \in H^1(\Omega, \Prob(D))$. Then there exists a vector-valued measure $\BT_{\mubf} \in \M(\Omega \times D, \R^p)$ supported on $\nO \dr \Omega \times D$ (which means that $\BT_{\mubf}(\varphi) = 0$ if $\varphi \cdot \nO = 0$ on $\dr \Omega \times D$) such that for any $\varphi \in C^1(\Omega \times D, \R^p)$ and for any $\Em \in \M(\Omega \times D, \R^{pq})$ for which $(\mubf, \Em)$ satisfies the continuity equation and $\Dir(\mubf, \Em) < + \infty$,
\begin{equation}
\label{equation_continuity_equation_boundary}
\iint_{\Omega \times D} \nabla_\Omega \cdot \varphi \ddr \mubf + \iint_{\Omega \times D} \nabla_D \varphi \cdot \ddr \Em = \BT_{\mubf}(\varphi).
\end{equation}
Moreover if $\mubf$ is continuous then for any $\varphi \in C^1(\Omega \times D, \R^p)$,  
\begin{equation*}
\BT_{\mubf}(\varphi) = \int_{\dr \Omega} \left( \int_D \varphi(\xi, x) \cdot \nO(\xi) \mubf(\xi, \ddr x) \right) \sigma(\ddr \xi),
\end{equation*}
where $\sigma$ is the surface measure on $\dr \Omega$.
\end{theo}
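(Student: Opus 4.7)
The aim is to define $L(\varphi) := \iint_{\Omega\times D}\nabla_\Omega\cdot\varphi\,\ddr\mubf + \iint_{\Omega\times D}\nabla_D\varphi\cdot\ddr\Em$ on $C^1(\Omega\times D,\R^p)$, prove that it depends neither on the admissible momentum $\Em$ nor on anything beyond $\|\varphi\|_\infty$, and extract $\BT_{\mubf}$ by the Riesz representation theorem. Boundary cutoffs will play the role that integration by parts would play if $\mubf$ were smooth. For the independence in $\Em$, write two admissible momenta as $\Em_i = \vbf_i\mubf$ via Proposition \ref{proposition_v_density_E}; the continuity equation tested against $\psi\in C^1_c(\Or\times D,\R^p)$ shows that $\vbf^\ast:=\vbf_1-\vbf_2$ is orthogonal in $L^2_{\mubf}$ to $\{\nabla_D\psi:\psi\in C^1_c(\Or\times D,\R^p)\}$, and the cutoff argument at the end of the proof of Proposition \ref{proposition_characterization_v_tangent} identifies this closure with the closure of $\{\nabla_D\psi:\psi\in C^1(\Omega\times D,\R^p)\}$. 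Hence $\vbf^\ast$ is also orthogonal to the gradients of general $C^1$ test functions, which gives well-definedness of $L$.

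For the uniform bound, let $d(\xi)$ denote the distance from $\xi$ to $\dr\Omega$. Working in local coordinates that flatten $\dr\Omega$ (allowed since $\dr\Omega$ is Lipschitz), fix a smooth family $\chi_\varepsilon:\Omega\to[0,1]$ built from the signed distance, with $\chi_\varepsilon\equiv 0$ on $\{d\leqslant\varepsilon/2\}$, $\chi_\varepsilon\equiv 1$ on $\{d\geqslant\varepsilon\}$, $|\nabla\chi_\varepsilon|\leqslant C/\varepsilon$, and $\nabla_\Omega\chi_\varepsilon$ essentially parallel to $-\nO$ in the shell. Since $\chi_\varepsilon\varphi\in C^1_c(\Or\times D,\R^p)$, the continuity equation gives $L(\chi_\varepsilon\varphi)=0$, and the Leibniz rule rewrites this as
\begin{equation*}
\iint_{\Omega\times D}\chi_\varepsilon\nabla_\Omega\cdot\varphi\,\ddr\mubf + \iint_{\Omega\times D}\chi_\varepsilon\nabla_D\varphi\cdot\ddr\Em = -\iint_{\Omega\times D}\varphi\cdot\nabla_\Omega\chi_\varepsilon\,\ddr\mubf.
\end{equation*}
By dominated convergence the left-hand side tends to $L(\varphi)$ as $\varepsilon\to 0$. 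The right-hand side is supported in $\{d<\varepsilon\}\times D$, whose $\mubf$-mass is $|\{d<\varepsilon\}|\leqslant C\varepsilon$ (again using that $\dr\Omega$ is Lipschitz), and since $|\nabla\chi_\varepsilon|\leqslant C/\varepsilon$ while each slice $\mubf(\xi,\cdot)$ is a probability, it is bounded by $C\|\varphi\|_\infty$ uniformly in $\varepsilon$. Hence $|L(\varphi)|\leqslant C\|\varphi\|_\infty$, and $L$ extends by density to a continuous linear form on $C(\Omega\times D,\R^p)$, producing $\BT_{\mubf}\in\M(\Omega\times D,\R^p)$ via Riesz.

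Finally, I refine the analysis of the right-hand side to obtain both the support property and the continuous-case formula. Under the alignment $\nabla_\Omega\chi_\varepsilon\approx -\varepsilon^{-1}\nO$ in the shell, if $\varphi\cdot\nO=0$ on $\dr\Omega\times D$ then the $C^1$ regularity of $\varphi$ yields $|\varphi(\xi,x)\cdot\nO(\xi)|=O(d(\xi))$ uniformly in $x$, so the right-hand side is at most $C|\{d<\varepsilon\}|\to 0$ and $\BT_{\mubf}(\varphi)=0$; a standard density argument then propagates this to continuous $\varphi$. When $\mubf$ is continuous, the right-hand side equals, up to an $o(1)$ error coming from the alignment error,
\begin{equation*}
\frac{1}{\varepsilon}\int_{\{d<\varepsilon\}}\left(\int_D\varphi(\xi,x)\cdot\nO(\xi)\mubf(\xi,\ddr x)\right)\ddr\xi,
\end{equation*}
whose $\xi$-integrand is a continuous function, and the Lipschitz coarea formula yields the limit $\int_{\dr\Omega}\int_D\varphi\cdot\nO\,\ddr\mubf\,\ddr\sigma$. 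The main obstacle is precisely the shell-integral analysis: constructing $\chi_\varepsilon$ whose gradient is essentially $-\nO/\varepsilon$ in spite of $\dr\Omega$ being only Lipschitz, and controlling the alignment remainder so that both the vanishing in the support step and the coarea-type limit in the continuous case go through. Once this cutoff machinery is in place, independence of $\Em$, the sup-norm bound, and the Riesz extension are routine.
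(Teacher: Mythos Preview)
Your approach is correct in outline but takes a genuinely different route from the paper's. The paper does \emph{not} use boundary-layer cutoffs at all. Instead, for a fixed $\varphi\in C^1(\Omega\times D,\R^p)$ it introduces the auxiliary function
\[
f(\xi)\;:=\;\int_D \varphi(\xi,x)\,\mubf(\xi,\ddr x)\in\R^p,
\]
and uses the continuity equation with test functions $\chi\,\varphi^\alpha$, $\chi\in C^1_c(\Or)$, to show that $f\in H^1(\Omega,\R^p)$ with $\partial_\alpha f^\beta(\xi)=\int_D \partial_\alpha\varphi^\beta\,\ddr\mubf(\xi)+\int_D \nabla_D\varphi^\beta\cdot\vbf^\alpha\,\ddr\mubf(\xi)$. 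Then the classical divergence theorem for $H^1$ functions on Lipschitz domains gives directly
\[
\iint_{\Omega\times D}\nabla_\Omega\cdot\varphi\,\ddr\mubf+\iint_{\Omega\times D}\nabla_D\varphi\cdot\ddr\Em=\int_{\dr\Omega}\bar f\cdot\nO\,\ddr\sigma,
\]
where $\bar f$ is the $H^1$ trace of $f$. From this single identity everything falls out at once: independence of $\Em$ (the right-hand side involves only $\bar f$, hence only $\mubf$), the sup-norm bound (since $\|\bar f\|_\infty\leqslant\|f\|_\infty\leqslant\|\varphi\|_\infty$), the support property (the right-hand side manifestly sees only $\varphi\cdot\nO$ on $\dr\Omega$), and the continuous-case formula (if $\mubf$ is continuous then so is $f$, and $\bar f=f|_{\dr\Omega}$).

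The gain of the paper's route is precisely that it sidesteps the obstacle you yourself flag as ``the main obstacle'': all the Lipschitz-boundary subtleties (alignment of $\nabla\chi_\varepsilon$ with $\nO$, coarea-type limits in the shell) are absorbed into the off-the-shelf trace theory for scalar $H^1$ functions, which is already settled for Lipschitz domains. Your shell approach is workable in principle, but the step ``$|\varphi(\xi,x)\cdot\nO(\xi)|=O(d(\xi))$'' is not well-posed as written (since $\nO$ lives only on $\dr\Omega$), and making the alignment of $\nabla d$ with the inward normal precise for merely Lipschitz $\dr\Omega$ requires extra care that the paper's reduction avoids entirely. Your independence-of-$\Em$ argument via orthogonality is fine and is a legitimate alternative to the paper's trace-based reasoning.
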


\noindent $\BT_{\mubf}$ stands for ``Boundary Term'' of $\mubf$. It is not surprising that, if $\mubf$ is continuous, the value of $\BT_{\mubf}$ depends only on the values of $\mubf$ on the boundary.

\begin{proof}
Take $\mubf \in H^1(\Omega, \Prob(D))$ and $\Em = \vbf \mubf \in \M(\Omega \times D, \R^{pq})$ such that $(\mubf, \Em)$ satisfies the continuity equation and $\Dir(\mubf, \Em) < + \infty$. The l.h.s. of \eqref{equation_continuity_equation_boundary} defines a vector-valued distribution on $\Omega \times D$ acting on $\varphi$. We need to show that it is of order $0$ and that it does not depend on $\Em$. 

We define $f : \Omega \to \R^p$ by, for a.e. $\xi \in \Omega$,  
\begin{equation*}
f(\xi) := \int_D \varphi(\xi, x) \mubf(\xi, \ddr x). 
\end{equation*} 
Using the continuity equation with test functions of the form $\chi \varphi^\alpha$, for $\chi \in C^1_c(\Or, \R^p)$ and $\alpha \in \{ 1,2, \ldots,p \}$, one can see that $f \in H^1(\Or, \R^p)$ and 
\begin{equation*}
\dr_\alpha f^\beta(\xi) = \int_{D} \dr_\alpha \varphi^\beta(\xi, x) \mubf(\xi, \ddr x) + \int_{D} \nabla_D \varphi^\beta(\xi, x) \cdot \vbf^\alpha(\xi, x) \mubf(\xi, \ddr x).
\end{equation*} 
for all $\alpha, \beta \in \{ 1,2, \ldots, p \}$. In particular $f$ admits on $\dr \Omega$ a trace $\bar{f} : \dr \Omega \to \R^p$. We apply the divergence theorem: one can find in \cite[Section 4.3]{Evans1992} a statement when $\dr \Omega$ is only Lipschitz and $f$ has Sobolev regularity. In our case, given the expression of $\nabla f$, it reads 
\begin{equation}
\label{equation_zz_aux_21}
\iint_{\Omega \times D} \nabla_\Omega \cdot \varphi \ddr \mubf + \iint_{\Omega \times D} \nabla_D \varphi \cdot \ddr \Em = \int_{\dr \Omega} \bar{f}(\xi) \cdot \nO(\xi) \sigma(\ddr \xi)
\end{equation}
where $\nO$ is the outward normal to $\dr \Omega$ and $\sigma$ its the surface measure. In particular we see that the r.h.s. of \eqref{equation_continuity_equation_boundary} does not depend on $\Em$. Moreover, as $\| f \|_\infty \leqslant \| \varphi \|_\infty$, the same $L^\infty$ bounds holds for $\bar{f}$, thus 
\begin{equation*}
\left| \int_{\dr \Omega} \bar{f}(\xi) \cdot \nO(\xi) \sigma(\ddr \xi) \right| \leqslant \sigma(\dr \Omega) \| \varphi \|_\infty.
\end{equation*} 
It allows to conclude that the r.h.s. of \eqref{equation_continuity_equation_boundary} is a distribution of order $0$ acting on $\varphi$, hence it can be represented by a measure $\BT_{\mubf} \in \M(\Omega \times D, \R^p)$. From \eqref{equation_zz_aux_21} it is clear that $\BT_{\mubf}$ is supported on $\nO \dr \Omega \times D$. 

If we assume moreover that $\mubf$ is continuous, so is $f$. Indeed, for any $\xi, \eta \in \Omega$,
\begin{align*}
|f(\xi) - f(\eta)| & = \left|\int_D \varphi(\xi, x) \mubf(\xi, \ddr x) - \int_D \varphi(\eta, x) \mubf(\xi, \ddr x \right| \\
& \leqslant \int_D |\varphi(\xi, x) - \varphi(\eta, x)| \mubf(\xi, \ddr x) + \left| \int_D \varphi(\eta, x) \mubf(\xi, \ddr x) - \int_D \varphi(\eta,x) \mubf(\eta, \ddr x) \right| \\
& \leqslant  \| \nabla_\Omega \varphi \|_\infty |\xi - \eta| + \left| \int_D \varphi(\eta, x) \mubf(\xi, \ddr x) - \int_D \varphi(\eta,x) \mubf(\eta, \ddr x) \right|.
\end{align*} 
When $\xi \to \eta$, the first term obviously goes to $0$, and the second one too by definition of the weak convergence (by assumption $\mubf(\xi) \to \mubf(\eta)$ in the weak sense). Thus $\bar{f}$ coincides with $f$, which gives the announced result.
\end{proof}

\noindent If $\mubf \in H^1(\Omega, \Prob(D))$, using the disintegration theorem and testing against well chosen functions, one can show that there exists $\bar{\mubf} : \dr \Omega \to \Prob(D)$ defined $\sigma$-a.e. such that $\BT_{\mubf} = \nO \bar{\mubf} \otimes \sigma$. The mapping $\bar{\mubf}$ can be seen as a definition of the values of $\mubf$ on $\dr \Omega$. 
 
Now we can define what it means to share the same boundary values and prove that the set of $\mubf$ with fixed boundary values is closed. 

\begin{defi}
Let $\mubf$ and $\nubf$ two elements of $H^1(\Omega, \Prob(D))$. We say that $\mubf |_{\dr \Omega} = \nubf |_{\dr \Omega}$ if $\BT_{\mubf} = \BT_{\nubf}$.
\end{defi}

\begin{prop}
\label{proposition_boundary_fixed_closed}
Let $\mubf_b \in H^1(\Omega, \Prob(D))$ and $C \in \R$ be fixed. Then the set 
\begin{equation*}
\{ \mubf \in H^1(\Omega, \Prob(D)) \ : \ \mubf |_{\dr \Omega} = \mubf_b |_{\dr \Omega} \text{ and } \Dir(\mubf) \leqslant C \}
\end{equation*}
is closed for the weak topology on $L^2(\Omega, \Prob(D))$.
\end{prop}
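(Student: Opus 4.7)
The plan is to argue by sequential closedness: take a sequence $(\mubf_n)_{n \in \N}$ in the set converging weakly in $L^2(\Omega, \Prob(D))$ to some $\mubf$, and show $\mubf$ is in the set. The bound $\Dir(\mubf) \leqslant C$ is immediate from Proposition \ref{proposition_lsc_convex_Dir}, which gives weak lower semi-continuity of the Dirichlet energy; in particular $\mubf \in H^1(\Omega, \Prob(D))$. The real content is to verify the boundary condition $\BT_{\mubf} = \BT_{\mubf_b}$.

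To this end, for each $n$ let $\Em_n = \vbf_n \mubf_n$ be the tangent momentum to $\mubf_n$ provided by Proposition \ref{proposition_existence_optimal_v}. By Cauchy--Schwarz,
\begin{equation*}
|\Em_n|(\Omega \times D) \leqslant \sqrt{\iint_{\Omega \times D} |\vbf_n|^2 \ddr \mubf_n} \cdot \sqrt{\mubf_n(\Omega \times D)} \leqslant \sqrt{2C},
\end{equation*}
so by the Banach--Alaoglu theorem, up to extraction $\Em_n$ converges weakly (in duality with $C(\Omega \times D, \R^{pq})$) to some $\Em \in \M(\Omega \times D, \R^{pq})$. Passing to the limit in the weak formulation of the continuity equation (tested against $\varphi \in C^1_c(\Or \times D, \R^p)$, whose derivatives are continuous on $\Omega \times D$) shows that $(\mubf, \Em)$ still satisfies the continuity equation. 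Moreover, since $\Dir(\cdot, \cdot)$ is defined in Definition \ref{definition_dirichlet_mu_E} as a supremum of continuous linear functionals on $(\mubf, \Em)$, it is jointly weakly l.s.c., hence $\Dir(\mubf, \Em) \leqslant \liminf_n \Dir(\mubf_n, \Em_n) = \liminf_n \Dir(\mubf_n) \leqslant C < + \infty$.

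Finally, fix any $\varphi \in C^1(\Omega \times D, \R^p)$. The functions $\nabla_\Omega \cdot \varphi$ and the components of $\nabla_D \varphi$ are continuous and bounded on $\Omega \times D$, so the weak convergences $\mubf_n \to \mubf$ and $\Em_n \to \Em$ yield
\begin{equation*}
\iint_{\Omega \times D} \nabla_\Omega \cdot \varphi \ddr \mubf_n + \iint_{\Omega \times D} \nabla_D \varphi \cdot \ddr \Em_n \ \longrightarrow \ \iint_{\Omega \times D} \nabla_\Omega \cdot \varphi \ddr \mubf + \iint_{\Omega \times D} \nabla_D \varphi \cdot \ddr \Em.
\end{equation*}
The left-hand side equals $\BT_{\mubf_n}(\varphi) = \BT_{\mubf_b}(\varphi)$ for every $n$ by Theorem \ref{theorem_boundary_values} and the assumption $\mubf_n|_{\dr \Omega} = \mubf_b|_{\dr \Omega}$. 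The right-hand side equals $\BT_{\mubf}(\varphi)$, again by Theorem \ref{theorem_boundary_values} applied to the pair $(\mubf, \Em)$, which we just checked satisfies the continuity equation with finite Dirichlet energy. Hence $\BT_{\mubf}(\varphi) = \BT_{\mubf_b}(\varphi)$ for every such $\varphi$, i.e. $\BT_{\mubf} = \BT_{\mubf_b}$, concluding the proof.

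The only potential subtlety is ensuring $(\mubf, \Em)$ has finite Dirichlet energy so that Theorem \ref{theorem_boundary_values} is applicable (the theorem requires this to guarantee that the boundary term defined through any admissible momentum coincides with $\BT_{\mubf}$); this is precisely where joint lower semi-continuity of $\Dir(\mubf, \Em)$ is essential, and is the one non-automatic point of the argument.
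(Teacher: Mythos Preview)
Your proof is correct and follows essentially the same approach as the paper: both argue by taking a weakly convergent sequence, extracting a weakly convergent subsequence of tangent momenta via a total-variation bound, checking that the limit pair $(\mubf,\Em)$ satisfies the continuity equation with $\Dir(\mubf,\Em)<+\infty$, and then passing to the limit in the boundary identity \eqref{equation_continuity_equation_boundary}. Your version is slightly more explicit (you spell out the Cauchy--Schwarz bound for $|\Em_n|$ and the joint lower semi-continuity of $\Dir(\cdot,\cdot)$), but the argument is the same.
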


\begin{proof}
The proof is straightforward. Indeed, take a sequence $(\mubf_n)_{n \in \N}$ in $ \in L^2(\Omega, \Prob(D))$ such that $\mubf_n |_{\dr \Omega} = \mubf_b |_{\dr \Omega}$ and $\Dir(\mubf_n) \leqslant C$ for any $n \in \N$, and assume it converges weakly to some $\mubf \in L^2(\Omega, \Prob(D))$. By lower semi-continuity of $\Dir$, we know that $\Dir(\mubf) \leqslant C$. For any $n \in \N$ choose $\Em_n \in \M(\Omega \times D, \R^{pq})$ tangent to $\mubf_n$, similarly take $\Em_b$ tangent to $\mubf_b$. The identity $\mubf_n |_{\dr \Omega} = \mubf_b |_{\dr \Omega}$ can be written: for every $\varphi \in C^1(\Omega \times D, \R^p)$ 
\begin{equation}
\label{equation_boundary_value_closed}
\iint_{\Omega \times D} \nabla \cdot \varphi \ddr \mubf_n + \iint_{\Omega \times D} \nabla_D \varphi \cdot \ddr \Em_n = \iint_{\Omega \times D} \nabla \cdot \varphi \ddr \mubf_b + \iint_{\Omega \times D} \nabla_D \varphi \cdot \ddr \Em_b.
\end{equation}
As seen in the proof of Proposition \ref{proposition_lsc_convex_Dir}, one can assume that, up to extraction, $(\Em_n)_{n \in \N}$ weakly converges to some $\Em$. It is easy to see that $(\mubf, \Em)$ satisfies the continuity equation and that $\Dir(\mubf, \Em) \leqslant C < + \infty$. Thus, we can pass to the limit in \eqref{equation_boundary_value_closed} and see that for any $\varphi \in C^1(\Omega \times D, \R^p)$, 
\begin{equation*}
\iint_{\Omega \times D} \nabla \cdot \varphi \ddr \mubf + \iint_{\Omega \times D} \nabla_D \varphi \cdot \ddr \Em = \iint_{\Omega \times D} \nabla \cdot \varphi \ddr \mubf_b + \iint_{\Omega \times D} \nabla_D \varphi \cdot \ddr \Em_b,
\end{equation*}  
which exactly means that $\mubf |_{\dr \Omega} = \mubf_b |_{\dr \Omega}$.
\end{proof}

\section{The Dirichlet problem and its dual}
\label{section_Dirichlet_problem}

\subsection{Statement of the problem}
\label{subsection_Dirichlet_problem}

With all the tools at our disposal, we are ready to state the Dirichlet problem. It simply consists in minimizing the Dirichlet energy under the constraint that the values at the boundary are fixed. 

\begin{defi}
Let $\mubf_b \in H^1(\Omega, \Prob(D))$. Then the Dirichlet problem with boundary values $\mubf_b$ is defined as 
\begin{equation*}
\min_{\mubf} \left\{ \Dir(\mubf) \ : \ \mubf \in H^1(\Omega, \Prob(D)) \text{ and } \mubf |_{\dr \Omega} = \mubf_b |_{\dr \Omega} \right\}.
\end{equation*}
A mapping $\mubf \in H^1(\Omega, \Prob(D))$ which realizes the minimum is called a solution of the Dirichlet problem. 
\end{defi}

\begin{defi}
Let $\mubf \in H^1(\Omega, \Prob(D))$. We say that $\mubf$ is harmonic if it is a solution of the Dirichlet problem with boundary values $\mubf$.
\end{defi}

With the work of the previous section, the existence of at least one solution is a straightforward application of the direct method of calculus of variations. 

\begin{theo}
\label{theorem_existence_solution_Dirichlet}
Let $\mubf_b \in H^1(\Omega, \Prob(D))$. Then there exists at least one solution of the Dirichlet problem with boundary values $\mubf_b$. 
\end{theo}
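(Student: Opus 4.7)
The plan is a textbook application of the direct method of the calculus of variations, since all the heavy lifting has been done in the preceding subsections. First I would note that the admissible set is non-empty: $\mubf_b$ itself satisfies $\mubf_b|_{\dr \Omega} = \mubf_b|_{\dr \Omega}$ and has $\Dir(\mubf_b) < +\infty$ by hypothesis. Thus the infimum $m := \inf\{ \Dir(\mubf) : \mubf \in H^1(\Omega, \Prob(D)),\ \mubf|_{\dr \Omega} = \mubf_b|_{\dr \Omega} \}$ is a finite non-negative real number, and I can pick a minimizing sequence $(\mubf_n)_{n \in \N}$ with $\mubf_n|_{\dr \Omega} = \mubf_b|_{\dr \Omega}$ and $\Dir(\mubf_n) \to m$. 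In particular, up to discarding finitely many terms, one has $\Dir(\mubf_n) \leq \Dir(\mubf_b)$ for all $n$.

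Next I would invoke compactness: viewed inside $\Prob_0(\Omega \times D) \subset \Prob(\Omega \times D)$, the space $L^2(\Omega, \Prob(D))$ equipped with the weak topology is a compact metrizable space (stated explicitly after the definition of the weak topology). Hence, up to extraction, the minimizing sequence converges weakly to some limit $\mubf \in L^2(\Omega, \Prob(D))$.

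It remains to check that $\mubf$ is admissible and optimal. The set $\{\nubf \in H^1(\Omega, \Prob(D)) : \nubf|_{\dr \Omega} = \mubf_b|_{\dr \Omega},\ \Dir(\nubf) \leq \Dir(\mubf_b)\}$ is closed for the weak topology by Proposition \ref{proposition_boundary_fixed_closed}, so $\mubf$ belongs to it; in particular $\mubf \in H^1(\Omega, \Prob(D))$ and $\mubf|_{\dr \Omega} = \mubf_b|_{\dr \Omega}$. Finally, the weak lower semi-continuity of the Dirichlet energy (Proposition \ref{proposition_lsc_convex_Dir}) yields
\begin{equation*}
\Dir(\mubf) \leq \liminf_{n \to +\infty} \Dir(\mubf_n) = m,
\end{equation*}
so $\mubf$ achieves the infimum and is a solution of the Dirichlet problem.

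There is essentially no obstacle in this proof: everything reduces to assembling three previously established facts (weak compactness of $L^2(\Omega, \Prob(D))$, weak closedness of the boundary constraint with a prescribed energy bound, and weak lower semi-continuity of $\Dir$). The genuine difficulty, namely proving that $\Dir$ is weakly l.s.c.\ and that the trace condition passes to weak limits, has already been absorbed into Propositions \ref{proposition_lsc_convex_Dir} and \ref{proposition_boundary_fixed_closed}; what is noticeably absent, and what I would \emph{not} attempt here, is any uniqueness statement, since convexity of $\Dir$ under the affine structure on $L^2(\Omega, \Prob(D))$ does not translate into strict convexity along admissible variations.
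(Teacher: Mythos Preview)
Your proof is correct and follows essentially the same approach as the paper: direct method of the calculus of variations using weak compactness of $L^2(\Omega, \Prob(D))$, closedness of the boundary constraint (Proposition \ref{proposition_boundary_fixed_closed}), and weak lower semi-continuity of $\Dir$ (Proposition \ref{proposition_lsc_convex_Dir}). Your version is slightly more explicit in invoking the uniform energy bound needed to apply Proposition \ref{proposition_boundary_fixed_closed}, but otherwise the arguments coincide.
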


\begin{proof}
There exists at least one $\mubf$ with finite Dirichlet energy which satisfies the boundary conditions, namely $\mubf_b$. Thus, one can consider a minimizing sequence $(\mubf_n)_{n \in \N}$. By compactness of $L^2( \Omega, \Prob(D))$, we can assume, up to extraction, that this sequence converges weakly to some $\mubf \in L^2(\Omega, \Prob(D))$. By Proposition \ref{proposition_boundary_fixed_closed}, we know that $\mubf$ also satisfies $\mubf |_{\dr \Omega} = \mubf_b |_{\dr \Omega}$. The lower semi-continuity of $\Dir$ allows to conclude that $\mubf$ is a minimizer of $\Dir$.
\end{proof}

Let us spend a few words about the question of uniqueness. Of course, the proof above provides no information about it. By convexity of the Dirichlet energy (Proposition \ref{proposition_lsc_convex_Dir}), we know that the set of solutions of the Dirichlet problem is convex. Recall that if $\Omega = [0,1]$ is a segment of $\R$, then the Dirichlet problem reduces to the problem of finding a geodesic between the two endpoints $\mubf_b(0)$ and $\mubf_b(1)$. It is well known that a sufficient condition for uniqueness is to impose that either $\mubf_b(0)$ or $\mubf_b(1)$ are absolutely continuous w.r.t. $\Leb_D$, and there can be non uniqueness when it is not the case (see for instance \cite[Chapter 5]{OTAM}). Hence, it would natural, in order to investigate the question of uniqueness, to impose that for every $\xi \in \dr \Omega$, the measure $\mubf_b(\xi)$ is absolutely continuous w.r.t. $\Leb_D$ (or maybe just for $\xi$ belonging to a positive fraction of $\dr \Omega$).
We do not know if uniqueness holds under this hypothesis: a difference with the case where $\Omega$ is a segment is the fact that we do not know a static or Lagrangian formulation. In other words, we do not know the equivalent of transport plans, which in the case of a $1$-dimensional $\Omega$, allow to parametrize geodesics and to greatly simply the problem. However we are able to prove uniqueness in a non trivial case: the one of a family of elliptically contoured distributions treated in Subsection \ref{subsection_location_scatter}, see also the introduction where the strategy of the proof is discussed.  

\subsection{Lipschitz extension}
\label{subsection_Lipschitz_extension}
 
To give ourselves the boundary conditions, we need a mapping $\mubf_b$ defined on the whole $\Omega$, even though only its values near $\dr \Omega$ will play a role. Thus a natural question arises: if $\mubf_b$ is only defined on $\dr \Omega$, is it possible to extend it on $\Omega$? The next theorem shows that the answer is positive in the case where $\mubf_b$ is Lipschitz on $\dr \Omega$. Indeed, in this case we can build an extension which is Lipschitz on $\Omega$, thus in $H^1(\Omega, \Prob(D))$ thanks to Theorem \ref{theorem_equivalence_sobolev_metric}.   

\begin{theo}
\label{theorem_lipschitz_extension}
Let $\mubf_l : \dr \Omega \to \Prob(D)$ a Lipschitz mapping. Then there exists $\mubf : \Omega \to \Prob(D)$ Lipschitz such that $\mubf(\xi) = \mubf_l(\xi)$ for every $\xi \in \dr \Omega$. 
\end{theo}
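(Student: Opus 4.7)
The plan is to first handle the case where $\Omega = \overline{B}(0,1)$ is the closed unit ball of $\R^p$, and then reduce the general Lipschitz domain to the ball case by a finite decomposition. The key idea is to extend $\mubf_l$ by an \emph{affine} (not geodesic) averaging in $\Prob(D)$ against a family of probability measures on $\dr \Omega$ that concentrates on the correct Dirac mass as one approaches the boundary.

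For the ball case, I would construct a family $(K_\xi)_{\xi \in \overline{B}(0,1)}$ of probability measures on $\Sph^{p-1}$ with the following two properties: (i) $K_\eta = \delta_\eta$ for every $\eta \in \Sph^{p-1}$, and (ii) the map $\xi \mapsto K_\xi$ is Lipschitz from $\overline{B}(0,1)$ into the metric space $(\Prob(\Sph^{p-1}), W_2)$ (for the chord distance on $\Sph^{p-1}$). A workable choice is to set, for $\xi = r \theta$ with $r \in [0,1]$ and $\theta \in \Sph^{p-1}$, the measure $K_\xi$ to be an appropriately smoothed, normalized indicator of a spherical cap around $\theta$ whose angular radius is a Lipschitz function of $r$ that vanishes exactly at $r=1$, with $K_0$ being the uniform measure on $\Sph^{p-1}$. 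Then define
\begin{equation*}
\mubf(\xi) := \int_{\Sph^{p-1}} \mubf_l(\eta)\, K_\xi(\ddr \eta),
\end{equation*}
the integral being taken in the affine sense on $\Prob(D)$, as in \eqref{equation_definition_integral_affine}. Property (i) ensures that $\mubf = \mubf_l$ on $\Sph^{p-1}$.

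The Lipschitz control on $\mubf$ comes from the joint convexity of $W_2^2$: for any transport plan $\pi \in \Pi(K_{\xi_1}, K_{\xi_2})$ on $\Sph^{p-1} \times \Sph^{p-1}$, choosing measurably an optimal transport plan between $\mubf_l(\eta_1)$ and $\mubf_l(\eta_2)$ for each $(\eta_1, \eta_2)$ and integrating against $\pi$ produces a transport plan between $\mubf(\xi_1)$ and $\mubf(\xi_2)$, yielding
\begin{equation*}
W_2^2(\mubf(\xi_1), \mubf(\xi_2)) \leqslant \iint_{\Sph^{p-1} \times \Sph^{p-1}} W_2^2(\mubf_l(\eta_1), \mubf_l(\eta_2))\, \ddr \pi(\eta_1, \eta_2) \leqslant L^2 \iint |\eta_1 - \eta_2|^2 \, \ddr \pi,
\end{equation*}
where $L$ denotes the Lipschitz constant of $\mubf_l$. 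Taking the infimum over $\pi$ and invoking (ii) gives $W_2(\mubf(\xi_1), \mubf(\xi_2)) \leqslant L\, L'\, |\xi_1 - \xi_2|$, with $L'$ the Lipschitz constant of $\xi \mapsto K_\xi$. For a general $\Omega$ with Lipschitz boundary, I would cover $\Omega$ by finitely many closed pieces, each in bilipschitz bijection with the closed ball, chosen so that each piece of $\dr \Omega$ is mapped to a part of the sphere while internal interfaces lie in the interior of $\Omega$. The ball construction above is used to build the extension on each piece from its boundary data (which, on $\dr \Omega$, is prescribed by $\mubf_l$, and on internal interfaces, is prescribed inductively from neighboring pieces, which is admissible since the trace of a Lipschitz mapping on an interface is itself Lipschitz).

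The main obstacle I expect is the construction of the family $(K_\xi)$ in step (ii), specifically managing the transition from a diffuse measure in the interior to Dirac masses on the boundary in a quantitatively controlled way. The natural candidate — the Poisson kernel / harmonic measure — degenerates as $\xi \to \Sph^{p-1}$ and does not give a uniform $W_2$-Lipschitz dependence, so a more hands-on mollification is needed. Moreover, the Lipschitz constant $L'$ obtained will inevitably be dimension-dependent and, as the author warns, the overall Lipschitz constant of the extension $\mubf$ cannot be controlled purely by the Lipschitz constant of $\mubf_l$; one has to accept this qualitative outcome, which is harmless because the statement only asks for \emph{some} Lipschitz extension.
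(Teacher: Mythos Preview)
Your ball-case construction is genuinely different from the paper's and, once the kernel family $(K_\xi)$ is built, the Lipschitz estimate via joint convexity of $W_2^2$ is correct and clean. The paper does not average: it fixes a point $x_0 \in D$, sets $T_r(x) = r x + (1-r)x_0$, and defines $\mubf(r\theta) := T_r \# \mubf_l(\theta)$; then $r \mapsto \mubf(r\theta)$ is the Wasserstein geodesic from $\delta_{x_0}$ to $\mubf_l(\theta)$, and the angular Lipschitz estimate comes from the fact that $T_r$ is $r$-Lipschitz (hence $\nu \mapsto T_r\#\nu$ is $r$-Lipschitz on $\Prob(D)$). Your averaging scheme trades this for the construction of a $W_2$-Lipschitz family of measures on $\Sph^{p-1}$ degenerating to Dirac masses on the boundary; the naive ``uniform on a shrinking cap'' is not quite Lipschitz near the center (where $|\xi_1-\xi_2|$ is small but the caps are rotated by a large angle), but this is repaired by keeping $K_\xi$ exactly uniform for, say, $|\xi| \leqslant 1/2$ and interpolating to a Dirac only on the outer annulus. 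Both approaches yield an extension whose Lipschitz constant depends on the diameter of $\Prob(D)$ and not only on that of $\mubf_l$, which, as you note, is unavoidable and harmless here.

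Your reduction for a general Lipschitz $\Omega$ has a real gap. Prescribing interface data ``inductively from neighboring pieces'' runs into a chicken-and-egg problem: to apply the ball construction to a piece $X_1$ you need Lipschitz data on \emph{all} of $\partial X_1$, including interfaces shared with pieces $X_2, X_3, \ldots$ that have not yet been treated. The paper resolves this by inducting on the \emph{dimension} $p$, not on the pieces: the union $\bigcup_n \partial X_n$ consists of $\partial\Omega$ together with finitely many internal interfaces that are (after bilipschitz straightening) $(p-1)$-dimensional Lipschitz domains; one first extends $\mubf_l$ to the internal interfaces using the theorem in dimension $p-1$ (the base case $p=1$ is a geodesic between two endpoints), and only then fills in each $X_n$ via the ball lemma. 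Your decomposition idea is the right one, but the induction has to be set up this way.
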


\noindent For a continuous $\mubf$ the boundary term $\BT_{\mubf}$ depends only on the values of $\mubf$ on $\dr \Omega$ (Theorem \ref{theorem_boundary_values}), hence the boundary term of the Lipschitz extension of $\mubf_l : \dr \Omega \to \Prob(D)$ does not depend on the extension. In other words, the following problem is well defined: 

\begin{defi}
Let $\mubf_l : \dr \Omega \to \Prob(D)$ a Lipschitz mapping. Then the Dirichlet problem with boundary values $\mubf_l$ is defined as the Dirichlet problem with boundary values $\mubf_b$, where $\mubf_b$ is any Lipschitz extension of $\mubf_l$ on $\Omega$.
\end{defi}

Now, let us prove the Lipschitz extension theorem. It relies on the following Lemma, which allows to treat the case where $\Omega$ is a ball. 

\begin{lm}
\label{lemma_lipschitz_extension_balls}
Let $B(0,1)$ be the unit ball of $\R^p$ and $\Sph^{p-1} := \dr B(0,1)$ its boundary. Let $\mubf_l : \Sph^{d-1} \to \Prob(D)$ a Lipschitz mapping and take $x_0 \in D$. Define, for any $r \in [0,1]$ the map $T_r : D \to D$ by $T_r(x) = r x + (1-r) x_0$. Then the mapping $\mubf : B(0,1) \to \Prob(D)$ defined by 
\begin{equation*}
\mubf(r\xi) := T_r \# [\mubf(\xi)]
\end{equation*}
for any $r \in [0,1]$ and any $\xi \in \Sph^{d-1}$ is Lipschitz. 
\end{lm}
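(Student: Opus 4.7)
The plan is to establish two elementary estimates (one ``angular'' and one ``radial'') and then glue them via the triangle inequality, using the polar-coordinate identity to convert the bound into one proportional to the Euclidean distance $|r\xi - s\eta|$.

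First I would check that $\mubf$ is well-defined at the origin: since $T_0(x) = x_0$ for every $x \in D$, one has $T_0 \# \mubf_l(\xi) = \delta_{x_0}$ independently of $\xi \in \Sph^{p-1}$, so the value $\mubf(0)$ does not depend on the chosen representation $0 = 0 \cdot \xi$. Let $L$ denote the Lipschitz constant of $\mubf_l$ and set $R := \mathrm{diam}(D)$.

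Next, the \emph{angular} estimate: for any $r \in [0,1]$ and $\xi, \eta \in \Sph^{p-1}$, I would take an optimal transport plan $\pi \in \Pi(\mubf_l(\xi), \mubf_l(\eta))$ and push it forward through $(T_r, T_r)$. Since $|T_r(x) - T_r(y)| = r|x-y|$, this yields a competitor between $T_r \# \mubf_l(\xi)$ and $T_r \# \mubf_l(\eta)$, hence
\begin{equation*}
W_2(\mubf(r\xi), \mubf(r\eta)) \leqslant r \, W_2(\mubf_l(\xi), \mubf_l(\eta)) \leqslant r L |\xi - \eta|.
\end{equation*}
The \emph{radial} estimate, for $r \leqslant s$ and $\xi \in \Sph^{p-1}$, I would get by using the diagonal coupling $(T_r, T_s) \# \mubf_l(\xi) \in \Pi(\mubf(r\xi), \mubf(s\xi))$ together with $|T_r(x) - T_s(x)| = (s-r)|x - x_0| \leqslant (s-r)R$, giving
\begin{equation*}
W_2(\mubf(r\xi), \mubf(s\xi)) \leqslant (s-r) R.
\end{equation*}

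Finally I would combine these via the triangle inequality: assuming without loss of generality $r \leqslant s$,
\begin{equation*}
W_2(\mubf(r\xi), \mubf(s\eta)) \leqslant W_2(\mubf(r\xi), \mubf(r\eta)) + W_2(\mubf(r\eta), \mubf(s\eta)) \leqslant rL|\xi - \eta| + (s-r) R.
\end{equation*}
The key identity is the polar decomposition
\begin{equation*}
|r\xi - s\eta|^2 = (s-r)^2 + rs |\xi - \eta|^2,
\end{equation*}
which immediately gives $|s-r| \leqslant |r\xi - s\eta|$ and $\sqrt{rs}|\xi - \eta| \leqslant |r\xi - s\eta|$. Since $r \leqslant s$ implies $r = \sqrt{r^2} \leqslant \sqrt{rs}$, we obtain $r|\xi - \eta| \leqslant |r\xi - s\eta|$, and therefore
\begin{equation*}
W_2(\mubf(r\xi), \mubf(s\eta)) \leqslant (L + R) |r\xi - s\eta|,
\end{equation*}
proving that $\mubf$ is Lipschitz on $B(0,1)$ with constant at most $L + \mathrm{diam}(D)$.

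The only subtle point is the behaviour near the origin, where the polar representation degenerates; this is resolved automatically by the fact that the factor $r$ appearing in the angular estimate kills any blow-up of $|\xi - \eta|$, and the identity $r \leqslant \sqrt{rs}$ (valid when $r \leqslant s$) converts this into the desired Euclidean Lipschitz bound without a hidden $1/\sqrt{rs}$ factor. No further analytic ingredient beyond the compactness of $D$ and the Lipschitz property of $\mubf_l$ is needed.
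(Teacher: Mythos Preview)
Your proof is correct and follows the same two-estimate strategy as the paper (radial bound from the geodesic to $\delta_{x_0}$, angular bound from the $r$-Lipschitz property of $T_r \#$), combined by the triangle inequality. You are actually more explicit than the paper, which stops at $W_2(\mubf(r\xi),\mubf(s\eta)) \leqslant C[|r-s| + \min(r,s)|\xi-\eta|]$ and leaves the conversion to the Euclidean distance implicit, whereas you carry it out via the polar identity; you also check well-definedness at the origin, which the paper omits.
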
 

\begin{proof}
If $\xi \in \Sph^{d-1}$ is fixed, then $r \in [0,1] \mapsto \mubf(r\xi)$ is the constant speed geodesic joining $\delta_{x_0}$ to $\mubf_l(\xi)$. Hence, we can write that $W_2(\mubf(r\xi), \mubf(s \xi)) \leqslant C |r-s|$, where $C$ depends only on the diameter of $\Prob(D)$. On the other hand, as $T_r$ is $r$-Lipschitz in $D$, then $\nu \mapsto T_r \# \nu$ is also $r$-Lipschitz in $\Prob(D)$. Hence, for any $\xi$ and $\eta$ in $\Sph^{d-1}$, one has $W_2(\mubf(r\xi), \mubf(r\eta)) \leqslant C r |\xi-\eta|$, where $C$ is the Lipschitz constant of $\mubf_l$. Putting the two estimates together, we deduce that for any $r,s \in [0,1]$ and any $\xi,\eta \in \Sph^{p-1}$, 
\begin{equation*}
W_2(\mubf(r\xi), \mubf(s \eta)) \leqslant C [|r-s| + \min(r,s) |\xi-\eta|],
\end{equation*}  
which is enough to conclude that $\mubf$ is Lipschitz.
\end{proof}

\noindent Notice that the Lipschitz constant of the extension is not controlled by the Lipschitz constant of $\mubf_l$: the distance between $\delta_{x_0}$ and the range of $\mubf_l$ also plays a role as $\mubf(0) = \delta_{x_0}$. Hence, we cannot use a decomposition with Withney cubes to extend mappings defined on arbitrary closed subsets $\Omega$, but only on the boundary of smooth sets: basically we need to use Lemma \ref{lemma_lipschitz_extension_balls} only a finite number of times. 

\begin{proof}[Proof of Theorem \ref{theorem_lipschitz_extension}]
We will use Lemma \ref{lemma_lipschitz_extension_balls} in the following form: if $\Omega$ is a domain which is in a bilipschitz bijection with a ball, then Theorem \ref{theorem_lipschitz_extension} holds for this domain. 

We reason by induction on $p \geqslant 1$ the dimension of $\Omega$. In dimension $1$, $\Omega = I$ is a segment. To extend a mapping defined only on the boundary of the segment $I$, we take the constant speed geodesic in $\Prob(D)$ between the values of $\mubf_l$ at the two endpoints of $I$. 

Now assume that the result holds for some $p-1 \geqslant 1$ and let $\Omega$ be a compact domain with Lipschitz boundary in $\R^p$. The goal is to cut $\Omega$ in a finite number of pieces on which Lemma \ref{lemma_lipschitz_extension_balls} apply. For each $\xi \in \Omega$ we choose $r_{\xi} >0$ such that $B(\xi, r_{\xi}) \cap \Omega$ is in a bilipschitz bijection with a ball. It is obvious that we can do that for $\xi \in \Or$, and for points on $\dr \Omega$ we use the fact that $\Omega$ is locally the epigraph of a Lipschitz function. By compactness, we find balls $B_1, B_2, \ldots, B_N$ covering $\Omega$ such that $B_n \cap \Omega$ is in a bilipschitz bijection with a ball for any $n \in \{ 1,2, \ldots, N \}$. We can of course assume that $B_n$ is not included in $B_m$ for any $n \neq m$. Then we define recursively $X_1 := B_1 \cap \Omega$ and $X_n = (B_n \cap \Omega) \bsl \mathring{X}_{n-1}$ for $n \in \{ 2, \ldots, N \}$. For any $n \in \{ 1,2, \ldots, N \}$, $X_n$ is still in a bilipschitz bijection with a ball. On $\bigcup_n \dr X_n$, which is made of $\dr \Omega$ and of pieces of spheres of $\R^p$, thus locally in bilipschitz bijection with Lipschitz domains of $\R^{p-1}$, we can use the induction assumption and extend $\mubf_l$. Then, we use Lemma \ref{lemma_lipschitz_extension_balls} to extend $\mubf$ on $\mathring{X}_n$ for each $n \in \{ 1,2, \ldots, N \}$. We have obtained a function $\mubf$ which is continuous and Lipschitz on each $X_n, n \in \{ 1,2, \ldots, N \}$: it is globally Lipschitz on $\Omega$.     
\end{proof} 

\subsection{The dual problem}

We will know show a rigorous proof of the absence of duality gap. The dual problem was already obtained, at least formally, in the introduction. 

\begin{theo}
\label{theorem_duality}
Let $\mubf_b \in H^1(\Omega, \Prob(D))$. Then one has 
\begin{align*}
\sup_\varphi \Bigg\{ \BT_{\mubf_b}(\varphi) \ : \ \varphi \in C^1(\Omega \times D, \R^p) \ & \text{ and } \ \nabla_\Omega \cdot \varphi + \frac{|\nabla_D \varphi|^2}{2} \leqslant 0 \text{ on } \Omega \times D \Bigg\} \\
& = \min_{\mubf} \left\{ \Dir(\mubf) \ : \ \mubf \in H^1(\Omega, \Prob(D)) \text{ and } \mubf |_{\dr \Omega} = \mubf_b |_{\dr \Omega} \right\}.
\end{align*}
\end{theo}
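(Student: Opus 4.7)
The plan is to derive the identity by applying the Fenchel-Rockafellar duality theorem, making rigorous the formal $\inf$-$\sup$ exchange sketched in the introduction. First I would establish the easy direction $\sup \leqslant \min$: given $\mubf$ admissible for the primal with tangent momentum $\Em = \vbf \mubf$, and $\varphi \in C^1(\Omega \times D, \R^p)$ satisfying the pointwise dual constraint, Theorem \ref{theorem_boundary_values} applied to $\mubf$ (for which $\BT_\mubf = \BT_{\mubf_b}$) gives
\begin{equation*}
\BT_{\mubf_b}(\varphi) = \iint_{\Omega \times D} \nabla_\Omega \cdot \varphi \, \ddr \mubf + \iint_{\Omega \times D} \nabla_D \varphi \cdot \vbf \, \ddr \mubf \leqslant \iint_{\Omega \times D} \left( -\tfrac{1}{2} |\nabla_D \varphi|^2 + \nabla_D \varphi \cdot \vbf \right) \ddr \mubf \leqslant \Dir(\mubf),
\end{equation*}
where the two inequalities use the pointwise constraint $\nabla_\Omega \cdot \varphi + \frac{1}{2}|\nabla_D \varphi|^2 \leqslant 0$ and Young's inequality respectively.

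For the reverse inequality I would apply Fenchel-Rockafellar with $X := C^1(\Omega \times D, \R^p)$, $Y := C(\Omega \times D) \times C(\Omega \times D, \R^{pq})$, the continuous linear operator $A\varphi := (\nabla_\Omega \cdot \varphi, \nabla_D \varphi)$, the continuous linear functional $F(\varphi) := -\BT_{\mubf_b}(\varphi)$ (which makes sense since $\BT_{\mubf_b} \in \M(\Omega \times D, \R^p)$ by Theorem \ref{theorem_boundary_values}), and the convex l.s.c. indicator $G(a,b) := 0$ if $(a(\xi,x), b(\xi,x)) \in \mathcal{K}$ for every $(\xi,x) \in \Omega \times D$, and $+\infty$ otherwise. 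The LHS of the theorem is precisely $-\inf_{\varphi \in X}\{F(\varphi) + G(A\varphi)\}$. The qualification condition can be verified by the explicit choice $\varphi_0(\xi,x) := -\xi$: then $A\varphi_0 = (-p, 0)$ lies strictly in the interior of $\mathcal{K}$ (with gap $p$ in the sup-norm), so $G$ is continuous at $A\varphi_0$. The duality theorem then yields
\begin{equation*}
\inf_{\varphi \in X}\{F(\varphi) + G(A\varphi)\} = \max_{y^* \in Y^*}\{-F^*(-A^*y^*) - G^*(y^*)\}.
\end{equation*}

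Writing a generic $y^* \in Y^*$ as a pair $(\mubf, \Em) \in \M(\Omega \times D) \times \M(\Omega \times D, \R^{pq})$, linearity of $F$ forces $A^*y^* = \BT_{\mubf_b}$, which is exactly the weak formulation of the continuity equation extended up to the boundary, i.e.\ $\BT_\mubf = \BT_{\mubf_b}$. On the finiteness locus ($\mubf \geqslant 0$), Definition \ref{definition_dirichlet_mu_E} identifies $G^*(\mubf, \Em)$ with $\Dir(\mubf, \Em)$; carrying out the infimum in $\Em$ first as in Definition \ref{definition_dirichlet_mu}, the dual collapses to $\min\{\Dir(\mubf) : \mubf|_{\dr\Omega} = \mubf_b|_{\dr\Omega}\}$, which closes the identity. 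The most delicate step is to check, from the single identity $A^*y^* = \BT_{\mubf_b}$, that the signed measure $\mubf$ actually belongs to $\Prob_0(\Omega \times D)$: nonnegativity comes from finiteness of $G^*(\mubf, \Em)$ (taking $b = 0$ and $a \leqslant 0$ in the supremum defining $G^*$), while the identification of the $\xi$-marginal of $\mubf$ with $\Leb_\Omega$ is obtained by testing the constraint against functions of the form $\varphi(\xi,x) = \chi(\xi) e_\alpha$ and comparing with the corresponding identity satisfied by $\mubf_b$ itself; all other ingredients (continuity of $\BT_{\mubf_b}$ on $C^1$, computation of the Legendre transforms) are routine.
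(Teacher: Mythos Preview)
Your proposal is correct and follows essentially the same Fenchel--Rockafellar strategy as the paper. The only cosmetic differences are that you use the operator form of the duality (with $A\varphi = (\nabla_\Omega\cdot\varphi,\nabla_D\varphi)$ explicit) whereas the paper absorbs this operator into the definition of one of the two functionals on $C(\Omega\times D,\R^{1+pq})$, and the roles of the two convex functionals are swapped; your qualification point $\varphi_0(\xi)=-\xi$ and the paper's $(-1,0)$ (realized by $\varphi(\xi,x)=(-\xi^1,0,\ldots,0)$) play the same role, and the identification of the $\Omega$-marginal is the same argument (the paper writes an arbitrary $a\in C(\Omega)$ as $\nabla_\Omega\cdot\varphi$ via $\varphi=\nabla f$ with $\Delta f=a$, which is what your test functions $\chi(\xi)e_\alpha$ amount to once summed).
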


\begin{proof}
We rely on the Fenchel-Rockafellar duality theorem which can be found in \cite[Theorem 1.9]{Villani2003}. Let $X := C(\Omega \times D, \R^{1+pq})$ the space of continuous functions defined on the compact space $\Omega \times D$ and valued in $\R^{1+pq}$ endowed with the norm of uniform convergence. An element of $X$ will be written $(a,b)$, where $a \in C(\Omega \times D)$ and $b \in C(\Omega \times D, \R^{pq})$. The dual space $X^\star$ is, by the Riesz theorem, $\M(\Omega \times D, \R^{1+pq})$. Again an element of $X^\star$ will be written $(\mubf, \Em)$ where $\mubf \in \M(\Omega \times D)$ is a signed measure and $\Em \in \M(\Omega \times D, \R^{pq})$ is a vector-valued measure. We introduce the functionals $F : X \to \R$ and $G : X \to \R$ defined as, for any $(a,b) \in X$, 
\begin{align*}
F(a,b) & = \begin{cases}
0 & \text{if } \dst{a(\xi, x) + \frac{|b(\xi, x)|^2}{2}} \leqslant 0 \text{ for every } (\xi, x) \in \Omega \times D \\
+ \infty & \text{else},
\end{cases}
\\
G(a,b) & = \begin{cases}
- \BT_{\mubf_b}(\varphi) & \text{if } (a,b) = (\nabla_\Omega \cdot \varphi, \nabla_D \varphi) \text{ for some } \varphi \in C^1(\Omega \times D, \R^p) \\
+ \infty & \text{else}.
\end{cases}
\end{align*}
Notice that thanks to \eqref{equation_continuity_equation_boundary}, $G$ is well defined and does not depend on the choice of $\varphi$ such that $(a,b) = (\nabla_\Omega \cdot \varphi, \nabla_D \varphi)$. Notice also that at the point $(-1,0) \in X$, one has that $F$ is finite and continuous and that $G$ is finite (take $\varphi(\xi, x) := (- \xi^1, 0, 0, \ldots, 0)$, where $\xi^1$ is the first component of $\xi$). As moreover $F$ and $G$ are convex, one can apply Fenchel-Rockafellar duality which means 
\begin{align*}
- \min_{(\mubf, \Em) \in X^\star} [ F^\star(\mubf, \Em) + G^\star(-\mubf,& -\Em) ]  = \inf_X (F+G) \\
& = - \sup_\varphi \Bigg\{ \BT_{\mubf_b}(\varphi) \ : \ \varphi \in C^1(\Omega \times D, \R^p) \ \text{ and } \ \nabla_\Omega \cdot \varphi + \frac{|\nabla_D \varphi|^2}{2} \leqslant 0 \Bigg\},
\end{align*}
where the last inequality is just a rewriting of the definition of $F$ and $G$. Let us compute $F^\star(\mubf, \Em)$. By definition, 
\begin{equation*}
F^\star(\mubf, \Em) = \sup_{a,b} \left\{ \iint_{\Omega \times D} a \ddr \mubf + \iint_{\Omega \times D} b \cdot \ddr \Em \ : (a,b) \in C(\Omega \times D, \mathcal{K})  \right\},
\end{equation*} 
where $\mathcal{K}$ is defined in Definition \ref{definition_dirichlet_mu_E}. In particular, if $\mubf$ is not a positive measure, then choosing suitable negative $a$, one sees that $F^\star(\mubf, \Em) = + \infty$. Moreover, if $\mubf \in L^2(\Omega, \Prob(D))$ and $(\mubf, \Em)$ satisfies the continuity equation, then $F^\star(\mubf, \Em) = \Dir(\mubf, \Em)$: this is precisely Definition \ref{definition_dirichlet_mu_E}. On the other hand, we can compute $G^\star$: for any $(\mubf,\Em) \in X^\star$, 
\begin{equation*}
G^\star(-\mubf, -\Em) = \sup_{\varphi \in C^1(\Omega \times D, \R^p)} \left( \BT_{\mubf_b}(\varphi) - \iint_{\Omega \times D} \nabla_\Omega \cdot \varphi \ddr \mubf - \iint_{\Omega \times D} \nabla_D \varphi \cdot \ddr \Em \right).
\end{equation*} 
By linearity of the expression inside the $\sup$ w.r.t. $\varphi$, we see that $G^\star(-\mubf, -\Em) < + \infty$ if and only if $G^\star(-\mubf, -\Em) = 0$, which translates in 
\begin{equation*}
\BT_{\mubf_b}(\varphi) = \iint_{\Omega \times D} \nabla_\Omega \cdot \varphi \ddr \mubf + \iint_{\Omega \times D} \nabla_D \varphi \cdot \ddr \Em
\end{equation*}
for every $\varphi \in C^1(\Omega \times D, \R^p)$. Let $a \in C(\Omega)$ a continuous function. It can always be written $a = \nabla_\Omega \cdot \varphi$, where $\varphi \in C^1(\Omega, \R^p)$ (take $\varphi = \nabla f$ where $f$ solves $\Delta f = a$), thus using the fact that for such a $\varphi$, 
\begin{equation*}
\BT_{\mubf_b}(\varphi) = \iint_{\Omega \times D} \nabla_\Omega \cdot \varphi \ddr \mubf_b = \iint_{\Omega \times D}a \ddr \mubf_b = \int_\Omega a(\xi) \ddr \xi,
\end{equation*}
one sees that if $G^\star(-\mubf, -\Em) < + \infty$, then 
\begin{equation*}
\int_\Omega a(\xi) \ddr \xi = \iint_{\Omega \times D} a \ddr \mubf.
\end{equation*}
Provided that $\mubf$ is a positive measure (recall that it happens if $F^\star(\mubf, \Em) < + \infty$) and by arbitrariness of $a$, it implies that the disintegration of $\mubf$ w.r.t. $\Leb_\Omega$ is made of probability measures on $D$, in other words that $\mubf \in L^2(\Omega, \Prob(D))$. Once we have this information, testing with functions $\varphi$ which are compactly supported on $\Omega$, we see that if $G^\star(-\mubf, -\Em) < + \infty$ then $(\mubf, \Em)$ satisfies the continuity equation, and testing with arbitrary $\varphi$, we see that $\BT_{\mubf} = \BT_{\mubf_b}$. In the end, one concludes that 
\begin{equation*}
\min_{(\mubf, \Em) \in X^\star} [ F^\star(\mubf, \Em) + G^\star(-\mubf, -\Em) ] =  \min_{\mubf} \left\{ \Dir(\mubf) \ : \ \mubf \in H^1(\Omega, \Prob(D)) \text{ and } \mubf |_{\dr \Omega} = \mubf_b |_{\dr \Omega} \right\}. \qedhere
\end{equation*}  
\end{proof}

A natural question which arises is the existence of an optimal $\varphi \in C^1(\Omega \times D, \R^p)$ (or in a space of less regular functions). If $\Omega$ is a segment, the constraint on $\varphi$ translates into the Hamilton-Jacobi equation 
\begin{equation*}
\partial_t \varphi + \frac{|\nabla \varphi|^2}{2} \leqslant 0,
\end{equation*}
whose explicit solutions are known. In particular, one can parametrize the function $\varphi$ by its value at the initial time, the unknown becomes a scalar function defined on $D$. In the compact case, by the double convexification trick, one can get compactness in a maximizing sequence. In our case, the constraint reads in full coordinate 
\begin{equation*}
\sum_{\alpha = 1}^p \dr_\alpha \varphi^\alpha + \frac{1}{2} \sum_{\alpha = 1}^p \sum_{i=1}^q |\dr_i \varphi^\alpha|^2 \leqslant 0.
\end{equation*} 
Now we do not know if one can parametrize a $\varphi$ which satisfies the constraint by its values on the boundary of $\dr \Omega$ (and even if it were the case, on which part of the boundary?). Moreover, notice that the function $\varphi$ is vector-valued, though the constraint involves only one scalar equation: to get compactness out of it seems more complicated. We have not investigated deeply the question of the existence of an optimal $\varphi$, but we believe that it can be substantially more complicated than in the case where $\Omega$ is a segment of $\R$.

\section{Failure of the superposition principle}
\label{section_superposition_principle}

\subsection{The superposition principle}

In this section, we want to explain why a powerful tool to study curves valued in the Wasserstein space (i.e. the case where $\Omega$ is a segment of $\R$), namely the superposition principle, fails in higher dimensions. To say it briefly, there is no Lagrangian point of view for mappings into the Wasserstein space, one has to work only with the Eulerian one. Notice that the question of the existence of a superposition principle was already formulated by Brenier \cite[Problem 3.1]{Brenier2003}, but left unanswered. As we want to prove a negative result, we will not only provide a counterexample to the superposition principle, but also try to explain the obstruction and why this principle fails for all but few exceptional cases. Let us first recall the superposition principle for absolutely continuous curves. 

The set $\Omega$ will be replaced by the unit segment $I = [0,1]$. As stated in Proposition \ref{proposition_Benamou_Brenier_1D}, the set $H^1(I, \Prob(D))$ coincides with the set of absolutely continuous curves. We denote by $\Gamma =C(I, D)$ the set of continuous curves valued in $D$ endowed with the norm of uniform convergence, it is a polish space. If $f \in \Gamma$, then $\dot{f}$ denotes the derivative w.r.t. time of $f$ provided that it exists. For any $t \in I$, $e_t : \Gamma \to D$ is the evaluation operator, which means $e_t(f) = f(t)$ for any $f \in \Gamma$. The following result can be found in \cite[Section 8.2]{Ambrosio2008}. 

\begin{theo}
\label{theorem_superposition_1D}
Let $\mubf \in H^1(I, \Prob(D))$. Then there exists a probability measure $Q \in \Prob(\Gamma)$ such that 
\begin{itemize}
\item[(i)] for any $t \in I$, $e_t \# Q = \mubf(t)$ ; 
\item[(ii)] the following equality holds: 
\begin{equation*}
\Dir(\mubf) = \int_{\Gamma} \left( \int_I \frac{1}{2} |\dot{f}(t)|^2 \ddr t \right) Q( \ddr \gamma ).
\end{equation*}
\end{itemize}
\end{theo}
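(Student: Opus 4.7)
The plan is to use the one-dimensional Benamou-Brenier formula (Proposition \ref{proposition_Benamou_Brenier_1D}) to extract a tangent velocity field $\vbf : I \times D \to \R^q$ with $(\mubf, \vbf \mubf)$ solving the continuity equation and $\Dir(\mubf) = \int_I \int_D \tfrac{1}{2}|\vbf|^2 \, \mubf(t, \ddr x) \ddr t$, and then to construct $Q$ as the law of trajectories driven by this velocity field. The obstacle is that $\vbf$ has no spatial regularity beyond $L^2_{\mubf}$, so the ODE $\dot{f}(t) = \vbf(t, f(t))$ does not obviously admit a classical flow. To bypass this I would first regularize: set $\mubf_n(t) := \Phi^D_{1/n} \mubf(t)$ and let $\vbf_n$ be the corresponding tangent velocity field of $\mubf_n$ (or a direct convolution of $\vbf$ against a smoothing kernel in $x$). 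By the contraction property of the heat flow (Proposition \ref{proposition_heat_flow_properties} (iv)) together with Proposition \ref{proposition_Benamou_Brenier_1D}, one has $\int_I \int_D \tfrac{1}{2}|\vbf_n|^2 \mubf_n(t, \ddr x) \ddr t \leqslant \Dir(\mubf)$, and $\vbf_n$ is Lipschitz in $x$ so the Cauchy-Lipschitz theorem yields a well defined flow $X_n : I \times D \to D$ with $X_n(0, x) = x$ and $\partial_t X_n(t, x) = \vbf_n(t, X_n(t, x))$.

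Define $Q_n \in \Prob(\Gamma)$ as the image of $\mubf_n(0)$ under $x \mapsto X_n(\cdot, x)$. The continuity equation together with the ODE ensure $e_t \# Q_n = \mubf_n(t)$ for every $t \in I$, and an integration along trajectories gives
\begin{equation*}
\int_\Gamma \left( \int_I \tfrac{1}{2} |\dot{f}(t)|^2 \ddr t \right) Q_n(\ddr f) = \int_I \int_D \tfrac{1}{2} |\vbf_n(t,x)|^2 \mubf_n(t, \ddr x) \ddr t \leqslant \Dir(\mubf).
\end{equation*}
This uniform $H^1$-type bound on the trajectories, combined with the compactness of $D$, yields tightness of $(Q_n)_{n \in \N}$ in $\Prob(\Gamma)$ via a standard Arzelà-Ascoli argument (uniform $1/2$-Hölder continuity of the curves in $Q_n$-mean). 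Prokhorov's theorem then provides a weakly convergent subsequence $Q_n \to Q \in \Prob(\Gamma)$.

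To conclude I would pass to the limit. The evaluation $e_t : \Gamma \to D$ being continuous, $e_t \# Q = \lim_n \mubf_n(t) = \mubf(t)$, which gives (i). For the energy bound in (ii), the functional $f \mapsto \int_I \tfrac{1}{2}|\dot{f}|^2 \ddr t$ is l.s.c.\ on $\Gamma$ endowed with the uniform topology, hence
\begin{equation*}
\int_\Gamma \left( \int_I \tfrac{1}{2} |\dot{f}|^2 \ddr t \right) Q(\ddr f) \leqslant \liminf_{n \to + \infty} \int_\Gamma \left( \int_I \tfrac{1}{2} |\dot{f}|^2 \ddr t \right) Q_n(\ddr f) \leqslant \Dir(\mubf).
\end{equation*}
The reverse inequality is the easy direction: given any $Q$ satisfying (i), disintegrating $Q$ with respect to $e_t$ and projecting yields a velocity field $\tilde{\vbf}(t, x) := \int \dot{f}(t) \, Q^{t,x}(\ddr f)$ which satisfies the continuity equation for $\mubf$; Jensen's inequality gives $\int_D |\tilde{\vbf}(t,x)|^2 \mubf(t,\ddr x) \leqslant \int_\Gamma |\dot{f}(t)|^2 Q(\ddr f)$, and Definition \ref{definition_dirichlet_mu} then yields $\Dir(\mubf) \leqslant \int_\Gamma \int_I \tfrac{1}{2} |\dot{f}|^2 Q$. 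The main technical point is the tightness argument for $(Q_n)_{n \in \N}$, which requires controlling the modulus of continuity of $X_n(\cdot, x)$ not pointwise but in $\mubf_n(0)$-mean via a Markov-type argument on the energy of trajectories.
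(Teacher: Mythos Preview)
The paper does not give its own proof of this statement: it simply records it as a known fact and cites \cite[Section 8.2]{Ambrosio2008}. Your sketch is essentially the argument found there (regularize to obtain a Lipschitz velocity field, build the flow by Cauchy--Lipschitz, push forward the initial measure to get $Q_n$, pass to the limit by tightness, and recover the energy equality via lower semi-continuity plus the easy Jensen direction), so there is nothing to compare in terms of strategy.

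Two small technical points are worth tightening. First, convolving $\vbf$ directly against a kernel in $x$ does not preserve the continuity equation; the standard route is to mollify the pair $(\mubf,\Em)$ (with $\Em = \vbf\mubf$) and set $\vbf_n := \Em_n/\mubf_n$, which is then smooth and satisfies the equation with $\mubf_n$. Your heat-flow alternative also works, but one must apply $\Phi^D_{1/n}$ to both $\mubf$ and the momentum $\Em$ and argue as in the proof of Theorem~\ref{theorem_approximation} that the resulting $\vbf_n$ is regular enough. Second, since $D$ is a compact domain, one should check that the regularized flow $X_n$ does not exit $D$ (or else work in $\R^q$ with compactly supported measures and project back at the end); this is handled in \cite{Ambrosio2008} by mollifying on all of $\R^q$. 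With these adjustments your argument is correct.
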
  

The measure $Q$ can be seen as a multimarginal transport plan coupling all the different instants, whose $2$-marginals are almost optimal transport plans if they are taken between two very close instants. In other words, for any $t$ and $s$ in $I$, $(e_s, e_t) \# Q$ is a transport plan between $\mubf(s)$ and $\mubf(t)$ (by (i)), and it is almost an optimal transport plan if $s$ is very close to $t$ by (ii). 

Another way to see it is the following: if $f \in \Gamma$, then we can also see it as an element $\mubf_f$ of $H^1(I, \Prob(D))$. Indeed, just set $\mubf(t) = \delta_{f(t)}$ for any $t \in I$, and one can define $\Em_f \in \M(I \times D, \R^q)$ by, for any $b \in C(I \times D, \R^q)$, 
\begin{equation*}
\iint_{I \times D} b \cdot \ddr \Em_f := \int_I b(t, f(t)) \cdot \dot{f}(t) \ddr t.
\end{equation*} 
With this choice, one can check that 
\begin{equation*}
\Dir(\mubf_f) = \Dir(\mubf_f, \Em_f) = \int_I \frac{1}{2} |\dot{f}(t)|^2 \ddr t.
\end{equation*}
Then, Theorem \ref{theorem_superposition_1D} is saying that there exists $Q \in \Prob(\Gamma)$ such that $\mubf$ is the mean w.r.t. $Q$ of the $\mubf_f$ (this is (i)), and such the $\Em$ which is tangent to $\mubf$ is the mean w.r.t. $Q$ of the $\Em_f$. Indeed, by linearity of the continuity equation the mean of the $\Em_f$ is an admissible momentum. Using Jensen's inequality,
\begin{equation*}
\Dir(\mubf) = \Dir \left( \int_{\Gamma} \mubf_f Q(\ddr f) \right) \leqslant \Dir \left( \int_{\Gamma} \mubf_f Q(\ddr f), \int_{\Gamma} \Em_f Q(\ddr f) \right) \leqslant \int_{\Gamma} \Dir(\mubf_f, \Em_f) Q(\ddr f)
\end{equation*}
and the r.h.s. is equal to the l.h.s. by (ii). Hence, all inequalities are equalities, which tells us that $\int_{\Gamma} \Em_f Q(\ddr f)$ is the tangent momentum to $\mubf$.


\medskip  

Let us try to see what a superposition principle would look like if the dimension of $\Omega$ is larger than $1$. We denote by $\F$ the space $L^2(\Omega, D)$ which is a polish space. As it was already done in \cite{Brenier2003}, if $f \in H^1(\Omega, D)$, then we can see it as an element $\mubf_f$ of $H^1(\Omega, \Prob(D))$ by setting $\mubf_f(\xi) := \delta_{f(\xi)}$. In other words, a classical function can be seen as a mapping valued in the Wasserstein space by identifying $f(\xi) \in D$ with $\delta_{f(\xi)} \in \Prob(D)$. More precisely, we define $\mubf_f \in L^2(\Omega, \Prob(D))$ and $\Em_f \in \M(\Omega \times D, \R^{pq})$ by, for any $a \in C(\Omega \times D)$ and $b \in C(\Omega \times D, \R^{pq})$,
\begin{align*}
\iint_{\Omega \times D} a \ddr \mubf_f &:= \int_\Omega a(\xi, f(\xi)) \ddr \xi, \\
\iint_{\Omega \times D} b \cdot \ddr \Em_f &:= \int_\Omega b(\xi, f(\xi)) \cdot \nabla f(\xi) \ddr \xi.
\end{align*}

\begin{prop}
\label{proposition_injection_function_generalized}
If $f \in H^1(\Omega, D)$, and if $\mubf_f$ and $\Em_f$ are defined as above, then $\Em_f$ is tangent to $\mubf_f$ and
\begin{equation*}
\Dir(\mubf_f) = \Dir (\mubf_f, \Em_f) = \int_\Omega \frac{1}{2} |\nabla f(\xi)|^2 \ddr \xi.
\end{equation*}
\end{prop}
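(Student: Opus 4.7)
The plan is to split the proof into three steps: first verify that the pair $(\mubf_f, \Em_f)$ satisfies the continuity equation, then compute $\Dir(\mubf_f, \Em_f)$ directly, and finally establish tangency of $\Em_f$ via the characterization in Proposition \ref{proposition_characterization_v_tangent}.

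For the continuity equation, fix $\varphi \in C^1_c(\Or \times D, \R^p)$ and define $F(\xi) := \varphi(\xi, f(\xi))$. Since $\varphi$ is $C^1$ with bounded derivatives on the compact set $\Omega \times D$ and $f \in H^1(\Omega, D)$, the Sobolev chain rule gives $F \in H^1(\Omega, \R^p)$, and $F$ vanishes outside a compact subset of $\Or$ (by the support hypothesis on $\varphi$), hence $F \in H^1_0(\Or, \R^p)$. Componentwise one has
\begin{equation*}
\dr_\alpha F^\alpha(\xi) = (\dr_\alpha \varphi^\alpha)(\xi, f(\xi)) + \sum_{i=1}^q (\dr_i \varphi^\alpha)(\xi, f(\xi)) \, \dr_\alpha f^i(\xi),
\end{equation*}
so that summing over $\alpha$ and integrating on $\Omega$, $\int_\Omega \nabla_\Omega \cdot F \, \ddr \xi = 0$ (by integration by parts against the zero function on $\dr \Omega$). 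Rewriting the two resulting terms as integrals against $\mubf_f$ and $\Em_f$ gives exactly the weak continuity equation.

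For the energy identity, observe that $\mubf_f = (\Id, f) \# \Leb_\Omega$ is concentrated on the graph $\{(\xi, f(\xi))\}$, and on this graph $\Em_f$ admits the density $\vbf_f(\xi, x) = \nabla f(\xi)$ with respect to $\mubf_f$, since for any continuous test $b$,
\begin{equation*}
\iint b \cdot \ddr \Em_f = \int_\Omega b(\xi, f(\xi)) \cdot \nabla f(\xi) \, \ddr \xi = \iint b \cdot \vbf_f \, \ddr \mubf_f.
\end{equation*}
Proposition \ref{proposition_v_density_E} then yields $\Dir(\mubf_f, \Em_f) = \iint \frac{1}{2}|\vbf_f|^2 \, \ddr \mubf_f = \int_\Omega \frac{1}{2}|\nabla f(\xi)|^2 \, \ddr \xi$, which is finite.

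It remains to show that $\Em_f$ is tangent. By Proposition \ref{proposition_characterization_v_tangent}, it suffices to approximate $\vbf_f$ in $L^2_{\mubf_f}(\Omega \times D, \R^{pq})$ by $\nabla_D \psi_n$ for some $\psi_n \in C^1(\Omega \times D, \R^p)$. Take $f_n \in C^\infty(\Omega, \R^q)$ with $f_n \to f$ in $H^1(\Omega, \R^q)$ (by extension and mollification, using that $\Omega$ has Lipschitz boundary; the $f_n$ need not take values in $D$), and set
\begin{equation*}
\psi_n^\alpha(\xi, x) := x \cdot \dr_\alpha f_n(\xi) = \sum_{i=1}^q x^i \, \dr_\alpha f_n^i(\xi).
\end{equation*}
Then $\psi_n \in C^\infty(\Omega \times D, \R^p)$ and $(\nabla_D \psi_n)^{\alpha i}(\xi, x) = \dr_\alpha f_n^i(\xi)$, so that $\nabla_D \psi_n$ depends only on $\xi$ and coincides with $\nabla f_n$. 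Hence
\begin{equation*}
\| \nabla_D \psi_n - \vbf_f \|^2_{L^2_{\mubf_f}} = \int_\Omega |\nabla f_n(\xi) - \nabla f(\xi)|^2 \, \ddr \xi \xrightarrow[n \to \infty]{} 0,
\end{equation*}
which proves tangency and therefore $\Dir(\mubf_f) = \Dir(\mubf_f, \Em_f)$. The only mild subtlety is the Sobolev chain rule for $\varphi \circ (\Id, f)$ and the smooth approximation of $H^1$ functions, both of which are standard; no genuine obstacle arises because the choice $\psi_n^\alpha(\xi, x) = x \cdot \dr_\alpha f_n(\xi)$ exactly linearizes the $x$-dependence needed to hit $\nabla f_n$ as a $D$-gradient.
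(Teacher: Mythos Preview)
Your proof is correct and follows essentially the same approach as the paper: the chain-rule computation for the continuity equation and the identification of the density $\vbf_f(\xi,x)=\nabla f(\xi)$ are identical, and tangency is obtained via Proposition~\ref{proposition_characterization_v_tangent}. The only difference is that the paper simply observes that $\vbf_f(\xi,\cdot)$ is constant in $x$ and hence a $D$-gradient, whereas you spell out the approximating sequence $\psi_n^\alpha(\xi,x)=x\cdot\dr_\alpha f_n(\xi)$ explicitly---this extra care is justified since $\dr_\alpha f$ is only $L^2$ in $\xi$, so the naive choice $\psi^\alpha(\xi,x)=x\cdot\dr_\alpha f(\xi)$ is not in $C^1(\Omega\times D,\R^p)$.
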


\begin{proof}
To check the first part, take $\varphi \in C^1_c(\Or \times D, \R^p)$. Defining $\tilde{\varphi} \in H^1(\Omega, \R^p)$ by $\tilde{\varphi}(\xi) = \varphi(\xi, f(\xi))$, we have that $\tilde{\varphi}$ is compactly supported in $\Or$ and 
\begin{equation*}
\nabla \cdot \tilde{\varphi} = (\nabla_\Omega \cdot \varphi)(\xi, f(\xi)) + (\nabla_D \varphi)(\xi, f(\xi)) \cdot \nabla f(\xi).
\end{equation*}
Integrating this identity w.r.t. $\Omega$, as the l.h.s. vanishes by compactness of the support of $\tilde{\varphi}$, we see that we can conclude that $(\mubf_f, \Em_f)$ satisfies the continuity equation. 

Notice that $\Em_f$ has a density $\vbf_f \in L^2_{\mubf_f}(\Omega \times D, \R^{pq})$ w.r.t. $\mubf$ given by $\vbf_f(\xi, x) = \nabla f(\xi)$. In particular, for a fixed $\xi$, $\vbf_f(\xi, \cdot)$ is constant hence the gradient of a function. Using Proposition \ref{proposition_characterization_v_tangent}, one sees that it is enough to conclude that $\Em_f$ is tangent. Moreover, as $\vbf_f$ does not depend on $x$, 
\begin{equation*}
\Dir(\mubf_f) = \Dir (\mubf_f, \Em_f) = \iint_{\Omega \times D} \frac{1}{2} |\vbf_f(\xi)|^2 \mubf(\ddr \xi, \ddr x) = \int_\Omega \frac{1}{2} |\vbf_f(\xi)|^2 \ddr \xi = \int_\Omega \frac{1}{2} |\nabla f(\xi)|^2 \ddr \xi. \qedhere
\end{equation*} 
\end{proof}

We mention that \cite[Theorem 3.1]{Brenier2003} states that if $f : \Omega \to D$ is a (classical) harmonic map, then $\mubf_f$ is also an harmonic mapping. To prove such a fact, Brenier showed how one can build a solution of the dual problem (with boundary values $\mubf_f$) from the function $f$. A more recent analysis of such result, in the case where $D$ is replaced by a Riemannian manifold, can be found in \cite{Lu2017}.    

By analogy, the superposition principle would read as follows: If $\mubf \in H^1(\Omega, \Prob(D))$ and $\Em \in \M(\Omega \times D, \R^{pq})$ is tangent to $\mubf$, does there exist $Q \in \Prob(\F)$ such that $\mubf$ is the mean of $\mubf_f$ w.r.t. $Q$ and $\Em$ is the mean of $\Em_f$ w.r.t. $Q$? Thanks to Jensen's inequality and the uniqueness of the tangent momentum, the second condition can in fact be rewritten as 
\begin{equation*}
\Dir(\mubf) = \Dir(\mubf, \Em) = \int_{\F} \Dir(\mubf_f,\Em_f) Q(\ddr f) = \int_{\F} \left( \int_\Omega \frac{1}{2} |\nabla f(\xi)|^2 \ddr \xi \right) Q(\ddr f).
\end{equation*}
These considerations can be summarized by the following definition, which is the same as \cite[Problem 3.1]{Brenier2003}. For $f \in \F$ we define its ``classical'' Dirichlet energy $\Dir_c(f)$ by
\begin{equation*}
\Dir_c(f) = \begin{cases}
\dst{\int_\Omega \frac{1}{2} |\nabla f(\xi)|^2 \ddr \xi} & \text{if } f \in H^1(\Omega, D), \\
+ \infty & \text{else}.
\end{cases}
\end{equation*}  

\begin{defi}
\label{definition_superposition_principle}
Let $\mubf \in H^1(\Omega, \Prob(D))$. We say that $\mubf$ admits a superposition principle if there exists $Q \in \Prob(\F)$ such that 
\begin{itemize}
\item[(i)] for any $a \in C(\Omega \times D)$;
\begin{equation*}
\iint_{\Omega \times D} a \ddr \mubf = \int_{\F} \left( \int_\Omega a(\xi, f(\xi)) \ddr \xi \right) Q(\ddr f),
\end{equation*}
\item[(ii)] the following identity holds: 
\begin{equation*}
\int_{\F} \Dir_c(f) Q(\ddr f) \leqslant \Dir(\mubf).
\end{equation*}
\end{itemize}
\end{defi}

\noindent In particular, with our definition, if $Q$ represents $\mubf \in H^1(\Omega, D)$, then for $Q$-a.e. function $f$ one has $\Dir_c(f) < + \infty$ hence $f$ belongs to $H^1(\Omega, D)$. Let us underline that (i) is heuristically the same as (i) of Theorem \ref{theorem_superposition_1D}, but in a form integrated over $\Omega$ because the evaluation operator does not make sense in higher dimensions: the elements of $\F$ are not necessarily continuous. In Definition \ref{definition_superposition_principle}, if (i) and (ii) holds, then the inequality in (ii) is in fact an equality because the reverse inequality always holds. Indeed, if $\mubf$ satisfies the superposition principle, we can say that $\mubf = \int_{\F} \mubf_f Q(\ddr f)$. By convexity of the Dirichlet energy (Proposition \ref{proposition_lsc_convex_Dir}), we can apply Jensen's inequality, thus 
\begin{equation*}
\Dir(\mubf) \leqslant \int_{\F} \Dir(\mubf_f) Q(\ddr f)  = \int_{\F} \Dir_c(f) Q(\ddr f).
\end{equation*}

\subsection{Counterexample}

We will first provide a counterexample which we will try to make as generic as possible. In what follows, we take $\Omega := \U$ to be the unit disk of $\R^2$ and $\Sph^1 = \dr \U$ its boundary. We also take $D = \U$. We view $\U$ as a subset of the complex plane $\mathbb{C}$: multiplication on $\U$ means complex multiplication. 

Let $\mubf_s : \Sph^1 : \to \Prob(\U)$ be the (complex) square root: it is the mapping defined by, for $\xi \in \Sph^1$, 
\begin{equation*}
\mubf_s(\xi) := \frac{1}{2} \sum_{z^2 = \xi}  \delta_{z}  = \frac{1}{2} (\delta_{\sqrt{\xi}} + \delta_{-\sqrt{\xi}}), 
\end{equation*}
where $\sqrt{\xi}$ is a (complex) square root of $\xi$. The function $\mubf_s$ is clearly Lipschitz (with Lipschitz constant equals to $2$). In fact, if $\xi = e^{i t}$ with $t \in \R$, one can write 
\begin{equation*}
\mubf_s(e^{it}) = \frac{1}{2} \left( \delta_{\exp(it/2)} + \delta_{\exp(it/2 + i \pi)} \right).
\end{equation*} 
The function $t \mapsto \mubf_s(e^{it})$ is $2 \pi$-periodic, but it cannot be written as a superposition of continuous $2\pi$-periodic functions, only $4\pi$-periodic ones. Hence, the superpositon principle with continuous functions fails for this mapping. This example is well known in the theory of $Q$-functions \cite{DeLellis2011}, we took it from there. To our purpose, we will need the fact that the superposition principle with $H^{1/2}$ functions fails for the mapping $\mubf_s$: roughly speaking, it holds because $H^{1/2}$ functions, in dimension $1$, cannot have jumps. 

\begin{lm}
\label{lemma_H1/2_no_jumps}
There is no function $f \in H^{1/2}(\Sph^1, \U)$ such that $f(\xi)^2 = \xi$ for a.e. $\xi \in \Sph^1$.
\end{lm}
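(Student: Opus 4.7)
The plan is to argue by contradiction, reducing — via the monodromy of the complex square root — to the classical fact that the characteristic function of a non-trivial measurable subset of a circle cannot belong to $H^{1/2}$.

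Assuming such an $f$ exists, I would first pull it back to the real line by setting $F(t) := f(e^{it})$ for $t \in \R$. Since $t \mapsto e^{it}$ is a smooth $2\pi$-periodic local diffeomorphism from $\R$ onto $\Sph^1$, the function $F$ lies in $H^{1/2}_{\mathrm{loc}}(\R, \U)$, is $2\pi$-periodic, and satisfies $F(t)^2 = e^{it}$ for a.e. $t$. The key observation is that $e^{it/2}$ is a \emph{global} smooth square root of $e^{it}$ on $\R$ (the obstruction being only the $2\pi$-periodicity), so the ratio $\epsilon(t) := F(t)\, e^{-it/2}$ still lies in $H^{1/2}_{\mathrm{loc}}(\R)$ — multiplication by the smooth bounded function $e^{-it/2}$ preserves $H^{1/2}_{\mathrm{loc}}$ — and satisfies $\epsilon(t)^2 = 1$ a.e., forcing $\epsilon(t) \in \{-1, +1\}$ a.e.

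Next I would exploit the monodromy. Since $e^{-i(t + 2\pi)/2} = -e^{-it/2}$, the $2\pi$-periodicity of $F$ translates into the antiperiodicity
\begin{equation*}
\epsilon(t + 2\pi) = -\epsilon(t) \quad \text{for a.e. } t \in \R.
\end{equation*}
Hence $\epsilon$ is $4\pi$-periodic and descends to a function $\tilde\epsilon \in H^{1/2}(\Sph^1_{4\pi})$ on $\Sph^1_{4\pi} := \R/4\pi\mathbb{Z}$, still valued in $\{-1, +1\}$. The antiperiodicity forces $|\{\tilde\epsilon = 1\}| = |\{\tilde\epsilon = -1\}| = 2\pi$; setting $A := \{\tilde\epsilon = 1\}$ yields an indicator function $\mathbf{1}_A = (\tilde\epsilon + 1)/2 \in H^{1/2}(\Sph^1_{4\pi})$ with $0 < |A| < 4\pi$.

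The main obstacle — and the substantive content of the lemma — is to rule out this last situation. I would invoke the Gagliardo-Slobodeckii representation of the seminorm,
\begin{equation*}
[\mathbf{1}_A]_{H^{1/2}(\Sph^1_{4\pi})}^2 \sim \iint_{A \times A^c} \frac{dt\, ds}{d(t, s)^2},
\end{equation*}
and show the right-hand side diverges whenever $0 < |A| < 4\pi$. The cleanest route is via symmetric decreasing rearrangement on the circle, which does not increase this non-local seminorm and thereby reduces the problem to the case where $A$ is an arc; for an arc, the divergence is an elementary computation near each endpoint (the integrand produces a logarithmic blowup after the change of variables $u = t - s$). This contradicts the membership $\mathbf{1}_A \in H^{1/2}(\Sph^1_{4\pi})$ obtained above and concludes the proof.
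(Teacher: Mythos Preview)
Your proof is correct and the overall architecture matches the paper's: reduce by contradiction to the fact that a non-trivial $\{0,1\}$-valued (equivalently $\{-1,+1\}$-valued) function cannot belong to $H^{1/2}$ of a one-dimensional circle or interval, then verify that fact via the Gagliardo seminorm. The reduction, however, is carried out differently. The paper works \emph{locally}: on each small arc $X \subset \Sph^1$ there are two continuous branches $f_0, f_1$ of the square root with well-separated ranges, so composing $f$ with a Lipschitz cut-off $u$ separating those ranges yields a $\{0,1\}$-valued function in $H^{1/2}(X)$, hence constant; this forces $f$ to be continuous on every arc, hence globally, contradicting the non-existence of a continuous square root. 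Your route is \emph{global}: you lift to $\R$, divide by the smooth global branch $e^{it/2}$, and read off the monodromy as an antiperiodicity $\epsilon(t+2\pi) = -\epsilon(t)$, which directly hands you a non-trivial indicator in $H^{1/2}(\Sph^1_{4\pi})$. Your reduction is arguably slicker (no localization, no auxiliary Lipschitz composition lemma), at the cost of needing to track periods carefully.

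One remark on your final step: the appeal to symmetric decreasing rearrangement on the circle is correct in spirit (the Pólya--Szeg\H{o} inequality for the $H^{1/2}$ seminorm does hold there), but it is heavier machinery than necessary and not entirely standard to cite on $\Sph^1$. The paper instead gives a short self-contained argument: for each large $t$ it locates, by the intermediate value theorem applied to local averages, a point $\xi_t$ near which both $\{f=0\}$ and $\{f=1\}$ have mass $\gtrsim t^{-1/2}$ inside an interval of length $t^{-1/2}$; this yields $\Leb^{\otimes 2}\{(\eta,\theta): |f(\eta)-f(\theta)|/|\eta-\theta|^2 \ge t\} \gtrsim c^2/t$, and integrating in $t$ gives the divergence. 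You might prefer this direct estimate over the rearrangement, since it avoids any question of whether the rearrangement inequality is available on the circle.
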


\noindent As this lemma is not directly related to harmonic mappings, we postpone its proof to the end of this article in Section \ref{section_appendix_H1/2_square_root}. With the help of this lemma, we can prove that no mapping $\mubf \in H^1(\U, \Prob( \U ))$ such that $\mubf |_{\dr \U} = \mubf_s$ can have a superposition principle: indeed, if it were the case, then we could restrict the superposition to $\dr \U$, and we would have a superposition principle for $\mubf_s$ with functions in $H^{1/2}$ which is a contradiction. To make this argument rigorous is a bit technical given the definition we chose for the boundary values of mappings in $H^1(\U, \Prob(\U))$: $\mubf$ is not necessarily continuous.

\begin{prop}
\label{proposition_failure_superposition_singular}
Let $\mubf \in H^1(\U, \Prob(\U))$ such that $\mubf |_{\dr \U} = \mubf_s$. Then $\mubf$ cannot admit a superposition principle. 
\end{prop}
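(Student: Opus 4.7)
The plan is to assume by contradiction that $\mubf$ admits a superposition via some $Q \in \Prob(\F)$, transfer the square-root relation from $\mubf_s$ to the boundary traces of $Q$-almost every function, and reach a contradiction with Lemma \ref{lemma_H1/2_no_jumps}. Since $\int_\F \Dir_c(f)\, Q(\ddr f) \leqslant \Dir(\mubf) < +\infty$, for $Q$-a.e.\ $f$ the function lies in $H^1(\U, \U)$, so its trace $\tau f \in H^{1/2}(\Sph^1, \U)$ is well-defined. Setting $\Em' := \int_\F \Em_f\, Q(\ddr f)$, linearity of the continuity equation shows that $(\mubf, \Em')$ satisfies it, and the convexity of $(\nubf, \Em) \mapsto \iint |\Em|^2 / (2 \nubf)$ combined with Proposition \ref{proposition_injection_function_generalized} gives
\begin{equation*}
\Dir(\mubf, \Em') \leqslant \int_\F \Dir(\mubf_f, \Em_f)\, Q(\ddr f) = \int_\F \Dir_c(f)\, Q(\ddr f) \leqslant \Dir(\mubf),
\end{equation*}
forcing equality throughout so that, by Proposition \ref{proposition_existence_optimal_v}, $\Em'$ coincides with the tangent momentum $\Em$ to $\mubf$.

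Next I would compute $\BT_{\mubf}$ in two ways. On the one hand, $\mubf_s$ admits a Lipschitz extension on $\U$ by Theorem \ref{theorem_lipschitz_extension}, and Theorem \ref{theorem_boundary_values} then yields, for every $\varphi \in C^1(\U \times \U, \R^2)$,
\begin{equation*}
\BT_{\mubf}(\varphi) = \int_{\Sph^1} \left( \int_\U \varphi(\xi, x) \cdot \nO(\xi)\, \mubf_s(\xi, \ddr x) \right) \sigma(\ddr \xi).
\end{equation*}
On the other hand, combining $\mubf = \int \mubf_f\, Q(\ddr f)$ with $\Em = \int \Em_f\, Q(\ddr f)$, using Fubini, and applying the divergence theorem to $\xi \mapsto \varphi(\xi, f(\xi)) \in H^1(\U, \R^2)$ (justified by smooth approximation of $f$) leads to
\begin{equation*}
\BT_{\mubf}(\varphi) = \int_\F \int_{\Sph^1} \varphi(\xi, \tau f(\xi)) \cdot \nO(\xi)\, \sigma(\ddr \xi)\, Q(\ddr f).
\end{equation*}
Specializing the test function to $\varphi(\xi, x) = \xi\, a(\xi, x)$ for $a \in C^1(\U \times \U)$, so that $\varphi \cdot \nO = a$ on $\Sph^1$ (since $\nO(\xi) = \xi$ and $|\xi|^2 = 1$ there), and extending by density to $a \in C(\Sph^1 \times \U)$, the two expressions would give
\begin{equation*}
\int_{\Sph^1} \int_\U a(\xi, x)\, \mubf_s(\xi, \ddr x)\, \sigma(\ddr \xi) = \int_{\Sph^1} \int_\F a(\xi, \tau f(\xi))\, Q(\ddr f)\, \sigma(\ddr \xi).
\end{equation*}

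Disintegration with respect to $\sigma$ then forces $\int_\F \delta_{\tau f(\xi)}\, Q(\ddr f) = \mubf_s(\xi) = \tfrac{1}{2}(\delta_{\sqrt{\xi}} + \delta_{-\sqrt{\xi}})$ for $\sigma$-a.e.\ $\xi \in \Sph^1$, whose support is contained in $\{z : z^2 = \xi\}$. Testing against $a(\xi, x) = |x^2 - \xi|$, which vanishes on that support, would yield $\tau f(\xi)^2 = \xi$ for $(\sigma \otimes Q)$-a.e.\ $(\xi, f)$, and Fubini then shows that $Q$-a.e.\ $f$ satisfies $\tau f(\xi)^2 = \xi$ for $\sigma$-a.e.\ $\xi$; since $\tau f \in H^{1/2}(\Sph^1, \U)$, this contradicts Lemma \ref{lemma_H1/2_no_jumps}. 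The main technical obstacle I expect is the rigorous identification of $\BT_{\mubf_f}$ for a generic $f \in H^1(\U, \U)$, which requires a careful trace argument for the composition $\xi \mapsto \varphi(\xi, f(\xi))$, together with justifying the joint measurability of $(\xi, f) \mapsto \tau f(\xi)$ needed to apply Fubini in the last step.
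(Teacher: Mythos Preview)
Your argument is correct and reaches the same endpoint as the paper (the trace of $Q$-a.e.\ $f$ satisfies $(\tau f)^2=\xi$, contradicting Lemma~\ref{lemma_H1/2_no_jumps}), but the route is genuinely different. The paper never identifies $\Em$ with $\int_\F \Em_f\,Q(\ddr f)$ and never invokes the divergence theorem for each individual $f$. Instead it plugs into the continuity equation the explicit test function $\varphi(\xi,x)=a_\varepsilon(\xi)\,|\xi-x^2|^2/\delta^2$, where $a_\varepsilon$ is a radial cutoff supported on an annulus of width~$\varepsilon$ near $\Sph^1$; because $\mubf_s$ lives on $\{x^2=\xi\}$ one has $\BT_{\mubf_s}(\varphi)=0$, and a Cauchy--Schwarz estimate on the remaining terms gives $\int_\F\int_\U \chi'_\varepsilon(|\xi|)\,|f(\xi)^2-\xi|^2/\delta^2\,\ddr\xi\,Q(\ddr f)\leqslant C\sqrt{\varepsilon}/\delta^2$. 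A Markov inequality plus Borel--Cantelli along $\varepsilon_n=2^{-n}$ then forces $\int_{\Sph^1}|\bar f(\xi)^2-\xi|^2\,\sigma(\ddr\xi)\leqslant\delta^2$ for $Q$-a.e.\ $f$, and letting $\delta\to0$ yields the square-root relation on the boundary.

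Your structural approach is conceptually cleaner and makes transparent why the obstruction is a trace phenomenon; a small remark is that you do not actually need $\Em'=\Em$, only that $(\mubf,\Em')$ satisfies the continuity equation with finite energy, since Theorem~\ref{theorem_boundary_values} guarantees $\BT_\mubf$ is independent of the admissible momentum. The price you pay is exactly the two technical points you flag: the trace identity $\tau[\varphi(\cdot,f(\cdot))]=\varphi(\cdot,\tau f(\cdot))$ for $f\in H^1(\U,\U)$ (standard via smooth approximation, but it does need a line) and the joint measurability of $(\xi,f)\mapsto\tau f(\xi)$ for Fubini. The paper's quantitative approach sidesteps both by approximating the trace with integrals over shrinking annuli, at the cost of a longer computation.
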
  

\begin{proof}
We will of course reason by contradiction. We assume that there exists $Q \in \Prob(\F)$ which satisfies the points (i) and (ii) of Definition \ref{definition_superposition_principle} (in fact only point (i) will be sufficient). Let $\Em = \vbf \mubf$ tangent to $\mubf$. Take $\delta > 0$ and $\varepsilon > 0$. We choose $\chi_\varepsilon \in C^1([0,1])$ an increasing function supported on $[1- \varepsilon, 1]$, such that $\chi_\varepsilon(1) = 1$. Define $a_\varepsilon \in C^1(\U, \R^2)$ and $b_\delta \in C^1(\U \times \U)$ by, for any $\xi, x \in \U$, 
\begin{align*}
a_\varepsilon(\xi) &= \frac{\xi}{|\xi|}  \chi_\varepsilon(|\xi|), \\ 
b_\delta(\xi, x) &= \frac{|\xi - x^2|^2}{\delta^2}.
\end{align*}
In words, $a_\varepsilon$ is a vector-valued function, parallel to lines issued from the origin, and whose norm is increasing on the annulus of radii $1 - \varepsilon$ and $1$ from $0$ to $1$. Define $A_\varepsilon = \{ \xi \in \U \ : \ 1 - \varepsilon \leqslant |\xi| \leqslant 1 \}$ the annulus outside which $a_\varepsilon$ vanishes. A simple computation gives
\begin{equation*}
\left| \nabla \cdot a_\varepsilon (\xi) - \chi'_\varepsilon(|\xi|) \right| \leqslant C \1_{A_\varepsilon}(\xi),
\end{equation*}  
where $C$ does not depend on $\varepsilon$. On the other hand, $b_\delta$ is a smooth scalar function, which vanishes if $x^2 = \xi$, which is larger than $1$ if $|x^2- \xi| \geqslant \delta$ and whose derivative is bounded by $C \delta^{-2}$. As a test function for the continuity equation, we take $\varphi(\xi, x) = a_\varepsilon(\xi) b_\delta(\xi, x)$. With this choice, for every $\xi \in \Sph^1$, one has 
\begin{equation*}
\int_{\U} \varphi(\xi, x) \mubf_s(\xi, \ddr x) = \frac{1}{2} \sum_{x^2 = \xi} \varphi(\xi, x) = 0.
\end{equation*}   
Thus, $\BT_{\mubf_s}(\varphi) = 0$ and the continuity equation tested against $\varphi$ reads
\begin{multline*}
\Bigg| \iint_{\U \times \U}  \chi'_\varepsilon(|\xi|)  b_\delta(\xi, x) \mubf(\ddr \xi, \ddr x) \\
 + \iint_{\U \times \U} [ a_\varepsilon(\xi) \cdot \nabla_\Omega b_\delta(\xi, x) + (a_\varepsilon(\xi) \otimes \nabla_D b_\delta(\xi, x)) \cdot \vbf(\xi, x)] \mubf(\ddr \xi, \ddr x) \Bigg| \leqslant C \varepsilon.
\end{multline*} 
Indeed, in the r.h.s, the reminder $\nabla \cdot a_\varepsilon - \chi'_\varepsilon(|\xi|)$ of order $1$ has been integrated over $A_\varepsilon$ whose area scales like $\varepsilon$. For the first integral, we use the assumption that $\mubf$ satisfies the superposition principle. For the second one, we bound $\nabla b_\delta$ by $C \delta^{-2}$, notice that $a_\varepsilon$ vanishes outside $A_\varepsilon$ and use Cauchy-Schwarz: 
\begin{align*}
\int_{\F} \left( \int_{\U} \chi'_\varepsilon(|\xi|) b_\delta(\xi, f(\xi)) \ddr \xi  \right) Q(\ddr f) & =  \iint_{\U \times \U}  \chi'_\varepsilon(|\xi|) b_\delta(\xi, x) \mubf(\ddr \xi, \ddr x) \\ 
& \leqslant \frac{C}{\delta^2} \iint_{A_\varepsilon \times \U} (1+|\vbf(\xi, x)|) \mubf(\ddr \xi, \ddr x) + C \varepsilon \\
& \leqslant  \frac{C}{\delta^2} \sqrt{\iint_{\U \times \U} (1 +|\vbf(\xi, x)|^2) \mubf(\ddr \xi, \ddr x)} \sqrt{\iint_{A_\varepsilon \times \U} \mubf(\ddr \xi, \ddr x)} + C \varepsilon  \\
& \leqslant \frac{C}{\delta^2} \sqrt{1 + 2 \Dir(\mubf)} \sqrt{\varepsilon} + C \varepsilon \leqslant C \frac{\sqrt{\varepsilon}}{\delta^2},
\end{align*} 
where $C$ denotes a generic constant which changes from one line to another and the inequality may hold only for small $\varepsilon$ and $\delta$. Let us call $\F_{\delta, \varepsilon} \subset \F$ the set of $f \in \F$ such that 
\begin{equation*}
\int_{\U} \chi'_\varepsilon(|\xi|) |f(\xi)^2 - \xi |^2 \ddr \xi \geqslant \delta^2.
\end{equation*}
By Markov's inequality, one can say that
\begin{multline*}
Q(\F_{\delta, \varepsilon}) = Q \left( \left\{ f \in \F \ : \ \int_{\U} \chi'_\varepsilon(|\xi|) b_\delta(\xi, f(\xi)) \ddr \xi \geqslant 1 \right\} \right) \\ 
\leqslant \int_{\F} \left( \int_{\U} \chi'_\varepsilon(|\xi|) b_\delta(\xi, f(\xi)) \ddr \xi  \right) Q(\ddr f) \leqslant C \frac{\sqrt{\varepsilon}}{\delta^2}.
\end{multline*}
Now take the sequence $\varepsilon_n := 2^{-n}$. By the previous estimate, one sees that 
\begin{equation*}
\sum_{n = 1}^{+ \infty} Q ( \F_{\delta, \varepsilon_n}) < + \infty.
\end{equation*}
By the Borel-Cantelli lemma, one has that $ Q ( \limsup_n \F_{\delta, \varepsilon_n}) = 0$ which means that for $Q$-a.e. $f \in \F$, there exists $n_0$ (which may depend on $f$) such that 
\begin{equation*}
\int_{\U} \chi'_{\varepsilon_n}(|\xi|) |f(\xi)^2 - \xi|^2 \ddr \xi \leqslant \delta^2
\end{equation*} 
for all $n \geqslant n_0$. Recall also that $Q$-a.e. $f$ belongs to $H^1(\Omega, D)$. For such an $f$, sending $n$ to $+ \infty$ and by definition of the trace of $f$, 
\begin{equation*}
\int_{\Sph^1} |\bar{f}(\xi)^2 - \xi|^2  \sigma(\ddr \xi) \leqslant \delta^2,
\end{equation*}
where in this formula $\bar{f}$ stands for the trace of $f$ on $\Sph^1$ and $\sigma$ the surface measure on $\dr \U$. Then using this estimate for smaller and smaller $\delta$ along a countable sequence, we conclude that $Q$-a.e. function $f$ satisfies $\bar{f}(\xi)^2 = \xi$ a.e. on $\Sph^1$. But on the other hand the trace of $Q$-a.e. function $f$ belongs to $H^{1/2}(\Sph^1, \U)$, which is a clear contradiction with Lemma \ref{lemma_H1/2_no_jumps}.   
\end{proof}

From this Proposition, we deduce that there exists an harmonic and a Lipschitz mapping $\mubf \in H^1(\Omega, \Prob(D))$ for which the superposition principle fails: just take respectively a solution of the Dirichlet problem with boundary values $\mubf_s$, or a Lipschitz extension of $\mubf_s$.

Though, these examples can seem too particular and rely too much on some singular boundary conditions. To produce stronger examples, we will use the fact that, roughly speaking, the set of $\mubf$ admitting a superposition principle is stable by approximation. Thus, by contraposition, any neighborhood of a $\mubf$ which does not admit a superposition principle will contain other measures not admitting a superposition principle. 

\begin{prop}
\label{proposition_superposition_approximation}
Let $(\mubf_n)_{n \in \N}$ a sequence of elements of $H^1(\Omega, \Prob(D))$ such that, for every $n \in \N$, $\mubf_n$ admits a superposition principle. We assume that $(\mubf_n)_{n \in \N}$ converges weakly to $\mubf \in H^1(\Omega, \Prob(D))$ and that $\lim_n \Dir(\mubf_n) = \Dir(\mubf)$. Then $\mubf$ admits a superposition principle. 
\end{prop}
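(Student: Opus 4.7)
The plan is to build the representing measure $Q$ as a weak limit of the $Q_n$'s in $\Prob(\F)$. By the discussion following Definition~\ref{definition_superposition_principle}, property (ii) for $\mubf_n$ is in fact an equality, so $\int_\F \Dir_c(f)\, Q_n(\ddr f) = \Dir(\mubf_n)$, which is uniformly bounded by hypothesis. To obtain tightness of $(Q_n)$ in $\Prob(\F)$, observe that since $D$ is compact, any $f\in\F$ satisfies $\|f\|_\infty \leqslant \mathrm{diam}(D)$, so the sublevel set $K_M := \{f \in \F : \Dir_c(f) \leqslant M\}$ is bounded in $H^1(\Omega, \R^q)$; by the Rellich--Kondrachov theorem $K_M$ is relatively compact in $L^2(\Omega, \R^q)$, and since the pointwise constraint $f(\xi) \in D$ is closed under a.e.\ convergence, $K_M$ is relatively compact in $\F$. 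Markov's inequality gives $Q_n(\F \setminus K_M) \leqslant \Dir(\mubf_n)/M$, which is bounded by a constant over $M$, so $(Q_n)$ is tight. Extract a subsequence (still denoted $Q_n$) weakly converging to some $Q \in \Prob(\F)$.

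\textbf{Passing to the limit in (i).} For $a \in C(\Omega \times D)$, define $F_a : \F \to \R$ by $F_a(f) := \int_\Omega a(\xi, f(\xi))\, \ddr \xi$. The map $F_a$ is continuous on $\F$: if $f_k \to f$ in $L^2(\Omega,D)$, a subsequence converges a.e., and since $a$ is continuous and uniformly bounded on the compact $\Omega \times D$, dominated convergence gives $F_a(f_k) \to F_a(f)$ along that subsequence; a standard subsubsequence argument yields convergence along the full sequence. Since $F_a$ is also bounded by $\|a\|_\infty$, the weak convergence $Q_n \rightharpoonup Q$ yields $\int_\F F_a\, \ddr Q_n \to \int_\F F_a\, \ddr Q$. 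By assumption, property~(i) for $\mubf_n$ reads $\int_\F F_a\, \ddr Q_n = \iint_{\Omega \times D} a\, \ddr \mubf_n$, and the right-hand side converges to $\iint_{\Omega \times D} a\, \ddr \mubf$ by the weak convergence $\mubf_n \rightharpoonup \mubf$ in $L^2(\Omega, \Prob(D))$. Hence (i) is satisfied by $Q$.

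\textbf{Passing to the limit in (ii).} The functional $\Dir_c : \F \to [0, +\infty]$ is lower semi-continuous: on $L^2$-convergent sequences, an $H^1$ bound transfers via weak compactness of gradients and lower semi-continuity of the $L^2$-norm under weak convergence. The Portmanteau theorem for nonnegative lower semi-continuous functionals under weak convergence of probability measures therefore yields
\begin{equation*}
\int_\F \Dir_c(f)\, Q(\ddr f) \;\leqslant\; \liminf_{n \to +\infty} \int_\F \Dir_c(f)\, Q_n(\ddr f) \;=\; \lim_{n \to +\infty} \Dir(\mubf_n) \;=\; \Dir(\mubf),
\end{equation*}
which is exactly property (ii). Thus $Q$ realizes the superposition principle for $\mubf$.

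\textbf{Main obstacle.} The only non-routine step is the tightness of $(Q_n)$: one needs to promote the uniform energy bound $\int \Dir_c\, \ddr Q_n \leqslant C$ into $\Prob(\F)$-tightness, which is where compactness of $D$ (providing the automatic $L^\infty$ bound) combines with the Rellich embedding $H^1(\Omega, \R^q) \hookrightarrow\hookrightarrow L^2(\Omega, \R^q)$. The assumption that $\Dir(\mubf_n)$ actually converges to $\Dir(\mubf)$ (rather than only $\liminf \Dir(\mubf_n) \geqslant \Dir(\mubf)$) is what is used in the final chain of inequalities to retrieve $\Dir(\mubf)$ on the right; without this, one would only obtain $\int \Dir_c\, \ddr Q \leqslant \liminf \Dir(\mubf_n)$, which may strictly exceed $\Dir(\mubf)$.
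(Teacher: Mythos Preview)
Your proof is correct and follows essentially the same approach as the paper: tightness of $(Q_n)$ via the compact sublevel sets of $\Dir_c$ in $\F$ (Rellich plus compactness of $D$) combined with the uniform energy bound, then passing (i) to the limit using continuity and boundedness of $f\mapsto\int_\Omega a(\xi,f(\xi))\,\ddr\xi$ on $\F$, and (ii) via lower semi-continuity of $\Dir_c$. The paper's argument is identical in structure, only more terse (it cites \cite[Remark 5.1.5]{Ambrosio2008} for tightness where you spell out Markov's inequality).
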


\begin{proof}
For any $n \in \N$, let $Q_n \in \Prob(\F)$ such that (i) and (ii) of Definition \ref{definition_superposition_principle} are satisfied. By Rellich's theorem (recall that $D$ is compact), the functional $\Dir_c : \F \to \R$ has compact sublevel sets in the $L^2(\Omega, D)$-topology. As 
\begin{equation*}
\sup_{n \in \N} \int_{\F} \Dir_c(f) Q_n(\ddr f) = \sup_{n \in \N} \Dir(\mubf_n) < + \infty, 
\end{equation*}
we can say \cite[Remark 5.1.5]{Ambrosio2008} that $(Q_n)_{n \in \N}$ is tight, hence up to extraction it weakly converges in $\Prob(\F)$ to some $Q \in \Prob(\F)$. We will show that $Q$ represents $\mubf$. 

Let us take $a \in C(\Omega \times D)$ and define $A : \F \to \R$ by, for any $f \in \F$, 
\begin{equation*}
A(f) := \int_\Omega a(\xi,f(\xi)) \ddr \xi.
\end{equation*}
The function $A$ is continuous for the $L^2$ topology. Thus, starting from 
\begin{equation*}
\int_{\F} A(f) Q_n(\ddr f) = \iint_{\Omega \times D} a \ddr \mubf_n,
\end{equation*}
which is valid by Definition \ref{definition_superposition_principle}, we can pass both terms to the limit (recall that $\mubf_n$ weakly converges to $\mubf$) and see that $(\mubf, Q)$ satisfies (i) of Definition \ref{definition_superposition_principle}.

Moreover, as $\Dir_c$ is l.s.c. (for the $L^2(\Omega, D)$ topology), we can say that 
\begin{equation*}
\int_{\F} \Dir_c(f) Q(\ddr f) \leqslant \liminf_{n \to + \infty} \int_{\F} \Dir_c(f) Q_n(\ddr f) = \liminf_{n \to + \infty} \Dir(\mubf_n) = \Dir(\mubf), 
\end{equation*} 
which gives point (ii) of Definition \ref{definition_superposition_principle} and concludes the proof. 
\end{proof}

With this proposition, one can use for instance the heat flow to regularize mappings and produce ``smoother'' counterexamples. For instance, let $\mubf \in H^1(\U, \Prob(\U))$ which does not satisfy the superposition principle. Set $\mubf_n(\xi) := \Phi^{\U}_{1/n} \mubf(\xi)$: for a fixed $\xi \in \U$, we regularize $\mubf(\xi)$ with the help of the heat flow acting on $\Prob(\U)$. One can check easily that $\mubf_n$ converges weakly in $L^2(\U,\Prob(\U))$ to $\mubf$. As $\Phi^{\U}_{1/n}$ is a contraction in the Wasserstein space (Proposition \ref{proposition_heat_flow_properties}), $\Dir(\mubf_n) \leqslant \Dir(\mubf)$ and by lower semi-continuity of $\Dir$ we deduce that $\lim_n \Dir(\mubf_n) = \Dir(\mubf)$. According to Proposition \ref{proposition_superposition_approximation}, we deduce that $\mubf_n$ does not satisfy the superposition principle for $n$ large enough. On the other hand, the for any $\xi$ and any $n$ the measure $\mubf_n(\xi)$ is smooth: it admits a density bounded from below and from above.

\subsection{Local obstruction to the superposition principle}

The counterexample provided above shows a \emph{global} obstruction. Indeed, the mapping $\mubf_s$ can be thought locally in $\Omega$ as a superposition of classical functions, but there is a contradiction if we try to make this superposition global. On the other hand, there is also (at least formally) \emph{local obstructions} to the superposition principle. To describe them we will stay sloppy about the regularity issues and concentrate on heuristic explanations. 

Indeed, if $\mubf$ admits a superposition principle given by $Q \in \Prob(\F)$, and if $\vbf$ is the velocity field tangent to $\mubf$, then for $Q$-a.e. $f$, one has $\nabla f(\xi) = \vbf(\xi, f(\xi))$. To prove this fact, notice that the tangent momentum $\Em = \vbf \mubf$ is equal to $\int_\F \Em_f Q(\ddr f)$ (see the discussion preceding Definition \ref{definition_superposition_principle}), i.e. for any $b \in C(\Omega \times D, \R^{pq})$,
\begin{equation*}
\iint_{\Omega \times D} b \cdot \ddr \Em := \int_{\F} \left( \int_{\Omega} b(\xi, f(\xi)) \cdot \nabla f(\xi) \ddr \xi \right) Q(\ddr f).  
\end{equation*}
Thus, one can say that 
\begin{align*}
\Dir(\mubf) = \iint_{\Omega \times D} \frac{1}{2} |\vbf|^2 \ddr \mubf 
 = \iint_{\Omega \times D} \frac{1}{2} \vbf \cdot \ddr \Em 
& = \int_{\F} \left( \int_\Omega \frac{1}{2} \vbf(\xi, f(\xi)) \cdot \nabla f(\xi) \ddr \xi \right) Q(\ddr f)  \\
& \leqslant \int_{\F} \left( \int_\Omega \frac{1}{4} \left[ |\vbf(\xi, f(\xi))|^2 + |\nabla f(\xi)|^2 \right] \ddr \xi \right) Q(\ddr f) \\
& = \frac{1}{4} \iint_{\Omega \times D} | \vbf |^2 \ddr \mubf + \frac{1}{2} \int_{\F} \left( \int_\Omega \frac{1}{2} |\nabla f(\xi)|^2 \ddr \xi \right) Q(\ddr f) \\
& = \Dir(\mubf).  
\end{align*}
In particular, the inequality is an equality: one sees that for $Q$-a.e. $f \in \F$, one has $\nabla f(\xi) = \vbf(\xi, f(\xi))$ for a.e. $\xi \in \Omega$.

The analogue if $\Omega$ is a segment is the fact that (using notations from Theorem \ref{theorem_superposition_1D}) for $Q$-a.e. $f$, $\dot{f}(t) = \vbf(t, f(t))$: the measure $Q$ is supported on the flow of the vector field $\vbf$ (see \cite[Theorem 8.2.1]{Ambrosio2008}). In dimension larger than $1$, the constraint $\nabla f = \vbf(\cdot, f)$ is much stronger. In particular, it implies that along every curve $\gamma : I \to \Omega$, the function $f \circ \gamma$ follows the flow of $\vbf \cdot \dot{\gamma}$. However, there are many different curves going from one point to another: if we want all the results to be coherent, some commutation properties of the flow of $\vbf$ along different directions are needed, which turns out to be a very strong constraint. Indeed, coordinatewise, the constraint reads for every $\alpha \in \{ 1,2, \ldots, p \}$ and $i \in \{ 1,2, \ldots, q \}$,
\begin{equation*}
\dr_\alpha f^i(\xi) = \vbf^{\alpha i}(\xi, f(\xi)).
\end{equation*}
If we differentiate w.r.t. $\beta$, we find that 
\begin{equation*}
\dr_{\beta \alpha} f^i (\xi) = \dr_\beta \vbf^{\alpha i}(\xi, f(\xi)) + \sum_{j=1}^q \dr_\beta f^j(\xi) \dr_j \vbf^{\alpha i} (\xi, f(\xi)) = \left( \dr_\beta \vbf^{\alpha i} +  \sum_{j=1}^q \vbf^{\beta j} \dr_j \vbf^{\alpha i}  \right)(\xi, f(\xi)). 
\end{equation*}
The l.h.s is clearly symmetric if we exchange the role of $\alpha$ and $\beta$, so must be the r.h.s. It implies that for all $\alpha, \beta \in \{ 1,2, \ldots, p \}$, 
\begin{equation*}
\dr_\alpha \vbf^{\beta i} + \sum_{j=1}^q \vbf^{\alpha j} \dr_j \vbf^{\beta i}  = \dr_\beta \vbf^{\alpha i} + \sum_{j=1}^q \vbf^{\beta j} \dr_j \vbf^{\alpha i},
\end{equation*}
at least on the support of $\mubf$ in $\Omega \times D$. In other words, we see that $\vbf$ must satisfy a differential constraint for the superposition principle to hold, and there is no reason why this constraint would be satisfied for a generic $\mubf \in H^1(\Omega, \Prob(D))$, even for a harmonic mapping.  

An other way to understand the \emph{local} failure of the superposition principle is the following. We will be sloppy and use the evaluation operators $e_\xi : \F \to D$ defined by $e_\xi(f) := f(\xi)$ (these operators are in principle not defined as elements of $\F$ are not continuous). If $\mubf$ admits a superposition principle, it would mean that for $\xi$ and $\eta$ very close, $(e_\xi, e_\eta) \# Q \in \Prob(D \times D)$ is a transport plan between $\mubf(\xi)$ and $\mubf(\eta)$ (because of point (i)) which is almost optimal (between of point (ii)). It also works with three measures: if $\xi, \eta$ and $\theta$ are three points of $\Omega$ very close to each other (for instance located at the vertices of an equilateral triangle), then $(e_\xi, e_\eta, e_\theta) \# Q \in \Prob(D \times D \times D)$ is a coupling between $\mubf(\xi), \mubf(\eta)$ and $\mubf(\theta)$ whose $2$-marginals are almost optimal transport plans. However, it is known that, if $\mu_1, \mu_2$ and $\mu_3 \in \Prob(D)$, then in general there exists no coupling between the three whose $2$-marginals are optimal transport plans.   

\section{A Ishihara type property}
\label{section_Ishihara}

As explained in the introduction, we want to show in this section that $F \circ \mubf$ is subharmonic (which means $\Delta(F \circ \mubf) \geqslant 0$) as soon as $\mubf \in H^1(\Omega, \Prob(D))$ is harmonic and $F : \Prob(D) \to \R$ is convex along generalized geodesics. As far as the regularity of $F$ is concerned the simplest would be to assume that $F$ is continuous on $\Prob(D)$. Nevertheless, this assumption is very strong and excludes natural functionals (like the internal energies). In the case where $F$ is only l.s.c., we will need additional assumptions: it is the object of the following definition.

\begin{defi}
We say that $F : \Prob(D) \to \R \cup \{ + \infty \}$ is regular if it is l.s.c. on $\Prob(D)$, if
\begin{equation*}
\mubf \in L^2(\Omega, \Prob(D)) \mapsto \int_\Omega F(\mubf(\xi)) \ddr \xi
\end{equation*}
is l.s.c. for the weak convergence on $L^2(\Omega, \Prob(D))$, and if $F$ is bounded on the bounded sets of $L^\infty(D) \cap \Prob(D)$. 
\end{defi}

\noindent Lower semi-continuity of $F$ is a reasonable assumption. To impose that $F$ is bounded on bounded sets of $L^\infty(D) \cap \Prob(D)$ is not a strong constraint as $D$ is compact, we will need it to ensure that, by regularizing probability measures with the heat flow, we get measures for which $F$ is finite. 

Lower semi-continuity of $\mathfrak{F} : \mubf \mapsto \int_\Omega (F \circ \mubf)$ is less usual: by a standard argument left to the reader, it implies that $F$ is convex for the affine structure on $\Prob(D)$. However, we do not know in the general case if the fact that $F$ is convex and l.s.c. on $\Prob(D)$ is enough to ensure lower semi-continuity of $\mathfrak{F}$. Indeed, to apply abstract functional analysis arguments, we would like to work in the space $\M(\Omega \times D)$ endowed with the total variation norm: it is the dual of the Banach space $(C(\Omega \times D), \| \cdot \|_\infty)$. If $F$ is convex and l.s.c. on $\Prob(D)$, it can be shown easily that $\mathfrak{F}$ is convex and l.s.c. on $\M(\Omega \times D)$ endowed with the total variation norm. However, it only implies that $\mathfrak{F}$ is l.s.c. for the topology on $\M(\Omega \times D)$ defined by duality w.r.t. the dual of $\M(\Omega \times D)$, the latter being strictly larger than $C(\Omega \times D)$. 

However, for the usual functionals on $\Prob(D)$ we can do an \emph{ad hoc} analysis and we have the following results.   

\begin{prop}
\label{proposition_regular_functional}
Let $V \in L^1(D)$ a l.s.c. function. Then the functional
\begin{equation*}
F : \mu \in \Prob(D) \mapsto \int_D  V \ddr \mu
\end{equation*} 
is regular. 

Let $f : [0, + \infty) \to \R$ a proper and convex function such that $\dst{\lim_{t \to + \infty}} f(t)/t = + \infty$. Then the functional defined by
\begin{equation*}
F : \mu \in \Prob(D) \mapsto \begin{cases}
\dst{\int_D} f(\mu(x)) \ddr x & \text{if } \mu \text{ is absolutely continuous w.r.t. } \Leb_D \\
+ \infty & \text{else},
\end{cases}
\end{equation*} 
is regular. 
\end{prop}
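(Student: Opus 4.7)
My plan is to handle the two functionals in parallel, exploiting the identification of $L^2(\Omega,\Prob(D))$ with the subset $\Prob_0(\Omega\times D)$ of probability measures on $\Omega\times D$ with Lebesgue first marginal, and using classical lower semi-continuity results for functionals on $\Prob(\Omega\times D)$ with the weak topology.

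For the potential energy, set $\tilde V:\Omega\times D\to\R$ by $\tilde V(\xi,x):=V(x)$; this function is $L^1$ on $\Omega\times D$ and lower semi-continuous (because $V$ is), and since $D$ is compact, $\tilde V$ is bounded from below. Under the bijection between $\mubf\in L^2(\Omega,\Prob(D))$ and its counterpart in $\Prob_0(\Omega\times D)$, Fubini's theorem gives
\begin{equation*}
\mathfrak{F}(\mubf)=\int_\Omega F(\mubf(\xi))\,\ddr\xi=\iint_{\Omega\times D}\tilde V\,\ddr\mubf.
\end{equation*}
Since $\tilde V$ is l.s.c.\ and bounded from below, a standard monotone approximation by continuous bounded functions (write $\tilde V-\inf\tilde V$ as an increasing pointwise limit of nonnegative continuous functions, apply Beppo Levi, and use that each integral against a continuous bounded function is continuous for the weak topology on $\Prob(\Omega\times D)$) shows that $\mubf\mapsto\iint\tilde V\,\ddr\mubf$ is l.s.c.\ on $\Prob_0(\Omega\times D)$ for the weak topology, which is precisely weak convergence in $L^2(\Omega,\Prob(D))$. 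Lower semi-continuity of $F$ itself on $\Prob(D)$ is obtained exactly the same way with $V$ in place of $\tilde V$. For the boundedness clause, if $\mu=\rho\,\Leb_D$ with $\|\rho\|_\infty\leqslant C$, then $|F(\mu)|\leqslant C\|V\|_{L^1(D)}+|\inf V|$, which is uniform in $\mu$.

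For the internal energy, define $\tilde F:\Prob(\Omega\times D)\to\R\cup\{+\infty\}$ by $\tilde F(\nu):=\iint_{\Omega\times D}f(r(\xi,x))\,\ddr\xi\,\ddr x$ if $\nu=r\cdot(\Leb_\Omega\otimes\Leb_D)$ and $+\infty$ otherwise. If $\mubf\in\Prob_0(\Omega\times D)$ is such that $\mathfrak{F}(\mubf)<+\infty$, then $\mubf(\xi)\ll\Leb_D$ for a.e.\ $\xi$, and writing $\mubf(\xi,\ddr x)=\rho(\xi,x)\,\ddr x$ the disintegration theorem together with Fubini yields $\mubf=\rho\cdot(\Leb_\Omega\otimes\Leb_D)$ with the same density $\rho$, hence $\mathfrak{F}(\mubf)=\tilde F(\mubf)$; conversely $\tilde F(\mubf)<+\infty$ also forces $\mubf(\xi)$ to have a density for a.e.\ $\xi$, so $\mathfrak{F}$ and $\tilde F$ coincide on $\Prob_0(\Omega\times D)$. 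Lower semi-continuity of $\tilde F$ on $\Prob(\Omega\times D)$ for the weak topology is the classical fact about internal energies with superlinear integrand (see e.g.\ \cite[Lemma 9.4.3]{Ambrosio2008}), and it restricts to lower semi-continuity on the closed subset $\Prob_0(\Omega\times D)$; the same result applied with $\Omega$ reduced to a point gives lower semi-continuity of $F$ on $\Prob(D)$. Finally, if $\mu=\rho\,\Leb_D$ with $\|\rho\|_\infty\leqslant C$, then $f\circ\rho$ is bounded by $\max_{[0,C]}|f|$ (finite since $f$ is convex and proper on $[0,\infty)$), and integrating over $D$ gives boundedness of $F$ on bounded sets of $L^\infty(D)\cap\Prob(D)$.

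The only nonroutine step is reconciling the l.s.c.\ requirement on $\mathfrak{F}$ with standard l.s.c.\ results on $\Prob(\Omega\times D)$, and the key point in both cases is to recognize $\mathfrak{F}$ itself, via the disintegration, as a functional of a single type (linear against $\tilde V$ for the potential case, classical internal energy for the internal case) on the whole of $\Prob(\Omega\times D)$, to which off-the-shelf lower semi-continuity theorems apply.
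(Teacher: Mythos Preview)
Your proof is correct and follows essentially the same approach as the paper: in both cases you lift $\mathfrak{F}$ to a functional on $\Prob(\Omega\times D)$ (linear against $\tilde V$ for the potential energy, a classical internal energy for the second) and invoke standard weak lower semi-continuity results, which is exactly what the paper does via \cite[Propositions 7.1 and 7.7]{OTAM}. Your treatment is slightly more detailed (you spell out the monotone approximation and the disintegration argument), but the structure is identical.
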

  
\begin{proof}
As $V$ is l.s.c. on the compact $D$, it is bounded from below. As $V$ is in $L^1(\Omega)$, the function $F$ is clearly bounded on bounded sets of $L^\infty(D) \cap \Prob(D)$. Then, we can use \cite[Proposition 7.1]{OTAM}, seeing either $V$ as a l.s.c. function on $D$, or as a l.s.c. on $\Omega \times D$ (constant w.r.t. its first variable) to get that both $F$ and $\int_\Omega (F \circ \cdot)$ are l.s.c. 

For the internal energy, to get lower semi-continuity of $F$ we rely on \cite[Proposition 7.7]{OTAM}. To get the lower semi-continuity of $\int_\Omega (F \circ \cdot)$, we can see that 
\begin{equation*}
\int_\Omega F(\mubf(\xi)) \ddr \xi = \begin{cases}
\dst{\iint_{\Omega \times D} f(\mubf(\xi, x)) \ddr \xi \ddr x} & \text{if } \mubf \text{ is absolutely continuous w.r.t. } \Leb_\Omega \otimes \Leb_D \\
 + \infty & \text{else},
\end{cases}
\end{equation*}  
thus \cite[Proposition 7.7]{OTAM} still applies. As $f$ is bounded on bounded sets of $[0, + \infty)$, we see that $F$ is bounded on bounded sets of $L^\infty(D) \cap \Prob(D)$. 
\end{proof}  

\noindent However, the interaction energy is not regular: it lacks convexity w.r.t. the affine structure on $\Prob(D)$ \cite[Chapter 7]{OTAM}. For instance, take $\Omega = D = [0,1]$ and define $F : \Prob(D) \to \R$ by 
\begin{equation*}
F(\mu) := \iint_{D \times D} |x-y|^2 \mu(\ddr x) \mu(\ddr y).
\end{equation*}  
This functional is continuous on $\Prob(D)$ and bounded on bounded subsets of $L^\infty(D) \cap \Prob(D)$. However, if we define $\mubf_n(\xi) := \delta_{x_n(\xi)}$ with $x_n(\xi) = 1/2 + 1/2 \cos(n \xi)$, one can see that $F(\mubf_n(\xi)) = 0$ for all $\xi \in \Omega$ and $n \in \N$, but $(\mubf_n)_{n \in \N}$ converges weakly on $\Prob(\Omega \times D)$ to $\mubf := \Leb_\Omega \otimes \Leb_D$, for which the value $\int_\Omega (F \circ \mubf)$ is strictly positive. On the other hand, as soon as the interaction potential is continuous, the interaction energy is continuous on $\Prob(D)$.  

Finally, let us recall that a function $f : \Omega \to \R$ is said subharmonic on $\Or$ in the sense of distributions if $\Delta f \geqslant 0$ as a distribution in $\Or$. 

\begin{theo}
\label{theorem_ishihara}
Let $F : \Prob(D) \to \R \cup \{+ \infty \}$ a functional which is convex along generalized geodesics. Assume either that $F$ is continuous (and everywhere finite) on $\Prob(D)$ or that $F$ is regular. Let $\mubf_l : \dr \Omega \to \Prob(D)$ a Lipschitz mapping such that $\sup_{\dr \Omega} (F \circ \mubf_l) < + \infty$. 

Then there exists at least one solution $\mubf \in H^1(\Omega, \Prob(D))$ of the Dirichlet problem with boundary conditions $\mubf_l$ such that $(F \circ \mubf) : \Omega \to \R$ is subharmonic in $\Or$ in the sense of distributions and 
\begin{equation}
\label{equation_maximum_principle}
\esssup_{\Omega} (F \circ \mubf) \leqslant \sup_{\dr \Omega} (F \circ \mubf_l).
\end{equation}
Moreover, if $F$ is regular then $\mubf$ can be chosen in such a way that 
\begin{equation}
\label{equation_zz_aux_11}
\int_\Omega F (\mubf(\xi)) \ddr \xi \leqslant \int_\Omega F (\nubf(\xi)) \ddr \xi. 
\end{equation}
if $\nubf$ is any other solution of the Dirichlet problem with boundary values $\mubf_l$.
\end{theo}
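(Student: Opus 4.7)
The plan is to combine the Korevaar--Schoen approximate energies $\Dir_\varepsilon$ with a flow interchange against the gradient flow $(S^F_t)$ of $F$ provided by Theorem~\ref{theorem_EVI}, and then pass to the limit via the $\Gamma$-convergence of Theorem~\ref{theorem_equivalence_KS}. First extend $\mubf_l$ to a Lipschitz map $\mubf_b:\Omega\to\Prob(D)$ by Theorem~\ref{theorem_lipschitz_extension}; when $F$ is regular, further regularize it via the heat flow $\Phi^D$ so that $F\circ\mubf_b$ is finite a.e., using that $F$ is bounded on bounded subsets of $L^\infty(D)\cap\Prob(D)$. Pick a small parameter $\delta_\varepsilon>0$ (destined to tend to $0$) and define $\mubf_\varepsilon$ as a minimizer of
\[
\mathfrak{F}_\varepsilon(\mubf) := \Dir_\varepsilon(\mubf) + \delta_\varepsilon \int_\Omega F(\mubf(\xi))\,\ddr\xi
\]
among all $\mubf\in L^2(\Omega,\Prob(D))$ coinciding with $\mubf_b$ on an $\varepsilon$-collar of $\dr\Omega$ (the $F$-penalization can be dropped when $F$ is continuous). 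Existence follows from Proposition~\ref{proposition_Dir_eps_lsc}, the lower semi-continuity of $\int F\circ\cdot$, and weak compactness of $L^2(\Omega,\Prob(D))$.

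\textbf{Pointwise barycenter identity and flow interchange.} Testing perturbations supported around a single point $\xi$ shows that, for a.e. $\xi$ in the interior, $\mubf_\varepsilon(\xi)$ minimizes the functional
\[
\nu \mapsto \frac{C_p}{\varepsilon^{p+2}}\int_{B(\xi,\varepsilon)} W_2^2(\nu,\mubf_\varepsilon(\eta))\,\ddr\eta + \delta_\varepsilon F(\nu).
\]
Using $\nu=S^F_h\mubf_\varepsilon(\xi)$ as competitor and the integrated EVI inequality
\[
W_2^2(S^F_h\mu,\nu) - W_2^2(\mu,\nu) \leq 2h\bigl[F(\nu) - F(S^F_h\mu)\bigr]
\]
(obtained by integrating the differential EVI of Theorem~\ref{theorem_EVI} over $[0,h]$, using monotonicity of $t\mapsto F(S^F_t\mu)$), one discards the non-positive contribution $\delta_\varepsilon[F(S^F_h\mubf_\varepsilon(\xi))-F(\mubf_\varepsilon(\xi))]$, divides by $h$, and lets $h\to 0^+$. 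The lower semi-continuity of $F$ combined with its monotonicity along the flow gives $F\circ S^F_h\mubf_\varepsilon(\xi)\uparrow F(\mubf_\varepsilon(\xi))$, so by monotone convergence we obtain the mean-value inequality
\[
\int_{B(\xi,\varepsilon)}\bigl[F(\mubf_\varepsilon(\eta)) - F(\mubf_\varepsilon(\xi))\bigr]\,\ddr\eta \geq 0,
\]
valid for a.e. $\xi$ at which $F(\mubf_\varepsilon(\xi)) < +\infty$, a full-measure interior set thanks to the $F$-penalization.

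\textbf{Passing to the limit and conclusions.} Multiplying the mean-value inequality by a non-negative $\phi\in C^\infty_c(\Or)$, integrating against $\xi$ and applying Fubini, the inequality becomes
\[
\int_\Omega F(\mubf_\varepsilon(\xi))\bigl[(\phi\star\1_{B(0,\varepsilon)})(\xi) - |B(0,\varepsilon)|\phi(\xi)\bigr]\,\ddr\xi \geq 0,
\]
whose kernel admits the Taylor expansion $c_p\varepsilon^{p+2}\Delta\phi + o(\varepsilon^{p+2})$. Theorem~\ref{theorem_equivalence_KS} combined with the lower semi-continuity of the $F$-penalty yields the $\Gamma$-convergence of $\mathfrak{F}_\varepsilon$ (with $\delta_\varepsilon\to 0$) to $\Dir$; hence up to a subsequence $\mubf_\varepsilon$ converges weakly in $L^2(\Omega,\Prob(D))$ to a solution $\mubf$ of the Dirichlet problem with boundary data $\mubf_l$. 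A Tikhonov-type comparison $\mathfrak{F}_\varepsilon(\mubf_\varepsilon) \leq \mathfrak{F}_\varepsilon(\nubf)$ for any competing Dirichlet solution $\nubf$ extended to match the collar condition, passing $\varepsilon \to 0$ and using the lower semi-continuity of $\int F\circ\cdot$, yields \eqref{equation_zz_aux_11}. Dividing the integrated inequality above by $\varepsilon^{p+2}$ and passing to the limit using either continuity of $F$ or the regularity hypothesis produces $\int_\Omega(F\circ\mubf)\Delta\phi\,\ddr\xi\geq 0$, i.e. the distributional subharmonicity of $F\circ\mubf$ in $\Or$. The maximum principle \eqref{equation_maximum_principle} then follows by comparison of $F\circ\mubf_\varepsilon$ with its boundary values via the iterated mean-value inequality and passing to the limit.

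\textbf{Main obstacle.} The delicate step is the rigorous justification of the flow interchange: the exchange of the $h\to 0^+$ limit with the integral over $B(\xi,\varepsilon)$ forces $F(\mubf_\varepsilon(\xi))$ to be finite for a.e. $\xi$, which is why we cannot simply minimize $\Dir_\varepsilon$ but must keep the $\delta_\varepsilon$-penalization; consequently the resulting mean-value inequality only approximates Jensen's identity up to a non-negative remainder $\delta_\varepsilon(F(\mubf_\varepsilon(\xi))-F(S^F_h\mubf_\varepsilon(\xi)))/(2h)$, which luckily can be discarded. The coupled calibration of $\delta_\varepsilon$, of the collar width, and of the heat-flow regularization of $\mubf_b$ so that all asymptotic regimes cooperate is the principal technical burden, and is also the source of the restriction that subharmonicity is only shown for one (not necessarily every) solution of the Dirichlet problem.
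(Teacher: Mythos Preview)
Your overall strategy—penalized approximate energies, flow interchange via EVI to obtain an approximate mean-value inequality, then a $\Gamma$-limit—is precisely the paper's, and your derivation of the mean-value inequality is correct (the integrated EVI shortcut is in fact slightly cleaner than the paper's Fatou argument in Proposition~\ref{proposition_subharmonicity_approximate}).

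The genuine gap is the step ``passing to the limit using either continuity of $F$ or the regularity hypothesis.'' In the regular (non-continuous) case, regularity only gives \emph{lower} semi-continuity of $\mubf\mapsto\int_\Omega F\circ\mubf$ under weak convergence; since $\Delta\phi$ changes sign, this is insufficient to pass $\int_\Omega F(\mubf_\varepsilon)\cdot[\text{kernel}]\to\int_\Omega F(\mubf)\,\Delta\phi$. The paper closes this gap by (i)~upgrading weak to \emph{strong} $L^2$ convergence via a Rellich-type compactness (Proposition~\ref{proposition_Rellich_approximate}), hence a.e.\ convergence of $\mubf_{\varepsilon,\lambda}(\xi)$; (ii)~combining the pointwise inequality $F(\bar\mubf(\xi))\le\liminf F(\mubf_{\varepsilon,\lambda}(\xi))$ from l.s.c.\ of $F$ with the \emph{equality} of integrals $\int F\circ\mubf_{\varepsilon,\lambda}\to\int F\circ\bar\mubf$ (a Scheff\'e-type argument, Proposition~\ref{proposition_simple_cv_Fmu}) to deduce \emph{pointwise a.e.} convergence $F\circ\mubf_{\varepsilon,\lambda}\to F\circ\bar\mubf$; (iii)~concluding by dominated convergence using the uniform bound $F\le M$ from the approximate maximum principle (Proposition~\ref{proposition_maximum_principle_approximate}).

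Step~(ii) is exactly where your single-parameter scheme $\delta_\varepsilon\to0$ breaks down. Your Tikhonov comparison yields $\int F\circ\mubf_\varepsilon\le\int F\circ\nubf + \delta_\varepsilon^{-1}[\Dir_\varepsilon(\nubf)-\Dir_\varepsilon(\mubf_\varepsilon)]$, and without a rate on the bracket you cannot conclude. The paper decouples $\varepsilon$ and $\lambda$ and takes the limits in \emph{both} orders (Propositions~\ref{proposition_tildemu_existence_property} and~\ref{proposition_existence_barmubf_ishihara}): first $\lambda\to0$ then $\varepsilon\to0$ (weakly) manufactures a Dirichlet minimizer $\tilde\mubf$ with $\int F\circ\tilde\mubf<\infty$, which then serves as the competitor needed to show, in the other order $\varepsilon\to0$ then $\lambda\to0$ (strongly), both that the limit $\bar\mubf$ minimizes $\Dir$ and that $\int F\circ\mubf_{\varepsilon,\lambda}\to\int F\circ\bar\mubf$. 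A minor secondary issue: regularizing $\mubf_b$ by $\Phi^D$ alters its boundary values, so the resulting map is no longer admissible for your collar constraint; the paper sidesteps this by working on an extended domain $\Omega_e\supset\Omega$ with values fixed on $\Omega_e\setminus\Or$ (Proposition~\ref{proposition_extension_boundary_values}), where $F\circ\mubf_e\le M$ holds by construction.
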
 

\noindent Let us make some comments. The first one is that \eqref{equation_maximum_principle} is nothing else than the maximum principle. It is not implied by the subharmonicity of $(F \circ \mubf)$ as the latter holds only in $\Or$ and we do not know if $(F \circ \mubf)$ is continuous. The second one is that  \eqref{equation_zz_aux_11} characterizes $\mubf$ if $F$ is strictly convex. More generally, the subharmonicity of $F \circ \mubf$ would hold for $\mubf$ solution of the Dirichlet problem minimizing
\begin{equation*}
\int_\Omega a(\xi) F (\mubf(\xi)) \ddr \xi,
\end{equation*}
where $a \in C(\Omega)$ is a continuous and strictly positive function (it comes from a slight modification of the proof which is left to the reader). The last comment is that this result is somehow disappointing because we cannot guarantee the subharmonicity to hold for all solutions. The main issue is that we reason by approximation, thus the solution $\mubf$ is constructed as the limit of some approximate mappings, the existence of the limit is coming from compactness. But as we have no uniqueness result for the Dirichlet problem, we can only identify the limit through \eqref{equation_zz_aux_11} (which is a byproduct of the approximation process) but we cannot say much more. 

The rest of this section is devoted to the proof of Theorem \ref{theorem_ishihara}. In Subsection \ref{subsection_Ishihara_preliminary} we prove some preliminary results. The most difficult and interesting case is the one where $F$ is not assumed to be continuous but only regular: it is the object of Subsections \ref{subsection_Ishihara_approximate_problems} and \ref{subsection_Ishihara_limit}. To conclude, in Subsection \ref{subsection_Ishihara_continuous}, we briefly comment about the simplifications of the proof in the case of a continuous $F$.    

\subsection{Preliminary results}
\label{subsection_Ishihara_preliminary}

We prove first some technical results which would have overburdened the previous sections. The first one deals with Rellich compactness theorem, as we will want some strong convergence of our solutions of the approximate problems. 

\begin{prop}
\label{proposition_Rellich}
Let $(\mubf_n)_{n \in \N}$ a sequence in $H^1(\Omega, \Prob(D))$ such that $\sup_n \Dir(\mubf_n) < + \infty$. Then, up to extraction, the sequence $(\mubf_n)_{n \in \N}$ converges strongly in $L^2(\Omega, \Prob(D))$ to some $\mubf \in H^1(\Omega, \Prob(D))$.   
\end{prop}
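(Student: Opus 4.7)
The plan is to first extract a weak limit and then upgrade the convergence to strong convergence using the Reshetnyak characterization from Theorem \ref{theorem_equivalence_sobolev_metric}. First I would note that $L^2(\Omega, \Prob(D))$ is compact for the weak topology, so up to extraction $\mubf_n \to \mubf$ weakly for some $\mubf \in L^2(\Omega, \Prob(D))$; lower semi-continuity of $\Dir$ (Proposition \ref{proposition_lsc_convex_Dir}) immediately gives $\mubf \in H^1(\Omega, \Prob(D))$. The whole point is therefore to show that the convergence is in fact strong.

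The main tool is the following: by Theorem \ref{theorem_equivalence_sobolev_metric}, for every $\nu \in \Prob(D)$, the real-valued function $[\mubf_n]_\nu = W_2(\mubf_n(\cdot), \nu)$ belongs to $H^1(\Omega)$ with $\|\nabla [\mubf_n]_\nu\|_{L^2(\Omega)} \leqslant \|g_{\mubf_n}\|_{L^2(\Omega)}$, which is uniformly bounded because $\Dir(\mubf_n)$ is. Since $\Prob(D)$ has finite diameter, the family $([\mubf_n]_\nu)_n$ is also uniformly bounded in $L^\infty(\Omega)$, hence bounded in $H^1(\Omega)$. The classical Rellich theorem then makes it relatively compact in $L^2(\Omega)$. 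Fixing a countable dense subset $(\nu_k)_{k \in \N}$ of $\Prob(D)$ and performing a diagonal extraction, I may assume that for every $k$, $[\mubf_n]_{\nu_k}$ converges in $L^2(\Omega)$, and, after one further extraction, also pointwise a.e., to some function $f_k \in L^2(\Omega)$.

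The key observation is that for a.e. $\xi \in \Omega$, these pointwise values already force $\mubf_n(\xi)$ to converge in $\Prob(D)$. Indeed, $\Prob(D)$ endowed with $W_2$ is compact, so any cluster point $\mu$ of $(\mubf_n(\xi))_n$ must satisfy $W_2(\mu, \nu_k) = f_k(\xi)$ for every $k \in \N$. The map $\mu \mapsto (W_2(\mu, \nu_k))_k$ is injective on $\Prob(D)$: if two measures $\mu, \mu'$ give identical values, picking $\nu_k$ arbitrarily close to $\mu$ and using the triangle inequality forces $W_2(\mu,\mu') < 2\varepsilon$ for every $\varepsilon > 0$. Hence the entire sequence $(\mubf_n(\xi))_n$ converges to a unique limit, call it $\tilde{\mubf}(\xi) \in \Prob(D)$, which is measurable as an a.e. pointwise limit of measurable mappings.

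To conclude, I would apply Lebesgue's dominated convergence theorem, using the uniform bound $W_2^2(\mubf_n(\xi), \tilde{\mubf}(\xi)) \leqslant \mathrm{diam}(D)^2$, to get $d_{L^2}^2(\mubf_n, \tilde{\mubf}) = \int_\Omega W_2^2(\mubf_n(\xi), \tilde{\mubf}(\xi)) \, \ddr \xi \to 0$. This is the desired strong convergence; since strong convergence implies weak convergence and weak limits are unique, $\tilde{\mubf} = \mubf$. The only slightly delicate point is the diagonal extraction and the injectivity argument that turns pointwise convergence of the family $([\mubf_n]_{\nu_k})_k$ into pointwise convergence of $\mubf_n$ itself in $\Prob(D)$; but once this is observed, the rest is routine Rellich-type compactness plus dominated convergence.
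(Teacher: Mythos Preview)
Your proof is correct. It differs from the paper's approach: the paper simply invokes known metric-space versions of Rellich's theorem (Korevaar--Schoen or Ambrosio--Tilli), or alternatively deduces the result from the more general Proposition~\ref{proposition_Rellich_approximate}, whose proof uses a Riesz--Fr\'echet--Kolmogorov style argument based on convolution regularization in the source variable and an equicontinuity estimate. Your route is instead a self-contained reduction to the classical Rellich theorem via the Reshetnyak characterization (Theorem~\ref{theorem_equivalence_sobolev_metric}): you pull back compactness through the countable family of scalar functions $[\mubf_n]_{\nu_k}$ and recover pointwise convergence of $\mubf_n$ from the injectivity of the Kuratowski-type embedding $\mu \mapsto (W_2(\mu,\nu_k))_k$. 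This is more elementary and stays entirely within the paper's toolbox, whereas the paper's argument via Proposition~\ref{proposition_Rellich_approximate} is heavier but yields the stronger statement where one only controls $\Dir_\varepsilon(\mubf_\varepsilon)$ rather than $\Dir(\mubf_n)$.
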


\begin{proof}
This is nothing else than the Rellich compactness theorem, but for mappings valued in metric spaces. Remark that $\Prob(D)$ has a finite diameter, thus in this result we only need a control on the Dirichlet energy of $\mubf_n$. We can find this result for instance in \cite[Theorem 1.13]{Korevaar1993} or in \cite[Theorem 5.4.3]{Ambrosio2003}. Any way, this result is also a consequence of the next proposition.   
\end{proof}

\noindent In fact, we will need a stronger result, as we want so show compactness if we only have a control of the approximate Dirichlet energies. 

\begin{prop}
\label{proposition_Rellich_approximate}
Let $(\mubf_\varepsilon)_{\varepsilon > 0}$ a family in $L^2(\Omega, \Prob(D))$ such that $\liminf_{\varepsilon} \Dir_\varepsilon(\mubf_\varepsilon) < + \infty$. Then there exists a sequence $(\varepsilon_n)_{n \in \N}$ which goes to $0$ such that $(\mubf_{\varepsilon_n})_{n \in \N}$ converges strongly in $L^2(\Omega, \Prob(D))$ to some $\mubf \in H^1(\Omega, \Prob(D))$.   
\end{prop}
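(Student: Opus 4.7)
The plan is to combine weak compactness of $L^2(\Omega,\Prob(D))$ with a uniform smoothing-at-scale-$\delta$ argument to upgrade weak convergence to strong convergence, in the spirit of classical Rellich.

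First, since $\liminf_\varepsilon \Dir_\varepsilon(\mubf_\varepsilon)<+\infty$, I pass to a subsequence (still denoted $\varepsilon_n$) with $\varepsilon_n\downarrow 0$ and $\Dir_{\varepsilon_n}(\mubf_{\varepsilon_n})\leqslant C$. The weak topology on $L^2(\Omega,\Prob(D))$ is compact, so after a further extraction $\mubf_{\varepsilon_n}\to \mubf$ weakly to some $\mubf\in L^2(\Omega,\Prob(D))$. To show $\mubf\in H^1(\Omega,\Prob(D))$, I fix $\delta>0$ and establish the comparison $\Dir_\delta(\mubf_\varepsilon)\leqslant C'\Dir_\varepsilon(\mubf_\varepsilon)$ whenever $\varepsilon\leqslant \delta$. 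This follows from the $W_2$-triangle inequality and Cauchy--Schwarz along a chain of $k\sim \delta/\varepsilon$ equally spaced points, which gives
\begin{equation*}
\int_\Omega W_2^2(\mubf_\varepsilon(\xi),\mubf_\varepsilon(\xi+h))\,\ddr\xi \leqslant k^2 \int_\Omega W_2^2(\mubf_\varepsilon(\xi),\mubf_\varepsilon(\xi+h/k))\,\ddr\xi
\end{equation*}
for $|h|\leqslant\delta$; integrating over $|h|\leqslant\delta$, changing variables $h\mapsto h/k$, and using the definition of $\Dir_\varepsilon$ yields the claim. Combined with the lower semi-continuity of $\Dir_\delta$ (Proposition \ref{proposition_Dir_eps_lsc}), $\Dir_\delta(\mubf)\leqslant C'$ for all $\delta>0$, and Theorem \ref{theorem_equivalence_KS} gives $\Dir(\mubf)=\lim_{\delta\to 0}\Dir_\delta(\mubf)\leqslant C'<+\infty$.

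For strong convergence, I mollify in the $\Omega$-variable using the affine structure on $\Prob(D)$. Fix a smooth convolution kernel $\chi_\delta$ on $\R^p$ supported in $B(0,\delta)$, and define, on the open set $\Omega^\delta:=\{\xi\in\Or\ :\ \mathrm{dist}(\xi,\dr\Omega)>\delta\}$,
\begin{equation*}
\tilde{\mubf}_\varepsilon^{\delta}(\xi):=\int_{B(\xi,\delta)}\chi_\delta(\xi-\eta)\mubf_\varepsilon(\eta)\,\ddr\eta
\end{equation*}
as an affine combination in $\Prob(D)$ (see \eqref{equation_definition_integral_affine}). Jensen's inequality for $W_2^2$ gives $W_2^2(\mubf_\varepsilon(\xi),\tilde{\mubf}_\varepsilon^\delta(\xi))\leqslant \int\chi_\delta(\xi-\eta)W_2^2(\mubf_\varepsilon(\xi),\mubf_\varepsilon(\eta))\,\ddr\eta$, so after integrating in $\xi$ and using the comparison above,
\begin{equation*}
\int_{\Omega^\delta} W_2^2(\mubf_\varepsilon(\xi),\tilde{\mubf}_\varepsilon^\delta(\xi))\,\ddr\xi \leqslant C \delta^2 \Dir_\delta(\mubf_\varepsilon) \leqslant C'' \delta^2
\end{equation*}
uniformly for $\varepsilon\leqslant\delta$.

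Now I fix $\delta$ and show that $\tilde{\mubf}_{\varepsilon_n}^\delta$ converges strongly as $n\to\infty$. For every fixed $\xi\in\Omega^\delta$ and every $a\in C(D)$, $\iint \chi_\delta(\xi-\eta)a(x)\mubf_{\varepsilon_n}(\ddr\eta,\ddr x)$ converges to the same integral against $\mubf$, since the integrand is continuous and bounded on $\Omega\times D$. Hence $\tilde{\mubf}_{\varepsilon_n}^\delta(\xi)\to \tilde{\mubf}^\delta(\xi)$ weakly in $\Prob(D)$ (equivalently in $W_2$) for every $\xi$, and dominated convergence (using the bounded diameter of $\Prob(D)$) gives strong convergence in $L^2(\Omega^\delta,\Prob(D))$. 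To conclude, I run a diagonal argument along $\delta_k=1/k$: at stage $k$, extract a subsequence of the previous one so that $\tilde{\mubf}_{\varepsilon_n^{(k)}}^{\delta_k}$ converges strongly. Writing the triangle inequality,
\begin{equation*}
d_{L^2}(\mubf_{\varepsilon_n^{(n)}},\mubf_{\varepsilon_m^{(m)}}) \leqslant d_{L^2}(\mubf_{\varepsilon_n^{(n)}},\tilde{\mubf}_{\varepsilon_n^{(n)}}^{\delta_k}) + d_{L^2}(\tilde{\mubf}_{\varepsilon_n^{(n)}}^{\delta_k},\tilde{\mubf}_{\varepsilon_m^{(m)}}^{\delta_k}) + d_{L^2}(\tilde{\mubf}_{\varepsilon_m^{(m)}}^{\delta_k},\mubf_{\varepsilon_m^{(m)}}),
\end{equation*}
the outer terms are $O(1/k)$ by the uniform estimate, the middle term vanishes as $n,m\to\infty$, and the contribution of the boundary layer $\Omega\setminus \Omega^{\delta_k}$ is controlled by $|\Omega\setminus\Omega^{\delta_k}|\cdot\mathrm{diam}(\Prob(D))^2\to 0$; thus the diagonal subsequence is Cauchy in $L^2(\Omega,\Prob(D))$ and converges strongly to $\mubf$.

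The main obstacle is the uniform chaining estimate $\Dir_\delta(\mubf_\varepsilon)\lesssim \Dir_\varepsilon(\mubf_\varepsilon)$ for arbitrary $\varepsilon\leqslant\delta$ (not just dyadic), which forces us to build an explicit chain of $\lceil \delta/\varepsilon\rceil$ intermediate points and keep track of the right power of $k$ through the change of variables; everything else is standard once this estimate is in place.
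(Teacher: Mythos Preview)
Your argument is correct and shares its architecture with the paper's proof: mollify in the $\Omega$-variable, control the mollification error uniformly via a comparison $\Dir_\delta(\mubf_\varepsilon)\lesssim \Dir_\varepsilon(\mubf_\varepsilon)$, and establish strong compactness of the mollified family at each fixed scale. The difference lies in how you execute the last two steps. For the comparison, the paper simply invokes the dyadic monotonicity of Theorem~\ref{theorem_equivalence_KS} (noting in passing that $t$ should be of the form $2^N\varepsilon_m$ ``but it does not really matter''); your direct chaining argument works too, and the non-dyadic case you flag as the main obstacle is in fact easily handled: combine dyadic monotonicity with the trivial bound $\Dir_{\varepsilon'}(\mubf)\leqslant (\varepsilon/\varepsilon')^{p+2}\Dir_\varepsilon(\mubf)$ when $\varepsilon'\leqslant\varepsilon\leqslant 2\varepsilon'$, since $\{|\xi-\eta|\leqslant\varepsilon'\}\subset\{|\xi-\eta|\leqslant\varepsilon\}$. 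For the compactness of the mollified family, the paper proves uniform equi-H\"older continuity by passing through the $1$-Wasserstein distance and applies Ascoli--Arzel\`a; you instead extract a weak limit $\mubf$ first and use it to identify the pointwise limit of $\tilde{\mubf}_{\varepsilon_n}^\delta(\xi)$ directly (the test function $(\eta,x)\mapsto\chi_\delta(\xi-\eta)a(x)$ being continuous on $\Omega\times D$), then apply dominated convergence. Your route here is slightly cleaner and avoids the detour through $W_1$; it also makes the identification $\mubf\in H^1(\Omega,\Prob(D))$ explicit via the lower semi-continuity of $\Dir_\delta$. Note finally that because the pointwise convergence of the mollifications holds for the whole sequence (not just a subsequence), your diagonal extraction in the last step is not actually needed.
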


\noindent There is a well known criterion for compactness in $L^2(\Omega)$: the Riesz-Fréchet-Kolmogorov theorem. It requires a uniform control of the $L^2$-norm of the difference between a function and its translated. Here, we have only a control of the distance between a function and its translation in average (thanks to $\Dir_\varepsilon$), and our mappings take values in $\Prob(D)$ rather than $\R$. Nevertheless, the strategy of the proof of the Riesz-Fréchet-Kolmogorov theorem is rather straightforward to adapt. Recall that $K^\Omega$ denotes the heat kernel on $\Omega$.

\begin{proof}
There exists a sequence $(\varepsilon_m)_{m \in \N}$, converging to $0$, such that $\sup_m \Dir_{\varepsilon_m}(\mubf_{\varepsilon_m}) < + \infty$. 

As in the proof of Theorem \ref{theorem_approximation}, let $\chi$ be a smooth function, radial, compactly supported in $B(0,1)$ and we set $\chi_t(\xi) = t^{-p} \chi(\xi/t)$. We will regularize $\mubf_{\varepsilon_m}$ only w.r.t. the source space $\Omega$. More specifically, for any $\tilde{\Omega}$ compactly supported in $\Or$ and $t$ small enough, we define $\tilde{\mubf}_{m,t} \in L^2(\tilde{\Omega}, \Prob(D))$ by
\begin{equation}
\tilde{\mubf}_{m,t}(\xi) := \int_{\Omega} \chi_t(\xi - \eta) \mubf_{\varepsilon_m}(\eta) \ddr \eta
\end{equation}
for any $\xi \in \tilde{\Omega}$. We first estimate $d_{L^2}(\tilde{\mubf}_{m,t}, \mubf_{\varepsilon_m}|_{\tilde{\Omega}})$. Using Jensen's inequality and the definition of $\Dir_{t}$,  
\begin{align*}
d_{L^2}(\tilde{\mubf}_{m,t}, \mubf_{\varepsilon_m}|_{\tilde{\Omega}}) & = 
\int_{\tilde{\Omega}} W_2^2 \left( \int_{B(0,t)} \chi_t(\eta) \mubf_{\varepsilon_m}(\xi - \eta) \ddr \eta, \mubf(\xi) \right) \ddr \xi \\
& \leqslant \int_{\tilde{\Omega}} \int_{B(0,t)} \chi_t(\eta) W_2^2 \left(  \mubf_{\varepsilon_m}(\xi - \eta) , \mubf(\xi) \right) \ddr \eta \ddr \xi \\
& \leqslant \frac{ 2 t^{p+2} \| \chi_t \|_\infty}{C_p} \Dir_{t}(\mubf_{\varepsilon_m}) = C t^2 \Dir_{t}(\mubf_{\varepsilon_m}). 
\end{align*}
Now, because of the monotonicity of $\Dir_t$ (Theorem \ref{theorem_equivalence_KS}) remember that $\Dir_{t}(\mubf_{\varepsilon_m}) \leqslant \Dir_{\varepsilon_m}(\mubf_{\varepsilon_m})$ if $m$ is large enough (and $t$ should in fact be of the form $2^N \varepsilon_m$ but it does not really matter). In consequence, for any $\delta > 0$, there exists $t > 0$ (small) and $m_0 \in \N$, such that for any $m \geqslant m_0$, 
\begin{equation*}
d_{L^2}(\tilde{\mubf}_{m,t}, \mubf_{\varepsilon_m}|_{\tilde{\Omega}}) \leqslant \delta. 
\end{equation*}    

On the other hand, for a fixed $t > 0$, we want to show compactness of the family $\left(\tilde{\mubf}_{m,t} \right)$ in $L^2(\tilde{\Omega}, \Prob(D))$. We will show that this family is uniformly equi-Hölder as mappings defined on $\tilde{\Omega}$ and valued in $(\Prob(D), W_2)$: it implies compactness in $C(\tilde{\Omega}, \Prob(D))$ from which we easily deduce compactness in $L^2(\tilde{\Omega}, \Prob(D))$. Here $\tilde{\Omega}$ is a compact subset of $\Omega$ lying at a distance larger than $t$ from $\dr \Omega$. We prefer to work on the $1$-Wasserstein distance whose definition is recalled in Section \ref{section_preliminairies}.  Take $\varphi \in C(D)$ a $1$-Lipschitz function, up to translation by a constant we can assume that $\| \varphi \|_\infty \leqslant C$ with $C$ independent of $\varphi$. Then for any $\xi, \eta \in \tilde{\Omega}$, 
\begin{align*}
\int_D \varphi(x) \tilde{\mubf}_{m,t}(\xi, \ddr x) - \int_D \varphi(x) \tilde{\mubf}_{m,t}(\eta, \ddr x)  & =  \iint_{ \tilde{\Omega}  \times D} \varphi(x) \left( \chi_t(\xi - \theta) - \chi_t(\eta - \theta) \right) \mubf(\theta, \ddr x) \ddr \theta \\
& \leqslant |\xi - \eta| \frac{1}{t^{p+1}} \| \chi' \|_\infty \| \varphi \|_\infty. 
\end{align*}
As the bound is independent on $\varphi$, we deduce that $W_1(\tilde{\mubf}_{m,t}(\xi), \tilde{\mubf}_{m,t}(\eta)) \leqslant C t^{-(p+1)} |\xi - \eta|$ for all $\xi$ and $\eta$ in $\tilde{\Omega}$. Using $W_2 \leqslant C \sqrt{W_1}$ \cite[Equation (5.1)]{OTAM}, we see that, for a fixed $t$, the family $(\tilde{\mubf}_{m,t})_{m \in \N}$, defined on $\tilde{\Omega},$ is uniformly equi-continuous (more precisely $1/2$-Hölder continuous).

Now we put the pieces together. For each $n \geqslant 1$, take $\tilde{\Omega}_n \subset \Or$ compactly supported in $\Or$ such that $\Leb_\Omega(\Omega \bsl \tilde{\Omega}_n) \leqslant 1/n$. Choose also $t_n$ small enough such that $d_{L^2}(\tilde{\mubf}_{m,t_n}, \left. \mubf_{\varepsilon_m} \right|_{\tilde{\Omega}_n}) \leqslant 1/n$ holds for $m$ large enough and the distance between $\tilde{\Omega}_n$ and $\dr \Omega$ is smaller than $t_n$. Then, using Ascoli-Arzelà theorem, up to a subsequence, we know that $(\tilde{\mubf}_{m,t_n})_{m \in \N}$ converges strongly in $L^2(\tilde{\Omega}_n, \Prob(D))$, in particular it is a Cauchy sequence. Up to a diagonal extraction in $(\varepsilon_m)_{m \in \N}$ (we do not relabel the sequence), we can assume that $(\left. \tilde{\mubf}_{m,t_n} \right|_{\tilde{\Omega}_n})_{m \in \N}$ is a Cauchy sequence for all $n \in \N$. Notice, as $\Prob(D)$ has a finite diameter, that $|d_{L^2}(\mubf, \nubf) -d_{L^2}(\left. \mubf \right|_{\tilde{\Omega}_n}, \left. \nubf \right|_{\tilde{\Omega}_n})| \leqslant C/n$ for all $\mubf, \nubf \in L^2(\Omega, \Prob(D))$. Hence, for any $n \in \N$, one has for $m$ and $m'$ large enough, 
\begin{align*}
d_{L^2}(\mubf_{\varepsilon_m}, \mubf_{\varepsilon_{m'}}) & 
\leqslant d_{L^2}(\left. \mubf_{\varepsilon_m} \right|_{\tilde{\Omega}_n}, \tilde{\mubf}_{m,t_n} ) + d_{L^2}(\tilde{\mubf}_{m,t_n}, \tilde{\mubf}_{m',t_n}) + d_{L^2}(\left. \mubf_{\varepsilon_{m'}} \right|_{\tilde{\Omega}_n}, \tilde{\mubf}_{m',t_n} ) + \frac{2C}{n} \\
& \leqslant \frac{2+2C}{n} + d_{L^2}(\tilde{\mubf}_{m,t_n}, \tilde{\mubf}_{m',t_n}), 
\end{align*}
and $d_{L^2}(\tilde{\mubf}_{m,t_n}, \tilde{\mubf}_{m',t_n})$ can be made arbitrary small for $m$ and $m'$ large enough. In other words, $(\mubf_{\varepsilon_m})_{m \in \N}$ is a Cauchy sequence in $L^2(\Omega, \Prob(D))$, thus it converges strongly. 
\end{proof}  

We will also need a result about the boundary conditions. Indeed, as the minimizers of $\Dir_\varepsilon$ will only live in $L^2(\Omega, \Prob(D))$, we cannot define and impose boundary values. To bypass this difficulty, we extend slightly our domain into a larger domain $\Omega_e \supset \Omega$ and impose the values of the mappings everywhere on $\Omega_e \bsl \Or$. 

\begin{prop}
\label{proposition_extension_boundary_values}
Let $\mubf_l : \dr \Omega \to \Prob(D)$ a Lipschitz mapping. There exists a compact $\Omega_e$ such that $\Omega \subset \ring{\Omega}_e$, and a Lipschitz mapping $\mubf_e \in L^2(\Omega_e \bsl \Or, \Prob(D)) $ such that $\mubf_e = \mubf_l$ on $\dr \Omega$ and
\begin{equation}
\label{equation_same_values_me_ml}
\{ \mubf_e(\xi) \ : \ \xi \in  \Omega_e \bsl \Or \} = \{ \mubf_l(\xi) \ : \ \xi \in  \dr \Omega \}.
\end{equation} 
Moreover, a mapping $\mubf \in H^1(\Omega, \Prob(D))$ satisfies $\mubf |_{\dr \Omega} = \mubf_l$ if and only if the mapping $\tilde{\mubf}$ defined on $\Omega_e$ by 
\begin{equation*}
\tilde{\mubf}(\xi) = \begin{cases}
\mubf(\xi) & \text{if } \xi \in \Or \\
\mubf_e(\xi) & \text{if } \xi \in \Omega_e \bsl \Or,
\end{cases}
\end{equation*}
belongs to $H^1(\Omega_e, \Prob(D))$.
\end{prop}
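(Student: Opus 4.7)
The proof decomposes into two parts: (a) constructing the enlarged domain $\Omega_e$ together with the extension $\mubf_e$, and (b) establishing the gluing equivalence for the $H^1$ condition.

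For (a), I would exploit the Lipschitz regularity of $\dr \Omega$: for $\delta > 0$ small enough, there exists a Lipschitz retraction $\pi : \{\xi \in \R^p : d(\xi, \Omega) \leqslant \delta\} \to \dr \Omega$, built locally in charts where $\Omega$ is an epigraph and glued by a Lipschitz partition of unity. Setting $\Omega_e := \{\xi \in \R^p : d(\xi, \Omega) \leqslant \delta\}$ and $\mubf_e(\xi) := \mubf_l(\pi(\xi))$ for $\xi \in \Omega_e \bsl \Or$ yields a Lipschitz mapping into $\Prob(D)$ agreeing with $\mubf_l$ on $\dr \Omega$; identity \eqref{equation_same_values_me_ml} holds because $\pi$ takes values in $\dr \Omega$ and equals the identity there.

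For (b), the strategy is to work at the level of the continuity equation. Fix $\mubf \in H^1(\Omega, \Prob(D))$ with tangent momentum $\Em$, and let $\Em_e$ denote the tangent momentum of $\mubf_e$ on $\Omega_e \bsl \Or$ (which has finite Dirichlet energy by Theorem \ref{theorem_equivalence_sobolev_metric} since $\mubf_e$ is Lipschitz). Concatenating yields $\tilde{\Em}$ on $\Omega_e$. I would first argue that $\tilde{\mubf} \in H^1(\Omega_e, \Prob(D))$ is equivalent to $(\tilde{\mubf}, \tilde{\Em})$ satisfying the continuity equation on $\Omega_e$; the non-trivial implication combines Proposition \ref{proposition_existence_optimal_v} and Corollary \ref{corollary_localization}, since any other globally admissible momentum must reduce to the (unique) tangent momentum on each of $\Or$ and $\Omega_e \bsl \Or$. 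For any $\varphi \in C^1_c(\mathring{\Omega}_e \times D, \R^p)$, splitting the integrals over $\Or$ and $\Omega_e \bsl \Or$ and applying Theorem \ref{theorem_boundary_values} on each piece expresses the would-be right-hand side of the continuity equation on $\Omega_e$ as the sum $\BT_{\mubf}(\varphi) + \BT^e_{\mubf_e}(\varphi)$, where $\BT^e_{\mubf_e}$ denotes the boundary term obtained by viewing $\mubf_e$ as a mapping on $\Omega_e \bsl \Or$.

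The main technical point is then the analysis of $\BT^e_{\mubf_e}$. The boundary of $\Omega_e \bsl \Or$ splits as $\dr \Omega \cup \dr \Omega_e$; since $\supp \varphi$ is compact inside $\mathring{\Omega}_e$, only the $\dr \Omega$ contribution survives. Because $\mubf_e$ is continuous on $\dr \Omega$ with value $\mubf_l$, the explicit representation in the last part of Theorem \ref{theorem_boundary_values} identifies this contribution as
\begin{equation*}
\BT^e_{\mubf_e}(\varphi) = -\int_{\dr \Omega} \bigg( \int_D \varphi(\xi,x) \cdot \nO(\xi)\, \mubf_l(\xi, \ddr x) \bigg) \sigma(\ddr \xi),
\end{equation*}
the sign flipping because the outward normal of $\Omega_e \bsl \Or$ along $\dr \Omega$ is $-\nO$. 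The continuity equation on $\Omega_e$ is thus equivalent to
\begin{equation*}
\BT_{\mubf}(\varphi) = \int_{\dr \Omega} \bigg( \int_D \varphi(\xi,x) \cdot \nO(\xi)\, \mubf_l(\xi, \ddr x) \bigg) \sigma(\ddr \xi)
\end{equation*}
for every admissible $\varphi$. Applying Theorem \ref{theorem_boundary_values} to any Lipschitz extension of $\mubf_l$ on $\Omega$ (which exists by Theorem \ref{theorem_lipschitz_extension}) shows that the right-hand side is exactly $\BT_{\mubf_l}(\varphi)$ in the sense of the trace definition, so the identity is precisely $\mubf|_{\dr \Omega} = \mubf_l$. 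The main obstacle throughout is keeping careful track of the two distinct boundary pieces $\dr \Omega$ and $\dr \Omega_e$ of $\Omega_e \bsl \Or$; once the compactness of $\supp \varphi$ is used to discard the outer boundary and the continuity of $\mubf_e$ on $\dr \Omega$ is used to obtain the explicit formula on the inner one, the bookkeeping closes the proof.
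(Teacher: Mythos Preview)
Your proposal is correct and follows essentially the same route as the paper's proof. The only cosmetic differences are (i) you build $\mubf_e$ via a Lipschitz retraction onto $\dr\Omega$ whereas the paper uses a bilipschitz collar $\Psi:[0,1]\times\dr\Omega\to\Omega_e\setminus\Or$ and sets $\mubf_e(\Psi(t,\xi)):=\mubf_l(\xi)$, and (ii) the paper invokes Whitney's extension theorem to ensure that all of $C^1(\Omega\times D,\R^p)$ arises by restriction from $C^1_c(\mathring{\Omega}_e\times D,\R^p)$, a point you leave implicit but which is straightforward since $\Omega\subset\mathring{\Omega}_e$.
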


\begin{proof}
As $\Omega$ has a Lipschitz boundary, one can say \cite[Section 1.12]{Korevaar1993} that there exists a compact $\Omega_e$ such that $\Omega \subset \ring{\Omega}_e$, and $\Psi : [0,1] \times \dr \Omega  \to \Omega_e \bsl \Or$ a bilipschitz mapping such that $\Psi(0, \cdot)$ is the identity on $\dr \Omega$. Roughly speaking (for instance if $\dr \Omega$ is $C^1$), $\Psi(t,\xi) = \xi + t \nO(\xi)$ where $\nO$ is the outward normal to $\dr \Omega$. Then, one can define 
\begin{equation*}
\mubf_e(\Psi(t,\xi)) := \mubf_l(\xi) 
\end{equation*} 
for every $t \in [0,1]$ and $\xi \in \dr \Omega$: we extend $\mubf_l$ by keeping it constant along the normal to $\dr \Omega$. Because $\Psi$ is bilipschitz and $\mubf_l$ is Lipschitz, it is clear that $\mubf_e$ is a Lipschitz mapping. Moreover, by construction, \eqref{equation_same_values_me_ml} obviously holds.   

Let us prove the second point. Take $\Em \in \M(\Omega \times D, \R^{pq})$ and $\Em_e \in \M((\Omega_e \bsl \Or) \times D, \R^{pq})$ the momenta tangent to respectively $\mubf$ and $\mubf_e$. The tangent momentum of $\tilde{\mubf}$, if it were to exist, must coincide with $\Em$ on $\Omega \times D$ and with $\Em_e$ on $(\Omega_e \bsl \Or) \times D$ because of Corollary \ref{corollary_localization}. Hence, if must be $\tilde{\Em} \in \M(\Omega_e \times D, \R^{pq})$ defined by 
\begin{equation*}
\iint_{\Omega_e \times D} b \cdot \ddr \tilde{\Em} = \iint_{\Omega \times D} b \cdot \ddr \Em + \iint_{(\Omega_e \bsl \Or) \times D} b \cdot \ddr \Em_e.
\end{equation*}     
As we already have $\Dir(\tilde{\mubf}, \tilde{\Em}) < + \infty$, we see that $\tilde{\mu} \in H^1(\Omega_e, \Prob(D))$ if and only if $(\tilde{\mubf}, \tilde{\Em})$ satisfies the continuity equation. If $\varphi \in C^1_c(\Omega_e, \R^p)$, 
\begin{align*}
\iint_{\Omega_e \times D}& \nabla_\Omega \cdot \varphi  \ddr \tilde{\mubf} + \iint_{\Omega_e \times D} \nabla_D \varphi \cdot \ddr \tilde{\Em} \\
& = \iint_{\Omega \times D} \nabla_\Omega \cdot \varphi  \ddr \mubf + \iint_{\Omega \times D} \nabla_D \varphi \cdot \ddr \Em + \iint_{(\Omega_e \bsl \Or) \times D} \nabla_\Omega \cdot \varphi  \ddr \mubf_e + \iint_{(\Omega_e \bsl \Or) \times D} \nabla_D \varphi \cdot \ddr \Em_e \\
& = \BT_{\mubf}(\varphi) + \BT_{\mubf_e}(\varphi). 
\end{align*}
By Whitney's theorem, the restriction of functions in $C^1_c(\ring{\Omega}_e, \R^p)$ to $\Omega$ coincide with $C^1(\Omega, \R^p)$, thus we see that $\tilde{\mubf} \in H^1(\Omega_e, \Prob(D))$ if and only if $\BT_{\mubf} = - \BT_{\mubf_e}$. Considering the fact that the outward normal to $\Omega_e \bsl \Or$ is $- \nO$, and that $\mubf_e$ is continuous with values on $\dr \Omega$ given by $\mubf_l$, the proposition is proved. 
\end{proof}

\subsection{The approximate problems and their optimality conditions}
\label{subsection_Ishihara_approximate_problems}

In all this subsection, we assume that $F$ is regular. As explained before, we use $\Dir_\varepsilon$ to approximate $\Dir$, as the optimality conditions of $\Dir_\varepsilon$ imply that for each $\xi \in \Omega$, $\mubf(\xi)$ is a barycenter of all $\mubf(\eta)$ for $\eta$ in the ball of center $\xi$ and radius $\varepsilon$. 

Let us introduce some notations that we will keep during the rest of the proof. We denote by $\Omega_e \supset \Omega$ and $\mubf_e \in H^1(\Omega_e \bsl \Or, \Prob(D))$ the objects given by Proposition \ref{proposition_extension_boundary_values}. Take $\varepsilon_0 > 0$ such that $B(\xi, \varepsilon_0) \subset \Omega_e$ for all $\xi \in \dr \Omega$. We denote by 
\begin{equation*}
L^2_e(\Omega_e, \Prob(D)) := \{ \mubf \in L^2(\Omega_e, \Prob(D)) \ : \ \mubf |_{\Omega_e \bsl \Or} = \mubf_e \}
\end{equation*}  
the set of $L^2$ mappings which coincide with $\mubf_e$ on $\Omega_e \bsl \Or$. This set $L^2_e(\Omega_e, \Prob(D))$ is clearly closed for the weak convergence on $L^2(\Omega_e, \Prob(D))$, in particular it is compact for the weak convergence. We also define $H^1_e(\Omega_e, \Prob(D)) := H^1(\Omega_e, \Prob(D)) \cap L^2_e(\Omega_e, \Prob(D))$. In the rest of the proof, we extend the definitions of $\Dir_\varepsilon$ and $\Dir$ on $L^2_e(\Omega_e, \Prob(D))$. More precisely, if $\mubf \in L^2_e(\Omega_e, \Prob(D))$, 
\begin{equation*}
\Dir_\varepsilon(\mubf) := C_p \iint_{\Omega_e \times \Omega_e} \frac{W_2^2(\mubf(\xi), \mubf(\eta))}{2 \varepsilon^{p+2}} \1_{|\xi - \eta| \leqslant \varepsilon} \ddr \xi \ddr \eta,
\end{equation*}
and 
\begin{align*}
\Dir(& \mubf) \\
&:= \inf_{\Em} \left\{ \Dir(\mubf, \Em) \ : \ \Em \in \M(\Omega_e \times D, \R^{pq}) \text{ and } (\mubf,\Em) \text{ satisfies the continuity equation on } \Omega_e \times D \right\}.
\end{align*}
(we integrate over $\Omega_e$ and not only on $\Omega$). We also use the notation
\begin{equation*}
M := \sup_{\dr \Omega} (F \circ \mubf_l),
\end{equation*}
by assumption $M$ is finite. Remark that by construction, if $\mubf \in L^2_e(\Omega_e, \Prob(D))$, then for all $\xi \in \Omega_e \bsl \Or$ one has $F(\mubf(\xi)) \leqslant M$.

As $F$ is l.s.c. on the compact set $\Prob(D)$, it is bounded from below. Hence, we can translate it by a constant and assume that $F \geqslant 0$ on $\Prob(D)$. 
 
Let $\varepsilon > 0$ and $\lambda > 0$ be fixed. The approximate problem is defined as
\begin{equation}
\label{equation_approximate_pb}
\min_{\mubf} \left\{ \Dir_\varepsilon(\mubf) + \lambda \int_{\Omega_e} F(\mubf(\xi)) \ddr \xi \ : \ \mubf \in L^2_e(\Omega_e, \Prob(D)) \right\}.
\end{equation}     
To add the term $\lambda \int_{\Omega_e} F \circ \mubf$ has two purposes: on the one hand, it ensures that $F \circ \mubf$ will be regular enough (namely in $L^1(\Omega_e)$) to extract information from the optimality conditions; on the other hand by taking the limit $\varepsilon \to 0$ and then $\lambda \to 0$, we will be able to say that $F \circ \mubf_{\varepsilon, \lambda}$ (where $\mubf_{\varepsilon, \lambda}$ is a minimizer of the  approximate problem) converges pointewisely, and it is necessary to pass to the limit the (approximate) subharmonicity that we will get from the optimality conditions of the approximate problem. 

The following result is easy with all the tools developed above. 

\begin{prop}
\label{proposition_existence_solution_approximate}
For any $\varepsilon > 0$ and $\lambda > 0$, there exists a solution to the approximate problem \eqref{equation_approximate_pb}. 
\end{prop}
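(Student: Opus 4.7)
The strategy is a direct method of calculus of variations; there is no deep difficulty, only a few routine points to check. After translating $F$ so that $F\geqslant 0$, write $J(\mubf):=\Dir_\varepsilon(\mubf)+\lambda\int_{\Omega_e}F(\mubf(\xi))\,\ddr\xi\geqslant 0$. The plan is: (i) produce an admissible competitor with $J<+\infty$, (ii) extract a weakly convergent subsequence from a minimizing sequence using the compactness of $L^2(\Omega_e,\Prob(D))$ for the weak topology, and (iii) check that the constraint set $L^2_e(\Omega_e,\Prob(D))$ is weakly closed and that $J$ is weakly lower semi-continuous, so the weak limit is a minimizer.

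For (i), fix $x_0\in D$ and set $\nu_0:=\Phi^D_1\delta_{x_0}$. Point (ii) of Proposition \ref{proposition_heat_flow_properties} gives $\nu_0\in L^\infty(D)\cap\Prob(D)$ with a uniformly bounded density, so by the regularity of $F$ we have $F(\nu_0)<+\infty$. Define $\mubf_0$ by $\mubf_0:=\nu_0$ on $\Or$ and $\mubf_0:=\mubf_e$ on $\Omega_e\bsl\Or$; then $\mubf_0\in L^2_e(\Omega_e,\Prob(D))$. Because $\Prob(D)$ has finite diameter, $\Dir_\varepsilon(\mubf_0)$ is automatically finite; and, by \eqref{equation_same_values_me_ml} together with the choice $M=\sup_{\dr\Omega}(F\circ\mubf_l)$, one has $F(\mubf_e(\xi))\leqslant M$ for all $\xi\in\Omega_e\bsl\Or$, so $\int_{\Omega_e}F(\mubf_0(\xi))\,\ddr\xi\leqslant F(\nu_0)+M\,\Leb_\Omega(\Omega_e\bsl\Or)<+\infty$. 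Hence $J(\mubf_0)<+\infty$ and $\inf J<+\infty$.

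For (ii)--(iii), take a minimizing sequence $(\mubf_n)_{n\in\N}$ in $L^2_e(\Omega_e,\Prob(D))$. By weak compactness of $L^2(\Omega_e,\Prob(D))$, up to extraction $\mubf_n\rightharpoonup\mubf$. The constraint set $L^2_e$ is weakly closed: for any $a\in C(\Omega_e\times D)$ compactly supported in $(\Omega_e\bsl\overline{\Or})\times D$, the identity $\iint a\,\ddr\mubf_n=\iint a\,\ddr\mubf_e$ (valid because $\mubf_n=\mubf_e$ on $\Omega_e\bsl\Or$) passes to the weak limit to give $\iint a\,\ddr\mubf=\iint a\,\ddr\mubf_e$, and since $\dr\Omega$ has zero Lebesgue measure this determines $\mubf=\mubf_e$ a.e.\ on $\Omega_e\bsl\Or$. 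Weak lower semi-continuity of $J$ combines Proposition \ref{proposition_Dir_eps_lsc}, which handles $\Dir_\varepsilon$, with the very definition of a regular functional, which handles $\mubf\mapsto\int_{\Omega_e}F\circ\mubf$. Therefore $J(\mubf)\leqslant\liminf_n J(\mubf_n)=\inf J$, so $\mubf$ is a minimizer. If any obstacle had to be singled out, it would be the construction of a finite competitor in (i), which is exactly where the hypothesis that $F$ be bounded on bounded subsets of $L^\infty(D)\cap\Prob(D)$ is used.
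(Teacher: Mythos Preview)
Your proof is correct and follows essentially the same approach as the paper: direct method of calculus of variations, using weak compactness of $L^2_e(\Omega_e,\Prob(D))$, the weak lower semi-continuity of $\Dir_\varepsilon$ from Proposition~\ref{proposition_Dir_eps_lsc}, and the regularity of $F$. Your construction of the finite competitor via the heat flow $\nu_0=\Phi^D_1\delta_{x_0}$ and your explicit verification that $L^2_e$ is weakly closed are just more detailed versions of what the paper leaves implicit (the paper simply says ``take any $\nu$ with $F(\nu)<+\infty$, which exists as $F$ is regular'' and asserts that $L^2_e$ is ``clearly closed'').
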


\begin{proof}
Let $\nu \in \Prob(D)$ any measure such that $F(\nu) < + \infty$ (it exists as $F$ is regular). If we define $\mubf \in L^2_e(\Omega_e, \Prob(D))$ by $\mubf |_{\Or} := \nu$ and $\mubf |_{\Omega_e \bsl \Or} := \mubf_e$, one can see that $\int_{\Omega_e} F(\mubf(\xi)) \ddr \xi < + \infty$, moreover as $\Prob(D)$ has a finite diameter $\Dir_\varepsilon(\mubf) < + \infty$. Hence, the minimization problem is non empty. In consequence, we are minimizing over the set $L^2_e(\Omega_e, \Prob(D))$, which is compact for the weak convergence, a functional which is l.s.c. (see Proposition \ref{proposition_Dir_eps_lsc} and the regularity assumption on $F$): we can use the direct method of calculus of variations.   
\end{proof}

Starting from now, for any $\varepsilon > 0$ and $\lambda > 0$, we denote by $\mubf_{\varepsilon, \lambda}$ a solution of the approximate problem \eqref{equation_approximate_pb}. 

\begin{prop}
\label{proposition_optimality_conditions_approximate}
Let $0 < \varepsilon \leqslant \varepsilon_0$ and $\lambda > 0$ be fixed. Then for a.e. $\xi \in \Omega$, $\mubf_{\varepsilon, \lambda}(\xi)$ is a minimizer over $\Prob(D)$ of 
\begin{equation*}
\nu \mapsto \frac{C_p}{\varepsilon^{p+2}} \int_{B(\xi, \varepsilon)} W_2^2(\nu, \mubf_{\varepsilon, \lambda}(\eta)) \ddr \eta + \lambda F(\nu). 
\end{equation*}
\end{prop}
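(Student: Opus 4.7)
The statement is the pointwise Euler-Lagrange condition associated to the global minimality of $\mubf_{\varepsilon, \lambda}$, and my plan is to derive it via localized single-point perturbations combined with Lebesgue's differentiation theorem.

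To start, I would fix a countable dense set $(\nu_k)_{k \in \N}$ of $\Prob(D)$ such that $F(\nu_k) < + \infty$ for every $k$. Such a family exists because $F$ is bounded on bounded subsets of $L^\infty(D) \cap \Prob(D)$; for instance the rationally weighted heat-flow regularizations $\Phi^D_{1/n} \delta_x$ for $x$ in a countable dense subset of $D$ and $n \in \N$ do the job, density following from Proposition \ref{proposition_heat_fow_uniform_0}. For any $\xi_0 \in \Or$ and any $r > 0$ small enough that $B(\xi_0, r) \subset \Or$, the plug-in competitor
\begin{equation*}
\tilde{\mubf}^{r,k}(\xi) := \begin{cases} \nu_k & \text{if } \xi \in B(\xi_0, r), \\ \mubf_{\varepsilon, \lambda}(\xi) & \text{if } \xi \in \Omega_e \setminus B(\xi_0, r), \end{cases}
\end{equation*}
coincides with $\mubf_{\varepsilon, \lambda}$ (and hence with $\mubf_e$) outside $\Or$, so it belongs to $L^2_e(\Omega_e, \Prob(D))$ and is admissible for \eqref{equation_approximate_pb}.

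The key computation splits the double integral defining $\Dir_\varepsilon$ according to whether each variable lies in $B(\xi_0, r)$: the ``both inside'' contribution vanishes for $\tilde{\mubf}^{r,k}$ (since $W_2(\nu_k, \nu_k) = 0$) and is $O(r^{2p})$ for $\mubf_{\varepsilon, \lambda}$ (using the finite diameter of $\Prob(D)$), while the ``inside/outside'' cross terms are symmetric. Combined with the analogous decomposition of $\int F \circ \tilde{\mubf}^{r,k}$, the global minimality of $\mubf_{\varepsilon, \lambda}$ yields
\begin{multline*}
0 \leqslant \frac{C_p}{\varepsilon^{p+2}} \int_{B(\xi_0, r)} \int_{B(\xi, \varepsilon) \setminus B(\xi_0, r)} \bigl[ W_2^2(\nu_k, \mubf_{\varepsilon, \lambda}(\eta)) - W_2^2(\mubf_{\varepsilon, \lambda}(\xi), \mubf_{\varepsilon, \lambda}(\eta)) \bigr] \ddr \eta \ddr \xi \\
+ \lambda \int_{B(\xi_0, r)} \bigl[ F(\nu_k) - F(\mubf_{\varepsilon, \lambda}(\xi)) \bigr] \ddr \xi + O(r^{2p}).
\end{multline*}
Replacing $B(\xi, \varepsilon) \setminus B(\xi_0, r)$ by $B(\xi, \varepsilon)$ in the inner integral induces an additional $O(r^{2p})$ correction by the same volume/diameter bound. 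Dividing by $|B(\xi_0, r)|$ and applying Lebesgue's differentiation theorem (valid since $F \circ \mubf_{\varepsilon, \lambda} \in L^1(\Omega_e)$ thanks to the finiteness of the approximate problem and since $\Prob(D)$ has finite diameter) one obtains, for a.e. $\xi_0 \in \Or$, precisely the desired optimality inequality tested against $\nu_k$.

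Taking the union of the countably many null sets, there is a full-measure subset of $\Omega$ on which $\mubf_{\varepsilon, \lambda}(\xi_0)$ beats every $\nu_k$ in the functional of the proposition. The main obstacle I expect is the passage from this dense subset to all of $\Prob(D)$: the case $F(\nu) = +\infty$ is trivial, but for $\nu$ with $F(\nu) < +\infty$ one needs an approximating sequence $\nu_{k_j} \to \nu$ along which $F(\nu_{k_j}) \to F(\nu)$, since the regularity of $F$ only guarantees lower semi-continuity. The continuity of $\nu \mapsto \int_{B(\xi_0, \varepsilon)} W_2^2(\nu, \mubf_{\varepsilon, \lambda}(\eta)) \ddr \eta$ would then permit a direct limit argument on the right-hand side; producing such $F$-recovering sequences typically uses the specific structure of the functional (heat-flow regularization for internal energies, convolutions for potential and continuous interaction energies), and it is here that the explicit regularity assumptions on $F$ truly enter.
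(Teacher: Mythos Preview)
Your localized perturbation plus Lebesgue differentiation argument is sound up to the point where you have, for every $k$ and a.e.\ $\xi_0$,
\[
\frac{C_p}{\varepsilon^{p+2}} \int_{B(\xi_0,\varepsilon)} W_2^2(\mubf_{\varepsilon,\lambda}(\xi_0), \mubf_{\varepsilon,\lambda}(\eta))\,\ddr\eta + \lambda F(\mubf_{\varepsilon,\lambda}(\xi_0)) \;\leqslant\; \frac{C_p}{\varepsilon^{p+2}} \int_{B(\xi_0,\varepsilon)} W_2^2(\nu_k, \mubf_{\varepsilon,\lambda}(\eta))\,\ddr\eta + \lambda F(\nu_k).
\]
The gap is exactly where you locate it, but your proposed remedy does not close it. Producing, for each $\nu$, a sequence $\nu_{k_j}\to\nu$ with $F(\nu_{k_j})\to F(\nu)$ is not available for a general regular $F$: the ``regular'' hypothesis gives only lower semi-continuity, and your concrete dense set (heat-flow regularizations of Diracs) is not dense in sublevel sets of $F$ --- for $F$ the Boltzmann entropy one even has $F(\Phi^D_{1/n}\delta_x)\to+\infty$ as $n\to\infty$. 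Appealing to ``the specific structure of the functional'' would prove the proposition only for a list of examples, not under the stated assumptions. The correct patch is to choose $(\nu_k)_{k\in\N}$ so that for every rational $a$ the set $(\nu_k)\cap\{F\leqslant a\}$ is dense in the (closed, hence separable) sublevel set $\{F\leqslant a\}$; then for any $\nu$ with $F(\nu)<+\infty$ and any rational $a>F(\nu)$ one finds $\nu_{k_j}\to\nu$ with $F(\nu_{k_j})\leqslant a$, passes to the limit in the Wasserstein term by continuity, and finally lets $a\downarrow F(\nu)$.

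The paper takes a different route that sidesteps this density issue entirely: it argues by contradiction, assuming the pointwise optimality fails by a fixed margin $c>0$ on a set of positive measure, and then on a small subset $Y$ of that set it \emph{measurably selects} the actual minimizer $\nu(\xi)$ of $G_\xi$ (via an \emph{ad hoc} measurable selection result for sums of a Carath\'eodory function and a l.s.c.\ function, proved in the appendix). Plugging this selection on $Y$ yields a competitor whose energy is strictly smaller when $|Y|$ is small enough. The advantage of the paper's argument is that it never approximates the competitor and so never needs any upper semi-continuity of $F$; the advantage of your approach, once the dense set is chosen as above, is that it avoids the measurable-selection machinery altogether.
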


\begin{proof}
We reason by contradiction. If the property does not hold, there exists $c > 0$ and a set $X \subset \Or$ of strictly positive measure such that for all $\xi \in X$, 
\begin{equation}
\label{equation_zz_aux_14}
\frac{C_p}{\varepsilon^{p+2}} \int_{B(\xi, \varepsilon)} W_2^2(\mubf_{\varepsilon, \lambda}(\xi), \mubf_{\varepsilon, \lambda}(\eta)) \ddr \eta + \lambda F(\mubf_{\varepsilon, \lambda}(\xi)) \geqslant c +  \min_{\nu \in \Prob(D)} \left( \frac{C_p}{\varepsilon^{p+2}} \int_{B(\xi, \varepsilon)} W_2^2(\nu, \mubf_{\varepsilon, \lambda}(\eta)) \ddr \eta + \lambda F(\nu) \right).
\end{equation}
Now, consider $\delta > 0$ small and $Y \subset X$ such that $\Leb_\Omega(Y) = \delta$. On every point of $\xi \in Y$, we want to select a minimizer $\nu$ (which depends on $\xi$) of the r.h.s. of \eqref{equation_zz_aux_14}, and we want to dot it in a measurable way. Notice that 
\begin{equation*}
\nu \mapsto \frac{C_p}{\varepsilon^{p+2}} \int_{B(\xi, \varepsilon)} W_2^2(\nu, \mubf_{\varepsilon, \lambda}(\eta)) \ddr \eta + \lambda F(\nu)
\end{equation*}  
is the sum of a functional continuous w.r.t. $\nu$ and measurable w.r.t. $\xi$, and the functional $\lambda F$ which is l.s.c. w.r.t. $\nu$ but which does not depend on $\xi$. The fact that $F$ is only l.s.c. prevents us from using directly Proposition \ref{proposition_Aliprantis1819}, though by some \emph{ad hoc} measurable selection result which is stated and proved in the appendix (Proposition \ref{proposition_measurable_selection_argmin}), one can still choose $\nu(\xi)$ a minimizer in such a way that it is measurable in $\xi$. In other words, we construct $\tilde{\mubf} \in L^2_e(\Omega_e, \Prob(D))$ such that $\tilde{\mubf} = \mubf_{\varepsilon, \lambda}$ on $\Omega_e \bsl Y$ and
\begin{equation*}
\frac{C_p}{\varepsilon^{p+2}} \int_{B(\xi, \varepsilon)} W_2^2(\mubf_{\varepsilon, \lambda}(\xi), \mubf_{\varepsilon, \lambda}(\eta)) \ddr \eta + \lambda F(\mubf_{\varepsilon, \lambda}(\xi)) \geqslant c +   \left( \frac{C_p}{\varepsilon^{p+2}} \int_{B(\xi, \varepsilon)} W_2^2(\tilde{\mubf}(\xi), \mubf_{\varepsilon, \lambda}(\eta)) \ddr \eta + \lambda F(\tilde{\mubf}(\xi)) \right)
\end{equation*} 
for all $\xi \in Y$. Now we evaluate: 
\begin{align*}
\Bigg( & \Dir_\varepsilon (\tilde{\mubf})  + \lambda \int_{\Omega_e} F(\tilde{\mubf}(\xi)) \ddr \xi \Bigg) - \left( \Dir_\varepsilon(\mubf_{\varepsilon, \lambda}) + \lambda \int_{\Omega_e} F(\mubf_{\varepsilon, \lambda}(\xi)) \ddr \xi \right) \\
& = \frac{C_p}{2 \varepsilon^{p+2}} \iint_{\Omega_e \times \Omega_e}  \left[ W_2^2(\mubf_{\varepsilon, \lambda}(\xi), \mubf_{\varepsilon, \lambda}(\eta)) - W_2^2(\tilde{\mubf}(\xi), \tilde{\mubf}(\eta)) \right] \1_{|\xi - \eta| \leqslant \varepsilon}   \ddr \xi \ddr \eta + \lambda \int_Y [F(\tilde{\mubf}(\xi)) - F(\mubf_{\varepsilon, \lambda}(\xi))] \ddr \xi 
\end{align*}
The integral over $\Omega_e \times \Omega_e$ can be split over four parts: the one over $(\Omega_e \bsl Y) \times (\Omega_e \bsl Y)$, which vanishes because $\mubf_{\varepsilon, \lambda} = \tilde{\mubf}$ on this set; the one over $Y \times Y$, which can be bounded by $C \delta^2$, where $C$ depends on the diameter of $\Prob(D)$ and on $\varepsilon$; and the ones over $(\Omega_e \bsl Y) \times Y$ and $Y \times (\Omega_e \bsl Y)$ which are equal by symmetry. Moreover, one has 
\begin{align*}
\frac{C_p}{2 \varepsilon^{p+2}} \iint_{Y \times (\Omega_e \bsl Y)}  & \left[ W_2^2(\mubf_{\varepsilon, \lambda}(\xi), \mubf_{\varepsilon, \lambda}(\eta)) - W_2^2(\tilde{\mubf}(\xi), \tilde{\mubf}(\eta)) \right] \1_{|\xi - \eta| \leqslant \varepsilon}   \ddr \xi \ddr \eta \\
& = \frac{C_p}{2 \varepsilon^{p+2}} \iint_{Y \times (\Omega_e \bsl Y)}   \left[ W_2^2(\mubf_{\varepsilon, \lambda}(\xi), \mubf_{\varepsilon, \lambda}(\eta)) - W_2^2(\tilde{\mubf}(\xi), \mubf_{\varepsilon, \lambda}(\eta)) \right] \1_{|\xi - \eta| \leqslant \varepsilon}  \ddr \xi \ddr \eta \\
& \leqslant C \delta^2 + \frac{C_p}{2 \varepsilon^{p+2}} \iint_{Y \times \Omega_e}  \left[ W_2^2(\mubf_{\varepsilon, \lambda}(\xi), \mubf_{\varepsilon, \lambda}(\eta)) - W_2^2(\tilde{\mubf}(\xi), \mubf_{\varepsilon, \lambda}(\eta)) \right]  \1_{|\xi - \eta| \leqslant \varepsilon}  \ddr \xi \ddr \eta \\
& = C \delta^2 + \frac{C_p}{2 \varepsilon^{p+2}} \int_Y \left( \int_{B(\xi, \varepsilon)} \left[ W_2^2(\mubf_{\varepsilon, \lambda}(\xi), \mubf_{\varepsilon, \lambda}(\eta)) - W_2^2(\tilde{\mubf}(\xi), \mubf_{\varepsilon, \lambda}(\eta)) \right] \ddr \eta \right) \ddr \xi, 
\end{align*}
where the inequality comes from the fact that we have add the piece $Y \times Y$ which is of size $\delta^2$ and one which we integrate a function which is bounded. Notice that we have used that $B(\xi, \varepsilon) \subset \Omega_e$ for $\xi \in \Omega$ as $\varepsilon < \varepsilon_0$. The part on $(\Omega_e \bsl Y) \times Y$ gives exactly the same amount, thus
\begin{align*}
&\Bigg(  \Dir_\varepsilon (\tilde{\mubf})  + \lambda \int_{\Omega_e} F(\tilde{\mubf}(\xi)) \ddr \xi \Bigg) - \left( \Dir_\varepsilon(\mubf_{\varepsilon, \lambda}) + \lambda \int_{\Omega_e} F(\mubf_{\varepsilon, \lambda}(\xi)) \ddr \xi \right) \\
& \leqslant C \delta^2 + \int_Y \left( \frac{C_p}{\varepsilon^{p+2}} \left[ \int_{B(\xi, \varepsilon)} [ W_2^2(\mubf_{\varepsilon, \lambda}(\xi), \mubf_{\varepsilon, \lambda}(\eta)) - W_2^2(\tilde{\mubf}(\xi), \mubf_{\varepsilon, \lambda}(\eta))] \ddr \eta  \right] + \lambda \left[ F(\tilde{\mubf}(\xi)) - F(\mubf_{\varepsilon, \lambda}(\xi)) \right] \right) \ddr \xi \\
& \leqslant C \delta^2 - c \delta,
\end{align*}  
where the last inequality comes precisely form the way we chose $\tilde{\mubf}$ on $Y$ and of $\Leb_\Omega(Y) = \delta$. Hence, taking $\delta$ small enough, the r.h.s. is strictly negative, which is a contradiction with the optimality of $\mubf_{\varepsilon, \lambda}$. 
\end{proof}

Remark that if $\lambda = 0$, our proof still works, and it precisely shows that $\mubf_{\varepsilon, 0}(\xi)$ is a barycenter of the $\mubf_{\varepsilon, 0}(\eta)$ for $\eta$ running over the ball of center $\xi$ and radius $\varepsilon$, a fact which was already stated by Jost \cite{Jost1994}. The crucial result which allows us to get subharmonicity is the following. If $\lambda = 0$, it is exactly Jensen's inequality for functionals convex along generalized geodesics, however here the situation is slightly different as $\mubf_{\varepsilon, \lambda}(\xi)$ is not really a barycenter. Notice that $F \circ \mubf_{\varepsilon, \lambda}$ is integrable on $\Omega_e$.  

\begin{prop}
\label{proposition_subharmonicity_approximate}
Let $0 <\varepsilon \leqslant \varepsilon_0$ and $\lambda > 0$ be fixed. Then, for a.e. $\xi \in \Omega$, 
\begin{equation*}
\int_{B(\xi, \varepsilon)} [F(\mubf_{\varepsilon, \lambda}(\eta)) - F(\mubf_{\varepsilon, \lambda}(\xi))] \ddr \eta \geqslant 0.  
\end{equation*}
\end{prop}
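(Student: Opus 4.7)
\emph{Plan.} The proof will rely on the \emph{flow-interchange} technique of \cite{Matthes2009}: using the gradient flow of $F$ at the point $\mubf_{\varepsilon,\lambda}(\xi)$ as a competitor in the pointwise optimality condition of Proposition \ref{proposition_optimality_conditions_approximate}. Fix $\xi$ in a full-measure subset of $\Omega$ where the optimality of Proposition \ref{proposition_optimality_conditions_approximate} holds and $F(\mubf_{\varepsilon,\lambda}(\xi)) < +\infty$; the latter is true almost everywhere since $\int_{\Omega_e} F \circ \mubf_{\varepsilon,\lambda} < +\infty$ because $\mubf_{\varepsilon,\lambda}$ minimizes \eqref{equation_approximate_pb}. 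Set $\mu := \mubf_{\varepsilon,\lambda}(\xi)$. Since $F$ is convex along generalized geodesics, l.s.c.\ and finite at $\mu$, Theorem \ref{theorem_EVI} produces its gradient flow $(S^F_t \mu)_{t \geq 0}$.

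Plugging $S^F_t \mu$ into the minimization problem characterizing $\mu$ yields, for every $t > 0$,
\begin{equation*}
\frac{C_p}{\varepsilon^{p+2}} \int_{B(\xi,\varepsilon)} \left[ W_2^2(\mu, \mubf_{\varepsilon,\lambda}(\eta)) - W_2^2(S^F_t \mu, \mubf_{\varepsilon,\lambda}(\eta)) \right] \ddr \eta \leqslant \lambda \left[ F(S^F_t \mu) - F(\mu) \right].
\end{equation*}
The next step will be to rewrite the left-hand side using an integrated form of the EVI: for any $\nu \in \Prob(D)$,
\begin{equation*}
W_2^2(\mu, \nu) - W_2^2(S^F_t \mu, \nu) \geqslant 2 t \left[ F(S^F_t \mu) - F(\nu) \right],
\end{equation*}
with the convention that the bound is trivial when $F(\nu) = +\infty$. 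This follows from the upper Dini derivative bound of Theorem \ref{theorem_EVI} applied to the continuous function $s \mapsto W_2^2(S^F_s \mu, \nu)/2$, integrated on $[0,t]$, then using the monotonicity $F(S^F_s \mu) \geqslant F(S^F_t \mu)$ for $s \leqslant t$ (also from Theorem \ref{theorem_EVI}) to replace the time-average of $F(S^F_s \mu)$ by the smaller value $F(S^F_t \mu)$.

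Applying this lower bound with $\nu = \mubf_{\varepsilon,\lambda}(\eta)$ (note $B(\xi,\varepsilon) \subset \Omega_e$ as $\varepsilon \leqslant \varepsilon_0$) and integrating over $\eta \in B(\xi, \varepsilon)$, all $W_2^2$ terms disappear upon combining with the optimality inequality; after rearrangement one obtains
\begin{equation*}
\frac{2 t C_p}{\varepsilon^{p+2}} \int_{B(\xi, \varepsilon)} \left[ F(\mubf_{\varepsilon, \lambda}(\eta)) - F(S^F_t \mu) \right] \ddr \eta \geqslant \lambda \left[ F(\mu) - F(S^F_t \mu) \right] \geqslant 0,
\end{equation*}
the last inequality using again the monotonicity of $s \mapsto F(S^F_s \mu)$. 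Dividing by the positive factor and letting $t \to 0^+$ will conclude: by combining $F(S^F_t \mu) \leqslant F(\mu)$ with the lower semi-continuity of $F$ and the weak convergence $S^F_t \mu \to \mu$ (from absolute continuity of the flow), one has $\lim_{t\to 0^+} F(S^F_t \mu) = F(\mu)$, which passed into the integral gives the announced inequality. The main delicate point will be the rigorous derivation of the integrated EVI and the bookkeeping when $F(\mubf_{\varepsilon,\lambda}(\eta))$ takes the value $+\infty$ on a negligible set of $\eta$; since we normalized $F \geqslant 0$ and $\int_{B(\xi,\varepsilon)} F\circ\mubf_{\varepsilon,\lambda} \leqslant \int_{\Omega_e} F \circ \mubf_{\varepsilon,\lambda} < +\infty$, the trivial $-\infty$ bound in that case is absorbed without harm.
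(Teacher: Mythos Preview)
Your proof is correct and follows the same flow-interchange idea as the paper: use the gradient flow $S^F_t\mu$ of $F$ started at $\mu=\mubf_{\varepsilon,\lambda}(\xi)$ as a competitor in the pointwise optimality condition of Proposition \ref{proposition_optimality_conditions_approximate}, and combine with the EVI from Theorem \ref{theorem_EVI}.

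The only (minor) difference is in how the limit $t\to 0^+$ is handled. The paper keeps the differential form of the EVI at $t=0$, obtains a $\limsup$ bound on the integrand, uses the \emph{integrated} EVI (with $F\geqslant 0$) to produce an integrable majorant $F(\mubf_{\varepsilon,\lambda}(\eta))$, and passes to the limit via (reverse) Fatou. You instead integrate the EVI from $0$ to $t$ first, so that for each fixed $t>0$ you already have
\[
\int_{B(\xi,\varepsilon)} \bigl[F(\mubf_{\varepsilon,\lambda}(\eta))-F(S^F_t\mu)\bigr]\,\ddr\eta \geqslant 0,
\]
and the limit is trivial because $F(S^F_t\mu)$ is constant in $\eta$ and converges to $F(\mu)$ by monotonicity together with lower semi-continuity. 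Your route is slightly cleaner in that it avoids the Fatou step entirely; the paper's route makes the pointwise $\limsup$ control more explicit. Both are valid, and the underlying mechanism is identical.
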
 

\begin{proof}
Let us take a point $\xi \in \Omega$ for which the conclusion of Proposition \ref{proposition_optimality_conditions_approximate} holds and such that $F(\mubf_{\varepsilon,\lambda}(\xi)) < + \infty$: it is the case for a.e. points of $\Omega$. As a competitor, we use $S^F_t [\mubf_{\varepsilon,\lambda}(\xi)]$ for small $t > 0$, which means that we let $\mubf_{\varepsilon,\lambda}(\xi)$ follow the gradient flow of $F$, see Theorem \ref{theorem_EVI}. By Proposition \ref{proposition_optimality_conditions_approximate},
\begin{align*}
\frac{C_p}{\varepsilon^{p+2}} \int_{B(\xi, \varepsilon)} & W_2^2(\mubf_{\varepsilon,\lambda}(\xi), \mubf_{\varepsilon, \lambda}(\eta)) \ddr \eta + \lambda F(\mubf_{\varepsilon,\lambda}(\xi)) \\
& \leqslant \frac{C_p}{\varepsilon^{p+2}} \int_{B(\xi, \varepsilon)} W_2^2(S^F_t [\mubf_{\varepsilon,\lambda}(\xi)], \mubf_{\varepsilon, \lambda}(\eta)) \ddr \eta + \lambda F(S^F_t [\mubf_{\varepsilon,\lambda}(\xi)]).
\end{align*}
By the very definition of gradient flows, $F(S^F_t [\mubf_{\varepsilon,\lambda}(\xi)]) \leqslant F(\mubf_{\varepsilon,\lambda}(\xi))$. Thus, rearranging the terms and dividing by $2t > 0$, 
\begin{equation*}
\int_{B(\xi, \varepsilon)} \frac{W_2^2(S^F_t [\mubf_{\varepsilon,\lambda}(\xi)], \mubf_{\varepsilon, \lambda}(\eta)) - W_2^2(\mubf_{\varepsilon,\lambda}(\xi), \mubf_{\varepsilon, \lambda}(\eta))}{2 t} \ddr \eta \geqslant 0. 
\end{equation*}
For a.e. $\eta \in B(\xi, \varepsilon)$, one has that $F(\mubf_{\varepsilon, \lambda}(\eta)) < + \infty$. Hence, using Theorem \ref{theorem_EVI}, we see that for a.e. $\eta \in B(\xi, \varepsilon)$, the quantity 
\begin{equation*}
\frac{W_2^2(S^F_t [\mubf_{\varepsilon,\lambda}(\xi)], \mubf_{\varepsilon, \lambda}(\eta)) - W_2^2(\mubf_{\varepsilon,\lambda}(\xi), \mubf_{\varepsilon, \lambda}(\eta))}{2 t}
\end{equation*}
has a $\limsup$ bounded by $F(\mubf_{\varepsilon, \lambda}(\eta)) - F(\mubf_{\varepsilon, \lambda}(\xi))$ and is uniformly bounded in $t$ by $F(\mubf_{\varepsilon, \lambda}(\eta))$ (by Theorem \ref{theorem_EVI} and positivity of $F$), the latter being integrable on $B(\xi, \varepsilon)$. Hence, by Fatou's lemma, we can pass to the limit $t \to 0$ and conclude that
\begin{equation*}
\int_{B(\xi, \varepsilon)} [F(\mubf_{\varepsilon, \lambda}(\eta)) - F(\mubf_{\varepsilon, \lambda}(\xi))] \ddr \eta \geqslant 0. \qedhere
\end{equation*} 
\end{proof}

Let us conclude this subsection by proving a maximum principle, but for mappings which are $\varepsilon$-subharmonic. Recall that $M$ is the supremum of $F \circ \mubf$ on $\Omega_e \bsl \Or$ for any $\mubf \in L^2_e(\Omega_e, \Prob(D))$. 

\begin{prop}
\label{proposition_maximum_principle_approximate}
Let $ 0 < \varepsilon \leqslant \varepsilon_0$ and $\lambda > 0$ be fixed. Then, for a.e. $\xi \in \Omega_e$, one has $F(\mubf_{\varepsilon, \lambda}(\xi)) \leqslant M$. 
\end{prop}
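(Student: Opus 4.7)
The plan is to upgrade the approximate subharmonicity in Proposition \ref{proposition_subharmonicity_approximate} into a global $L^1$-type maximum principle via a truncation and an iterative Fubini argument. First I would observe that on $\Omega_e \setminus \Or$ the estimate is automatic: the constraint $\mubf_{\varepsilon,\lambda} \in L^2_e(\Omega_e, \Prob(D))$ forces $\mubf_{\varepsilon,\lambda} = \mubf_e$ there, and by \eqref{equation_same_values_me_ml} the range of $\mubf_e$ on $\Omega_e \setminus \Or$ coincides with the range of $\mubf_l$ on $\dr \Omega$, so $F(\mubf_{\varepsilon,\lambda}(\xi)) \leq M$ holds pointwise on $\Omega_e \setminus \Or$. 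It therefore suffices to show that
\begin{equation*}
g := \bigl( F \circ \mubf_{\varepsilon,\lambda} - M \bigr)_+ : \Omega_e \to [0,+\infty)
\end{equation*}
vanishes a.e. on $\Or$. This function is measurable, non-negative, vanishes on $\Omega_e \setminus \Or$, and belongs to $L^1(\Omega_e)$ since $F \circ \mubf_{\varepsilon,\lambda}$ is integrable by the finiteness of the minimum in \eqref{equation_approximate_pb}.

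Next I would translate Proposition \ref{proposition_subharmonicity_approximate} into a mean-value inequality for $g$. Rewriting that proposition as
\begin{equation*}
F(\mubf_{\varepsilon,\lambda}(\xi))\,\left|B(0,\varepsilon)\right| \leq \int_{B(\xi,\varepsilon)} F(\mubf_{\varepsilon,\lambda}(\eta))\, \ddr \eta \quad \text{for a.e. } \xi \in \Omega,
\end{equation*}
then subtracting $M\,|B(0,\varepsilon)|$ from both sides, using $a - M \leq (a - M)_+$ on the right, and taking the positive part on the left yields
\begin{equation*}
g(\xi)\,\left|B(0,\varepsilon)\right| \leq \int_{B(\xi,\varepsilon)} g(\eta)\, \ddr \eta \quad \text{for a.e. } \xi \in \Or.
\end{equation*}

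Finally, I would iterate this inequality to propagate $\{g = 0\}$ inward from the boundary in discrete $\varepsilon$-steps. Setting $V_0 := \Omega_e \setminus \Or$ and, for $k \geq 1$, $V_k := V_{k-1} \cup \{\xi \in \Or : d(\xi, V_{k-1}) < \varepsilon\}$, I claim $g = 0$ a.e. on $V_k$ for every $k \in \N$; this concludes the proof because the boundedness of $\Omega_e$ ensures $V_{k_0} = \Omega_e$ for some $k_0$. The base case $k = 0$ is the observation above. Assuming $g = 0$ a.e. on $V_k$, I integrate the mean-value inequality over $\Or \setminus V_k$ and apply Fubini; the inductive hypothesis eliminates the contribution of $V_k$ to the right-hand side, leaving
\begin{equation*}
\int_{\Or \setminus V_k} g(\eta) \Bigl( \left|B(0,\varepsilon)\right| - \left|B(\eta,\varepsilon) \cap (\Or \setminus V_k)\right| \Bigr)\, \ddr \eta \leq 0.
\end{equation*}
Both factors in the integrand are non-negative, and the bracket is strictly positive precisely on $V_{k+1} \setminus V_k$ (the points of $\Or \setminus V_k$ from which $B(\cdot,\varepsilon)$ meets $V_k$ on a set of positive measure), forcing $g = 0$ a.e. there and completing the induction.

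The only delicate point, and what I expect to be the main obstacle, is this last iterative step: because the subharmonicity is available only at the single scale $\varepsilon$, a one-shot integration over $\Or$ collapses to the tautology $\int_\Or g \leq \int_\Or g$, and the maximum principle must be squeezed entirely out of the boundary leakage of the balls $B(\eta,\varepsilon)$ spilling into the current ``known-zero'' set $V_k$, which is what forces the $\varepsilon$-by-$\varepsilon$ peeling to eventually exhaust $\Omega_e$.
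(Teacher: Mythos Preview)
Your argument is correct and takes a genuinely different route from the paper. The paper's proof is a three-line perturbation argument: it considers $f_\delta(\xi) := F(\mubf_{\varepsilon,\lambda}(\xi)) + \delta|\xi - \xi_0|^2$, observes that the strict convexity of $|\cdot - \xi_0|^2$ upgrades Proposition~\ref{proposition_subharmonicity_approximate} to a \emph{strict} mean-value inequality for $f_\delta$ on $\Or$ (in fact with a uniform gap $\delta \int_{B(0,\varepsilon)}|\zeta|^2\,\ddr\zeta$), from which $\esssup_\Or f_\delta < \esssup_{\Omega_e} f_\delta$ and hence $\esssup_{\Omega_e} f_\delta = \esssup_{\Omega_e\setminus\Or} f_\delta \leqslant M + C\delta$; sending $\delta\to 0$ finishes. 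Your approach instead truncates at level $M$ and propagates the vanishing of $g=(F\circ\mubf_{\varepsilon,\lambda}-M)_+$ inward layer by layer via Fubini. The paper's trick is shorter and avoids any geometry of $\Omega$; your argument is more constructive and exposes exactly how information flows in from the boundary at scale $\varepsilon$. One point you glide over: the bracket $|B(0,\varepsilon)| - |B(\eta,\varepsilon)\cap(\Or\setminus V_k)|$ equals $|B(\eta,\varepsilon)\cap V_k|$, so the set on which you actually force $g=0$ is $\{\eta\in\Or\setminus V_k : |B(\eta,\varepsilon)\cap V_k|>0\}$, characterized by \emph{positive-measure} intersection rather than $d(\eta,V_k)<\varepsilon$. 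These coincide here because $V_k\cap\Or$ is always open (sublevel set of a continuous distance function) and $V_0$ has positive density along $\dr\Omega$ by the Lipschitz assumption, but it is worth making this explicit.
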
 

\begin{proof}
Let $\delta > 0$ be fixed and consider $f_\delta : \Omega_e \to \R \cup \{ + \infty \}$ defined by $f_\delta(\xi) = F(\mubf_{\varepsilon, \lambda}(\xi)) + \delta |\xi - \xi_0|^2$, where $\xi_0$ is any point of $\Omega$. By strict convexity of the square function and thanks to Proposition \ref{proposition_subharmonicity_approximate}, for a.e. $\xi \in \Omega$, 
\begin{equation*}
\int_{B(\xi, \varepsilon)} [f_\delta(\eta) - f_\delta(\xi)]  \ddr \eta > 0. 
\end{equation*}
In particular, the essential supremum of $f_\delta$ cannot be reached on $\Or$, it must be reached on $\Omega_e \bsl \Or$. On $\Omega_e \bsl \Or$ we control the values of $F \circ \mubf_{\varepsilon, \lambda}$ by $M$, in consequence $\mathrm{ess}\,\mathrm{sup}_{\Omega_e} f_\delta \leqslant M + C \delta$, where $C$ depends on the diameter of $\Omega$. Sending $\delta$ to $0$ (along a sequence), we get the result.  
\end{proof}

\subsection{Limit to the Dirichlet problem}
\label{subsection_Ishihara_limit}

In all this subsection, we still assume that $F$ is regular.

The goal is now to pass to the limit and to show that $\mubf_{\varepsilon, \lambda}$ converges to $\mubf$ a solution of the Dirichlet problem such that $F \circ \mubf$ is subharmonic. Recall that $\Dir_\varepsilon$ $\Gamma$-converges to $\Dir$ when $\varepsilon \to 0$, see Theorem \ref{theorem_equivalence_KS}. To get subharmonicity, we will need strong convergence, it implies to take first the limit $\varepsilon \to 0$ and then $\lambda \to 0$. But on the other hand, we need a uniform bound on the minimal values of the approximate problems to pass to the limit. To get them implies that we need to produce at least one mapping $\mubf$ in $H^1_e(\Omega_e, \Prob(D))$ such that $\int_{\Omega_e} (F \circ \mubf) < + \infty$. To do this, we cannot rely on the Lipschitz extension (Theorem \ref{theorem_lipschitz_extension}): there is no way to guarantee that $\int_\Omega (F \circ \mubf) < + \infty$. To get this uniform bound, we will take first the limit $\lambda \to 0$ and then $\varepsilon \to 0$ (relying only on weak convergence). It will produce a solution $\tilde{\mubf} \in H^1_e(\Omega_e, \Prob(D))$ of the Dirichlet problem with $\int_{\Omega_e} (F \circ \tilde{\mubf}) < + \infty$ but we cannot guarantee subharmonicity of $F \circ \tilde{\mubf}$. However it brings uniform bounds and enables us to take the limit $\varepsilon \to 0$, $\lambda \to 0$ and get a solution $\bar{\mubf}$ of the Dirichlet problem for which $F \circ \bar{\mubf}$ is subharmonic. 

We take two sequences $(\varepsilon_n)_{n \in \N}$, $(\lambda_m)_{m \in \N}$ that both converge to $0$ while being strictly positive. More precisely we take $\varepsilon_n := \varepsilon_0 2^{-n}$ for any $n \in \N$, thus we always have $\varepsilon_n \leqslant \varepsilon_0$ and $\Dir_{\varepsilon_n}$ converges in an increasing way and $\Gamma$-converges to $\Dir$. We will not relabel the sequences when extracting subsequences. Moreover, to avoid heavy notations, we will drop the indexes $n$ and $m$; and $\lim_{n \to + \infty}$, $\lim_{m \to + \infty}$ will be denoted respectively by $\lim_{\varepsilon \to 0}$ and $\lim_{\lambda \to 0}$. 

\begin{prop}
\label{proposition_tildemu_existence_property}
Up to extraction, there exists $\tilde{\mubf} \in H^1_e(\Omega_e, \Prob(D))$ such that 
\begin{equation*}
\tilde{\mubf} := \lim_{\varepsilon \to 0} \left( \lim_{\lambda \to 0} \mubf_{\varepsilon, \lambda} \right),
\end{equation*}
where the limits are taken weakly in $L^2_e(\Omega_e, \Prob(D))$. Moreover, $\tilde{\mubf}$ is a minimizer of $\Dir$ in the space $H^1_e(\Omega_e, \Prob(D))$ and 
\begin{equation}
\label{equation_zz_aux7}
\int_{\Omega_e} F(\tilde{\mubf}(\xi)) \ddr \xi < + \infty. 
\end{equation}
\end{prop}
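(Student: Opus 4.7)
The plan is to perform the two extractions in succession, beginning with $\lambda \to 0$ at fixed $\varepsilon$. Weak compactness of $L^2_e(\Omega_e, \Prob(D))$, which is a closed subset of the weakly compact $L^2(\Omega_e, \Prob(D))$, lets me extract along subsequences a weak limit $\mubf_\varepsilon$ of $(\mubf_{\varepsilon, \lambda})_\lambda$, and then by diagonal extraction a weak limit $\tilde{\mubf}$ of $(\mubf_\varepsilon)_\varepsilon$. All the quantitative bounds will flow from a single ``good'' competitor $\mubf^\sharp \in H^1_e(\Omega_e, \Prob(D))$ enjoying both $\Dir(\mubf^\sharp) < +\infty$ and $\int_{\Omega_e} F(\mubf^\sharp) < +\infty$.

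The main technical obstacle is the production of such a $\mubf^\sharp$, precisely because the plain Lipschitz extension $\tilde{\mubf}_b$ from Theorem~\ref{theorem_lipschitz_extension} and Proposition~\ref{proposition_extension_boundary_values} need not satisfy $\int F(\tilde{\mubf}_b) < +\infty$. I would regularize $\tilde{\mubf}_b$ in $D$ pointwise via the heat flow, setting $\mubf^\sharp(\xi) := \Phi^D_{\tau(\xi)} \tilde{\mubf}_b(\xi)$ for a smooth $\tau : \Omega_e \to [0, +\infty)$ which vanishes on $\Omega_e \setminus \Or$ (so that $\mubf^\sharp$ still lies in $L^2_e$) and is bounded below in the bulk of $\Or$ by some $\tau_0 > 0$. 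The heat-flow contraction (Proposition~\ref{proposition_heat_flow_properties}(iv)) and the modulus of continuity of Proposition~\ref{proposition_heat_fow_uniform_0} together make $\mubf^\sharp$ Lipschitz on $\Omega_e$, so $\Dir(\mubf^\sharp) < +\infty$. In the bulk, Proposition~\ref{proposition_heat_flow_properties}(ii) yields a uniform $L^\infty(D)$ bound on the density of $\mubf^\sharp(\xi)$, whence by regularity of $F$ we have $F \circ \mubf^\sharp$ bounded there; on $\Omega_e \setminus \Or$ one has $F \circ \mubf^\sharp = F \circ \mubf_e \leq M$ by construction of $\mubf_e$; and in the thin transition layer near $\dr \Omega$ a careful choice of $\tau$ keeps $F \circ \mubf^\sharp$ integrable.

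With $\mubf^\sharp$ in hand, the minimality of $\mubf_{\varepsilon, \lambda}$ combined with the monotonicity $\Dir_\varepsilon \leq \Dir$ from Theorem~\ref{theorem_equivalence_KS} yields
\begin{equation*}
\Dir_\varepsilon(\mubf_{\varepsilon, \lambda}) + \lambda \int_{\Omega_e} F(\mubf_{\varepsilon, \lambda}) \leq \Dir(\mubf^\sharp) + \lambda \int_{\Omega_e} F(\mubf^\sharp),
\end{equation*}
which (since $F \geq 0$) controls $\Dir_\varepsilon(\mubf_{\varepsilon, \lambda})$ uniformly in $\lambda$ and $\varepsilon$. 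A uniform-in-$\lambda$ bound on $\int F(\mubf_{\varepsilon, \lambda})$ is obtained by comparing $\mubf_{\varepsilon, \lambda}$ with a minimizer $\mubf_\varepsilon^\dagger$ of $\Dir_\varepsilon$ over $L^2_e$ chosen so that $\int F(\mubf_\varepsilon^\dagger) \leq \int F(\mubf^\sharp)$ (such an $\mubf_\varepsilon^\dagger$ is produced by mollifying in $D$ any minimizer of $\Dir_\varepsilon$, in the same spirit as $\mubf^\sharp$): combining the inequality $\Dir_\varepsilon(\mubf_{\varepsilon, \lambda}) + \lambda \int F(\mubf_{\varepsilon, \lambda}) \leq \Dir_\varepsilon(\mubf_\varepsilon^\dagger) + \lambda \int F(\mubf_\varepsilon^\dagger)$ with $\Dir_\varepsilon(\mubf_\varepsilon^\dagger) \leq \Dir_\varepsilon(\mubf_{\varepsilon, \lambda})$ gives $\int F(\mubf_{\varepsilon, \lambda}) \leq \int F(\mubf_\varepsilon^\dagger) \leq \int F(\mubf^\sharp)$. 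Passing to the weak limit $\lambda \to 0$ and invoking the weak lower semi-continuity of $\Dir_\varepsilon$ (Proposition~\ref{proposition_Dir_eps_lsc}) and of $\int F$ (regularity of $F$), one obtains $\Dir_\varepsilon(\mubf_\varepsilon) \leq \Dir(\mubf^\sharp)$ and $\int F(\mubf_\varepsilon) \leq \int F(\mubf^\sharp)$, still uniform in $\varepsilon$. Finally, the $\Gamma$-convergence of Theorem~\ref{theorem_equivalence_KS} applied to the minimizers $\mubf_\varepsilon$ of $\Dir_\varepsilon$ identifies $\tilde{\mubf}$ as a minimizer of $\Dir$ on $H^1_e(\Omega_e, \Prob(D))$, while weak lower semi-continuity of $\int F$ yields $\int F(\tilde{\mubf}) \leq \int F(\mubf^\sharp) < +\infty$.
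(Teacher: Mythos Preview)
Your construction of $\mubf^\sharp$ does not work as stated. The claim that $\xi \mapsto \Phi^D_{\tau(\xi)} \tilde{\mubf}_b(\xi)$ is Lipschitz relies on the modulus $\omega^D$ of Proposition~\ref{proposition_heat_fow_uniform_0} being Lipschitz at $0$, but that proposition only gives continuity; in fact for the heat flow one typically has $W_2(\Phi^D_t \mu, \mu) \sim \sqrt{t}$ (take $\mu = \delta_{x_0}$), so your $\mubf^\sharp$ is at best $\tfrac12$-H\"older and there is no reason for $\Dir(\mubf^\sharp)$ to be finite. The $\mubf_\varepsilon^\dagger$ step is also broken: mollifying a $\Dir_\varepsilon$-minimizer by the heat flow \emph{only on $\Or$} does not yield another minimizer, because the map $\nu \mapsto \1_{\Or}\Phi^D_t \nu + \1_{\Omega_e\setminus\Or}\nu$ is not a $W_2$-contraction across the $\Or$/$(\Omega_e\setminus\Or)$ interface; hence you lose the inequality $\Dir_\varepsilon(\mubf_\varepsilon^\dagger) \leq \Dir_\varepsilon(\mubf_{\varepsilon,\lambda})$ on which your bound for $\int F(\mubf_{\varepsilon,\lambda})$ rests.

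The paper avoids both difficulties. For the uniform bound on $\int_{\Omega_e} F(\mubf_{\varepsilon,\lambda})$ it simply invokes the discrete maximum principle (Proposition~\ref{proposition_maximum_principle_approximate}), proved just beforehand, which gives $F(\mubf_{\varepsilon,\lambda}(\xi)) \leqslant M$ pointwise a.e.; no auxiliary competitor is needed at all, and \eqref{equation_zz_aux7} then follows by weak lower semi-continuity of $\int F$. For the minimality of $\tilde{\mubf}$ the paper argues by contradiction: given $\nubf \in H^1_e$ with $\Dir(\nubf) < \Dir(\tilde{\mubf})$, it first fixes an $\varepsilon$ with $\Dir_\varepsilon(\nubf) < \liminf_\lambda(\Dir_\varepsilon(\mubf_{\varepsilon,\lambda}) + \lambda \int F(\mubf_{\varepsilon,\lambda}))$, and \emph{only then} regularizes the competitor by $\nubf_t := (\1_{\Or}\Phi^D_t)\nubf$ to force $\int F(\nubf_t)<+\infty$. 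The point is that at fixed $\varepsilon$ the functional $\Dir_\varepsilon$ is \emph{strongly continuous} (Proposition~\ref{proposition_Dir_eps_lsc}), so $\Dir_\varepsilon(\nubf_t) \to \Dir_\varepsilon(\nubf)$ and the strict inequality survives for small $t$; one then picks $\lambda$ small and contradicts the optimality of $\mubf_{\varepsilon,\lambda}$. Working with $\Dir_\varepsilon$ at fixed $\varepsilon$ (strongly continuous) rather than $\Dir$ (only l.s.c.) is exactly what lets the heat-flow regularization go through without the Lipschitz issue you ran into.
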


\begin{proof}
The existence of $\tilde{\mubf} \in L^2_e(\Omega_e, \Prob(D))$ is trivial: recall that $L^2_e(\Omega_e, \Prob(D))$ is compact for the weak convergence. Moreover, using Proposition \ref{proposition_maximum_principle_approximate}, we have that for $\varepsilon \leqslant \varepsilon_0$ and $\lambda > 0$, 
\begin{equation*}
\int_{\Omega_e} F(\mubf_{\varepsilon, \lambda}(\xi)) \ddr \xi \leqslant M |\Omega_e|.
\end{equation*}
By the regularity assumption on $F$, we can pass this inequality to the weak limit and get \eqref{equation_zz_aux7}.  

The minimizing property of $\tilde{\mubf}$ is more involved. Assume by contradiction that there exists $\nubf \in H^1_e(\Omega, \Prob(D))$ such that $\Dir(\nubf) < \Dir(\tilde{\mubf})$. By the $\Gamma$-convergence of $\Dir_\varepsilon$ to $\Dir$ and the positivity of $F$, one has 
\begin{equation*}
\Dir(\nubf) < \Dir(\tilde{\mubf}) \leqslant \liminf_{\varepsilon \to 0} \left( \liminf_{\lambda \to 0} \left( \Dir_{\varepsilon}(\mubf_{\varepsilon, \lambda}) + \lambda \int_{\Omega_e} F(\mubf_{\varepsilon, \lambda}(\xi)) \ddr \xi  \right) \right).
\end{equation*}
In particular, we can choose $\varepsilon > 0$ small enough such that (by monotonicity of $\Dir_\varepsilon$) 
\begin{equation*}
\Dir_\varepsilon(\nubf) \leqslant \Dir(\nubf) < \liminf_{\lambda \to 0} \left( \Dir_{\varepsilon}(\mubf_{\varepsilon, \lambda}) + \lambda \int_{\Omega_e} F(\mubf_{\varepsilon, \lambda}(\xi)) \ddr \xi  \right).
\end{equation*}
We regularize $\nubf$ in the following way: for $t> 0$, we denote by $\nubf_t := (\1_{\Or} \Phi^D_t) \nubf$ the element of $L^2_e(\Omega_e, \Prob(D))$ for which the heat flow on $D$ has been followed only in $\Or$: in other words, for any $t > 0$, 
\begin{equation*}
\nubf_t(\xi) := \begin{cases}
(\Phi^D_t) [\nubf(\xi)] & \text{if } \xi \in \Or, \\
\nubf(\xi) = \mubf_e(\xi) & \text{if } \xi \in \Omega_e \bsl \Or.
\end{cases}
\end{equation*}  
Clearly, $\nubf_t \in L^2_e(\Omega_e, \Prob(D))$. Moreover, as $W_2(\nubf_t(\xi), \nubf(\xi)) \leqslant \omega^D(t)$ with $\omega^D(t) \to 0$ as $t \to 0$ (see Proposition \ref{proposition_heat_fow_uniform_0}), we see that $\nubf_t$ converges strongly in $L^2_e(\Omega_e, \Prob(D))$ to $\nubf$. In particular, thanks to the continuity of $\Dir_\varepsilon$, there exists $t$ small enough such that 
\begin{equation*}
\Dir_\varepsilon(\nubf_t) < \liminf_{\lambda \to 0} \left( \Dir_{\varepsilon}(\mubf_{\varepsilon, \lambda}) + \lambda \int_{\Omega_e} F(\mubf_{\varepsilon, \lambda}(\xi)) \ddr \xi  \right). 
\end{equation*} 
Because of the standard $L^\infty-L^1$ estimate for the heat flow (see (ii) of Proposition \ref{proposition_heat_flow_properties}), one has that $\{ \nubf_t(\xi) \ : \ \xi \in \Or \}$ is included in a bounded set of $L^\infty(D) \cap \Prob(D)$. In particular, $F \circ \nubf_t$ is bounded on $\Or$. As it is also bounded on $\Omega_e \bsl \Or$ by $M$, we see that $\int_{\Omega_e} F \circ \nubf_t < + \infty$. Hence, for some $\lambda$ small enough, 
\begin{equation*}
\Dir_\varepsilon(\nubf_t) + \lambda \int_{\Omega_e} F(\nubf_t(\xi)) \ddr \xi  < \Dir_{\varepsilon}(\mubf_{\varepsilon, \lambda}) + \lambda \int_{\Omega_e} F(\mubf_{\varepsilon, \lambda}(\xi)) \ddr \xi,
\end{equation*}
which is a contradiction with the optimality of $\mubf_{\varepsilon, \lambda}$. 
\end{proof}

\begin{prop}
\label{proposition_existence_barmubf_ishihara}
Up to extraction, there exists $\bar{\mubf} \in H^1_e(\Omega_e, \Prob(D))$ such that
\begin{equation*}
\bar{\mubf} := \lim_{\lambda \to 0} \left( \lim_{\varepsilon \to 0} \mubf_{\varepsilon, \lambda} \right),
\end{equation*}
where the limits are taken strongly in $L^2_e(\Omega_e, \Prob(D))$. Moreover, $\bar{\mubf}$ is a minimizer of $\Dir$ in the space $H^1_e(\Omega_e, \Prob(D))$ and for any other minimizer $\nubf$ of $\Dir$ in $H^1_e(\Omega_e, \Prob(D))$, 
\begin{equation}
\label{equation_zz_aux_10}
\int_{\Omega_e} F(\bar{\mubf}(\xi)) \ddr \xi \leqslant \liminf_{\lambda \to 0} \left( \liminf_{\varepsilon \to 0} \left( \int_{\Omega_e} F(\mubf_{\varepsilon, \lambda}(\xi)) \ddr \xi \right) \right) \leqslant \int_{\Omega_e} F(\nubf(\xi)) \ddr \xi.  
\end{equation}
\end{prop}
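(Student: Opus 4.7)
The plan is to pass to the limit in the order indicated by the statement: first $\varepsilon \to 0$ with $\lambda > 0$ fixed, then $\lambda \to 0$. The crucial ingredient for obtaining \emph{a priori} bounds at both stages is the minimizer $\tilde{\mubf} \in H^1_e(\Omega_e, \Prob(D))$ with $\int_{\Omega_e} F(\tilde{\mubf}(\xi))\,\ddr\xi < +\infty$ produced by the previous proposition: it supplies a legitimate competitor whose penalization is finite and whose Dirichlet energy is already minimal.

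First, fix $\lambda > 0$. Using $\tilde{\mubf}$ as a competitor in the problem defining $\mubf_{\varepsilon, \lambda}$, and the monotonicity $\Dir_\varepsilon \leqslant \Dir$, yields
$$ \Dir_\varepsilon(\mubf_{\varepsilon, \lambda}) + \lambda \int_{\Omega_e} F(\mubf_{\varepsilon, \lambda}(\xi))\,\ddr\xi \leqslant \Dir(\tilde{\mubf}) + \lambda \int_{\Omega_e} F(\tilde{\mubf}(\xi))\,\ddr\xi < +\infty, $$
uniformly in $\varepsilon$. Proposition \ref{proposition_Rellich_approximate} then provides, along $\varepsilon_n = 2^{-n}\varepsilon_0$ and up to extraction, strong $L^2$-convergence $\mubf_{\varepsilon, \lambda} \to \mubf_\lambda \in H^1_e(\Omega_e, \Prob(D))$. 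Comparing with an arbitrary $\nubf \in H^1_e(\Omega_e, \Prob(D))$ via the minimality inequality satisfied by $\mubf_{\varepsilon, \lambda}$, combined with the $\Gamma$-liminf inequality for $\Dir_\varepsilon \to \Dir$ (Theorem \ref{theorem_equivalence_KS}) and the weak lower semi-continuity of $\mubf \mapsto \int_{\Omega_e} F(\mubf(\xi))\,\ddr\xi$ (regularity of $F$), one checks that $\mubf_\lambda$ minimizes $\mubf \mapsto \Dir(\mubf) + \lambda \int_{\Omega_e} F(\mubf(\xi))\,\ddr\xi$ over $H^1_e(\Omega_e, \Prob(D))$. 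In particular $\Dir(\mubf_\lambda) \leqslant \Dir(\tilde{\mubf}) + \lambda \int_{\Omega_e} F(\tilde{\mubf}(\xi))\,\ddr\xi$, which is uniformly bounded for bounded $\lambda$.

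Proposition \ref{proposition_Rellich} now furnishes, along a further subsequence as $\lambda \to 0$, strong convergence $\mubf_\lambda \to \bar{\mubf} \in H^1_e(\Omega_e, \Prob(D))$; lower semi-continuity of $\Dir$ together with the uniform bound imply $\Dir(\bar{\mubf}) \leqslant \Dir(\tilde{\mubf}) = \min \Dir$, so $\bar{\mubf}$ is a minimizer. The outer inequality $\int_{\Omega_e} F(\bar{\mubf}) \leqslant \liminf_\lambda \liminf_\varepsilon \int_{\Omega_e} F(\mubf_{\varepsilon, \lambda})$ follows from iterating weak lower semi-continuity of the $F$-integral along the strong (hence weak) convergences $\mubf_{\varepsilon, \lambda} \to \mubf_\lambda$ and $\mubf_\lambda \to \bar{\mubf}$. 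For the inner inequality, fix any minimizer $\nubf$ of $\Dir$ with $\int_{\Omega_e} F(\nubf) < +\infty$ (otherwise the bound is trivial); from the optimality of $\mubf_{\varepsilon, \lambda}$ together with $\Dir_\varepsilon(\nubf) \leqslant \Dir(\nubf) = \min \Dir$ and $\liminf_\varepsilon \Dir_\varepsilon(\mubf_{\varepsilon, \lambda}) \geqslant \Dir(\mubf_\lambda) \geqslant \min \Dir$, one extracts
$$ \lambda \int_{\Omega_e} F(\mubf_{\varepsilon,\lambda}(\xi))\,\ddr\xi \leqslant \bigl[\Dir(\nubf) - \Dir_\varepsilon(\mubf_{\varepsilon,\lambda})\bigr] + \lambda \int_{\Omega_e} F(\nubf(\xi))\,\ddr\xi, $$
whose $\liminf_\varepsilon$ after dividing by $\lambda$ gives $\liminf_\varepsilon \int_{\Omega_e} F(\mubf_{\varepsilon, \lambda}) \leqslant \int_{\Omega_e} F(\nubf)$, and $\liminf_\lambda$ then closes the chain.

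The main technical subtlety is the bookkeeping of which lower semi-continuity property (weak versus strong, $\Gamma$-liminf for $\Dir_\varepsilon$ versus pointwise limit for the penalization) is available for which functional at each stage; everything works cleanly only because $\tilde{\mubf}$ supplies a single competitor that is simultaneously admissible at all scales $(\varepsilon, \lambda)$, so that no diagonal extraction is required and the nested limits can be performed in the prescribed order.
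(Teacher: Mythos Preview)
Your proof is correct and follows the same overall strategy as the paper: use $\tilde{\mubf}$ as a universal competitor to get uniform bounds, apply the Rellich-type compactness results (Propositions~\ref{proposition_Rellich_approximate} and~\ref{proposition_Rellich}) to extract strong limits first in $\varepsilon$ then in $\lambda$, and exploit $\Gamma$-convergence of $\Dir_\varepsilon$ together with the weak lower semi-continuity of $\mubf \mapsto \int_{\Omega_e} F(\mubf)$.

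The only notable difference is stylistic. The paper proves both the minimality of $\bar{\mubf}$ and the second inequality in \eqref{equation_zz_aux_10} by contradiction, whereas you proceed by direct estimates: you first observe that each $\mubf_\lambda$ minimizes the penalized functional $\Dir + \lambda \int_{\Omega_e} F$ over $H^1_e(\Omega_e, \Prob(D))$, and then derive everything from this. Your route is slightly more transparent, since the intermediate minimality of $\mubf_\lambda$ explains why the penalization term eventually becomes harmless. The key step in your argument for the upper bound---namely that $\liminf_\varepsilon \Dir_\varepsilon(\mubf_{\varepsilon,\lambda}) \geqslant \Dir(\mubf_\lambda) \geqslant \min \Dir = \Dir(\nubf)$ forces the bracket $[\Dir(\nubf) - \Dir_\varepsilon(\mubf_{\varepsilon,\lambda})]$ to have non-positive $\limsup$---is exactly what the paper's contradiction argument encodes, just unpacked.
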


\begin{proof}
Using $\tilde{\mubf}$ as a competitor in the approximate problem, given the monotonicity of $\Dir_\varepsilon$, one has that 
\begin{equation*}
\Dir_\varepsilon(\mubf_{\varepsilon, \lambda}) + \lambda \int_{\Omega_e} F(\mubf_{\varepsilon, \lambda}(\xi)) \ddr \xi \leqslant \Dir(\tilde{\mubf}) + \lambda \int_{\Omega_e} F(\tilde{\mubf}(\xi)) \ddr \xi \leqslant C,
\end{equation*}
where the constant $C$ is uniform in $\varepsilon > 0$ and $0 <\lambda \leqslant 1$. In particular, using the Rellich-like theorem (Proposition \ref{proposition_Rellich_approximate}), we see that, up to extraction, $\mubf_{\varepsilon, \lambda}$ converges strongly in $L^2_e(\Omega_e, \Prob(D))$ to some $\bar{\mubf}_\lambda$ when $\varepsilon \to 0$. Moreover, by $\Gamma$-convergence of $\Dir_\varepsilon$ and the regularity of $F$, 
\begin{equation}
\label{equation_zz_aux8}
\Dir(\bar{\mubf}_\lambda) + \lambda \int_{\Omega_e} F(\bar{\mubf}_\lambda(\xi)) \ddr \xi \leqslant \liminf_{\varepsilon \to 0} \left( \Dir_\varepsilon(\mubf_{\varepsilon, \lambda}) + \lambda \int_{\Omega_e} F(\mubf_{\varepsilon, \lambda}(\xi)) \ddr \xi \right) \leqslant C. 
\end{equation} 
Hence, we have a uniform bound on $\Dir(\bar{\mubf}_\lambda)$, and we can apply Rellich theorem (Proposition \ref{proposition_Rellich}) to see that $\bar{\mubf}_\lambda$ converges strongly in $L^2(\Omega_e, \Prob(D))$ to some $\bar{\mubf} \in H^1_e(\Omega_e, \Prob(D))$ when $\lambda \to 0$. Moreover, using the lower semi-continuity of $\Dir$ and positivity of $F$, 
\begin{equation}
\label{equation_zz_aux9}
\Dir(\bar{\mubf}) \leqslant \liminf_{\lambda \to 0} \left( \Dir(\bar{\mubf}_\lambda) + \lambda \int_{\Omega_e} F(\bar{\mubf}_\lambda(\xi)) \ddr \xi \right).
\end{equation}  

Let us assume by contradiction that $\bar{\mubf}$ is not a minimizer of $\Dir$. Thanks to Proposition \ref{proposition_tildemu_existence_property}, it boils down to assume that $\Dir(\tilde{\mubf}) < \Dir(\bar{\mubf})$. In particular, as $F \circ \tilde{\mubf}$ is integrable on $\Omega_e$ and with the help of \eqref{equation_zz_aux9}, it means that there exists $\lambda$ small enough such that 
\begin{equation*}
\Dir(\tilde{\mubf}) + \lambda \int_{\Omega_e} F(\tilde{\mubf}(\xi)) \ddr \xi < \Dir(\bar{\mubf}_\lambda) + \lambda \int_{\Omega_e} F(\bar{\mubf}_\lambda(\xi)) \ddr \xi. 
\end{equation*} 
Using the fact that $\Dir_\varepsilon(\tilde{\mubf}) \to \Dir (\tilde{\mubf})$ to handle the l.h.s. and \eqref{equation_zz_aux8} to deal with the r.h.s., we see that for $\varepsilon > 0$ small enough, 
\begin{equation*}
\Dir_\varepsilon(\tilde{\mubf}) + \lambda \int_{\Omega_e} F(\tilde{\mubf}(\xi)) \ddr \xi < \Dir_{\varepsilon}(\mubf_{\varepsilon, \lambda}) + \lambda \int_{\Omega_e} F(\mubf_{\varepsilon, \lambda}(\xi)) \ddr \xi,
\end{equation*} 
which is a contradiction with the optimality of $\mubf_{\varepsilon, \lambda}$. Hence, $\bar{\mubf}$ is a minimizer of $\Dir$ over $H^1_e(\Omega_e, \Prob(D))$.

Remark that in \eqref{equation_zz_aux_10} the first inequality is a consequence of the fact that $F$ is regular. Assume by contradiction that there exists $\nubf \in H^1_e(\Omega_e, \Prob(D))$ a minimizer of $\Dir$ such that \eqref{equation_zz_aux_10} does not hold. In particular as $\Dir(\bar{\mubf}) = \Dir(\nubf)$, and by $\Gamma$-convergence of $\Dir_\varepsilon$ and lower semi-continuity of $\Dir$,
\begin{equation*}
\Dir(\nubf) = \Dir(\bar{\mubf}) \leqslant \liminf_{\lambda \to 0} \left( \liminf_{\varepsilon \to 0} \left( \Dir_\varepsilon(\mubf_{\varepsilon, \lambda}) \right) \right),
\end{equation*}
thus one can write that for some $\lambda$ small enough, 
\begin{equation*}
\Dir(\nubf) + \lambda \int_{\Omega_e} F(\nubf(\xi)) \ddr \xi < \liminf_{\varepsilon \to 0} \left( \Dir_{\varepsilon}(\mubf_{\varepsilon, \lambda}) + \lambda \int_{\Omega_e} F(\mubf_{\varepsilon, \lambda}(\xi)) \ddr \xi \right):
\end{equation*}
it leads to the same contradiction as before by taking $\varepsilon > 0$ small enough.
\end{proof}

Now, the key result to get subharmonicity of $F \circ \bar{\mubf}$ is that we can pass at the pointwise limit the quantity $F \circ \mubf_{\varepsilon, \lambda}$. 

\begin{prop}
\label{proposition_simple_cv_Fmu}
For a.e. $\xi \in \Omega$, 
\begin{equation*}
F(\bar{\mubf}(\xi)) = \lim_{\lambda \to 0} \left( \lim_{\varepsilon \to 0} \left( F(\mubf_{\varepsilon, \lambda}(\xi)) \right) \right). 
\end{equation*}
\end{prop}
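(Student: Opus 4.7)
The plan is to establish the two pointwise limits $\lim_\varepsilon F(\mubf_{\varepsilon,\lambda}(\xi))=F(\bar{\mubf}_\lambda(\xi))$ and $\lim_\lambda F(\bar{\mubf}_\lambda(\xi))=F(\bar{\mubf}(\xi))$ separately, a.e.\ in $\Omega$, and then compose them. In both cases the mechanism is the same \emph{sandwich}: the lower semi-continuity of $F$ together with strong $L^2_e$ convergence (upgraded, via a further diagonal extraction, to $W_2$-convergence at a.e.\ $\xi$) gives a pointwise $\liminf$ inequality, while a reverse Fatou bound coming from the uniform estimate $F(\mubf_{\varepsilon,\lambda})\leqslant M$ of Proposition~\ref{proposition_maximum_principle_approximate} matches it from above, so that $\liminf=\limsup$ a.e.\ as soon as the integrals $\int F$ converge. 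The only non-routine ingredient is this convergence of the integrals.

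For the inner limit, I would first observe that $\bar{\mubf}_\lambda$ is admissible as competitor in the minimality of $\mubf_{\varepsilon,\lambda}$ and use the pointwise convergence $\Dir_\varepsilon(\bar{\mubf}_\lambda)\to\Dir(\bar{\mubf}_\lambda)$ of Theorem~\ref{theorem_equivalence_KS} to obtain
$\limsup_\varepsilon[\Dir_\varepsilon(\mubf_{\varepsilon,\lambda})+\lambda\int F(\mubf_{\varepsilon,\lambda})]\leqslant\Dir(\bar{\mubf}_\lambda)+\lambda\int F(\bar{\mubf}_\lambda)$. Combined with the matching $\liminf$ inequality \eqref{equation_zz_aux8}, the sum converges; and since the $\liminf$ of each summand is already bounded below by its target ($\Dir(\bar{\mubf}_\lambda)$ by $\Gamma$-convergence, $\int F(\bar{\mubf}_\lambda)$ by the regularity hypothesis on $F$), this forces the separate convergence $\lim_\varepsilon\int F(\mubf_{\varepsilon,\lambda})=\int F(\bar{\mubf}_\lambda)$. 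Fatou's lemma then upgrades the pointwise $\liminf$ inequality to an a.e.\ identity $\liminf_\varepsilon F(\mubf_{\varepsilon,\lambda}(\xi))=F(\bar{\mubf}_\lambda(\xi))$; reverse Fatou applied to $F(\mubf_{\varepsilon,\lambda})\leqslant M$ gives the analogous identity for $\limsup$, and together they imply that $\lim_\varepsilon F(\mubf_{\varepsilon,\lambda}(\xi))=F(\bar{\mubf}_\lambda(\xi))$ for a.e.\ $\xi$.

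For the outer limit the role played above by $\bar{\mubf}_\lambda$ is taken over by $\bar{\mubf}$, but this time the sum-splitting step relies on a preliminary minimization property: taking $\liminf$ in $\varepsilon$ in the inequality $\Dir_\varepsilon(\mubf_{\varepsilon,\lambda})+\lambda\int F(\mubf_{\varepsilon,\lambda})\leqslant\Dir_\varepsilon(\nubf)+\lambda\int F(\nubf)$ against any $\nubf\in H^1_e$ with $\int F(\nubf)<+\infty$ shows that $\bar{\mubf}_\lambda$ itself minimizes $\Dir+\lambda\int F$ over $H^1_e$. Testing this minimality against $\bar{\mubf}$, and using $\Dir(\bar{\mubf})\leqslant\Dir(\bar{\mubf}_\lambda)$ since $\bar{\mubf}$ minimizes $\Dir$ by Proposition~\ref{proposition_existence_barmubf_ishihara}, yields the upper bound $\int F(\bar{\mubf}_\lambda)\leqslant\int F(\bar{\mubf})$; the matching lower bound $\int F(\bar{\mubf})\leqslant\liminf_\lambda\int F(\bar{\mubf}_\lambda)$ is supplied by the inner limit combined with \eqref{equation_zz_aux_10}. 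The same Fatou and reverse-Fatou sandwich, now based on the uniform bound $F(\bar{\mubf}_\lambda)\leqslant M$ inherited from the inner limit, then delivers $\lim_\lambda F(\bar{\mubf}_\lambda(\xi))=F(\bar{\mubf}(\xi))$ a.e., whence the announced iterated limit.

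The main obstacle is bookkeeping rather than insight: one must perform the two minimality identifications ($\bar{\mubf}_\lambda$ as minimizer of the penalized problem, then $\bar{\mubf}$ used as competitor in this minimization) in the right order to feed the sum-splitting argument, and both use the regularity of $F$ in an essential way through the heat-flow regularization built in the proofs of Propositions~\ref{proposition_tildemu_existence_property} and~\ref{proposition_existence_barmubf_ishihara} to guarantee the existence of competitors of finite penalty.
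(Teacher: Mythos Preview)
Your two-step decomposition is correct and works, but it is more elaborate than the paper's route. The paper argues directly with the iterated limit: since the strong $L^2_e$ convergences $\mubf_{\varepsilon,\lambda}\to\bar{\mubf}_\lambda$ and $\bar{\mubf}_\lambda\to\bar{\mubf}$ are already in hand from Proposition~\ref{proposition_existence_barmubf_ishihara}, after extraction one has $\bar{\mubf}(\xi)=\lim_\lambda\lim_\varepsilon\mubf_{\varepsilon,\lambda}(\xi)$ in $\Prob(D)$ for a.e.\ $\xi$, and lower semi-continuity of $F$ gives the pointwise iterated $\liminf$ bound in one stroke. The integral side is likewise handled in one stroke by plugging $\nubf=\bar{\mubf}$ into the already-established inequality \eqref{equation_zz_aux_10}, which immediately pins down $\int F(\bar{\mubf})=\lim_\lambda\lim_\varepsilon\int F(\mubf_{\varepsilon,\lambda})$ (up to extraction). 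Neither the intermediate minimality of $\bar{\mubf}_\lambda$ for $\Dir+\lambda\int F$ nor the separate sum-splitting you perform is needed. Your route does produce that minimality as a pleasant byproduct, but the paper's shortcut through \eqref{equation_zz_aux_10} is shorter.

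One genuine, if easily repaired, gap: the step ``reverse Fatou applied to $F(\mubf_{\varepsilon,\lambda})\leqslant M$ gives the analogous identity for $\limsup$'' does not work as written. Reverse Fatou only yields $\int\limsup_\varepsilon F(\mubf_{\varepsilon,\lambda})\geqslant\limsup_\varepsilon\int F(\mubf_{\varepsilon,\lambda})=\int F(\bar{\mubf}_\lambda)$, and you already know $\limsup\geqslant\liminf=F(\bar{\mubf}_\lambda)$ a.e.; these two inequalities together do \emph{not} force $\limsup=F(\bar{\mubf}_\lambda)$ a.e. The correct completion (what the paper calls ``an adaptation of the proof of Scheff\'e's lemma'') is: from $\liminf f_n\geqslant g$ a.e., $0\leqslant f_n\leqslant M$, and $\int f_n\to\int g$, one has $(f_n-g)^-\to0$ a.e.\ with domination, so $\int(f_n-g)^-\to0$; since also $\int(f_n-g)\to0$, one gets $\int(f_n-g)^+\to0$, hence $f_n\to g$ in $L^1$ and, after a further extraction, a.e. This replaces your reverse-Fatou step in both the inner and the outer limit.
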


\begin{proof}
As the convergence of $\mubf_{\varepsilon, \lambda}$ to $\bar{\mubf}$ holds strongly in $L^2_e(\Omega, \Prob(D))$, we can, up to extraction, assume that it holds a.e. In other words, for a.e. $\xi \in \Omega$, 
\begin{equation*}
\bar{\mubf}(\xi) =  \lim_{\lambda \to 0} \left( \lim_{\varepsilon \to 0} \left( \mubf_{\varepsilon, \lambda}(\xi) \right) \right)
\end{equation*}
in $\Prob(D)$. By lower semi-continuity of $F$ on $\Prob(D)$, the inequality
\begin{equation*}
F(\bar{\mubf}(\xi)) \leqslant \liminf_{\lambda \to 0} \left( \liminf_{\varepsilon \to 0} \left( F(\mubf_{\varepsilon, \lambda}(\xi)) \right) \right)
\end{equation*} 
holds for a.e. $\xi \in \Omega$. On the other hand, use \eqref{equation_zz_aux_10} with $\nubf = \bar{\mubf}$: up to extraction one has 
\begin{equation*}
\int_{\Omega_e} F(\bar{\mubf}(\xi)) \ddr \xi = \lim_{\lambda \to 0} \left( \lim_{\varepsilon \to 0} \left( \int_{\Omega_e} F(\mubf_{\varepsilon, \lambda}(\xi)) \ddr \xi \right) \right).  
\end{equation*}  
By combining the two equations above (recall that all the functions $F \circ \mubf_{\varepsilon, \lambda}$ and $F \circ \bar{\mubf}$ are positive and bounded above by $M$ thanks to Proposition \ref{proposition_maximum_principle_approximate}), we reach the desired conclusion (this is just an adaptation of the proof of Scheffé's lemma). 
\end{proof}

\begin{prop}
\label{proposition_conclusion_Ishihara}
The function $F \circ \bar{\mubf}$ is subharmonic on $\Or$. Moreover, 
\begin{equation*}
\esssup_{\Omega} (F \circ \bar{\mubf}) \leqslant M. 
\end{equation*}
\end{prop}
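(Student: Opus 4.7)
The plan is to obtain the distributional subharmonicity of $F\circ\bar{\mubf}$ on $\Or$ by testing the mean-value-type inequality of Proposition \ref{proposition_subharmonicity_approximate} against smooth non-negative test functions and passing to the double limit $\varepsilon\to 0$ then $\lambda\to 0$ along the subsequences already extracted in Propositions \ref{proposition_existence_barmubf_ishihara} and \ref{proposition_simple_cv_Fmu}.

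First I would fix a non-negative $\chi\in C^\infty_c(\Or)$ and let $\delta$ denote the distance from $\supp\chi$ to $\dr\Omega$. For any $\varepsilon<\min(\delta,\varepsilon_0)$, multiplying the a.e. inequality of Proposition \ref{proposition_subharmonicity_approximate} by $\chi(\xi)$, integrating over $\Omega$, and swapping the order of integration via Fubini gives
\begin{equation*}
\int_\Omega F(\mubf_{\varepsilon,\lambda}(\xi))\left[\frac{1}{|B(0,\varepsilon)|}\int_{B(\xi,\varepsilon)}\chi(\eta)\,\ddr\eta-\chi(\xi)\right]\ddr\xi\geqslant 0.
\end{equation*}
A Taylor expansion of $\chi$, whose first and third order moments on $B(0,\varepsilon)$ vanish by radial symmetry, shows that
\begin{equation*}
\frac{1}{|B(0,\varepsilon)|}\int_{B(\xi,\varepsilon)}\chi(\eta)\,\ddr\eta-\chi(\xi)=\frac{\varepsilon^2}{2(p+2)}\Delta\chi(\xi)+R_\varepsilon(\xi),
\end{equation*}
with $\|R_\varepsilon\|_\infty\leqslant C_\chi\varepsilon^4$. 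Combined with the uniform bound $F(\mubf_{\varepsilon,\lambda})\leqslant M$ supplied by Proposition \ref{proposition_maximum_principle_approximate}, this rearranges into
\begin{equation*}
\int_\Omega F(\mubf_{\varepsilon,\lambda}(\xi))\Delta\chi(\xi)\,\ddr\xi\geqslant -C'_\chi\varepsilon^2,
\end{equation*}
where $C'_\chi$ depends only on $\chi$, $M$ and $|\Omega|$.

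The second step is to pass to the double limit. By Proposition \ref{proposition_simple_cv_Fmu}, along the extracted subsequences, for a.e. $\xi\in\Omega$ the inner limit $G_\lambda(\xi):=\lim_{\varepsilon\to 0}F(\mubf_{\varepsilon,\lambda}(\xi))$ exists and satisfies $G_\lambda(\xi)\to F(\bar{\mubf}(\xi))$ as $\lambda\to 0$, both quantities remaining bounded by $M$. Two successive applications of dominated convergence (first in $\varepsilon$ for fixed $\lambda$, then in $\lambda$) turn the previous inequality into
\begin{equation*}
\int_\Omega F(\bar{\mubf}(\xi))\Delta\chi(\xi)\,\ddr\xi\geqslant 0,
\end{equation*}
valid for every non-negative $\chi\in C^\infty_c(\Or)$, which is precisely the distributional subharmonicity of $F\circ\bar{\mubf}$ on $\Or$.

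Finally, the maximum principle \eqref{equation_maximum_principle} is a direct byproduct: Proposition \ref{proposition_maximum_principle_approximate} gives $F(\mubf_{\varepsilon,\lambda}(\xi))\leqslant M$ for a.e. $\xi\in\Omega$, and passing to the iterated pointwise limit through Proposition \ref{proposition_simple_cv_Fmu} yields $F(\bar{\mubf}(\xi))\leqslant M$ a.e., hence $\esssup_\Omega(F\circ\bar{\mubf})\leqslant M$. No single step is really hard; the main point requiring care is the bookkeeping of extractions, since every convergence above must be read along the same chain of subsequences of $(\varepsilon_n)$ and $(\lambda_m)$ selected in the preceding propositions. This is harmless because the whole scheme is designed to produce one specific solution $\bar{\mubf}$ of the Dirichlet problem for which the Ishihara-type property holds.
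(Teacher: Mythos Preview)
Your proposal is correct and follows essentially the same route as the paper: multiply the mean-value inequality of Proposition \ref{proposition_subharmonicity_approximate} by a non-negative test function, perform the discrete integration by parts (your Fubini step), use the Taylor expansion of the averaged test function to recover $\Delta\chi$ up to an $o(1)$ error, and pass to the iterated limit via Propositions \ref{proposition_maximum_principle_approximate} and \ref{proposition_simple_cv_Fmu} with dominated convergence. The paper normalizes by $\varepsilon^{-(p+2)}$ rather than $|B(0,\varepsilon)|^{-1}$ and is less explicit about the remainder $R_\varepsilon$, but the argument is the same.
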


\begin{proof}
The fact that the essential supremum of $F \circ \bar{\mubf}$ is bounded by $M$ is a simple combination of Propositions \ref{proposition_maximum_principle_approximate} and \ref{proposition_simple_cv_Fmu}. For the subharmonicity, take $\psi \in C^\infty_c(\Or)$ a smooth and positive function compactly supported in $\Or$. For $0 < \varepsilon \leqslant \varepsilon_0$ small enough, one has, thanks to Proposition \ref{proposition_subharmonicity_approximate}, 
\begin{equation*}
\int_{\Omega_e} \psi(\xi) \left( \frac{1}{\varepsilon^{d+2}} \int_{B(\xi, \varepsilon)} [F(\mubf_{\varepsilon, \lambda}(\eta)) - F(\mubf_{\varepsilon, \lambda}(\xi))] \ddr \eta \right) \ddr \xi \geqslant 0. 
\end{equation*}
Performing a discrete integration by parts (which is possible if $\varepsilon$ is smaller than the distance between $\dr \Omega$ and the support of $\psi$), one sees that 
\begin{equation*}
\int_{\Omega_e} F(\mubf_{\varepsilon, \lambda}(\xi)) \left( \frac{1}{\varepsilon^{d+2}} \int_{B(\xi, \varepsilon)} [\psi(\eta) - \psi(\xi)] \ddr \eta \right) \ddr \xi \geqslant 0. 
\end{equation*}
Now send $\varepsilon \to 0$ and then $\lambda \to 0$. By smoothness of $\psi$, the quantity $\varepsilon^{-(d+2)} \int_{B(\xi, \varepsilon)} [\psi(\eta) - \psi(\xi)] \ddr \eta$ converges to $\Delta \psi(\xi)$ (up to a multiplicative constant). On the other hand, $F(\mubf_{\varepsilon, \lambda}(\xi))$ converges pointwisely to $F(\bar{\mubf})$ (see Proposition \ref{proposition_simple_cv_Fmu}) while being bounded by $M$. By Lebesgue dominated convergence theorem, 
\begin{equation*}
\int_{\Omega_e} F(\mubf(\xi)) \Delta \psi(\xi) \ddr \xi \geqslant 0,
\end{equation*} 
which exactly means that $F \circ \mubf$ is subharmonic in the sense of distributions as $\psi$ is an arbitrary smooth and positive function. 
\end{proof}

Now we can conclude: 

\begin{proof}[Proof of Theorem \ref{theorem_ishihara} if $F$ is regular]
We take $\mubf$ the restriction of $\bar{\mubf}$ to $\Omega$. Thanks to Proposition \ref{proposition_extension_boundary_values}, the fact that $\bar{\mubf}$ is a minimizer of $\Dir$ among $H^1_e(\Omega_e, \Prob(D))$ is translated into the fact that $\mubf$ is a solution of the Dirichlet problem with boundary values $\mubf_l$. The subharmonicity and the upper bound of $F \circ \bar{\mubf}$ are preserved by restriction. To get the minimality of $\int_\Omega (F \circ \bar{\mubf})$, we just use \eqref{equation_zz_aux_10}.   
\end{proof}

\subsection{Simplifications in the continuous case}
\label{subsection_Ishihara_continuous}

In this subsection, we assume that $F$ is continuous. In particular, as $\Prob(D)$ is compact, it implies that $F$ is bounded. The proof is simpler because we do not need to add the term $\lambda \int F \circ \mubf$ in the approximate problem. Indeed, strong convergence in $L^2(\Omega, \Prob(D))$ of a sequence $\mubf_n$ to $\mubf$ implies, up to extraction, the convergence a.e. of $(F \circ \mubf_n)$ to $(F \circ \mubf)$. 

We define $\Omega_e, \mubf_e$ and the functional spaces $L^2_e(\Omega_e, \Prob(D)), H^1_e(\Omega_e, \Prob(D))$ as in the beginning of Subsection \ref{subsection_Ishihara_approximate_problems}. 

\begin{proof}[Proof of Theorem \ref{theorem_ishihara} if $F$ is continuous]
For any $\varepsilon > 0$, we take $\mubf_\varepsilon \in L^2_e(\Omega_e, \Prob(D))$ a minimizer of $\Dir_\varepsilon$ over $L^2_e(\Omega_e, \Prob(D))$. 

We can still apply Proposition \ref{proposition_optimality_conditions_approximate} and conclude that for a.e. $\xi \in \Omega$, $\mubf_\varepsilon(\xi)$ is a barycenter of the $\mubf_\varepsilon(\eta)$ for $\eta \in B(\xi, \varepsilon)$. The proof of Jensen's inequality (Proposition \ref{proposition_subharmonicity_approximate}) works in the same way as $F$ is bounded on $\Prob(D)$. Hence, the maximum principle given by Proposition \ref{proposition_maximum_principle_approximate} is still true as it is only implied by Proposition \ref{proposition_subharmonicity_approximate}. 

To pass to the limit $\varepsilon \to 0$, we use the fact that (along an appropriate sequence) $\Dir_\varepsilon$ $\Gamma$-converges to $\Dir$. Hence, up to extraction, $\mubf_\varepsilon$ converges to $\bar{\mubf}$ which is a minimizer of $\Dir$ over $L^2_e(\Omega_e, \Prob(D))$. Thanks to Proposition \ref{proposition_Rellich_approximate}, the convergence takes place strongly in $L^2_e(\Omega_e, \Prob(D))$ and a.e. By continuity of $F$, we deduce that the conclusion of Proposition \ref{proposition_simple_cv_Fmu} still holds: $F \circ \mubf_\varepsilon$ converges a.e. to $F \circ \bar{\mubf}$ as $\varepsilon \to 0$. Thus the proof of Proposition \ref{proposition_conclusion_Ishihara} works exactly in the same way and it is enough to take for $\mubf$ the restriction of $\bar{\mubf}$ to $\Omega$.   
\end{proof}

\section{Examples}
\label{section_examples}

To conclude, we give examples of situations where the computation of harmonic mappings can be done explicitly. The first one is rather simple: when $D$ is a segment of $\R$ the space $\Prob(D)$ has a structure of Hilbert space, hence the study is considerably simpler and all the machinery developed above is too heavy. The second one is trickier: we restrict ourselves to a family of elliptically contoured distributions, which is a geodesically convex subset of finite dimension. Thus we end up with mappings valued in a finite-dimensional Riemannian manifold, on which we can write explicit Euler-Lagrange equations.  

Before studying these examples, let us say that the case where the measures on $\dr \Omega$ are Dirac masses has already been treated. Indeed, we have already mentioned \cite[Theorem 3.1]{Brenier2003} which states that if $f : \Omega \to D$ is harmonic, then the measure $\mubf_f$ (defined by $\mubf_f(\xi) = \delta_{f(\xi)}$ for all $\xi \in \Omega$) is a measured-valued harmonic mapping. This result has been extended in \cite[Theorem 3.3]{Lu2017} to the case where space $D$ is a simply connected manifold with negative curvature.   

\subsection{One dimensional target}

In this subsection, we assume that $D = I = [0,1]$ is the unit interval. The important point is that the space $\Prob(I)$ has a very simple structure: the right object to characterize an element $\mu \in \Prob(D)$ is its inverse distribution function $F^{[-1]}_\mu : [0,1] \to [0,1]$ defined by 
\begin{equation*}
F^{[-1]}_\mu(t) := \inf \{ x \in [0,1] \ : \ \mu([0, x]) \geqslant t \}. 
\end{equation*} 
It is well known that $F^{[-1]}_\mu$ is increasing, right continuous, and that there is a bijection between the set of increasing and right continuous mappings $[0,1] \to [0,1]$ and $\Prob(I)$. Moreover, for any $\mu, \nu \in \Prob(I)$, one has (see for instance \cite[Proposition 2.17]{OTAM}) 
\begin{equation}
\label{equation_wasserstein_distance_1D}
W_2^2(\mu, \nu) = \int_0^1 |F^{[-1]}_\mu(t) - F^{[-1]}_\nu(t)|^2 \ddr t. 
\end{equation} 
Introduce the Hilbert space $\Hil := L^2([0,1])$ with its usual norm (denoted by $|\cdot|_\Hil$) and the subspace $\Hili$ of increasing functions: if $f \in \Hil$, then we say that $f \in \Hili$ if $f(t) \in [0,1]$ for a.e. $t \in [0,1]$ and if for any $0 \leqslant t_1 < t_2 \leqslant t_3 < t_4 \leqslant 1$, one has 
\begin{equation*}
\frac{1}{t_2 - t_1} \int_{t_1}^{t_2} f(t) \ddr t \leqslant \frac{1}{t_4 - t_3} \int_{t_3}^{t_4} f(t) \ddr t
\end{equation*}
Notice that $\Hili$ is clearly a convex and closed subset of $\Hil$. Any $f \in \Hili$ has a unique increasing and right continuous representative. Indeed, take the representative given by the Lebesgue differentiation theorem: except on a subset $N$ which is negligible, it is increasing. Then, on $N$ and on any point of discontinuity, choose the right limit. Uniqueness is easy as any increasing and right continuous representative is continuous except at a countable number of points. This discussion can be summarized in the following proposition. 

\begin{prop}
If we define $\Psi(\mu) := F^{[-1]}_\mu$, then $\Psi$ is a one-to-one isometry between $\Prob(I)$ and $\Hili$. 
\end{prop}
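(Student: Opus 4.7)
The plan is to check each of the four claims (well-definedness, isometry, injectivity, surjectivity) separately, using the formula \eqref{equation_wasserstein_distance_1D} as the main tool and the existence/uniqueness of a right-continuous increasing representative for any $f\in\Hili$ (already established just before the statement).

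First I would verify that $\Psi$ takes values in $\Hili$. For any $\mu\in\Prob(I)$ the function $F^{[-1]}_\mu$ takes values in $[0,1]$ by construction, and it is non-decreasing; any non-decreasing function automatically satisfies the averaging inequality defining $\Hili$, since for $t_1<t_2\leqslant t_3<t_4$ one has $f(t)\leqslant f(s)$ for $t\leqslant t_2\leqslant t_3\leqslant s$. Hence $\Psi(\mu)\in\Hili$. The isometry property is then immediate: given $\mu,\nu\in\Prob(I)$, equation \eqref{equation_wasserstein_distance_1D} is precisely the identity
\begin{equation*}
W_2(\mu,\nu)=|\Psi(\mu)-\Psi(\nu)|_{\Hil}.
\end{equation*}

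Injectivity is then a formal consequence of the isometry. Alternatively, and this is what I would record explicitly, if $F^{[-1]}_\mu=F^{[-1]}_\nu$ as elements of $\Hil$, then their canonical right-continuous increasing representatives coincide everywhere; since $F^{[-1]}_\mu$ uniquely determines $\mu$ through the identity $\mu=F^{[-1]}_\mu\#\Leb_{[0,1]}$ (which can be checked by computing $\int_I a\,\ddr\mu$ for any $a\in C(I)$ via the change of variable $x=F^{[-1]}_\mu(t)$), we deduce $\mu=\nu$.

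For surjectivity, take $f\in\Hili$ and let $\tilde f$ denote its increasing right-continuous representative (whose existence and uniqueness is the content of the paragraph preceding the statement). Define $\mu:=\tilde f\#\Leb_{[0,1]}\in\Prob(I)$. I would then check that $F^{[-1]}_\mu=\tilde f$ pointwise: by definition, for $t\in[0,1]$,
\begin{equation*}
F^{[-1]}_\mu(t)=\inf\{x\in[0,1]\ :\ \Leb_{[0,1]}(\{s:\tilde f(s)\leqslant x\})\geqslant t\},
\end{equation*}
and because $\tilde f$ is non-decreasing and right-continuous, $\{s:\tilde f(s)\leqslant x\}$ is an interval of the form $[0,s_x]$ with $s_x=\sup\{s:\tilde f(s)\leqslant x\}$, from which a short case analysis yields $F^{[-1]}_\mu(t)=\tilde f(t)$ for every $t$. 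Thus $\Psi(\mu)=\tilde f=f$ in $\Hil$. The only delicate point in the whole proof is this last verification, ensuring that taking the inverse of the distribution function of the pushforward returns the original increasing right-continuous map; the main obstacle is being careful with atoms of $\mu$ (which correspond to plateaus of $\tilde f$) and discontinuities of $\tilde f$ (which correspond to gaps in the support of $\mu$), but in both cases right-continuity makes the identification canonical.
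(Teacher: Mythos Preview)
Your proposal is correct and matches the paper's approach: the paper does not give a separate proof but simply states that the proposition summarizes the preceding discussion (the bijection between increasing right-continuous maps $[0,1]\to[0,1]$ and $\Prob(I)$, the existence of a unique increasing right-continuous representative for each $f\in\Hili$, and formula \eqref{equation_wasserstein_distance_1D}). You have spelled out exactly these ingredients, with the extra care on surjectivity being a faithful elaboration of what the paper leaves implicit.
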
   

Now we need to make the bridge between the Dirichlet energy in the space $H^1(\Omega, \Prob(I))$ and the one in  $H^1(\Omega, \Hil)$. In fact, it was already proved by Korevaar and Schoen \cite{Korevaar1993} that their definition of Dirichlet energy coincides with the usual one if the target space is $\R$. By Pythagore's theorem, the equivalence still holds if the target space is a separable Hilbert space, as one can work on the coordinates in an orthogonal basis. As our definition of Dirichlet energy coincides with the one of Korevaar and Schoen, see Theorem \ref{theorem_equivalence_KS}, we can conclude that  
\begin{equation}
\label{equation_zz_aux_30}
\Dir(\mubf) := \int_{\Omega} |\nabla (\Psi \circ \mubf)(\xi)|^2_{\Hil} \ddr \xi. 
\end{equation}
for any $\mubf \in H^1(\Omega, \Prob(I))$. Thus, we can say the following: 

\begin{theo}
Let $\mubf_l : \dr \Omega \to \Prob(I)$ a given Lipschitz mapping. Then there exists a unique $\mubf \in H^1(\Omega, \Prob(I))$ solution of the Dirichlet problem with boundary values $\mubf_l$. Moreover, $(\Psi \circ \mubf)$ is the solution of the minimization problem 
\begin{equation}
\label{equation_zz_aux_12}
\min_f \left\{ \int_\Omega |\nabla f(\xi)|_{\Hil}^2 \ddr \xi \ : \ f \in H^1(\Omega, \Hil) \ \text{ and } \ f|_{\dr \Omega} = \Psi \circ \mubf_l \right\}. 
\end{equation}  
\end{theo}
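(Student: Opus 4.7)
The strategy is to transfer the problem, via the isometry $\Psi$, to a standard Dirichlet problem for mappings into the Hilbert space $\Hil$, where existence and uniqueness follow from classical arguments, and then transport the unique solution back to $\Prob(I)$.

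First I would combine the identity \eqref{equation_zz_aux_30} with the fact that $\Psi$ is an isometric bijection from $\Prob(I)$ onto the convex closed subset $\Hili \subset \Hil$: this gives that $\mubf \in H^1(\Omega, \Prob(I))$ if and only if $\Psi \circ \mubf \in H^1(\Omega, \Hil)$ (automatically with values in $\Hili$ a.e.), with matching Dirichlet energies. I would also check that the boundary condition $\mubf|_{\dr \Omega} = \mubf_l|_{\dr \Omega}$ corresponds, under $\Psi$, to the usual trace condition $(\Psi \circ \mubf)|_{\dr \Omega} = \Psi \circ \mubf_l$ in $L^2(\dr \Omega, \Hil)$. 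Since the trace theory of Korevaar--Schoen is intrinsically metric and is claimed to agree with the definition of Subsection \ref{subsection_boundary_values}, applying it on both sides of the isometry $\Psi$ gives the correspondence; to make this rigorous one can use the Lipschitz extension $\mubf_b$ of $\mubf_l$ furnished by Theorem \ref{theorem_lipschitz_extension} as a common reference and exploit Proposition \ref{proposition_boundary_fixed_closed}.

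Second, I would solve the Hilbert problem \eqref{equation_zz_aux_12}. The admissible set $\{ f \in H^1(\Omega, \Hil) : f|_{\dr \Omega} = \Psi \circ \mubf_l \}$ is a nonempty (it contains $\Psi \circ \mubf_b$) closed affine subspace of $H^1(\Omega, \Hil)$. On this affine subspace the Dirichlet functional is coercive and weakly lower semi-continuous, giving a minimizer $f^\star$ by the direct method. Uniqueness follows from strict convexity modulo constants: if $f_1, f_2$ are two minimizers, the parallelogram identity yields $\nabla(f_1 - f_2) = 0$ a.e., and $f_1 - f_2$ has zero boundary trace, so $f_1 = f_2$. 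To see that $f^\star$ takes values in $\Hili$ a.e. (so that $\Psi^{-1} \circ f^\star$ makes sense as an element of $H^1(\Omega, \Prob(I))$), I would use that the orthogonal projection $\Pi : \Hil \to \Hili$ onto the closed convex set $\Hili$ is $1$-Lipschitz, hence $\Pi \circ f^\star \in H^1(\Omega, \Hil)$ has Dirichlet energy at most that of $f^\star$ and the same boundary trace (because $\Psi \circ \mubf_l$ already takes values in $\Hili$). Uniqueness of $f^\star$ forces $f^\star = \Pi \circ f^\star$.

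Setting $\mubf := \Psi^{-1} \circ f^\star \in H^1(\Omega, \Prob(I))$ and using the equivalence of the first paragraph, $\mubf$ is the unique solution of the Wasserstein-valued Dirichlet problem and $\Psi \circ \mubf = f^\star$ solves \eqref{equation_zz_aux_12}. The step that requires the most care is the correspondence of the two notions of boundary values in the first paragraph, since the boundary term $\BT_{\mubf}$ of Theorem \ref{theorem_boundary_values} is defined through the continuity equation and is \emph{a priori} of a different nature than the metric trace on the Hilbert side; everything else reduces to elementary Hilbert-space convex analysis.
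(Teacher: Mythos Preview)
Your proposal is correct and follows essentially the same route as the paper: transfer via the isometry $\Psi$ to a Hilbert-valued Dirichlet problem, solve it there, and pull back. The only notable difference is in showing that the Hilbert minimizer $f^\star$ takes values in $\Hili$: the paper invokes the maximum principle (the harmonic extension stays in the closed convex hull of its boundary values), whereas you use the nearest-point projection $\Pi:\Hil\to\Hili$ as a $1$-Lipschitz competitor map and conclude by uniqueness---both arguments are standard and equivalent in this Hilbert setting. Your flagging of the boundary-value correspondence as the delicate point is apt; the paper handles it via Proposition~\ref{proposition_extension_boundary_values} rather than through Korevaar--Schoen traces directly, which is somewhat more in keeping with the internal machinery.
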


\begin{proof}
Everything relies on \eqref{equation_zz_aux_30}. With the help of Proposition \ref{proposition_extension_boundary_values}, one can be convinced that imposing $\BT_{\mubf} = \BT_{\mubf_b}$ is the same as saying that the the trace of $(\Psi \circ \mubf)$ is $(\Psi \circ \mubf_l)$. Then, one takes $f$ to be the unique harmonic extension of $(\Psi \circ \mubf_l)$ in $H^1(\Omega, \Hil)$: it is the minimizer of \eqref{equation_zz_aux_12}. By the maximum principle, as $(\Psi \circ \mubf_l) \in \Hili$ on $\dr \Omega$, it is clear that $f \in H^1(\Omega, \Hili)$. Thus, we can simply set $\mubf := \Psi^{-1} \circ f$.  
\end{proof}

\subsection{Family of elliptically contoured distributions}
\label{subsection_location_scatter}

We finish by studying the case where the boundary values belong to a family of elliptically contoured distributions: they are parametrized by their covariance matrix. It can be seen as a generalization of the case where the measures are Gaussian. In this subsection, we would like to show that at least one solution of the harmonic problem is valued in the family of elliptically contoured distributions if it is the case for the boundary values, and to give a full solution (existence, uniqueness, regularity and Euler-Lagrange equation) under the additional assumption that the covariance matrices of the boundary values are non singular.

We will deal with centered measures (i.e. measures with zero mean) because the contribution of the mean to the Dirichlet energy can be handled independently. More precisely if $\mu \in \Prob(D)$ we denote by $m(\mu) := \int_D x \mu(\ddr x) \in D$ its mean and $\mu_0$ the centered measured defined as the push forward of $\mu$ by $(x \mapsto x - m(\mu))$. It is well known \cite[Problem 1]{Villani2003} that if $\mu, \nu$ are two probability measures then 
\begin{equation*}
W_2^2(\mu, \nu) = W_2^2(\mu_0, \nu_0) + |m(\mu) - m(\nu)|^2. 
\end{equation*}
If $\mubf \in L^2(\Omega, \Prob(D))$, we use the formula above on $\Dir_\varepsilon(\mubf)$:
\begin{equation*}
\Dir_\varepsilon(\mubf) = \Dir_\varepsilon(\mubf_0) + C_p \iint_{\Omega \times \Omega} \frac{ |m(\mubf(\xi)) - m(\mubf(\eta))|^2}{2 \varepsilon^{p+2}} \1_{|\xi - \eta| \leqslant \varepsilon} \ddr \xi \ddr \eta.
\end{equation*}
Then, sending $\varepsilon$ to $0$ and using \cite[Theorem 8.3.1]{Jost2008} to handle the part involving the Dirichlet energy of the means, one sees that  
\begin{equation*}
\Dir(\mubf) = \Dir(\mubf_0) + \frac{1}{2} \int_\Omega |\nabla [m(\mubf)](\xi)|^2 \ddr \xi. 
\end{equation*}
The term involving $m(\mubf)$ is easy to minimize (because $m(\mubf)$ is a vector-valued function, it boils down to take the harmonic extension) and it can be done independently from the term involving $\Dir(\mubf_0)$. In other words, it is not restrictive to work only with centered measures. 

Let us go back to the family of elliptically contoured distributions. As we have assumed that $D$ is compact, we cannot work with non compactly supported measures, in particular with Gaussian measures. For the rest of the subsection, we fix $\rho \in L^1(\R^q)$ a positive function compactly supported such that $\rho \Leb_D$ is a probability measure with zero mean and the identity matrix as a covariance matrix. Recall that the covariance matrix $\cov(\mu)$ of a centered measure $\mu \in \Prob(\R^q)$ is defined as: for any $i,j \in \{1,2, \ldots, q \}$,
\begin{equation*}
\cov(\mu)_{ij} := \int_{\R^q} x_i x_j \mu(\ddr x). 
\end{equation*}
For technical reasons, we also assume that $\rho$ is radial and that the Boltzmann entropy of $\rho \Leb_D$ (see \eqref{equation_boltzamnn_entropy} below) is finite. Let us denote by $S_q(\R)$ the set of symmetric $q \times q$ matrices and $S^{+}_q(\R) \subset S_q(\R)$ the set of symmetric and semi-definite positive $q \times q$ matrices. The space $S_q(\R)$ is equiped with its canonical scalar product $\langle \cdot, \cdot \rangle$ defined by $\langle A, B\rangle = \Tr(AB)$. The unique symmetric square root of a matrix $A \in S^+_q(\R)$ is denoted by $A^{1/2}$. Instead of parametrizing measures by their covariance matrix we will do it by the square root of their covariance matrix, i.e. by their standard deviation: it is more natural for homogeneity reasons and the formulas are slightly simpler. 

\begin{defi}
For any $A \in S^+_q(\R)$ we denote by $\rho_{A} \in \Prob(\R^q)$ the image measure of $\rho \Leb_D$ by the map $x \in \R^q \mapsto Ax \in \R^q$. 

The set of all $\rho_A$ for $A \in S^+_q(\R)$ is denoted by $\Probec(\R^q)$ and is called a family of elliptically contoured distributions (with reference measure $\rho \Leb_D$). 
\end{defi}

\noindent Thanks to the normalization of $\rho$, the measure $\rho_A$ has zero mean and covariance matrix $A^2$. Notice that if $A$ is invertible then 
\begin{equation*}
\rho_A(\ddr x) := \frac{1}{\det (A)} \rho \left( A^{-1}x \right) \ddr x.   
\end{equation*}
We would recover the Gaussian case by taking $\rho(x) = (2 \pi)^{-q/2} \exp(- |x|^2/2)$, but this function is not compactly supported. 

The crucial tool to establish that a harmonic extension of a mapping valued in a family of elliptically contoured distributions stays in the same family is the existence of a retraction on the set $\Probec(\R^q)$. Let us call $\Prob_2(\R^q)$ the set of probability measures on $\R^q$ with finite second moment.  

\begin{defi}
Let $R : \Prob_2(\R^q) \to \Probec(\R^q)$ the application defined by $R(\mu) := \rho_A$, where $A := \cov(\mu)^{1/2}$ is the symmetric square root of the covariance matrix of $\mu$.
\end{defi} 

\begin{prop}
The application $R : \Prob_2(\R^q) \to \Probec(D)$ leaves $\Probec(\R^q)$ unchanged and is a contraction (i.e. is $1$-Lipschitz) provided that $\Prob_2(\R^q)$ and $\Probec(\R^q)$ are endowed with the quadratic Wasserstein distance $W_2$. 
\end{prop}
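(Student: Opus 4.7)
\medskip

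The first claim is immediate: if $\mu = \rho_A$ for some $A \in S^+_q(\R)$, the normalization of $\rho$ (mean zero, identity covariance) gives $\cov(\mu) = A^2$, so $\cov(\mu)^{1/2} = A$ and $R(\rho_A) = \rho_A$. Hence $R$ is the identity on $\Probec(\R^q)$. The entire content of the proposition is the contraction property, which I would prove in three steps: an explicit formula for $W_2$ inside $\Probec(\R^q)$, a lower bound on $W_2$ in terms of covariance matrices (Gelbrich's inequality), and a perturbation argument to treat the degenerate case.

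The plan for the explicit formula is, assuming $A, B \in S^+_q(\R)$ invertible, to set $S := A^{-1}(AB^2A)^{1/2}A^{-1} \in S^+_q(\R)$ and consider the map $T(x) = Sx$. Since $\rho$ is radial, the law of a linear image $Mz$ of $z \sim \rho \Leb_D$ depends only on $MM^T$; a direct computation gives $(SA)(SA)^T = SA^2S = B^2$, hence $T_\# \rho_A = \rho_B$. As $S$ is symmetric positive, $T$ is the gradient of the convex function $x \mapsto \tfrac12 \langle Sx, x \rangle$, so by Brenier's theorem it is the optimal transport map. Expanding $|x - Sx|^2$ and using $\int z \otimes z \, \rho(z) \ddr z = \Id$ yields
\begin{equation*}
W_2^2(\rho_A, \rho_B) = \Tr(A^2) - 2 \Tr(SA^2) + \Tr(SA^2 S) = \Tr(A^2) + \Tr(B^2) - 2 \Tr\bigl((AB^2A)^{1/2}\bigr),
\end{equation*}
by cyclicity of the trace, since $\Tr(SA^2) = \Tr((AB^2A)^{1/2})$ and $SA^2S = B^2$.

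Next, for general centered $\mu, \nu \in \Prob_2(\R^q)$ with $A := \cov(\mu)^{1/2}$ and $B := \cov(\nu)^{1/2}$, I would establish the Gelbrich-type inequality $W_2^2(\mu, \nu) \geqslant \Tr(A^2) + \Tr(B^2) - 2\Tr((AB^2A)^{1/2})$. For any $\pi \in \Pi(\mu, \nu)$, expanding gives $\iint |x-y|^2 \ddr \pi = \Tr(A^2) + \Tr(B^2) - 2 \Tr(M_\pi)$ with $M_\pi := \iint x \otimes y \, \ddr \pi$. The joint covariance matrix $\bigl(\begin{smallmatrix} A^2 & M_\pi \\ M_\pi^T & B^2 \end{smallmatrix}\bigr)$ is positive semi-definite; conjugating by $\mathrm{diag}(A^{-1}, B^{-1})$, this is equivalent (in the invertible case) to $\|U\|_{\mathrm{op}} \leqslant 1$ for $U := A^{-1}M_\pi B^{-1}$ via the Schur complement. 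Writing $\Tr(M_\pi) = \Tr(BAU)$, von Neumann's trace inequality yields $\Tr(M_\pi) \leqslant \sum_i \sigma_i(BA) \sigma_i(U) \leqslant \|BA\|_* = \Tr((AB^2A)^{1/2})$, and taking the infimum over $\pi$ gives the announced bound. Combined with the formula of the previous paragraph, this is $W_2(R(\mu), R(\nu)) = W_2(\rho_A, \rho_B) \leqslant W_2(\mu, \nu)$. The non-centered case then follows from the standard identity $W_2^2(\mu, \nu) = W_2^2(\mu_0, \nu_0) + |m(\mu) - m(\nu)|^2$ recalled earlier in the subsection, since $R(\mu) = R(\mu_0)$ only depends on the covariance.

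The main technical obstacle is the singular case, where $A$ or $B$ is not invertible: both the construction of $S$ and the parametrization $M_\pi = AUB$ break down. I would handle it by a standard perturbation: replace $A$ by $A_\varepsilon := (A^2 + \varepsilon \Id)^{1/2}$, so that $\rho_{A_\varepsilon}$ has invertible covariance, observe $\rho_{A_\varepsilon} \to \rho_A$ in $(\Prob(\R^q), W_2)$ as $\varepsilon \to 0$ by joint continuity of the pushforward under matrices converging in operator norm, and pass to the limit in both the explicit formula and in Gelbrich's inequality using continuity of $A \mapsto \Tr((AB^2A)^{1/2})$ on $S_q^+(\R)$.
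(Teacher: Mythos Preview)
Your argument is correct. The paper's own proof is not a proof at all: it simply reads ``The second part is a reformulation of Theorem~2.1 and Theorem~2.4 of \cite{Gelbrich1990}.'' What you have done is unpack exactly those two results. Your explicit formula $W_2^2(\rho_A,\rho_B)=\Tr(A^2)+\Tr(B^2)-2\Tr((AB^2A)^{1/2})$ via the Brenier map $x\mapsto Sx$ with $S=A^{-1}(AB^2A)^{1/2}A^{-1}$ is Gelbrich's Theorem~2.1, and your lower bound on $W_2^2(\mu,\nu)$ through the joint second-moment matrix and von Neumann's trace inequality is his Theorem~2.4. So the route is not genuinely different from what the paper intends; you have made the citation self-contained, which is a legitimate and useful thing to do.

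One small point to tighten. Your perturbation step, as written, is tailored to the explicit formula: you replace $A$ by $A_\varepsilon=(A^2+\varepsilon\Id)^{1/2}$ and use $\rho_{A_\varepsilon}\to\rho_A$ in $W_2$. That is fine there. But for Gelbrich's inequality the measures $\mu,\nu$ are fixed and arbitrary, so you cannot pass through $\rho_{A_\varepsilon}$. The clean fix is to perturb at the matrix level instead: since the joint second-moment matrix $\bigl(\begin{smallmatrix} A^2 & M_\pi \\ M_\pi^T & B^2 \end{smallmatrix}\bigr)$ is positive semi-definite, so is $\bigl(\begin{smallmatrix} A^2+\varepsilon\Id & M_\pi \\ M_\pi^T & B^2+\varepsilon\Id \end{smallmatrix}\bigr)$; your invertible-case argument then gives $\Tr(M_\pi)\leqslant\Tr\bigl((A_\varepsilon B_\varepsilon^2 A_\varepsilon)^{1/2}\bigr)$, and you let $\varepsilon\to 0$ using the continuity of $C\mapsto\Tr(C^{1/2})$ on $S_q^+(\R)$.
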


\begin{proof}
The first part is obvious by the way we normalize $\rho$. The second part is a reformulation of Theorem 2.1 and Theorem 2.4 of \cite{Gelbrich1990}. 
\end{proof}

Let us prove state and prove here an easy technical lemma which will be crucial in the sequel. 

\begin{lm}
\label{lemma_composition_Lipschitz}
Let $\mubf_l : \dr \Omega \to \Prob(D)$ a Lipschitz function and $\mubf \in H^1(\Omega, \Prob(D))$ such that $\mubf|_{\dr \Omega} = \mubf_l$. Let $T : \Prob(D) \to \Prob(D)$ a $1$-Lipschitz mapping. Then $T \circ \mubf \in H^1(\Omega, \Prob(D))$ with $(T \circ \mubf)|_{\dr \Omega} = (T \circ \mubf_l)$ and 
\begin{equation*}
\Dir(T \circ \mubf) \leqslant \Dir(\mubf).
\end{equation*}
\end{lm}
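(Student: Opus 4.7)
The plan is to combine the metric approximation of the Dirichlet energy via $\Dir_\varepsilon$ with the boundary-value formalism of Proposition~\ref{proposition_extension_boundary_values}.

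\textbf{Step 1: the energy inequality.} Since $T$ is $1$-Lipschitz on $(\Prob(D), W_2)$, for every $\xi, \eta \in \Omega$ one has $W_2(T(\mubf(\xi)), T(\mubf(\eta))) \leqslant W_2(\mubf(\xi), \mubf(\eta))$. Plugging this pointwise inequality into the very definition of the approximate Dirichlet energy yields $\Dir_\varepsilon(T \circ \mubf) \leqslant \Dir_\varepsilon(\mubf)$ for every $\varepsilon > 0$; note that $T \circ \mubf$ is a measurable element of $L^2(\Omega, \Prob(D))$ because $T$ is continuous and $\Prob(D)$ has finite diameter. Passing to the limit along the dyadic sequence $\varepsilon = 2^{-n}\varepsilon_0$ and invoking Theorem~\ref{theorem_equivalence_KS} on both sides gives $\Dir(T \circ \mubf) \leqslant \Dir(\mubf) < + \infty$, and by Theorem~\ref{theorem_equivalence_sobolev_metric} this places $T \circ \mubf$ in $H^1(\Omega, \Prob(D))$.

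\textbf{Step 2: boundary values.} Let $\Omega_e$ and $\mubf_e$ be the enlarged domain and the Lipschitz extension of $\mubf_l$ given by Proposition~\ref{proposition_extension_boundary_values}, and let $\tilde{\mubf} \in H^1(\Omega_e, \Prob(D))$ denote the associated extension of $\mubf$. Because $T$ is $1$-Lipschitz, $T \circ \mubf_e$ is Lipschitz on $\Omega_e \bsl \Or$ and it obviously extends the Lipschitz mapping $T \circ \mubf_l$, so it is a legitimate ``$\mubf_e$'' for the boundary datum $T \circ \mubf_l$. Applying Step~1 verbatim on $\Omega_e$ (in place of $\Omega$) to $\tilde{\mubf}$ shows that $T \circ \tilde{\mubf} \in H^1(\Omega_e, \Prob(D))$. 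Now $T \circ \tilde{\mubf}$ coincides with $T \circ \mubf$ on $\Or$ and with $T \circ \mubf_e$ on $\Omega_e \bsl \Or$, i.e.\ it is exactly the extension of $T \circ \mubf$ associated (via the procedure of Proposition~\ref{proposition_extension_boundary_values}) to the boundary candidate $T \circ \mubf_l$. The reverse direction of Proposition~\ref{proposition_extension_boundary_values} then gives $(T \circ \mubf)|_{\dr \Omega} = (T \circ \mubf_l)$.

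\textbf{Main obstacle.} Once one reads the Dirichlet energy through its metric approximation $\Dir_\varepsilon$, the inequality $\Dir(T \circ \mubf) \leqslant \Dir(\mubf)$ is essentially tautological; the only subtle point is to propagate the boundary condition through $T$, since our definition of trace is not directly an evaluation on $\dr \Omega$ but rather goes through the boundary term $\BT$. The trick is to avoid working with $\BT$ by hand and instead reduce the statement, via the extension construction of Proposition~\ref{proposition_extension_boundary_values}, to a plain $H^1$-statement on the enlarged domain $\Omega_e$, where Step~1 applies unchanged.
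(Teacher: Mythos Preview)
Your proof is correct and follows exactly the approach sketched in the paper: use the contraction property of $T$ at the level of $\Dir_\varepsilon$, pass to the limit via Theorem~\ref{theorem_equivalence_KS}, and handle the trace via Proposition~\ref{proposition_extension_boundary_values}. The paper's proof is a two-line sketch pointing to precisely these ingredients, and your Step~2 is a faithful unpacking of the remark ``use Proposition~\ref{proposition_extension_boundary_values}''; in particular your observation that the ``if and only if'' part of that proposition works with any Lipschitz extension of the boundary datum (not just the specific one constructed there) is the right reading of its proof.
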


\begin{proof}
As $T$ is a contraction and from the definition of $\Dir_\varepsilon$ it is obvious that 
\begin{equation*}
\Dir_\varepsilon(T \circ \mubf) \leqslant \Dir_\varepsilon(\mubf)
\end{equation*}
holds for any $\varepsilon > 0$. Then it is sufficient to send $\varepsilon$ to $0$. To get the assertion involving the boundary conditions, one can use for instance Proposition \ref{proposition_extension_boundary_values}.
\end{proof}

As we work in the compactly supported case, we add some assumption that $D$ is large enough in order for the boundary of $D$ to be invisible. More precisely, the following lemma will help us to handle the finiteness of $D$.  

\begin{lm}
\label{lemma_solution_Dirichlet_extension}
Let $\tilde{D} \subset D$ be a convex compact subset of $D$. Let $\mubf_l : \dr \Omega \to \Prob(\tilde{D})$ be a Lipschitz mapping. If $\mubf \in H^1(\Omega, \Prob(\tilde{D}))$ is a solution of the Dirichlet problem with boundary values $\mubf_l$, then, seen as an element of $H^1(\Omega, \Prob(D))$ (extending $\mubf$ by $0$ on $D \bsl \tilde{D}$), $\mubf$ is also a solution of the Dirichlet problem with boundary values $\mubf_l$ (with $\mubf_l$ seen as a mapping valued in $\Prob(D)$). 
\end{lm}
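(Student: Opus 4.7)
The strategy is to show that for any competitor $\nubf \in H^1(\Omega, \Prob(D))$ with boundary values $\mubf_l$, one has $\Dir(\nubf) \geqslant \Dir(\mubf)$; I would do this by projecting $\nubf$ back into $\Prob(\tilde{D})$ using push-forward by the orthogonal projection onto $\tilde{D}$, and then invoking the minimality of $\mubf$ in the smaller Dirichlet problem.

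First I would verify a compatibility statement: for any mapping valued in $\Prob(\tilde{D})$, viewed as valued in $\Prob(D)$ by extending by $0$ on $D \bsl \tilde{D}$, both the Dirichlet energy and the boundary term coincide with those computed in the $\tilde{D}$-framework. The equality of Dirichlet energies is immediate from the metric characterization $\Dir = \lim_{\varepsilon \to 0} \Dir_\varepsilon$ given by Theorem \ref{theorem_equivalence_KS}, since $\Dir_\varepsilon$ depends only on the pairwise Wasserstein distances $W_2(\mubf(\xi),\mubf(\eta))$, which coincide in $\tilde{D}$ and in $D$ when the measures are supported in $\tilde{D}$. The equality of boundary terms follows by extending a $\tilde{D}$-tangent momentum $\Em_{\tilde{D}}$ by zero on $\Omega \times (D \bsl \tilde{D})$: every test function $\varphi \in C^1(\Omega \times D, \R^p)$ restricts to an admissible test function on $\Omega \times \tilde{D}$ and the two integrals coincide; combined with the explicit formula from Theorem \ref{theorem_boundary_values} applied to the Lipschitz (hence continuous) $\mubf_l$, this yields that $\BT^D_{\mubf} = \BT^D_{\mubf_l}$ whenever $\BT^{\tilde{D}}_{\mubf} = \BT^{\tilde{D}}_{\mubf_l}$.

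Second, let $\pi : D \to \tilde{D}$ be the orthogonal projection, which is $1$-Lipschitz because $\tilde{D}$ is closed and convex. The push-forward $\Pi(\nu) := \pi \# \nu$ is then a $1$-Lipschitz self-map of $\Prob(D)$ (for $W_2$) taking values in $\Prob(\tilde{D})$, and it acts as the identity on $\Prob(\tilde{D})$. Given any competitor $\nubf \in H^1(\Omega, \Prob(D))$ with boundary values $\mubf_l$, Lemma \ref{lemma_composition_Lipschitz} applied with $T = \Pi$ yields $\Pi \circ \nubf \in H^1(\Omega, \Prob(D))$ with $\Dir(\Pi \circ \nubf) \leqslant \Dir(\nubf)$ and boundary values $(\Pi \circ \nubf)|_{\dr\Omega} = \Pi \circ \mubf_l = \mubf_l$, where the last equality uses that $\mubf_l$ is already valued in $\Prob(\tilde{D})$. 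By the compatibility step, $\Pi \circ \nubf$ is then a legitimate competitor for the Dirichlet problem in $\tilde{D}$ with boundary values $\mubf_l$, so the minimality of $\mubf$ in that problem gives $\Dir^{\tilde{D}}(\mubf) \leqslant \Dir^{\tilde{D}}(\Pi \circ \nubf)$. Using once more that the Dirichlet energies of $\mubf$ and $\Pi \circ \nubf$ are the same whether computed in $\tilde{D}$ or in $D$, one obtains $\Dir(\mubf) \leqslant \Dir(\Pi \circ \nubf) \leqslant \Dir(\nubf)$, which is the desired conclusion.

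The main technical hurdle is the first step, namely the bookkeeping identifying the $\tilde{D}$- and $D$-versions of $\Dir$ and of $\BT$ for mappings supported in $\tilde{D}$; once this compatibility is at hand, the proof reduces to a direct contraction argument via Lemma \ref{lemma_composition_Lipschitz}.
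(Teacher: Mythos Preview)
Your proposal is correct and follows essentially the same approach as the paper: both use the Euclidean projection $P_{\tilde D}:D\to\tilde D$, the induced $1$-Lipschitz push-forward on $\Prob(D)$, and Lemma~\ref{lemma_composition_Lipschitz} to turn any $D$-competitor into a $\tilde D$-competitor without increasing the Dirichlet energy or changing the boundary values. The paper's proof is simply terser, leaving the compatibility of $\Dir$ and $\BT$ under the inclusion $\Prob(\tilde D)\hookrightarrow\Prob(D)$ implicit, whereas you spell it out.
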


\begin{proof}
It relies on a simple observation. Let $P_{\tilde{D}} : D \to \tilde{D}$ be the Euclidean projection on $\tilde{D}$. One has that $\nu \mapsto P_{\tilde{D}} \# \nu$ is a $1$-Lipschitz function from $(\Prob(D), W_2)$ to $(\Prob(\tilde{D}), W_2)$ which leaves the boundary values $\mubf_l$ unchanged. Thus we can apply Lemma \ref{lemma_composition_Lipschitz} to see that $P_{\tilde{D}}$ maps any competitor from $H^1(\Omega, \Prob(D))$ into a competitor in $H^1(\Omega, \Prob(\tilde{D}))$. 
\end{proof}

We will say that $\tilde{D} \subset D$ is compatible with $\rho$ if it is a compact convex subset of $D$ and for any $\mu \in \Prob(\tilde{D})$, one has $R(\mu) \in \Prob(D)$. It holds if $D$ is large enough compared to $\tilde{D}$ and the diameter of the support of $\rho$. In the sequel, we will use the notations $\Probec(\tilde{D}) := \Prob(\tilde{D}) \cap \Probec(\R^q)$ and $\Probec(D) := \Prob(D) \cap \Probec(\R^q)$  

\begin{theo}
\label{theorem_gaussian_existence}
Take $\tilde{D} \subset D$ compatible with $\rho$. Let $\mubf_l : \dr \Omega \to \Probec(\tilde{D})$ a Lipschitz mapping valued in the family of elliptically contoured distributions. Then there exists $\mubf \in H^1(\Omega, \Prob(D))$ a solution of the Dirichlet problem with boundary values $\mubf_l$ such that $\mubf(\xi) \in \Probec(D)$ for a.e. $\xi \in \Omega$. 
\end{theo}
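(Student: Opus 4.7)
The strategy is to start from an arbitrary solution of the Dirichlet problem whose values lie in $\Prob(\tilde{D})$, and then to project it pointwisely onto $\Probec$ via the retraction $R$. The contraction property of $R$ will guarantee that the Dirichlet energy does not increase, and the fact that $R$ leaves $\Probec$ invariant will guarantee that the boundary values are preserved.

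First, we use the compactness of $\tilde{D}$ to produce a solution already confined to $\tilde{D}$. Since $\mubf_l$ takes values in $\Prob(\tilde{D})$ and $\tilde{D}$ is convex and compact, Theorem \ref{theorem_existence_solution_Dirichlet} applied with $\tilde{D}$ in place of $D$ provides a solution $\mubf \in H^1(\Omega,\Prob(\tilde{D}))$ of the Dirichlet problem with boundary values $\mubf_l$. By Lemma \ref{lemma_solution_Dirichlet_extension}, seen as an element of $H^1(\Omega,\Prob(D))$ it is still a solution of the Dirichlet problem in the larger target space. It is at this point that the compactness of $\tilde{D}$ is crucial: without it, one could not assert that the resulting $\mubf$ charges only $\tilde{D}$.

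Next, define $\nubf \in L^2(\Omega,\Prob(\R^q))$ by $\nubf(\xi) := R(\mubf(\xi))$ for a.e.\ $\xi \in \Omega$ (measurability of $\nubf$ follows from continuity of $R$ on $(\Prob_2(\R^q),W_2)$). Since $\mubf(\xi) \in \Prob(\tilde{D})$ and $\tilde{D}$ is compatible with $\rho$, we have $\nubf(\xi) \in \Probec(D) \subset \Prob(D)$, so $\nubf \in L^2(\Omega,\Prob(D))$. Because $R$ is a contraction on $(\Prob_2(\R^q),W_2)$, for every $\varepsilon > 0$ the pointwise inequality $W_2(\nubf(\xi),\nubf(\eta)) \leqslant W_2(\mubf(\xi),\mubf(\eta))$ implies $\Dir_\varepsilon(\nubf) \leqslant \Dir_\varepsilon(\mubf)$; letting $\varepsilon \to 0$ and invoking Theorem \ref{theorem_equivalence_KS} yields $\Dir(\nubf) \leqslant \Dir(\mubf)$. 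In particular, $\nubf \in H^1(\Omega,\Prob(D))$.

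It remains to check that $\nubf$ has the same boundary values as $\mubf_l$, which will make $\nubf$ an admissible competitor and hence a solution by minimality of $\mubf$. This is essentially the content of Lemma \ref{lemma_composition_Lipschitz}: extending the argument slightly to accommodate the fact that $R$ is defined on $\Prob(\tilde{D})$ rather than $\Prob(D)$, one uses that $R$ coincides with the identity on $\Probec$. Since $\mubf_l(\xi) \in \Probec(\tilde{D})$ for every $\xi \in \dr \Omega$, we have $R \circ \mubf_l = \mubf_l$, so the boundary term of $\nubf$ agrees with that of $\mubf_l$. Concretely, one can either apply the strategy of Proposition \ref{proposition_extension_boundary_values} (extending $\mubf$ slightly beyond $\Omega$ by a Lipschitz extension of $\mubf_l$, which itself remains in $\Probec$ after composition with $R$) or approximate $\mubf$ by smooth mappings and use the continuity of $R$. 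Either way, $\nubf$ satisfies $\nubf|_{\dr\Omega} = \mubf_l$, and the minimality $\Dir(\nubf) \leqslant \Dir(\mubf)$ combined with $\Dir(\mubf) \leqslant \Dir(\nubf)$ (since $\mubf$ is a solution) shows $\nubf$ is also a solution of the Dirichlet problem.

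The only delicate point is the preservation of boundary values in the last paragraph, since Lemma \ref{lemma_composition_Lipschitz} is stated for self-maps of $\Prob(D)$. This is, however, a routine adaptation: the proof of that lemma relies only on the $1$-Lipschitz character of $T$ with respect to $W_2$, which is precisely what $R$ provides, and on the fact that $T$ fixes the boundary data, which holds here because $\mubf_l$ is valued in $\Probec$.
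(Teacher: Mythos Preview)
Your proof is correct and follows essentially the same approach as the paper: take a solution $\tilde{\mubf}$ of the Dirichlet problem with values in $\Prob(\tilde{D})$ (via Lemma~\ref{lemma_solution_Dirichlet_extension}), compose with the retraction $R$, and invoke the contraction property of $R$ through Lemma~\ref{lemma_composition_Lipschitz} to conclude that $R\circ\tilde{\mubf}$ is still a solution with the correct boundary values. Your discussion of the minor adaptation needed because $R$ is not literally a self-map of $\Prob(D)$ is accurate, and the paper simply glosses over this point.
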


\noindent The assumption that $\tilde{D}$ is compatible with $D$ can be translated in the fact that the supports of the $\mubf_l(\xi)$ for $\xi \in \dr \Omega$ are small compared to $D$.

\begin{proof}
Let $\tilde{\mubf}$ be a solution of the Dirichlet problem with boundary values $\mubf_l$, it exists thanks to Theorem \ref{theorem_lipschitz_extension} and Theorem \ref{theorem_existence_solution_Dirichlet}. According to Lemma \ref{lemma_solution_Dirichlet_extension}, we can choose $\tilde{\mubf}$ such that $\tilde{\mubf} \in \Prob(\tilde{D})$ a.e. As $R$ is a contraction which leaves the boundary values unchanged, it is clear thanks to Lemma \ref{lemma_composition_Lipschitz} that $\mubf := R \circ \tilde{\mubf}$ is a solution of the Dirichlet problem with boundary values $\mubf_l$. By construction, $\mubf$ is valued in $\Probec(\R^q)$ and also in $\Prob(D)$ as $\tilde{D}$ is compatible with $\rho$.
\end{proof}

\noindent We believe that, conducting a careful analysis, one can prove that all solutions of the Dirichlet problem with boundary values $\mubf_l$ are valued in $\Probec(D)$.

\bigskip

Now, we want to go further and give a more explicit description of the solution valued in the family of elliptically contoured distributions. To this extent, we rely on the fact that the manifold $S^+_q(\R)$, when endowed with the distance induced by $W_2$ through the application $A \mapsto \rho_A$, has a structure of Riemannian manifold, at least when restricted to the set of non singular matrix. The computation of Wasserstein distance between gaussians distributions has been discovered independently many times (see for instance \cite{Dowson1982, Gelbrich1990}), while the resulting geometry was first investigated by Takatsu \cite{Takatsu2011}. The restriction of the Wasserstein distance to the set of gaussian measures is sometimes called the Bures metric.    

More precisely, if $A$ and $B$ are in $S^+_q(\R)$ it is known (see for instance \cite{Gelbrich1990}) that 
\begin{equation*}
W_2^2(\rho_A, \rho_B) = \Tr \left( A^2 + B^2 - 2 (AB^2A)^{1/2} \right).
\end{equation*} 
Notice that if $A$ and $B$ commute then $W_2^2(\rho_A, \rho_B) = \Tr((A-B)^2)$, which justifies that the right choice is to parametrize elements of the family of elliptically contoured distributions by the square root of their covariance matrix. Denote by $S^{++}_q(\R)$ the set of $q \times q$ symmetric definite positive matrices. If $A \in S^{+}_q(\R)$, we can define the linear map $L_A : S_q(\R) \to S_q(\R)$ by $L_A := A \otimes \Id + \Id \otimes A $. More explicitly for any $H \in S_q(\R)$ 
\begin{equation*}
L_A(H) = AH + HA.
\end{equation*}
The map $L_A$ is symmetric, and is moreover positive definite as soon as $A$ is positive definite (in this case in particular it is invertible). If $A$ is diagonal, then $L_A$ is also diagonal in the canonical basis for matrices. In particular, if $A$ and $B$ commute, then $L_A$ and $L_B$ also commute. If $A \in S^{++}_q(\R)$ and $B \in S_q(\R)$, a lengthy but straightforward computation leads to 
\begin{equation}
\label{equation_infinitesimal_Wasserstein_gaussian}
\lim_{t \to 0} \frac{W_2^2(\rho_A, \rho_{A+tB})}{t^2} = \langle B, \g_A (B) \rangle
\end{equation}  
where $\g_A : S_q(\R) \to S_q(\R)$ is a linear map defined as 
\begin{equation*}
\g_A := \frac{1}{2} (L_A)^2 (L_{A^2})^{-1} . 
\end{equation*}
More explicitly, if $A$ is a diagonal matrix with eigenvalues $\lambda_1, \lambda_2, \ldots, \lambda_q$ and $B = (B_{ij})_{1 \leqslant i,j \leqslant q}$ then 
\begin{equation*}
\langle B, \g_A (B) \rangle = \frac{1}{2} \sum_{1 \leqslant i,j \leqslant q} \frac{(\lambda_i + \lambda_j)^2}{\lambda_i^2 + \lambda_j^2} B^2_{ij}.
\end{equation*}
Notice that $\g_A$ always defines a scalar product on the space $S_q(\R)$. As a consequence, we can define the Riemannian manifold $(S_q^{++}(\R), \g)$: at each point $A \in S^{++}_q(\R)$ the tangent space, which is isomorphic to $S_q(\R)$, is endowed with the scalar product $\g_A$. If we do that, as we already know that $\Probec(\R^q)$ is a geodesic space and thanks to \eqref{equation_infinitesimal_Wasserstein_gaussian}, we see that the Riemannian distance $d_\g$ induced by $\g$ satisfies $d_\g(A,B) = W_2(\rho_A, \rho_B)$ for any $A,B \in S^{++}_q(\R)$. From this identity we can derive the following consequence. Take $\Abf \in H^1(\Omega, (S^{++}_q(\R), \g))$ a matrix-valued function and define $\rho_\Abf \in L^2(\Omega, \Prob(D))$ by $\rho_\Abf(\xi) = \rho_{\Abf(\xi)}$ for a.e. $\xi \in \Omega$. Then $\rho_\Abf \in H^1(\Omega, \Prob(D))$ and    
\begin{equation}
\label{equation_Dirichlet_Riemannian}
\Dir(\rho_\Abf) = \int_\Omega \frac{1}{2} \sum_{\alpha = 1}^p \langle \dr_\alpha \Abf(\xi), \g_{\Abf(\xi)} (\dr_\alpha \Abf(\xi)) \rangle \ddr \xi. 
\end{equation} 
To justify this identity, one can use for instance the formulation with $\Dir_\varepsilon$ (Theorem \ref{theorem_equivalence_KS}), replace the Wasserstein distance $W_2$ by the Riemannian distance $d_\g$, and use the already known equivalence between $\Dir$ and the limit of $\Dir_\varepsilon$ when $\varepsilon \to 0$ for mappings valued in a Riemannian manifold \cite[Theorem 8.3.1]{Jost2008}.

Notice that the metric tensor $\g_A$ diverges as $A$ becomes singular. Thus, it is natural to assume that the boundary values have non singular covariance matrices. With this assumption we are able to provide the full solution of the Dirichlet problem. 

\begin{theo}
\label{theorem_dirichlet_problem_location_scatter}
Take $\tilde{D} \subset D$ compatible with $\rho$. Let $\mubf_l : \dr \Omega \to \Probec(\tilde{D})$ a Lipschitz mapping such that $\det \left( \cov(\mubf_l(\xi)) \right) > 0$ for all $\xi \in \dr \Omega$ and define $\Abf_l(\xi) = \cov(\mubf_l(\xi))^{1/2}$ for all $\xi \in \dr \Omega$.

Then there exists a unique solution $\bar{\mubf} \in H^1(\Omega, \Prob(D))$ of the Dirichlet problem with boundary values $\mubf_l$ and $\bar{\mubf}(\xi) \in \Probec(D)$ for a.e. $\xi \in \Omega$. Moreover, if $\bar{\Abf} \in H^1(\Omega, (S^{++}_q(\R), \g))$ is defined by $\bar{\Abf}(\xi) := \cov(\bar{\mubf}(\xi))^{1/2}$ for a.e. $\xi \in \Omega$, then the following holds:
\begin{itemize}
\item[(i)] $\essinf_{\xi \in \Omega} \det(\bar{\Abf}(\xi)) > 0$;
\item[(ii)] $\bar{\Abf}$ is a minimizer of 
\begin{equation*}
\int_\Omega \frac{1}{2} \sum_{\alpha = 1}^p \langle \dr_\alpha \Bbf(\xi), \g_{\Bbf(\xi)} (\dr_\alpha \Bbf(\xi)) \rangle \ddr \xi.   
\end{equation*} 
among all $\Bbf \in H^1(\Omega, (S^{++}_q(\R), \g))$ which have boundary values $\Abf_l$;
\item[(iii)] $\bar{\Abf}$ is a weak solution of
\begin{equation}
\label{equation_euler_lagrange_matrix}
\sum_{\alpha = 1}^p \dr_\alpha \left( L_{\bar{\Abf}} L_{\bar{\Abf}^2}^{-1} (\dr_\alpha \bar{\Abf}) \right) + \sum_{\alpha = 1}^p \left( L_{\bar{\Abf}} L_{\bar{\Abf}^2}^{-1} ( \dr_\alpha \bar{\Abf}) \right)^2 = 0. 
\end{equation}
\item[(iv)] The mapping $\bar{\Abf}$ is smooth (namely $C^\infty$) in the interior of $\Omega$, and regularity up to the boundary holds provided $\Abf_l$ and $\dr \Omega$ are smooth enough.  
\end{itemize} 
\end{theo}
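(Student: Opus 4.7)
The Boltzmann entropy $F(\mu):=\int_D \mu\log\mu$ is regular in the sense of Section \ref{section_Ishihara} (Proposition \ref{proposition_regular_functional}) and displacement convex, with $F(\rho_A)=F(\rho)-\log\det A$ for $A\in S_q^{++}(\R)$ by a change of variables (finite because we assumed the Boltzmann entropy of $\rho\,\Leb_D$ is finite). Since $\Abf_l$ is continuous with $\det\Abf_l>0$ on the compact $\dr\Omega$, the constant $M:=\sup_{\dr\Omega}(F\circ\mubf_l)$ is finite. Theorem \ref{theorem_ishihara} then yields a Dirichlet solution $\mubf^*$ (valued in $\Prob(\tilde D)$ thanks to Lemma \ref{lemma_solution_Dirichlet_extension}) with $F(\mubf^*(\xi))\le M$ for a.e. $\xi\in\Omega$. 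The classical Gaussian maximum-entropy principle gives, for every absolutely continuous $\mu\in\Prob(\R^q)$ with finite second moment, $F(\mu)\ge -\tfrac12\log\det\cov(\mu)-\tfrac{q}{2}\log(2\pi e)$; applied pointwise to $\mubf^*$ this forces $\det\cov(\mubf^*(\xi))\ge c^2$ a.e. for some explicit $c>0$. Setting $\bar\mubf:=R\circ\mubf^*$ (well-defined since $\tilde D$ is $\rho$-compatible), Lemma \ref{lemma_composition_Lipschitz} shows that $\bar\mubf$ is still a solution of the Dirichlet problem; it is $\Probec$-valued by construction and has the same covariance as $\mubf^*$, so $\det\bar\Abf\ge c>0$ a.e., proving (i).

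\textbf{Euler--Lagrange equation and uniqueness ((ii), (iii), uniqueness).} Identity \eqref{equation_Dirichlet_Riemannian} combined with Theorem \ref{theorem_gaussian_existence} (which ensures that the Wasserstein minimum is attained in $\Probec$) shows that $\bar\Abf$ minimizes the Riemannian Dirichlet energy among $\Bbf:\Omega\to S^{++}_q$ with $\Bbf|_{\dr\Omega}=\Abf_l$, which is (ii). A direct first variation using $\g_A=\tfrac12 L_A^2 L_{A^2}^{-1}$ yields (iii). For uniqueness, the explicit form of the optimal transport map between elliptically contoured distributions shows that the tangent velocity field of $\bar\mubf$ is linear in $x$,
\[
\vbf^\alpha(\xi,x)=\tilde M^\alpha(\xi)\,x,\qquad \tilde M^\alpha:=L_{\bar\Abf^2}^{-1}L_{\bar\Abf}(\dr_\alpha\bar\Abf),
\]
and (i) plus Sobolev regularity forces $\tilde M^\alpha\in L^\infty(\Omega,S_q(\R))$. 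Set $\varphi^\alpha(\xi,x):=\tfrac12\langle\tilde M^\alpha(\xi)x,x\rangle$: then
\[
\nabla_\Omega\cdot\varphi+\tfrac12|\nabla_D\varphi|^2 \;=\; \tfrac12\Big\langle\Big[\sum_\alpha\big(\dr_\alpha\tilde M^\alpha+(\tilde M^\alpha)^2\big)\Big]x,x\Big\rangle \;=\; 0
\]
by the Euler-Lagrange equation \eqref{equation_euler_lagrange_matrix}, so (after a regularization near $\dr\Omega$ to restore $C^1$-regularity) $\varphi$ realizes equality in the duality of Theorem \ref{theorem_duality}. Strong duality then forces any other Dirichlet solution $\nubf$ to have tangent velocity $\vbf_\nubf=\nabla_D\varphi$ $\nubf$-a.e.; restricting the generalized continuity equation to lines parallel to each $e_\alpha$ and invoking 1D uniqueness for the continuity equation with this Lipschitz-in-$x$ field yields $\nubf=\bar\mubf$.

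\textbf{Regularity (iv) and main obstacle.} The mapping $\bar\Abf$ is a minimizing harmonic map from $\Omega$ to the Riemannian manifold $(S^{++}_q,\g)$ whose image lies in a compact subregion (by (i) from below and compactness of $D$ from above). The regularity theory of Schoen--Uhlenbeck \cite{Schoen1982,Schoen1983} then applies provided non-constant tangent minimizing maps are excluded: any such tangent map $A:\R^p\to S^{++}_q$ would, by the Ishihara argument repeated without boundary, have $-\log\det A$ subharmonic on $\R^p$ with controlled growth, which combined with similar bounds coming from other convex-along-$\g$-geodesics functionals forces constancy. Boundary regularity is standard once $\Abf_l$ and $\dr\Omega$ are smooth. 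The main technical obstacle is the intermediate step in the uniqueness argument: one must verify that the quadratic-in-$x$ potential $\varphi$ built from the $\tilde M^\alpha$ is an admissible competitor in Theorem \ref{theorem_duality} (so $C^1$ up to $\dr\Omega$, obtained by mollification using the smoothness of $\Abf_l$) and that the 1D continuity-equation propagation from $\dr\Omega$ genuinely covers all of $\Omega$; both hinge on the quantitative lower bound (i), which is exactly why the non-singular boundary hypothesis is essential.
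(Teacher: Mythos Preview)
Your overall architecture matches the paper's: Ishihara with the Boltzmann entropy for (i), the identification \eqref{equation_Dirichlet_Riemannian} for (ii)--(iii), the linear-in-$x$ velocity field for uniqueness, and Schoen--Uhlenbeck for (iv). But two of your steps, as written, do not go through.

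\textbf{Uniqueness.} You assert $\tilde M^\alpha\in L^\infty(\Omega,S_q(\R))$; this is not implied by (i) and $H^1$-regularity, since $\tilde M^\alpha$ contains $\dr_\alpha\bar\Abf$, which is only $L^2$ a priori. More seriously, you try to make $\varphi$ an admissible competitor in Theorem~\ref{theorem_duality} by ``mollification using the smoothness of $\Abf_l$''. But $\Abf_l$ is only Lipschitz, so you cannot hope for $\varphi\in C^1(\Omega\times D,\R^p)$ up to the boundary; the paper explicitly avoids claiming $\bar\varphi$ solves the dual. What it does instead is mollify the matrix fields $\bar\Bbf^\alpha$ in the interior, obtain the boundary-term identity for the mollified $\varphi_n$, and pass to the limit using only that $\bar\Bbf^\alpha\in L^2$ and $\sum_\alpha\dr_\alpha\bar\Bbf^\alpha=-\sum_\alpha(\bar\Bbf^\alpha)^2\in L^1$ (from \eqref{equation_euler_lagrange_matrix}). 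This yields directly
\[
\iint_{\Omega\times D}\Big(-\tfrac12|\bar\vbf|^2+\bar\vbf\cdot\vbf\Big)\,\ddr\mubf
=\iint_{\Omega\times D}\tfrac12|\bar\vbf|^2\,\ddr\bar\mubf=\Dir(\bar\mubf)=\Dir(\mubf),
\]
from which $\vbf=\bar\vbf$ $\mubf$-a.e. follows by expanding the square. The line-by-line continuity-equation argument then uses only $\bar\Bbf^\alpha\in L^2$ (hence $L^1$ on a.e. line), not $L^\infty$.

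\textbf{Absence of tangent maps.} Your plan is to invoke the Ishihara argument on $\R^p$ for $-\log\det A$ and ``other convex-along-$\g$-geodesics functionals''. Two problems: Theorem~\ref{theorem_ishihara} is proved by approximation on bounded domains with prescribed boundary data and does not transfer as stated to $0$-homogeneous maps on all of $\R^p$; and subharmonicity of $-\log\det A$ alone only forces $\det A$ to be constant, not $A$. The paper's route is a direct computation from the Euler--Lagrange equation \eqref{equation_euler_lagrange_matrix}: for every fixed $C\in S_q^+(\R)$ one checks that $\xi\mapsto\langle\bar\Abf(\xi)^2,C\rangle$ is subharmonic (the Laplacian equals $2\sum_\alpha\Tr(\bar\Bbf^\alpha\bar\Abf^2\bar\Bbf^\alpha C)\ge 0$). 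A $0$-homogeneous subharmonic function on the ball is constant by the maximum principle, and since $C$ is arbitrary in $S_q^+(\R)$ this pins down $\bar\Abf^2$, hence $\bar\Abf$. This computation works for any weak solution of \eqref{equation_euler_lagrange_matrix} with the uniform bounds coming from (i), which is exactly the setting of a tangent map.
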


\noindent Notice that we are able to prove uniqueness among \emph{all} mappings valued in the Wasserstein space and not only those valued in the family of elliptically contoured distributions: it is one of the only case where we can prove that uniqueness holds for the Dirichlet problem. Remark also that \eqref{equation_euler_lagrange_matrix} is nothing else than the Euler-Lagrange equation associated to the problem of calculus of variations (ii). The last point is the application of the standard theory of elliptic regularity for harmonic mappings valued in Riemannian manifolds, in particular we refer the reader to \cite{Schoen1983} for the precise assumptions required for the regularity to hold up to the boundary. The only thing we will need to show is the absence of non constant \emph{minimizing tangent maps}, which we will prove thanks to an argument based on the maximum principle.


\bigskip 

The rest of this subsection (and, incidentally, this article) is dedicated to the proof of Theorem \ref{theorem_dirichlet_problem_location_scatter} which is obtained by putting together Propositions \ref{proposition_existence_solution_non_singular}, \ref{proposition_EL_location_scatter}, \ref{proposition_uniqueness_location_scatter} and \ref{proposition_gaussian_no_MTM}. More precisely, the first step is to show the existence of one solution $\bar{\mubf}$ of the Dirichlet problem taking values in the family of elliptically contoured distributions for which the covariance matrices stay non singular inside $\Omega$ (Proposition \ref{proposition_existence_solution_non_singular}). Then, using the explicit expression \eqref{equation_Dirichlet_Riemannian}, it is fairly easy to show that (ii) and (iii) are satisfied (Proposition \ref{proposition_EL_location_scatter}). The hardest part is the question of uniqueness. As explained in the introduction, we will first show that any solution $\mubf$ of the Dirichlet problem with boundary values $\mubf_l$ must have $\bar{\vbf}$ as tangent velocity field, where $\bar{\vbf}$ is the tangent velocity field of $\bar{\mubf}$. Then, as $\bar{\vbf}$ will happen to be smooth enough (linear, hence Lipschitz w.r.t. variables in $D$), we will use the results about uniqueness of the ($1$-dimensional) continuity equation for smooth velocity field (Proposition \ref{proposition_uniqueness_location_scatter}). For the last point of the theorem, as $\bar{\Abf}$ is a Dirichlet minimizing mapping valued in a compact subset of the Riemannian manifold $(S_q^{++}(\R), \g)$ (thanks to point (i)), we can apply the classical theory: see \cite[Theorem IV]{Schoen1982} for the interior regularity and \cite{Schoen1983} for the boundary regularity. The only point to show is the absence of non constant minimizing tangent maps, which a consequence of Proposition \ref{proposition_gaussian_no_MTM} proved below.   

Let us begin by showing that for at least one solution of the Dirichlet problem the covariance matrices stay non singular inside $\Omega$. As a tool to measure regularity of elliptically contoured distributions, we will use the Boltzmann entropy. We define $H : \Prob(D) \to [0, + \infty]$ by 
\begin{equation}
\label{equation_boltzamnn_entropy}
H(\mu) := \begin{cases}
\dst{\int_D \mu(x) \ln(\mu(x))  \ddr x} & \text{if } \mu \text{ is absolutely continuous w.r.t. } \Leb_D, \\
+ \infty & \text{else}.
\end{cases}
\end{equation} 
It is known that $H$ is convex along generalized geodesics \cite[Theorem 9.4.10]{Ambrosio2008} and it is regular according to Proposition \ref{proposition_regular_functional}. Moreover, an explicit computation leads to $H(\rho_A) = - \ln ( \det A) + H(\rho \Leb_D)$ (with the convention $\ln(0) = - \infty$). Also, using the fact that Gaussian measures are the ones which minimize $H$ for a covariance matrix, we get that for any $\mu \in \Prob(D)$, 
\begin{equation}
\label{equation_entropy_covariance}
H(\mu) \geqslant - \frac{1}{2} \ln \left( \det \left( \cov(\mu) \right) \right) + C,
\end{equation}
where the constant $C$ is the entropy of a standard normal distribution.

\begin{prop}
\label{proposition_existence_solution_non_singular}
Take $\tilde{D} \subset D$ compatible with $\rho$. Let $\mubf_l : \dr \Omega \to \Probec(\tilde{D})$ a Lipschitz mapping such that $\det \left( \cov(\mubf_l(\xi)) \right) > 0$ for all $\xi \in \dr \Omega$. Then there exists $\bar{\mubf} \in H^1(\Omega, \Prob(D))$ a solution of the Dirichlet problem with boundary values $\mubf_l$ such that $\bar{\mubf}(\xi) \in \Probec(D)$ for a.e. $\xi \in \Omega$ and such that 
\begin{equation*}
\essinf_{\xi \in \Omega} \left[ \det \left( \cov(\bar{\mubf}(\xi)) \right) \right] > 0.
\end{equation*} 
\end{prop}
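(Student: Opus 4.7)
The plan is to combine the Ishihara-type maximum principle of Theorem \ref{theorem_ishihara} applied to the Boltzmann entropy $H$ (which bounds the determinant of the covariance from below) with the retraction $R$ onto the family of elliptically contoured distributions (which produces a solution valued in $\Probec(D)$ without changing the covariance). The preliminary observation is that, by compactness of $\dr \Omega$ and continuity of $\xi \mapsto \cov(\mubf_l(\xi))$, the bound $\inf_{\dr \Omega} \det \cov(\mubf_l) > 0$ holds, so the explicit formula $H(\rho_A) = -\ln \det A + H(\rho \Leb_D)$ gives
\begin{equation*}
M := \sup_{\dr \Omega} H \circ \mubf_l \;=\; -\tfrac{1}{2} \inf_{\dr \Omega} \ln \det \cov(\mubf_l) + H(\rho \Leb_D) \;<\; +\infty,
\end{equation*}
using that $H(\rho \Leb_D)$ is assumed finite.

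I would then work in the target space $\Prob(\tilde{D})$ rather than $\Prob(D)$, so as to be able to apply the retraction $R$ afterwards while remaining inside $\Prob(D)$. Since $\tilde{D}$ is a compact convex subset of $\R^q$, the whole theory developed in the previous sections applies with $\tilde{D}$ in place of $D$, and $H$ (restricted to $\Prob(\tilde{D})$) is regular by Proposition \ref{proposition_regular_functional} and convex along generalized geodesics. Applying Theorem \ref{theorem_ishihara} to the Dirichlet problem in $H^1(\Omega, \Prob(\tilde{D}))$ with boundary values $\mubf_l$ and $F = H$ yields a solution $\bar{\mubf}_1 \in H^1(\Omega, \Prob(\tilde{D}))$ satisfying
\begin{equation*}
\esssup_\Omega H \circ \bar{\mubf}_1 \;\leq\; M.
\end{equation*}
Combining this with the entropy–covariance inequality \eqref{equation_entropy_covariance} gives a uniform lower bound $\det \cov(\bar{\mubf}_1(\xi)) \geq \exp(2(C - M)) =: \delta > 0$ for a.e. $\xi \in \Omega$, where $C$ is the entropy of a standard Gaussian. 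By Lemma \ref{lemma_solution_Dirichlet_extension}, the same $\bar{\mubf}_1$ viewed in $H^1(\Omega, \Prob(D))$ is a solution of the original Dirichlet problem.

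To finish, I would define $\bar{\mubf} := R \circ \bar{\mubf}_1$. Compatibility of $\tilde{D}$ with $\rho$ ensures that $\bar{\mubf}(\xi) \in \Probec(D)$ for a.e. $\xi$, and since $R$ is $1$-Lipschitz on $(\Prob(\tilde{D}), W_2) \to (\Prob(D), W_2)$ by Gelbrich's theorem and fixes each $\mubf_l(\xi)$ (already in $\Probec(\tilde{D})$), a direct adaptation of Lemma \ref{lemma_composition_Lipschitz} shows that $\bar{\mubf}$ is still a solution of the Dirichlet problem with boundary values $\mubf_l$. Since $R$ preserves covariance matrices ($\cov(R(\mu)) = \cov(\mu)$), we have $\det \cov(\bar{\mubf}(\xi)) = \det \cov(\bar{\mubf}_1(\xi)) \geq \delta$ a.e., which gives the desired $\essinf$ bound.

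The main obstacle is the interplay between two constraints that pull in opposite directions: the maximum principle for $H$ requires an ambient space in which the boundary entropy is finite and $R$ produces admissible measures, while the property $\bar{\mubf}(\xi) \in \Probec(D)$ requires applying $R$ without leaving $\Prob(D)$. The compatibility assumption on $\tilde{D}$ is exactly what reconciles these, and the fact that $R$ is a covariance-preserving contraction is what allows the entropy bound obtained on $\bar{\mubf}_1$ to transfer to $\bar{\mubf}$.
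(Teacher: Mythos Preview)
Your proposal is correct and follows essentially the same approach as the paper: apply Theorem \ref{theorem_ishihara} with $F=H$ in $\Prob(\tilde{D})$ to obtain the entropy maximum principle, convert it into a lower bound on $\det(\cov)$ via \eqref{equation_entropy_covariance}, and then compose with the covariance-preserving contraction $R$ to land in $\Probec(D)$ while keeping both the minimality and the determinant bound. The paper's proof is the same argument, just slightly more compressed (it folds your invocation of Lemma \ref{lemma_solution_Dirichlet_extension} and Lemma \ref{lemma_composition_Lipschitz} into a reference to the proof of Theorem \ref{theorem_gaussian_existence}).
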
 

\begin{proof}
Notice, thanks to the explicit formula for $H$ on $\Probec(\R^q)$ and as $\mubf_l$ is continuous, that $\sup_{\dr \Omega} (H \circ \mubf_l) < + \infty$. Take $\mubf \in H^1(\Omega, \Prob(\tilde{D}))$ the solution of the Dirichlet problem with boundary values $\mubf_l$ given by Theorem \ref{theorem_ishihara} (with $F = H$). Set $\bar{\mubf} := R \circ \mubf$. By the same argument as in Theorem \ref{theorem_gaussian_existence}, $\bar{\mubf} \in H^1(\Omega, \Probec(D))$ is a solution of the Dirichlet problem with boundary values $\mubf_l$. Using first the estimate \eqref{equation_entropy_covariance} and then the maximum principle \eqref{equation_maximum_principle},
\begin{align*}
\esssup_{\xi \in \Omega} \left[ - \ln \left( \det \left(  \cov(\bar{\mubf}(\xi)) \right) \right) \right] & = \esssup_{\xi \in \Omega} \left[ - \ln \left( \det \left(  \cov(\mubf(\xi))\right) \right) \right] \\
& \leqslant -2 C + 2 \esssup_{\xi \in \Omega} H(\mubf(\xi)) \\
& \leqslant -2 C + 2 \sup_{\xi \in \dr \Omega} H(\mubf_l(\xi)) < + \infty. \qedhere 
\end{align*}
\end{proof}

\noindent Until the end of the subsection, $\bar{\mubf} \in H^1(\Omega, \Probec(D))$ will denote the object defined in Proposition \ref{proposition_existence_solution_non_singular} and for a.e. $\xi \in \Omega$, one defines $\bar{\Abf}(\xi) = \cov(\bar{\mubf}(\xi))^{1/2}$. Notice that (i) of Theorem \ref{theorem_dirichlet_problem_location_scatter} is proved. Now let us derive the equation satisfied by $\bar{\Abf}$. 

\begin{prop}
\label{proposition_EL_location_scatter}
The mapping $\bar{\Abf} \in H^1(\Omega, (S^{++}_q(\R), \g))$ is a weakly harmonic map, more precisely a minimizer of 
\begin{equation*}
\Bbf \in H^1(\Omega, (S^{++}_q(\R), \g)) \mapsto \int_\Omega \frac{1}{2} \sum_{\alpha = 1}^p \langle \dr_\alpha \Bbf(\xi), \g_{\Bbf(\xi)} (\dr_\alpha \Bbf(\xi)) \rangle \ddr \xi.   
\end{equation*} 
among all $\Bbf$ which have boundary values $\Abf_l$. In particular, $\bar{\Abf}$ satisfies the Euler-Lagrange equation \eqref{equation_euler_lagrange_matrix}. 
\end{prop}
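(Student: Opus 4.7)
The plan is to exploit the isometric correspondence between mappings valued in the elliptically contoured family $\Probec(D)$ and mappings valued in the Riemannian manifold $(S^{++}_q(\R), \g)$, afforded by the identity \eqref{equation_Dirichlet_Riemannian}. For the minimization property I would take any $\Bbf \in H^1(\Omega, (S^{++}_q(\R), \g))$ with boundary values $\Abf_l$ such that $\rho_{\Bbf(\xi)} \in \Prob(D)$ for a.e. $\xi \in \Omega$, and form $\rho_\Bbf \in H^1(\Omega, \Prob(D))$. By \eqref{equation_Dirichlet_Riemannian} its Wasserstein Dirichlet energy equals the Riemannian Dirichlet energy $J(\Bbf)$, and by continuity of $A \mapsto \rho_A$ together with Theorem \ref{theorem_boundary_values} its boundary values coincide with $\mubf_l$. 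The optimality of $\bar{\mubf} = \rho_{\bar{\Abf}}$ from Proposition \ref{proposition_existence_solution_non_singular} then yields $J(\bar{\Abf}) \leqslant J(\Bbf)$. Competitors $\Bbf$ for which $\rho_{\Bbf(\xi)}$ is not supported in $D$ are reduced to the previous case by a truncation: composing $\Bbf$ with the projection onto matrices of operator norm at most $\mathrm{diam}(D)/\mathrm{diam}(\supp \rho)$, which is $1$-Lipschitz in $(S^+_q(\R), d_\g)$ by the same argument giving the contraction property of $R$, brings the competitor into the admissible class without increasing $J$, while the boundary values are preserved since $\Abf_l$ already satisfies the bound thanks to $\mubf_l$ being valued in $\Prob(\tilde D)$.

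For the Euler-Lagrange equation, the essential-supremum bound (i) combined with the upper bound $\| \bar{\Abf} \|_\infty \leqslant \mathrm{diam}(D)$ (which follows from $\bar{\mubf}(\xi)$ being centered and supported in $D$, using $\cov(\bar{\mubf}) = \bar{\Abf}^2$) shows that $\bar{\Abf}$ is essentially valued in a compact subset $K \subset S^{++}_q(\R)$, on which $\g$ is a smooth Riemannian metric uniformly bounded above and below. For any $\Hbf \in C^\infty_c(\Or, S_q(\R))$ and $|t|$ small enough, $\bar{\Abf} + t \Hbf$ remains in a slight enlargement of $K$ still inside $S^{++}_q(\R)$ and continues to produce a competitor in $\Prob(D)$. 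Setting $\frac{d}{dt}\big|_{t=0} J(\bar{\Abf} + t \Hbf) = 0$, using the identity $\g_A(H) = \frac{1}{2} L_A \big( L_A L_{A^2}^{-1}(H) \big)$, and introducing the auxiliary field $V_\alpha := L_{\bar{\Abf}} L_{\bar{\Abf}^2}^{-1}(\dr_\alpha \bar{\Abf})$ so that $\g_{\bar{\Abf}}(\dr_\alpha \bar{\Abf}) = \frac{1}{2} L_{\bar{\Abf}}(V_\alpha)$, an integration by parts in $\Omega$ recasts the vanishing of the first variation as the weak form of \eqref{equation_euler_lagrange_matrix}.

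The main obstacle is the linear-algebraic computation of the Gateaux derivative of $A \mapsto L_A^2 L_{A^2}^{-1}$ in the direction $\Hbf$, which requires using the commutation $L_A L_{A^2} = L_{A^2} L_A$ (valid because $A$ and $A^2$ are simultaneously diagonalizable) and carefully tracking the terms produced by differentiating the inverse operator $L_{A^2}^{-1}$. After rearrangement these terms combine into a symmetric bilinear expression in $V_\alpha$ which, when paired with the divergence part coming from the $\dr_\alpha \Hbf$-terms, reorganizes into the clean form $\sum_\alpha (\dr_\alpha V_\alpha + V_\alpha^2) = 0$; the quadratic contribution $V_\alpha^2$ here plays the role of the Christoffel-symbol correction expected for a harmonic-map equation valued in a Riemannian manifold. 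A secondary technical point is the transfer of the boundary-value constraint between the Wasserstein and Riemannian settings, which is handled by the continuity of $A \mapsto \rho_A$ together with the trace characterization in Theorem \ref{theorem_boundary_values}, exactly as in the argument used to define admissible competitors in the first paragraph.
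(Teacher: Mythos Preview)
Your overall strategy matches the paper's: use the isometry \eqref{equation_Dirichlet_Riemannian} to transfer competitors between the Riemannian and Wasserstein pictures, and then derive \eqref{equation_euler_lagrange_matrix} by a first-variation argument. There is, however, a gap in how you handle competitors $\Bbf$ for which $\rho_{\Bbf(\xi)}$ is not supported in $D$. You assert that the projection of $S^+_q(\R)$ onto matrices of operator norm at most a fixed constant is $1$-Lipschitz for the Bures distance $d_\g$, ``by the same argument giving the contraction property of $R$''. But Gelbrich's result, which underlies the $1$-Lipschitz property of $R$, concerns the map $\mu \mapsto \rho_{\cov(\mu)^{1/2}}$ on probability measures; it says nothing about truncating eigenvalues of a matrix. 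Whether such a truncation is a Bures contraction when the two matrices do not commute is not obvious and would need a separate proof. The paper sidesteps this entirely by working on the measure side: it forms $\mubf := \rho_\Bbf \in H^1(\Omega, \Prob(\R^q))$ and then composes with $P_D \# \cdot$, where $P_D : \R^q \to D$ is the Euclidean projection onto the convex set $D$. Since $P_D$ is $1$-Lipschitz on $\R^q$, the push-forward is $1$-Lipschitz in $W_2$ (Lemma \ref{lemma_composition_Lipschitz}), and it fixes the boundary data because $\mubf_l$ is already supported in $\tilde D \subset D$. This yields $\Dir(\bar\mubf) \leqslant \Dir(P_D \# \mubf) \leqslant \Dir(\mubf)$ directly.

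For the Euler--Lagrange part your plan is correct in principle, but the paper uses a simplification you may find worthwhile: instead of varying in $\bar{\Abf}$ and differentiating $A \mapsto (L_A)^2 (L_{A^2})^{-1}$, it changes variable to the covariance $\bar{\Cbf} := \bar{\Abf}^2$, for which the integrand reduces to $\langle \dr_\alpha \bar{\Cbf}, L_{\bar{\Cbf}}^{-1}(\dr_\alpha \bar{\Cbf}) \rangle$. The first variation then only requires differentiating $L_{\bar{\Cbf}}^{-1}$ in the direction $\Dbf$, which gives the term $-L_{\bar{\Cbf}}^{-1} L_{\Dbf} L_{\bar{\Cbf}}^{-1}$; after pairing and using the trace cyclicity one reads off the weak form of \eqref{equation_euler_lagrange_matrix} upon substituting back $\dr_\alpha \bar{\Cbf} = L_{\bar{\Abf}}(\dr_\alpha \bar{\Abf})$. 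This avoids the commutation bookkeeping you identify as the main obstacle.
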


\begin{proof}
We need to prove that, for any $\Bbf \in H^1(\Omega, (S^{++}_q(\R), \g))$ with boundary values $\Abf_l$ one has 
\begin{equation*}
\int_\Omega \frac{1}{2} \sum_{\alpha = 1}^p \langle \dr_\alpha \Bbf(\xi), \g_{\Bbf(\xi)} (\dr_\alpha \Bbf(\xi)) \rangle \ddr \xi \geqslant  \int_\Omega \frac{1}{2} \sum_{\alpha = 1}^p \langle \dr_\alpha \bar{\Abf}(\xi), \g_{\bar{\Abf}(\xi)} (\dr_\alpha \bar{\Abf}(\xi)) \rangle \ddr \xi 
= \Dir(\rho_{\bar{\Abf}}) = \Dir(\bar{\mubf}). 
\end{equation*}
To prove it, if we take any $\Bbf \in H^1(\Omega, (S^{++}_q(\R), \g))$ we can build $\mubf := \rho_\Bbf$ and we have, thanks to \eqref{equation_Dirichlet_Riemannian}, the identity
\begin{equation*}
\Dir(\mubf) =  \int_\Omega \frac{1}{2} \sum_{\alpha = 1}^p \langle \dr_\alpha \Bbf(\xi), \g_{\Bbf(\xi)} (\dr_\alpha \Bbf(\xi)) \rangle \ddr \xi.
\end{equation*} 
\emph{A priori}, $\mubf$ is valued in $\Prob(\R^q)$. If we denote by $P_D : \R^q \to D$ the Euclidean projection on $D$, then 
\begin{equation*}
\Dir(\bar{\mubf}) \leqslant \Dir(P_D \# \mubf) \leqslant \Dir(\mubf),
\end{equation*} 
where the first inequality comes from the optimality of $\bar{\mubf}$ (notice that $P_D \# \cdot$ leaves the boundary values unchanged) and the second one from the fact that $P_D \# \cdot$ is a contraction (Lemma \ref{lemma_composition_Lipschitz}).

To get the Euler-Lagrange equation it is actually easier if we take the covariance matrix and not its square root as the variable. In other words we define $\bar{\Cbf} := \bar{\Abf}^2$. As $\bar{\Abf}$ is never singular, this change of variables is smooth. We have $\dr_\alpha \bar{\Cbf} = L_{\bar{\Abf}} (\dr_\alpha \bar{\Abf})$ and in particular 
\begin{equation*}
\langle \dr_\alpha \bar{\Abf}, \g_{\bar{\Abf}} (\dr_\alpha \bar{\Abf}) \rangle = \langle \dr_\alpha \bar{\Cbf} , L_{\bar{\Cbf}}^{-1}(\dr_\alpha \bar{\Cbf}) \rangle. 
\end{equation*}  
If we take $\Dbf : \Omega \to S_q(\R)$ smooth and compactly supported on $\Omega$ and that we consider $\Bbf := \bar{\Cbf} + t \Dbf$ as a competitor for small $t$, we reach the conclusion that
\begin{equation*}
\sum_{\alpha = 1}^p \langle \dr_\alpha \Dbf, L_{\bar{\Cbf}}^{-1}(\dr_\alpha \bar{\Cbf}) \rangle + \frac{1}{2} \sum_{\alpha = 1}^p \left. \frac{\ddr }{\ddr t} \right|_{t= 0} \langle \dr_\alpha \bar{\Cbf} , L_{\bar{\Cbf} + t \Dbf}^{-1}(\dr_\alpha \bar{\Cbf}) \rangle = 0.
\end{equation*} 
A simple computation leads to 
\begin{equation*}
L_{\bar{\Cbf} + t \Dbf}^{-1}(\dr_\alpha \bar{\Cbf}) = L_{\bar{\Cbf}}^{-1}(\dr_\alpha \bar{\Cbf}) - t L_{\bar{\Cbf}}^{-1} \left[ \Dbf (L_{\bar{\Cbf}}^{-1} (\dr_\alpha \bar{\Cbf})) + (L_{\bar{\Cbf}}^{-1} (\dr_\alpha \bar{\Cbf} ))\Dbf \right] + o(t^2).
\end{equation*}
Using the properties of the Trace and the symmetry of $L_{\bar{\Cbf}}^{-1}$, we conclude that the Euler-Lagrange equation reads 
\begin{equation*}
\sum_{\alpha = 1}^p \langle \dr_\alpha \Dbf, L_{\bar{\Cbf}}^{-1}(\dr_\alpha \bar{\Cbf}) \rangle - \sum_{\alpha = 1}^p \langle \Dbf, (L_{\bar{\Cbf}}^{-1} (\dr_\alpha \bar{\Cbf}) )^2 \rangle = 0.
\end{equation*}
Coming back to $\bar{\Cbf} = \bar{\Abf}^2$ and $\dr_\alpha \bar{\Cbf} = L_{\bar{\Abf}} (\dr_\alpha \bar{\Abf})$, as $\Dbf$ is arbitrary we see that we get the weak formulation of \eqref{equation_euler_lagrange_matrix}. 
\end{proof}    

As far as the regularity issues are concerned, notice that $\bar{\Abf}$ is uniformly bounded from below as a symmetric matrix (this is (i) of Theorem \ref{theorem_dirichlet_problem_location_scatter}) and also bounded from above as a symmetric matrix (as $\rho_{\bar{\Abf}} \in \Prob(D)$ and $D$ is compact), hence the operators $L_{\bar{\Abf}(\xi)} : S_q(\R) \to S_q(\R)$ are bounded with a bounded inverse uniformly in $\xi \in \Omega$. In other words, the metric tensor $\g_{\bar{\Abf}(\xi)}$ is equivalent to the canonical scalar product uniformly in $\xi \in \Omega$. In particular, the regularity $\bar{\mubf} \in H^1(\Omega, \Prob(D))$ translates in $\bar{\Abf} \in H^1(\Omega, S_q(\R))$ where $S_q(\R)$ is endowed with its usual euclidean norm $| \cdot |$. 

Let us prove uniqueness. The first step is to identify the tangent velocity field to $\bar{\mubf}$ and a (at least formal) solution of the dual problem. 
  
\begin{prop}
\label{proposition_velocity_field_gaussian}
For any $\alpha \in \{ 1,2, \ldots, p \}$ we define $\bar{\Bbf}^\alpha := L_{\bar{\Abf}} L_{\bar{\Abf}^2}^{-1}(\dr_\alpha \bar{\Abf}) \in L^2(\Omega, S_q(\R))$ and we set
\begin{equation*}
\bar{\vbf}^{\alpha}(\xi, x) := \bar{\Bbf}^\alpha(\xi) x \in \R^q.
\end{equation*} 
for $\xi \in \Omega$ and $x \in D$. Then $\bar{\vbf} \in L^2_{\bar{\mubf}}(\Omega \times D, \R^{pq})$ is the tangent velocity field to $\bar{\mubf}$.
\end{prop}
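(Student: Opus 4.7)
The strategy is to produce a velocity field $\bar{\vbf} \in L^2_{\bar{\mubf}}(\Omega \times D, \R^{pq})$ which simultaneously satisfies the continuity equation on $\Omega \times D$ and achieves the equality $\frac{1}{2}\iint |\bar{\vbf}|^2 \ddr \bar{\mubf} = \Dir(\bar{\mubf})$. By Proposition \ref{proposition_existence_optimal_v}, the tangent velocity field is characterized as the unique such minimizer, so these two properties are enough to identify $\bar{\vbf}$ as tangent without having to separately invoke the approximation-by-gradients criterion of Proposition \ref{proposition_characterization_v_tangent}.

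The energy identity is direct. Using the pushforward structure $\bar{\mubf}(\xi) = \bar{\Abf}(\xi) \# [\rho \Leb_D]$ together with $\int y y^T \rho(y) \ddr y = \Id$, one finds for each $\alpha$ that
\begin{equation*}
\int_D |\bar{\Bbf}^\alpha(\xi) x|^2 \bar{\mubf}(\xi, \ddr x) = \Tr\bigl( \bar{\Bbf}^\alpha(\xi)^2 \bar{\Abf}(\xi)^2 \bigr) = \| \bar{\Bbf}^\alpha(\xi) \bar{\Abf}(\xi) \|_F^2,
\end{equation*}
and a short computation in an eigenbasis of $\bar{\Abf}(\xi)$ shows that this is precisely $\langle \dr_\alpha \bar{\Abf}(\xi), \g_{\bar{\Abf}(\xi)}(\dr_\alpha \bar{\Abf}(\xi)) \rangle$. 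Summing over $\alpha$, integrating over $\Omega$, and invoking \eqref{equation_Dirichlet_Riemannian} yields $\frac{1}{2}\iint |\bar{\vbf}|^2 \ddr \bar{\mubf} = \Dir(\bar{\mubf})$. The uniform bounds on $\bar{\Abf}$ (bounded below by point (i) of Theorem \ref{theorem_dirichlet_problem_location_scatter}, bounded above because $\bar{\mubf}$ is valued in $\Prob(D)$ with $D$ compact) ensure that $L_{\bar{\Abf}}$ and $L_{\bar{\Abf}^2}^{-1}$ act with bounded norms, so $\bar{\Bbf}^\alpha \in L^2(\Omega, S_q(\R))$ and $\bar{\vbf} \in L^2_{\bar{\mubf}}$.

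The verification of the continuity equation is the heart of the proof. Fix $\varphi \in C^1_c(\Or \times D, \R^p)$; working component by component and using the pushforward, one combines the identity
\begin{equation*}
\dr_\alpha \bigl[ \varphi^\alpha(\xi, \bar{\Abf}(\xi) y) \bigr] = \bigl[ \dr_\alpha \varphi^\alpha \bigr](\xi, \bar{\Abf}(\xi) y) + \nabla_D \varphi^\alpha(\xi, \bar{\Abf}(\xi) y) \cdot \bigl( \dr_\alpha \bar{\Abf}(\xi) \bigr) y
\end{equation*}
with integration by parts in $\xi$ (the boundary term vanishes by compact support of $\varphi^\alpha$ in $\Or$) and the relation $\nabla_D \varphi^\alpha(\xi, \bar{\Abf}(\xi) y) = \bar{\Abf}(\xi)^{-1} \nabla_y \tilde{\varphi}^\alpha(\xi, y)$ for $\tilde{\varphi}^\alpha(\xi, y) := \varphi^\alpha(\xi, \bar{\Abf}(\xi) y)$, to reduce the continuity equation for the $\alpha$-component to
\begin{equation*}
\iint_{\Omega \times D} \! \dr_\alpha \varphi^\alpha \ddr \bar{\mubf} + \iint_{\Omega \times D} \! \nabla_D \varphi^\alpha \cdot \bar{\vbf}^\alpha \ddr \bar{\mubf} = \iint_{\Omega \times \R^q} \nabla_y \tilde{\varphi}^\alpha(\xi, y) \cdot N^\alpha(\xi) y \, \rho(y) \ddr y \ddr \xi,
\end{equation*}
where $N^\alpha(\xi) := \bar{\Abf}(\xi)^{-1} \bar{\Bbf}^\alpha(\xi) \bar{\Abf}(\xi) - \bar{\Abf}(\xi)^{-1} \dr_\alpha \bar{\Abf}(\xi)$.

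Showing that the right-hand side vanishes is the main obstacle, and it is here that the definition of $\bar{\Bbf}^\alpha$ and the radial structure of $\rho$ meet. The algebraic input is that $N^\alpha$ is \emph{antisymmetric}: a direct manipulation using that $\bar{\Abf}$, $\bar{\Bbf}^\alpha$ and $\dr_\alpha \bar{\Abf}$ are all symmetric gives $\bar{\Abf} \bigl( N^\alpha + (N^\alpha)^T \bigr) \bar{\Abf} = L_{\bar{\Abf}^2}(\bar{\Bbf}^\alpha) - L_{\bar{\Abf}}(\dr_\alpha \bar{\Abf})$, which vanishes exactly by the defining relation $\bar{\Bbf}^\alpha = L_{\bar{\Abf}^2}^{-1} L_{\bar{\Abf}}(\dr_\alpha \bar{\Abf})$ (the operators $L_{\bar{\Abf}}$ and $L_{\bar{\Abf}^2}^{-1}$ commute, so the two forms in the statement agree). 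The geometric input is that $\rho$ is radial, so that $\nabla \rho(y)$ is parallel to $y$; combined with antisymmetry this gives $N^\alpha y \cdot \nabla \rho(y) = 0$, while $\Tr(N^\alpha) = 0$ yields $\nabla_y \cdot (N^\alpha y \rho) = 0$, and the right-hand side above vanishes after integration by parts in $y$ (using that $\rho$ is compactly supported to discard boundary terms). If $\rho$ lacks the regularity to perform this step pointwise, it is first established for a smooth radial mollification $\rho_n$ of $\rho$, rescaled to preserve unit covariance, and then transferred to $\rho$ by passing to the limit in the continuity equation for $\bar{\Abf} \# [\rho_n \Leb_D]$, using the strong convergence of the pushforwards and the uniform $L^2$-control on $\bar{\Bbf}^\alpha$. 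The critical and interesting point of the whole argument is precisely this pairing of the antisymmetry identity with the radial symmetry of $\rho$: it is exactly this dual constraint that forces the appearance of the operator $L_{\bar{\Abf}} L_{\bar{\Abf}^2}^{-1}$, the same one that defines the Riemannian metric $\g$ on $S^{++}_q(\R)$ through \eqref{equation_infinitesimal_Wasserstein_gaussian}.
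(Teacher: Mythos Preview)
Your proof is correct, and the core of the argument---showing that the matrix $N^\alpha$ (equivalently $\bar{\Abf}^{-1}(\bar{\Bbf}^\alpha\bar{\Abf}-\dr_\alpha\bar{\Abf})$) is skew-symmetric and then using the radiality of $\rho$ to kill the residual term---is exactly the same computation the paper performs, only organized slightly differently: the paper first introduces the auxiliary field $\wbf^\alpha(\xi,x)=\dr_\alpha\bar{\Abf}\,\bar{\Abf}^{-1}x$, checks that $(\bar{\mubf},\wbf\bar{\mubf})$ satisfies the continuity equation, and then shows $\bar{\vbf}^\alpha-\wbf^\alpha$ is $L^2_{\bar{\mubf}(\xi)}$-orthogonal to gradients, which is your divergence-free computation in disguise.

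The one genuine difference is how you conclude tangency. You go through the energy identity $\tfrac12\iint|\bar{\vbf}|^2\,\ddr\bar{\mubf}=\Dir(\bar{\mubf})$, appealing to \eqref{equation_Dirichlet_Riemannian} and the eigenbasis computation $\Tr\bigl((\bar{\Bbf}^\alpha)^2\bar{\Abf}^2\bigr)=\langle\dr_\alpha\bar{\Abf},\g_{\bar{\Abf}}(\dr_\alpha\bar{\Abf})\rangle$, and then invoke uniqueness of the minimizer. The paper instead observes that $\bar{\vbf}^\alpha(\xi,\cdot)=\nabla_D\bigl(\tfrac12\,\bar{\Bbf}^\alpha(\xi)x\cdot x\bigr)$ is \emph{literally} a gradient (because $\bar{\Bbf}^\alpha$ is symmetric) and applies Proposition~\ref{proposition_characterization_v_tangent} directly. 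Your route is valid but a bit longer: you deliberately avoid the gradient criterion, yet here it applies for free, without any approximation, since $\bar{\vbf}(\xi,\cdot)$ is the gradient of an explicit quadratic. What your route buys is an independent verification of \eqref{equation_Dirichlet_Riemannian} at the level of the velocity field; what the paper's route buys is that one never needs to compute the energy at all.
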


\begin{proof}
Take $\psi \in C^1_c(\Omega \times D, \R^p)$ a test function. If we define $\tilde{\psi} \in H^1(\Omega, \R^p)$ by 
\begin{equation*}
\tilde{\psi}(\xi) := \int_D \psi(\xi, x) \bar{\mubf}(\xi, \ddr x) = \int_D \psi(\xi, \bar{\Abf}(\xi) x ) \rho(x) \ddr x,
\end{equation*} 
then we see that $\tilde{\psi}$ is compactly supported in $\Omega$, in particular the integral of $\nabla \cdot \tilde{\psi}$ over $\Omega$ vanishes. It reads 
\begin{equation*}
\iint_{\Omega \times D} (\nabla_\Omega \cdot \psi)(\xi, \bar{\Abf}(\xi) x ) \rho(x) \ddr x + \iint_{\Omega \times D} \sum_{\alpha = 1}^p (\dr_\alpha \bar{\Abf}(\xi) x ) \cdot (\nabla_D  \psi^\alpha)(\xi, \bar{\Abf}(\xi) x) \rho(x) \ddr x = 0.
\end{equation*}
By doing for a fixed $\xi \in \Omega$ the change of variables $y = \bar{\Abf}(\xi) x$, one can see that $(\bar{\mubf}, \wbf \bar{\mubf})$ satisfies the continuity equation where $\wbf : \Omega \times D \to \R^p$ is given by 
\begin{equation*}
\wbf^\alpha(\xi, y) := \dr_\alpha \bar{\Abf}(\xi) \bar{\Abf}(\xi)^{-1}y.  
\end{equation*} 
Notice that $\wbf(\xi, \cdot)$ is not a gradient because $\dr_\alpha \bar{\Abf}(\xi)$ and $\bar{\Abf}(\xi)^{-1}$ do not necessarily commute. On the contrary, as the matrices $\bar{\Bbf}^\alpha(\xi)$ for $\alpha \in \{ 1,2, \ldots, p \}$ are symmetric, $\bar{\vbf}(\xi, \cdot)$ is a gradient.   

Fix $\xi \in \Omega$ and $\alpha \in \{1,2, \ldots, p \}$. We claim that the velocity field $\bar{\vbf}^\alpha(\xi, \cdot)$ is the orthogonal projection in $L^2_{\bar{\mubf}(\xi)}(D, \R^q)$ of $\wbf^\alpha(\xi, \cdot)$ on the space of gradients (actually, this is exactly how $\bar{\vbf}^\alpha$ was chosen). Not to overburden the notations, we drop momentarily the dependence on $\xi$, i.e. $\bar{\Abf} := \bar{\Abf}(\xi)$, $\bar{\Bbf}^\alpha := \bar{\Bbf}^\alpha(\xi)$ and $\dr_\alpha \bar{\Abf} := \dr_\alpha \bar{\Abf}(\xi)$ are considered as given matrices. Take $f \in C^1(D)$ a test function defined on $D$ and compute:
\begin{align*}
\int_D \nabla f(x) \cdot (\wbf^\alpha(\xi, x)  - \bar{\vbf}^\alpha(\xi, x))  \bar{\mubf}(\xi, \ddr x) & =  \int_D (\nabla f)(\bar{\Abf} x) \cdot \left( (\dr_\alpha \bar{\Abf} \bar{\Abf}^{-1} - \bar{\Bbf}^\alpha) \bar{\Abf} x \right) \rho(x) \ddr x \\
& = \int_D (\nabla \tilde{f})(x) \cdot \left( \bar{\Abf}^{-1} (\dr_\alpha \bar{\Abf} \bar{\Abf}^{-1} - \bar{\Bbf}^\alpha) \bar{\Abf} x \right) \rho(x) \ddr x,
\end{align*} 
where $\tilde{f}(x) := f(\bar{\Abf} x)$. On the other hand, as the reader can check, $\bar{\Bbf}^\alpha$ is the projection on the set of symmetric matrices of $\dr_\alpha \bar{\Abf} \bar{\Abf}^{-1}$ where the scalar product between two matrices $C$ and $D$ is given by $\Tr(\bar{\Abf} ^tC D \bar{\Abf})$. In particular, the matrix $(\dr_\alpha \bar{\Abf} \bar{\Abf}^{-1} - \bar{\Bbf}^\alpha) \bar{\Abf}^2$ is skew-symmetric, thus the matrix  $\bar{\Abf}^{-1}(\dr_\alpha \bar{\Abf} \bar{\Abf}^{-1} - \bar{\Bbf}^\alpha) \bar{\Abf}$ is also skew-symmetric. As $\rho$ is radial, it implies that the function 
\begin{equation*}
x \in D \mapsto \left( \bar{\Abf}^{-1} (\dr_\alpha \bar{\Abf} \bar{\Abf}^{-1} - \bar{\Bbf}^\alpha) \bar{\Abf} x \right) \rho(x) 
\end{equation*}  
is divergence-free. It allows us to conclude that
\begin{equation*}
\int_D \nabla f(x) \cdot (\wbf^\alpha(\xi, x)  - \bar{\vbf}^\alpha(\xi, x))  \bar{\mubf}(\xi, \ddr x) = \int_D  \tilde{f}(x)  \nabla_D \cdot \left[ \left( \bar{\Abf}^{-1} (\dr_\alpha \bar{\Abf} \bar{\Abf}^{-1} - \bar{\Bbf}^\alpha) \bar{\Abf} x \right) \rho(x) \right] \ddr x = 0,
\end{equation*}
hence the claim is proved as $f$ is arbitrary.

The claim implies that $(\bar{\mubf}, \bar{\vbf} \bar{\mubf})$ also satisfies the continuity equation: for any $\psi \in C^1_c(\Omega \times D, \R^p)$,
\begin{equation*}
\iint_{\Omega \times D} \nabla_\Omega \cdot \psi \ddr \bar{\mubf} + \iint_{\Omega \times D} \nabla_D \psi \cdot \bar{\vbf} \ddr \bar{\mubf} = \iint_{\Omega \times D} \nabla_\Omega \cdot \psi \ddr \bar{\mubf} + \iint_{\Omega \times D} \nabla_D \psi \cdot \wbf \ddr \bar{\mubf} + \iint_{\Omega \times D} \nabla_D \psi \cdot (\bar{\vbf} - \wbf) \ddr \bar{\mubf} = 0,  
\end{equation*}
as the last integral vanishes because of the claim.

As $\bar{\vbf}(\xi, \cdot)$ is a gradient (because the $\bar{\Bbf}^\alpha$ are symmetric), Proposition \ref{proposition_characterization_v_tangent} implies that $\bar{\vbf}$ is the tangent velocity field to $\bar{\mubf}$. 
\end{proof}

Notice that if we define $\bar{\varphi} : \Omega \times D \to \R^p$ by, for any $\xi \in \Omega, x \in D$ and $\alpha \in \{ 1,2, \ldots, p \}$, 
\begin{equation*}
\bar{\varphi}^\alpha(\xi, x) := \frac{1}{2} \bar{\Bbf}^\alpha(\xi) x \cdot x;
\end{equation*}
then $\bar{\vbf} = \nabla_D \varphi$. More precisely, for a.e. $\xi \in \Omega$, $\bar{\varphi}(\xi, \cdot)$ (resp. $\bar{\vbf}(\xi, \cdot)$) is defined \emph{everywhere} on $D$ as a smooth function belonging to $C^1(D,\R^p)$ (resp. $C^1(D, \R^{pq})$). Moreover the Euler-Lagrange equation  \eqref{equation_euler_lagrange_matrix}, which can be written 
\begin{equation}
\label{equation_euler_lagrange_B}
\sum_{\alpha = 1}^p \dr_\alpha \bar{\Bbf}^\alpha + \sum_{\alpha = 1}^p (\bar{\Bbf}^\alpha)^2 = 0,
\end{equation}
translates at the level of $\bar{\varphi}$ in 
\begin{equation*}
\label{equation_varphi_solution_dual}
\nabla_\Omega \cdot \bar{\varphi} + \frac{1}{2} |\nabla_D \bar{\varphi}|^2 =0.
\end{equation*}
In fact, at least formally (because of the lack of smoothness of $\bar{\varphi}$), the function $\bar{\varphi}$ is a solution of the dual problem. For $\bar{\varphi}$ to be an actual solution, we would need the $\bar{\Bbf}^\alpha$ to be $C^1$ up to the boundary: even with the elliptic regularity proved below (i.e. point (iv) of Theorem \ref{theorem_dirichlet_problem_location_scatter}), we would not reach such a strong result if we just assume that $\dr \Omega$ and $\Abf_l$ are Lipschitz. We will use $\bar{\varphi}$ to show that the tangent velocity field of any other solution of the Dirichlet problem with boundary values $\mubf_l$ must coincide with $\bar{\vbf}$. About the smoothness of the objects involved, notice that for any $\alpha \in \{ 1,2, \ldots, p \}$ one has $\bar{\Bbf}^\alpha \in L^2(\Omega, S_q(\R))$ and, given \eqref{equation_euler_lagrange_B}, the function
\begin{equation*}
\sum_{\alpha = 1}^p \dr_\alpha \bar{\Bbf}^\alpha
\end{equation*}
belongs to $L^1(\Omega, S_q(\R))$. 

\begin{prop}
Let $\mubf$ a solution of the Dirichlet problem with boundary conditions $\mubf_l$ and $\vbf$ its tangent velocity field. Then, for a.e. $\xi \in \Omega$, one has $\vbf(\xi, x) = \bar{\vbf}(\xi, x)$ for $\mubf(\xi)$-a.e. $x$. 
\end{prop}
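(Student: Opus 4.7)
The plan is to test the weak continuity equation satisfied by any minimizer $(\mubf, \Em)$ (with $\Em = \vbf \mubf$) against the candidate dual solution $\bar{\varphi}(\xi, x) = \bigl(\tfrac{1}{2}\bar{\Bbf}^\alpha(\xi) x \cdot x\bigr)_{\alpha=1}^p$, exploiting the pointwise Hamilton--Jacobi identity $\nabla_\Omega \cdot \bar{\varphi} + \tfrac{1}{2}|\nabla_D \bar{\varphi}|^2 = 0$ that follows directly from the Euler--Lagrange equation \eqref{equation_euler_lagrange_B}. Formally, expanding $\tfrac{1}{2}|\vbf - \nabla_D \bar{\varphi}|^2$ and invoking the continuity equation with boundary term produces
\[
\iint_{\Omega \times D} \tfrac{1}{2}|\vbf - \bar{\vbf}|^2 \,\ddr \mubf \;=\; \Dir(\mubf) - \BT_{\mubf_l}(\bar{\varphi})
\]
once the HJ identity is used to cancel $\nabla_\Omega \cdot \bar{\varphi} + \tfrac{1}{2}|\nabla_D \bar{\varphi}|^2$; applying the same computation to $(\bar{\mubf}, \bar{\vbf})$ identifies $\BT_{\mubf_l}(\bar{\varphi}) = \Dir(\bar{\mubf})$, so minimality yields $\iint \tfrac{1}{2}|\vbf - \bar{\vbf}|^2 \,\ddr \mubf = \Dir(\mubf) - \Dir(\bar{\mubf}) = 0$ and the proposition follows.

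The hard part is justifying that $\bar{\varphi}$ can in fact be used as a test function. Interior elliptic regularity for the harmonic map $\bar{\Abf}$ into the Riemannian manifold $(S_q^{++}(\R), \g)$ gives smoothness of $\bar{\Abf}$ (and hence of $\bar{\varphi}$) on $\Or$, but under only Lipschitz assumptions on $\dr \Omega$ and $\Abf_l$ we cannot reach $C^1$ regularity up to $\dr \Omega$, so neither $\iint \nabla_\Omega \cdot \bar{\varphi} \,\ddr \mubf$ nor the boundary pairing $\BT_{\mubf_l}(\bar{\varphi})$ is a priori well defined.

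To overcome this obstruction, I would pass to the \emph{difference} $(\mubf - \bar{\mubf}, \Em - \bar{\Em})$. Extending both mappings past $\Omega$ by the common boundary mapping $\mubf_e$ of Proposition \ref{proposition_extension_boundary_values}, the difference is compactly supported in $\mathring{\Omega}_e$, so for every $\varphi \in C^1_c(\mathring{\Omega}_e \times D, \R^p)$ the boundary terms cancel and
\[
\iint \nabla_\Omega \cdot \varphi \,\ddr(\mubf - \bar{\mubf}) + \iint \nabla_D \varphi \cdot \ddr(\Em - \bar{\Em}) = 0.
\]
I would then approximate $\bar{\varphi}$ by smooth $\varphi^n(\xi, x) = \tfrac{1}{2}(\bar{\Bbf}^{n,\alpha}(\xi) x \cdot x)_\alpha$ with $\bar{\Bbf}^{n,\alpha} \in C^\infty_c(\mathring{\Omega}_e, S_q(\R))$ converging to $\bar{\Bbf}^\alpha$ in $L^2(\Omega, S_q(\R))$. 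The quadratic dependence on $x$ and the boundedness of $D$ make the convergence $\nabla_D \varphi^n \to \nabla_D \bar{\varphi}$ in $L^2_{\mubf}$ and $L^2_{\bar{\mubf}}$ automatic. The delicate point is the limit of $\iint \nabla_\Omega \cdot \varphi^n \,\ddr (\mubf - \bar{\mubf})$: the second-moment tensor $M(\xi) := \int_D x \otimes x \,(\mubf - \bar{\mubf})(\xi, \ddr x)$ lies in $H^1_0(\Omega, S_q(\R))$ (its $H^1$ regularity coming from the tangent velocity fields of $\mubf$ and $\bar{\mubf}$ through the continuity equations, and its vanishing trace on $\dr \Omega$ from $\mubf|_{\dr \Omega} = \bar{\mubf}|_{\dr \Omega} = \mubf_l$), so integrating by parts and invoking the distributional Euler--Lagrange identity $\sum_\alpha \dr_\alpha \bar{\Bbf}^\alpha = -\sum_\alpha (\bar{\Bbf}^\alpha)^2$ on $\Or$ (extended to $H^1_0$ test functions by density) identifies the limit with $-\iint \tfrac{1}{2}|\bar{\vbf}|^2 \,\ddr(\mubf - \bar{\mubf})$.

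Once this limit identity is in hand, combining it with the relation $\iint \bar{\vbf} \cdot \ddr \bar{\Em} = 2\Dir(\bar{\mubf})$ (valid since $\bar{\vbf}$ is tangent to $\bar{\mubf}$) rearranges it into $\iint \bar{\vbf} \cdot \ddr \Em - \iint \tfrac{1}{2}|\bar{\vbf}|^2 \,\ddr \mubf = \Dir(\bar{\mubf})$, and substituting into the elementary expansion $\iint \tfrac{1}{2}|\vbf - \bar{\vbf}|^2 \,\ddr \mubf = \Dir(\mubf) - \iint \bar{\vbf} \cdot \ddr \Em + \iint \tfrac{1}{2}|\bar{\vbf}|^2 \,\ddr \mubf$ gives $\iint \tfrac{1}{2}|\vbf - \bar{\vbf}|^2 \,\ddr \mubf = \Dir(\mubf) - \Dir(\bar{\mubf}) = 0$, which is precisely the claim.
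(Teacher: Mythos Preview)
Your proposal is correct and follows essentially the same strategy as the paper: regularize the matrix fields $\bar{\Bbf}^\alpha$ by mollification to produce admissible test functions $\varphi_n$, use the Euler--Lagrange identity \eqref{equation_euler_lagrange_B} to control the limit of $\nabla_\Omega \cdot \varphi_n$, and conclude by the algebraic expansion of $\iint \tfrac{1}{2}|\vbf - \bar{\vbf}|^2 \,\ddr \mubf$ together with $\Dir(\mubf) = \Dir(\bar{\mubf})$.

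The organizational difference is minor but worth noting. You route the argument through the \emph{difference} $\mubf - \bar{\mubf}$, the $H^1_0$ property of the second-moment tensor $M$, and an extension of the weak Euler--Lagrange equation to $H^1_0 \cap L^\infty$ test functions. The paper instead keeps $\mubf$ and $\bar{\mubf}$ separate and uses that the boundary terms $\BT_{\mubf_l}(\varphi_n)$ agree on both sides of the continuity equation; the passage to the limit in $\iint \nabla_\Omega \cdot \varphi_n \,\ddr \mubf$ is then handled directly by the observation that, since $\sum_\alpha \dr_\alpha \bar{\Bbf}^\alpha = -\sum_\alpha (\bar{\Bbf}^\alpha)^2 \in L^1(\Omega)$ and derivatives commute with convolution, one has $\sum_\alpha \dr_\alpha \bar{\Bbf}^\alpha_n \to -\sum_\alpha (\bar{\Bbf}^\alpha)^2$ in $L^1$, which yields pointwise convergence of $\nabla_\Omega \cdot \varphi_n(\xi,\cdot)$ in $C(D)$ for a.e.\ $\xi$ together with a uniformly integrable majorant. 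This avoids having to check that $M$ has vanishing trace and spares the density step needed to test the weak Euler--Lagrange against $M$. Your variant buys a cleaner cancellation of the boundary term at the cost of those two extra verifications; both are short, and the arguments are interchangeable.
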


\begin{proof}
If $\varphi \in C^1(\Omega \times D, \R^p)$ then, as $\mubf$ and $\bar{\mubf}$ share the same boundary conditions, 
\begin{equation*}
\iint_{\Omega \times D} (\nabla_\Omega \cdot \varphi + \nabla_D \varphi \cdot \vbf ) \ddr \mubf = \BT_{\mubf_l}(\varphi) = \iint_{\Omega \times D} (\nabla_\Omega \cdot \varphi + \nabla_D \varphi \cdot \bar{\vbf} ) \ddr \bar{\mubf}. 
\end{equation*}
We claim that we can insert $\varphi = \bar{\varphi}$ even though $\bar{\varphi}$ is \emph{a priori} not regular enough. In other words, given \eqref{equation_varphi_solution_dual} and the fact that $\bar{\vbf} = \nabla_D \bar{\varphi}$, we claim that 
\begin{equation}
\label{equation_equality_boundary_terme_nonsmooth}
\iint_{\Omega \times D} \left( - \frac{1}{2} |\bar{\vbf}|^2 + \bar{\vbf} \cdot \vbf \right) \ddr \mubf = \iint_{\Omega \times D} \frac{1}{2} |\bar{\vbf}|^2 \ddr \bar{\mubf}.
\end{equation}
Notice that the r.h.s. is (formally) equal to both $\BT_{\mubf_l}(\bar{\varphi})$ and $\Dir(\bar{\mubf})$: it is not surprising as $\bar{\varphi}$ is a solution of the dual problem. 

To prove such an equality we regularize $\bar{\varphi}$ in the following way. For each $\alpha \in \{1,2, \ldots,p \}$ we apply to the matrix field $\bar{\Bbf}^\alpha$ the standard truncation and convolution procedure (see \cite[Theorem 3 of Section 4.2]{Evans1992}) to produce a sequence $(\bar{\Bbf}_n^\alpha)_{n \in \N}$ which belongs to $C^1(\Omega, S_q(\R))$ and which converges to $\bar{\Bbf}^\alpha$ in $L^2(\Omega, S_q(\R))$. Moreover, as derivatives commute with convolution, we can say that 
\begin{equation*}
\lim_{n \to + \infty} \sum_{\alpha = 1}^p \dr_\alpha \bar{\Bbf}^\alpha_n =  \sum_{\alpha = 1}^p \dr_\alpha \bar{\Bbf}^\alpha = - \sum_{\alpha = 1}^p (\bar{\Bbf}^\alpha)^2, 
\end{equation*} 
and the limit takes place in $L^1(\Omega, S_q(\R))$ as we already know that the r.h.s. belongs to such a space. In particular, up to extraction the convergences hold a.e. on $\Omega$. Then we set 
\begin{equation*}
\varphi_n^\alpha(\xi, x) := \frac{1}{2} \bar{\Bbf}_n^\alpha(\xi) x \cdot x.
\end{equation*}
for $\xi \in \Omega$ and $x \in D$. By construction $\varphi_n \in C^1(\Omega \times D, \R)$ so that 
\begin{equation}
\label{equation_equality_boundary_term_smooth}
\iint_{\Omega \times D} (\nabla_\Omega \cdot \varphi_n + \nabla_D \varphi_n \cdot \vbf ) \ddr \mubf = \BT_{\mubf_l}(\varphi_n) = \iint_{\Omega \times D} (\nabla_\Omega \cdot \varphi_n + \nabla_D \varphi_n \cdot \bar{\vbf} ) \ddr \bar{\mubf}. 
\end{equation} 
It remains to show that we can pass to the limit $n \to + \infty$. Given the convergence a.e. of the $\bar{\Bbf}^\alpha_n$ and of $\sum \dr_\alpha \bar{\Bbf}^\alpha_n$, we can assume that for a.e. $\xi \in \Omega$, the functions $\nabla_\Omega \cdot \varphi_n(\xi, \cdot)$ and $\nabla_D \varphi_n(\xi,\cdot)$ converge to respectively $- \frac{1}{2} |\bar{\vbf}|^2(\xi, \cdot)$ and $\bar{\vbf}(\xi,\cdot)$ in respectively $C(D)$ and $C(D, \R^{pq})$ respectively (notice that we use the fact that $D$ is bounded). Hence for a.e. $\xi \in \Omega$,
\begin{equation}
\label{equation_zz_aux_19}
\lim_{n \to + \infty} \int_D \left( \nabla_\Omega \cdot \varphi_n(\xi, x) + \nabla_D \varphi_n(\xi, x) \cdot \vbf(\xi,x) \right) \mubf(\xi, \ddr x) = \int_D \left( - \frac{1}{2} |\bar{\vbf}|^2(\xi, x) + \bar{\vbf}(\xi, x) \cdot \vbf(\xi,x) \right)  \mubf(\xi, \ddr x).
\end{equation}  
It remains to integrate this limit over $\Omega$. The natural upper bound for the l.h.s. of \eqref{equation_zz_aux_19} is obtained by Cauchy-Schwarz and the boundedness of $D$: for any $n \in \N$, 
\begin{align*}
\Bigg| \int_D \left( \nabla_\Omega \cdot \varphi_n(\xi, x) + \nabla_D \varphi_n(\xi, x) \cdot  \vbf(\xi,x) \right) &  \mubf(\xi, \ddr x) \Bigg| \\ 
& \leqslant C \left( \sum_{\alpha = 1}^p |\bar{\Bbf}_n^\alpha(\xi)|^2 + \sqrt{\int_D |\vbf(\xi,x)|^2 \mubf(\xi, \ddr x)} \sqrt{\sum_{\alpha = 1}^p |\bar{\Bbf}_n^\alpha(\xi)|^2}  \right),  
\end{align*}
where $C$ depends only on $D$. The r.h.s. is not bounded uniformly w.r.t. $n \in \N$ but on the other hand it converges in $L^1(\Omega)$ which is enough to say that the l.h.s. is uniformly integrable. Hence, up to extraction we can integrate \eqref{equation_zz_aux_19} w.r.t. $\Omega$:
\begin{equation*}
\lim_{n \to + \infty}  \iint_{\Omega \times D} (\nabla_\Omega \cdot \varphi_n + \nabla_D \varphi_n \cdot \vbf ) \ddr \mubf = \iint_{\Omega \times D} \left( - \frac{1}{2} |\bar{\vbf}|^2 + \bar{\vbf} \cdot \vbf \right) \ddr \mubf.
\end{equation*}
Of course, the result still holds if we take $(\mubf, \vbf) = (\bar{\mubf}, \bar{\vbf})$. Thus, passing in the limit in \eqref{equation_equality_boundary_term_smooth} we get \eqref{equation_equality_boundary_terme_nonsmooth}. 

Until now we have not used the optimality of $\mubf$. We notice that the r.h.s. of \eqref{equation_equality_boundary_terme_nonsmooth} is nothing else than $\Dir(\bar{\mubf})$ which coincides with $\Dir(\mubf) = \iint_{\Omega \times D} \frac{1}{2}  |\vbf|^2 \ddr \mubf$ by optimality of $\mubf$. From there, an algebraic manipulation leads to 
\begin{equation*}
\iint_{\Omega \times D} \frac{1}{2} |\vbf - \bar{\vbf}|^2 \ddr \mubf = 0,
\end{equation*}
which easily implies the result: recall that for a.e. $\xi \in \Omega$, the velocity field $\bar{\vbf}$ is continuous on $D$. 
\end{proof}

\begin{prop}
\label{proposition_uniqueness_location_scatter}
Let $\mubf$ a solution of the Dirichlet problem with boundary conditions $\mubf_l$. Then $\mubf = \bar{\mubf}$. 
\end{prop}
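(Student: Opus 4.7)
The plan is to reduce the uniqueness of $\mubf$ to the classical uniqueness of the one-dimensional continuity equation with a Lipschitz-in-$x$ velocity field. By the preceding proposition, any solution $\mubf$ of the Dirichlet problem shares with $\bar{\mubf}$ the same tangent velocity field $\bar{\vbf}$, which by construction satisfies $\bar{\vbf}^\alpha(\xi, x) = \bar{\Bbf}^\alpha(\xi) x$ for every $\alpha$; in particular it is linear in $x$. Point (i) of Theorem \ref{theorem_dirichlet_problem_location_scatter} bounds $\det(\bar{\Abf})$ from below a.e., while the compactness of $D$ together with $\rho_{\bar{\Abf}} \in \Prob(D)$ yields $\Tr(\bar{\Abf}^2) \leqslant \mathrm{diam}(D)^2$; hence $\bar{\Abf}$ stays in a compact subset of $S^{++}_q(\R)$ a.e. The linear operator $L_{\bar{\Abf}} L_{\bar{\Abf}^2}^{-1}$ is thus uniformly bounded on $S_q(\R)$, and $\bar{\Bbf}^\alpha \in L^2(\Omega, S_q(\R))$.

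First, I would extend both $\mubf$ and $\bar{\mubf}$ to the enlarged domain $\Omega_e \supset \Omega$ via Proposition \ref{proposition_extension_boundary_values}, obtaining $\tilde{\mubf}, \tilde{\bar{\mubf}} \in H^1_e(\Omega_e, \Prob(D))$ which both coincide with the Lipschitz extension $\mubf_e$ on $\Omega_e \setminus \Or$ (hence coincide with each other there). By Corollary \ref{corollary_localization} and the uniqueness of the tangent momentum, the two extensions share a common tangent velocity field $\tilde{\vbf}$, equal to $\bar{\vbf}$ on $\Omega$ and to the tangent velocity field of $\mubf_e$ on $\Omega_e \setminus \Or$.

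Next, following the strategy of Proposition \ref{proposition_equivalence_sobolev_cube}, I would cover $\Omega_e$ by cubes aligned with the coordinate axes and fix a direction $\alpha \in \{1, 2, \ldots, p\}$. Testing the continuity equation against product test functions and using Fubini, for a.e. line $\gamma(t) = \xi + t e_\alpha$ parallel to $e_\alpha$ and crossing a cube, the restrictions of $\tilde{\mubf}$ and $\tilde{\bar{\mubf}}$ to this line define absolutely continuous curves in $\Prob(D)$ satisfying the same 1D continuity equation $\dr_t \nu + \nabla_D \cdot(\tilde{\vbf}^\alpha(\gamma(t), \cdot)\, \nu) = 0$. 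These two curves already coincide on the portion of the line lying in $\Omega_e \setminus \Or$, since both equal $\mubf_e$ there. On the portion inside $\Omega$ the velocity field reads $(t, x) \mapsto \bar{\Bbf}^\alpha(\gamma(t)) x$, linear in $x$ with coefficient matrix in $L^2_t$ by Fubini, hence uniformly Lipschitz in $x$ with Lipschitz constant in $L^2_t$. The linear ODE $\dot{y}(t) = \bar{\Bbf}^\alpha(\gamma(t)) y(t)$ therefore admits, by Cauchy-Lipschitz, a unique globally defined flow $X_t : \R^q \to \R^q$, and any probability-valued solution of the corresponding 1D continuity equation is the push-forward of its initial condition by $X_t$ (see e.g.\ \cite[Proposition 8.1.8]{Ambrosio2008}). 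Since the two curves agree at the point where the line enters $\Omega$, they coincide on the entire line. Letting $\alpha$ range over $\{1, \ldots, p\}$ and using a.e. lines gives $\tilde{\mubf} = \tilde{\bar{\mubf}}$ a.e. on $\Omega_e$, and hence $\mubf = \bar{\mubf}$.

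The main obstacle is the passage from the generalized continuity equation on $\Omega$ to the 1D continuity equation along a.e. line with the predicted velocity field: this is essentially a Fubini-based computation mirroring the argument of Proposition \ref{proposition_equivalence_sobolev_cube}, which is technical but routine. Once this is in place, uniqueness follows immediately from the Cauchy-Lipschitz theorem thanks to the linear-in-$x$ structure of $\bar{\vbf}$; it is precisely the absence of such regularity for the tangent velocity field of a general Dirichlet minimizer that prevents this strategy from yielding uniqueness outside the location-scatter setting.
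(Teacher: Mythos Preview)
Your strategy is essentially the paper's: reduce to the one-dimensional continuity equation along lines and invoke uniqueness via the Lipschitz-in-$x$ structure of $\bar{\vbf}$. The paper's execution is a bit cleaner, however: rather than extending both $\mubf$ and $\bar{\mubf}$ to the collar $\Omega_e$, it works directly with the signed measure $\nubf := \mubf - \bar{\mubf}$, extends it by zero to all of $\R^p$ (and sets $\bar{\vbf} := 0$ outside $\Omega$), and observes that since $\BT_{\mubf} = \BT_{\bar{\mubf}}$ the boundary contributions cancel, so $(\nubf, \bar{\vbf}\nubf)$ satisfies the continuity equation on the whole space $\R^p \times D$. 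Then for a.e.\ line parallel to $e_\alpha$, the curve $t \mapsto \nubf((t,\xi))$ solves the one-dimensional continuity equation with velocity $\bar{\vbf}^\alpha$, vanishes for $|t|$ large, and is therefore identically zero by \cite[Proposition~8.1.7]{Ambrosio2008}. This buys two simplifications: there is no need to identify the velocity field on the collar, and the Fubini reduction to lines is automatic since one is on $\R^p$.

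Your variant is workable but the step ``cover $\Omega_e$ by cubes'' needs attention. If a cube lies entirely inside $\Or$, a line segment within that cube never meets $\Omega_e \setminus \Or$, so there is no point on it where the two curves are already known to agree, and your Cauchy--Lipschitz argument has no initial condition. You would need either to propagate equality from cubes touching the collar inward, or---more naturally---to run the Fubini reduction directly for lines crossing all of $\Omega_e$ (every such line enters and exits through $\Omega_e \setminus \Omega$ since $\Omega$ sits in the interior of $\Omega_e$), gluing the one-dimensional continuity equations obtained on adjacent cubes along the same line. The paper's extension-by-zero to $\R^p$ sidesteps this bookkeeping entirely.
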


\begin{proof}
Take $\mubf$ a solution of the Dirichlet problem with boundary conditions $\mubf_l$ and define $\nubf = \mubf - \bar{\mubf}$. We extend $\nubf$ on $\R^p \bsl \Omega$ by $0$: with such a choice $\nubf \in L^2(\R^p, \M(D))$ is a (signed) measure-valued mapping defined on the whole space $\R^p$ which vanishes outside a compact set. We also define $\bar{\vbf}$ as a function $\R^p \times D \to \R^{pq}$ by extending it to $0$ outside $\Omega \times D$. If $\varphi \in C^1(\R^p \times D, \R^p)$ is any smooth function then 
\begin{align*}
\iint_{\R^p \times D} \left( \nabla_\Omega \cdot \varphi + \nabla_D \varphi \cdot \bar{\vbf} \right) \ddr \nubf &= \iint_{\Omega \times D} \left( \nabla_\Omega \cdot \varphi + \nabla_D \varphi \cdot \bar{\vbf} \right) \ddr \nubf \\
& = \iint_{\Omega \times D} \left( \nabla_\Omega \cdot \varphi + \nabla_D \varphi \cdot \bar{\vbf} \right) \ddr \mubf - \iint_{\Omega \times D} \left( \nabla_\Omega \cdot \varphi + \nabla_D \varphi \cdot \bar{\vbf} \right) \ddr \bar{\mubf} \\
& = \BT_{\mubf_l}(\varphi) - \BT_{\mubf_l}(\varphi) = 0,  
\end{align*}
where we have used the fact that both $(\mubf, \bar{\vbf} \mubf)$ and $(\bar{\mubf}, \bar{\vbf} \bar{\mubf})$ satisfy the continuity equation. In other words, $(\nubf, \bar{\vbf} \nubf)$ satisfy the continuity equation on the whole space $\R^p \times D$. 

We take an arbitrary direction in $\R^p$: we fix $\alpha = 1$. As we have seen in the proof of Proposition \ref{proposition_equivalence_sobolev_cube}, the (generalized) continuity equation implies that for a.e. $\xi \in \R^{p-1} = (e_\alpha)^\perp$, the curve $t \in \R \mapsto \nubf((t, \xi))$ satisfies the ($1$-dimensional) continuity equation with a velocity field given by $\wbf(t,x) = \bar{\vbf}^\alpha((t, \xi),x)$. Notice that for a fixed $t$ the velocity field $\wbf(t, \cdot)$ is Lipschitz and bounded with Lipschitz constant and upper bound controlled by $C \1_{(t,\xi) \in \Omega}  |\bar{\Bbf}^\alpha((t,\xi))|$ where $C < + \infty$ depends only on $D$. Given that $\bar{\Bbf}^\alpha \in L^2(\Omega)$, for a.e. $\xi \in \R^{p-1}$ one has that 
\begin{equation*}
\int_{\R} \1_{(t,\xi) \in \Omega}  |\bar{\Bbf}^\alpha((t,\xi))|  \ddr t < + \infty. 
\end{equation*}  
Hence for a.e. $\xi \in \R^{p-1}$ the assumptions of \cite[Proposition 8.1.7]{Ambrosio2008} are satisfied: the curve $t \in \R \mapsto \nubf((t, \xi))$ is solution of a continuity equation which has at most one solution. As the curve identically equal to $0$ is a solution (recall that $\nubf((t, \xi)) = 0$ for $|t|$ large enough), so must be $\nubf((\cdot, \xi))$. As this result holds for a.e. $\xi \in \R^{p-1}$, it implies that $\nubf$ is identically zero, which is the desired result.    
\end{proof}

Eventually, to prove regularity, following the theory of Schoen and Uhlenbeck \cite{Schoen1982, Schoen1983}, we only need to show that there is no \emph{minimizing tangent maps}, i.e. no Dirichlet minimizing mapping which is $0$-homogeneous. We start with the following result. 

\begin{prop}
\label{proposition_ishihara_gaussian}
Let $\Abf \in H^1(\Omega, S^{++}_q(\R))$ be a weak solution of \eqref{equation_euler_lagrange_matrix}, bounded from above and uniformly away from singular matrices, and $C \in S^{+}_q(\R)$ a semi-definite positive matrix. Then the (real-valued) mapping 
\begin{equation*}
f : \xi \in \Omega \to \langle \Abf(\xi)^2, C \rangle
\end{equation*}
is subharmonic.
\end{prop}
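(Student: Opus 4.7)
The plan is to establish $\Delta f \geqslant 0$ in the distributional sense on $\Or$ by a purely algebraic manipulation: differentiate $f = \langle \Abf^2, C \rangle$ twice, use the Euler-Lagrange equation \eqref{equation_euler_lagrange_matrix} to cancel one family of terms, and recognize what is left as a sum of traces of products of symmetric positive semi-definite matrices. Before starting the computation I would invoke elliptic regularity to work with a smooth representative of $\Abf$: the two-sided bounds on $\Abf$ (bounded above and uniformly away from singular matrices) make the leading linear operator $L_{\Abf} L_{\Abf^2}^{-1}$ appearing in \eqref{equation_euler_lagrange_matrix} uniformly bounded and uniformly elliptic, while the right-hand side $\sum_\alpha (\bar{\Bbf}^\alpha)^2$ has quadratic growth in $\nabla \Abf$. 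This is the framework of harmonic mappings into a compact subset of the Riemannian manifold $(S^{++}_q(\R), \g)$, so that the Schoen--Uhlenbeck theory, or a classical bootstrap through $W^{2,2}$ followed by Schauder estimates, yields $\Abf \in C^\infty(\Or, S^{++}_q(\R))$.

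Setting $\Cbf := \Abf^2$, the definition $\bar{\Bbf}^\alpha := L_{\Abf} L_{\Abf^2}^{-1}(\dr_\alpha \Abf)$ translates into $\dr_\alpha \Cbf = \Cbf \bar{\Bbf}^\alpha + \bar{\Bbf}^\alpha \Cbf$. Since $C$, $\Cbf$ and $\bar{\Bbf}^\alpha$ are all symmetric, the cyclic property of the trace gives
\begin{equation*}
\dr_\alpha f = \langle \dr_\alpha \Cbf, C \rangle = 2 \Tr(\bar{\Bbf}^\alpha \Cbf C),
\end{equation*}
and differentiating once more,
\begin{equation*}
\dr_\alpha^2 f = 2 \Tr\bigl( (\dr_\alpha \bar{\Bbf}^\alpha) \Cbf C \bigr) + 2 \Tr\bigl( (\bar{\Bbf}^\alpha)^2 \Cbf C \bigr) + 2 \Tr\bigl( \bar{\Bbf}^\alpha \Cbf \bar{\Bbf}^\alpha C \bigr).
\end{equation*}
Summing over $\alpha$ and inserting the Euler-Lagrange identity $\sum_\alpha \dr_\alpha \bar{\Bbf}^\alpha + \sum_\alpha (\bar{\Bbf}^\alpha)^2 = 0$, the first two families of terms combine into the trace of the zero matrix against $\Cbf C$ and we are left with
\begin{equation*}
\Delta f = 2 \sum_{\alpha = 1}^p \Tr\bigl( \bar{\Bbf}^\alpha \Cbf \bar{\Bbf}^\alpha C \bigr).
\end{equation*}

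Each summand is non-negative: since $\bar{\Bbf}^\alpha$ is symmetric and $\Cbf = \Abf^2 \in S^+_q(\R)$, we may write $\bar{\Bbf}^\alpha \Cbf \bar{\Bbf}^\alpha = (\Cbf^{1/2} \bar{\Bbf}^\alpha)^T (\Cbf^{1/2} \bar{\Bbf}^\alpha) \in S^+_q(\R)$, and the trace of a product of two elements of $S^+_q(\R)$ is always non-negative. Hence $\Delta f \geqslant 0$, which is the desired subharmonicity. The main obstacle is not the algebraic computation above but the regularity step: one must confirm that a weak $H^1$ solution of \eqref{equation_euler_lagrange_matrix} satisfying the two-sided matrix bounds actually admits a smooth representative, so that the pointwise computation becomes legitimate; everything else is bookkeeping with the trace.
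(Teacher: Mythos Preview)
Your algebraic computation is identical to the paper's and correct: both arrive at $\Delta f = 2\sum_\alpha \Tr(\bar{\Bbf}^\alpha \Abf^2 \bar{\Bbf}^\alpha C) \geqslant 0$ after one use of the Euler--Lagrange identity \eqref{equation_euler_lagrange_B}.

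The difference lies entirely in the regularity step, and here your proposal has a genuine problem. Invoking Schoen--Uhlenbeck for the smoothness of $\Abf$ is circular in the paper's logic: this very proposition is the tool used (via Proposition~\ref{proposition_gaussian_no_MTM}) to rule out non-constant minimizing tangent maps, which is precisely the hypothesis Schoen--Uhlenbeck needs. And for a mere weak solution (which is what the statement assumes), neither Schoen--Uhlenbeck nor a naive $W^{2,2}$ bootstrap applies: this is a harmonic-map-type \emph{system} with critical quadratic nonlinearity, where weak solutions need not be smooth in general.

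The paper sidesteps this by \emph{not} asking for smoothness at all. It observes that $\bar{\Bbf}^\alpha \in L^2(\Omega, S_q(\R))$ (immediate from $\Abf \in H^1$ and the two-sided bounds) and carries the computation through weakly: since $L_{\Abf^2}(C) = \Abf^2 C + C\Abf^2 \in H^1 \cap L^\infty$, the matrix $\Dbf := \psi\, L_{\Abf^2}(C)$ with $\psi \in C^\infty_c(\Or)$ is an admissible $H^1_0$ test function in the weak form of \eqref{equation_euler_lagrange_matrix}. Plugging it in and expanding $\dr_\alpha \Dbf$ by the product rule yields exactly your formula for $\int f\, \Delta\psi$, with every term a well-defined $L^1$ pairing. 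So the ``main obstacle'' you flagged is removed not by proving regularity, but by never needing it.
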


\noindent Actually, this is nothing else than the Ishihara property (Theorem \ref{theorem_ishihara}) for the functional $\mu \mapsto  \int_D \xi \cdot (C \xi) \mu(\ddr \xi)$, though in this simpler case we can show that it holds for any solution, as we can check it by a straightforward computation. 

\begin{proof}
As in Proposition \ref{proposition_velocity_field_gaussian}, for $\alpha \in \{ 1,2, \ldots, p \}$, we set $\Bbf^\alpha := L_{\Abf} L^{-1}_{\Abf^2}(\dr_\alpha \Abf)$. Thanks to the assumptions on $\Abf$, we know that $\Bbf^\alpha \in L^2(\Omega, S_q(\R))$: this regularity is enough to justify the following computations. Indeed, with this notation at hand, for any $\alpha \in \{1,2, \ldots,p \}$ 
\begin{equation*}
\dr_\alpha f = \langle L_{\Abf} (\dr_\alpha \Abf), C \rangle = \langle L_{\Abf^2} (\Bbf^\alpha), C \rangle = \langle  \Bbf^\alpha, L_{\Abf^2}(C) \rangle.
\end{equation*}
Hence, taking the derivative again and summing over $\alpha$, 
\begin{align*}
\Delta f & = \sum_{\alpha = 1}^p \left( \langle \dr_\alpha \Bbf^\alpha, L_{\Abf^2}(C) \rangle + \langle \Bbf^\alpha, L_{L_{\Abf}(\dr_\alpha \Abf)}(C) \rangle \right)  \\
& = \sum_{\alpha = 1}^p \left( \langle \dr_\alpha \Bbf^\alpha, L_{\Abf^2}(C) \rangle + \langle \Bbf^\alpha, L_{L_{\Abf^2}(\Bbf^\alpha)}(C) \rangle \right)  \\
& = \sum_{\alpha = 1}^p \left( \langle\dr_\alpha \Bbf^\alpha, L_{\Abf^2}(C) \rangle + \mathrm{Tr} \left( \left[ 2 \Bbf^\alpha \Abf^2 \Bbf^\alpha + (\Bbf^\alpha)^2 \Abf^2 + \Abf^2 (\Bbf^\alpha)^2  \right] C \right) \right).
\end{align*}
Now, using \eqref{equation_euler_lagrange_matrix} which reads $\sum_\alpha \dr_\alpha \Bbf^\alpha = - \sum_\alpha (\Bbf^\alpha)^2$, one reaches the conclusion that 
\begin{equation*}
\Delta f  = 2 \sum_{\alpha = 1}^p  \mathrm{Tr} \left(  \Bbf^\alpha \Abf^2 \Bbf^\alpha  C \right). 
\end{equation*}
The matrix $\Bbf^\alpha \Abf^2 \Bbf^\alpha$ belongs to $S^+_q(\R)$ because $\Abf$ does, and so does $C$ by assumption. As the trace of the product of two elements of $S^+_q(\R)$ is non negative, we deduce $\Delta f \geqslant 0$ which was the claim. 
\end{proof}

\noindent With this result, it is easy to see that there exists no non constant $0$-homogeneous tangent maps. Notice, by point (i) of Theorem \ref{theorem_dirichlet_problem_location_scatter}, and as $D$ is bounded, that any minimizing tangent map, if it were to exist, would be bounded from above and uniformly away from singular matrices.   

\begin{prop}
\label{proposition_gaussian_no_MTM}
Assume $\Omega = \U$ the unit ball of dimension $p$ and $\Abf \in H^1(\Omega, S^{++}_q(\R))$ is a weak solution of \eqref{equation_euler_lagrange_matrix}, bounded from above and uniformly away from singular matrices, which is $0$-homogeneous, meaning that $\Abf(\lambda \xi) = \Abf(\xi)$ for any $\lambda > 0$. Then $\Abf$ is constant. 
\end{prop}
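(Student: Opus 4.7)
The idea is to combine Proposition \ref{proposition_ishihara_gaussian} with the $0$-homogeneity so as to apply the strong maximum principle at the origin. Concretely, for each $C \in S^+_q(\R)$ I will show that the scalar function $f_C(\xi) := \langle \Abf(\xi)^2, C \rangle$ is constant on $\U$; since $S^+_q(\R)$ spans $S_q(\R)$ linearly, this will force $\Abf^2$, and therefore $\Abf = (\Abf^2)^{1/2}$, to be constant.

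Fix $C \in S^+_q(\R)$. Because $\Abf$ is bounded, $f_C \in L^\infty(\U)$, and Proposition \ref{proposition_ishihara_gaussian} yields that $f_C$ is subharmonic on $\Or$ in the distributional sense. A bounded subharmonic distribution admits a (unique) upper semi-continuous representative $\tilde f_C$ that is classically subharmonic, given pointwise by
\begin{equation*}
\tilde f_C(\xi) \;=\; \inf_{\varepsilon > 0} \; \esssup_{B(\xi, \varepsilon) \cap \Or} f_C.
\end{equation*}
Set $M := \esssup_\U f_C < + \infty$. The decisive observation is that $\tilde f_C(0) = M$: indeed the $0$-homogeneity $f_C(\varepsilon \xi) = f_C(\xi)$ a.e.\ combined with the change of variables $\xi \mapsto \varepsilon \xi$ mapping $\U$ onto $B(0,\varepsilon)$ gives $\esssup_{B(0,\varepsilon)} f_C = \esssup_\U f_C = M$ for every $\varepsilon \in (0,1]$, hence the infimum over $\varepsilon$ in the formula above still equals $M$. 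On the other hand, $\tilde f_C \leq M$ everywhere on $\Or$, so the u.s.c.\ subharmonic function $\tilde f_C$ attains its maximum at the interior point $0$.

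The strong maximum principle on the connected open set $\Or$ then forces $\tilde f_C \equiv M$, that is, $f_C = M$ almost everywhere on $\U$. Letting $C$ range over $S^+_q(\R)$ yields that $\Abf^2$ is a.e.\ constant, and $\Abf$ is then constant since it is uniquely recovered as the symmetric positive square root of $\Abf^2$. The main obstacle is precisely the step at the origin: it is the $0$-homogeneity that packs the full essential supremum $M$ into arbitrarily small balls around $0$, thereby producing an interior maximum point at which a local device like the strong maximum principle can bite. Without this rigidity one would only recover an essential-supremum statement on $\Or$, which would not be sharp enough to conclude.
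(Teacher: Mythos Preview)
Your proof is correct and follows the same route as the paper: use Proposition \ref{proposition_ishihara_gaussian} to get that each $f_C = \langle \Abf^2, C\rangle$ is subharmonic, combine this with $0$-homogeneity and the maximum principle to force $f_C$ constant, and then recover $\Abf$ from $\Abf^2$ since $S^+_q(\R)$ spans $S_q(\R)$. The paper states this in one line (``subharmonic and $0$-homogeneous, hence constant by the maximum principle''); you have supplied the missing details, namely that the $0$-homogeneity makes the essential supremum over every ball $B(0,\varepsilon)$ equal to the global one, so the u.s.c.\ subharmonic representative attains its maximum at the interior point $0$ and the strong maximum principle applies.
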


\begin{proof}
According to Proposition \ref{proposition_ishihara_gaussian}, for any $C \in S^+_q(\R)$, the function 
\begin{equation*}
f : \xi \in \Omega \to \langle \Abf(\xi)^2, C \rangle
\end{equation*}
is subharmonic and $0$-homogeneous, hence it is constant by the maximum principle. But clearly, the scalar product between $\Abf$ and any given symmetric positive matrix is constant if and only if $\Abf$ is itself constant. 
\end{proof}

\appendix 

\section{Measurable selection of the \texorpdfstring{$\argmin$}{argmin}}

We want to show a result which states that if $F : X \times Y \to \R$ is a function which is measurable w.r.t. $X$, then one can find a selection $m : X \to Y$ such that $F(x,m(x)) = \min_Y F(x, \cdot)$ for every $x \in X$, i.e. such that $m(x) \in \argmin_Y F(x, \cdot)$. Recall the following result which can be found in \cite[Theorem 18.19]{Aliprantis2006}. 
\begin{prop}
\label{proposition_Aliprantis1819}
Let $X$ be a measured space and $Y$ a polish space. Let $F : X \times Y \to \R$ a function such that $F(x, \cdot) : Y \to \R$ is continuous for every $x \in X$, and $F(\cdot, y) : X \to \R$ is measurable for every $y \in Y$. Assume that for every $x \in X$, the function $F(x, \cdot)$ has a minimizer over $Y$. 

Then there exists $m : X \to Y$ a measurable function such that for all $x \in X$, 
\begin{equation*}
F(x, m(x)) = \min_{y \in Y} F(x,y).
\end{equation*}  
\end{prop}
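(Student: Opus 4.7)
The plan is to apply a classical measurable selection theorem to the argmin correspondence $\Phi : X \rightrightarrows Y$ defined by
\[
\Phi(x) := \{y \in Y : F(x,y) = \inf_{y' \in Y} F(x,y')\}.
\]

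First, I would establish that the value function $v(x) := \inf_{y \in Y} F(x,y)$ is measurable. Taking $(y_n)_{n \geq 1}$ a countable dense subset of $Y$, the continuity of $F(x, \cdot)$ gives $v(x) = \inf_{n \geq 1} F(x, y_n)$, a countable infimum of measurable functions. The correspondence $\Phi(x) = \{y \in Y : F(x, y) \leq v(x)\}$ is then nonempty by hypothesis and closed by continuity of $F(x, \cdot)$. Moreover, since $F$ is Carath\'eodory on a separable metric space $Y$, the classical Carath\'eodory measurability lemma gives joint measurability of $F$ on $X \times Y$, so the graph $\mathrm{Gr}(\Phi) = \{(x,y) : F(x,y) - v(x) \leq 0\}$ belongs to $\Sigma_X \otimes \mathcal{B}(Y)$.

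Next, I would invoke the Kuratowski--Ryll-Nardzewski measurable selection theorem: a closed-valued, weakly measurable correspondence from a measurable space into a Polish space admits a measurable selection. Weak measurability means $\{x : \Phi(x) \cap U \neq \emptyset\} \in \Sigma_X$ for every open $U \subset Y$; this set equals $\pi_X(\mathrm{Gr}(\Phi) \cap (X \times U))$, the projection of a product-measurable set, which is directly measurable when $X$ is standard Borel (via the measurable projection theorem) and universally measurable in general (Jankov--von Neumann / Aumann). Either way one obtains a (possibly universally) measurable selection $m : X \to Y$ with $m(x) \in \Phi(x)$, which is the desired conclusion; this level of measurability is enough for the application made in the proof of Proposition \ref{proposition_optimality_conditions_approximate}.

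The main technical obstacle is the weak measurability of $\Phi$: the naive characterization ``$\Phi(x) \cap U \neq \emptyset$ iff $\inf_{y \in U} F(x,y) = v(x)$'' fails in general, because a minimizing sequence in $U$ can escape to $\partial U$ (as happens e.g. for $F(y) = |y|^2$ on an annulus $U \subset \mathbb{R}^2$). The measurable-projection route bypasses this difficulty abstractly. A more hands-on alternative, which I would keep in reserve, is a direct iterative construction: build a Cauchy sequence of measurable $\{y_n\}$-valued functions $(m_k)_{k \geq 1}$ with $F(x, m_k(x)) < v(x) + 2^{-k}$ and $d(m_k(x), m_{k-1}(x))$ small enough for the limit $m(x) = \lim_k m_k(x)$ to lie in $\Phi(x)$, choosing $m_k(x) = y_n$ for the smallest admissible index $n$; the delicate point is guaranteeing existence at each stage, which requires calibrating the closeness tolerance against the (unknown) distance from $m_{k-1}(x)$ to $\Phi(x)$ using the density of $\{y_n\}$ and continuity of $F(x, \cdot)$ near a minimizer.
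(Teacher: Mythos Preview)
The paper does not prove this proposition: it simply cites it as \cite[Theorem 18.19]{Aliprantis2006} and moves on. Your proposal is therefore not competing with any argument in the paper; it is a reconstruction of a standard textbook result.

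Your sketch is in the right spirit and would work, but note one wrinkle. The step where you deduce weak measurability of $\Phi$ from measurability of its graph via the measurable projection theorem requires either that $(X,\Sigma_X)$ is a standard Borel space or that you pass to the completion of $\Sigma_X$; in the latter case the selection $m$ is only universally measurable. The statement as written allows $X$ to be an arbitrary measurable space and asks for a $\Sigma_X$-measurable selection, so this route does not quite deliver the conclusion as stated. The cleaner path (and the one taken in \cite{Aliprantis2006}) is to verify weak measurability of $\Phi$ directly without projection: for a closed set $C \subset Y$, one has $\{x : \Phi(x) \cap C \neq \emptyset\} = \{x : \inf_{y \in C} F(x,y) = v(x)\}$, and both sides are measurable because $C$ is closed (so the infimum over $C$ is attained on $C$ whenever it equals $v(x)$, and is computed over a countable dense subset of $C$). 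This gives measurability of $\{x : \Phi(x) \cap C \neq \emptyset\}$ for closed $C$, which for closed-valued correspondences into Polish spaces is equivalent to weak measurability; then Kuratowski--Ryll-Nardzewski applies with no caveat. Your ``hands-on alternative'' is essentially the proof of Kuratowski--Ryll-Nardzewski itself.
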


However, in particular for Proposition \ref{proposition_optimality_conditions_approximate}, we need a case where $F(x, \cdot)$ is only l.s.c.. Thus, we prove some \emph{ad hoc} result relying on the particular structure of our problem which allows to treat lower semi-continuity. 

\begin{lm}
\label{lemma_min_continuous}
Let $X$ be a measured space and $Y$ a compact metrizable space. Let $F : X \times Y \to \R$ a function such that $F(x, \cdot) : Y \to \R$ is continuous for every $x \in X$, and $F(\cdot, y) : X \to \R$ is measurable for every $y \in Y$; and let $G : Y \to \R$ a l.s.c. function. 

Then the function $H : X \to \R$ defined by 
\begin{equation*}
H(x) := \min_y \{  F(x,y) + G(y) \ : \ y \in Y \}
\end{equation*} 
is measurable. 
\end{lm}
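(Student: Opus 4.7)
The plan is to reduce to the continuous case by approximating $G$ from below by an increasing sequence of continuous functions, and then to pass to the pointwise supremum while preserving measurability. The Moreau--Yosida regularization is the natural tool: since $Y$ is compact metrizable (with, say, a compatible distance $d$), a lower semi-continuous function $G$ is bounded from below on $Y$, and the formula
\begin{equation*}
G_n(y) := \inf_{z \in Y} \bigl[ G(z) + n\, d(y,z) \bigr]
\end{equation*}
defines an $n$-Lipschitz function $G_n : Y \to \R$ with $G_n \leqslant G$ and $G_n \leqslant G_{n+1}$. A standard argument (for each $y$, any almost-minimizer $z_n$ satisfies $d(z_n, y) \to 0$ because $n\, d(z_n, y)$ is bounded, and then $\liminf_n G(z_n) \geqslant G(y)$ by lower semi-continuity) shows that $G_n(y) \to G(y)$ pointwise on $Y$.

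Next, I would set
\begin{equation*}
H_n(x) := \min_{y \in Y} \bigl[ F(x,y) + G_n(y) \bigr],
\end{equation*}
where the minimum exists by compactness of $Y$ and continuity of $F(x, \cdot) + G_n$. The function $(x,y) \mapsto F(x,y) + G_n(y)$ is Carathéodory (continuous in $y$, measurable in $x$), so by choosing a countable dense subset $\{y_k\}_{k \in \N}$ of the separable space $Y$ and using the continuity of $F(x,\cdot) + G_n$ one has
\begin{equation*}
H_n(x) = \inf_{k \in \N}\, \bigl[ F(x, y_k) + G_n(y_k) \bigr],
\end{equation*}
which makes $H_n$ measurable as a countable infimum of measurable functions.

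Finally, I would prove that $H_n \uparrow H$ pointwise, from which measurability of $H$ follows immediately. The inequality $H_n(x) \leqslant H(x)$ is clear from $G_n \leqslant G$, and the sequence $(H_n(x))_{n \in \N}$ is non-decreasing. For the reverse, fix $x$ and pick a minimizer $y_n \in Y$ of $F(x, \cdot) + G_n$; by compactness, up to extraction $y_n \to y^\star$ for some $y^\star \in Y$. For any fixed $m$, continuity of $G_m$ gives $\lim_{n} G_m(y_n) = G_m(y^\star)$, hence
\begin{equation*}
\liminf_{n \to +\infty} H_n(x) = \liminf_{n \to +\infty} \bigl[ F(x, y_n) + G_n(y_n) \bigr] \geqslant F(x, y^\star) + G_m(y^\star),
\end{equation*}
using continuity of $F(x, \cdot)$ and $G_n \geqslant G_m$ for $n \geqslant m$. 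Letting $m \to + \infty$ and invoking $\sup_m G_m = G$ yields $\liminf_n H_n(x) \geqslant F(x, y^\star) + G(y^\star) \geqslant H(x)$, which closes the argument. The only mildly subtle step is this last passage to the limit, where one must combine the continuity of $F(x,\cdot)$ with the monotone approximation of the merely lower semi-continuous $G$; once this is set up, everything reduces to the Carathéodory case.
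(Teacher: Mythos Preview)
Your proof is correct and takes a genuinely different route from the paper. The paper does not regularize $G$; instead it builds a single countable sequence $(y_n)_{n\in\N}\subset Y$ with the property that for every rational $a$ there is a subsequence dense in the sublevel set $\{G\leqslant a\}$, and shows directly that $H(x)=\inf_n [F(x,y_n)+G(y_n)]$ by approaching a minimizer $\bar y$ inside $\{G\leqslant a\}$ for $a>G(\bar y)$ rational and using only the continuity of $F(x,\cdot)$. Your Moreau--Yosida approach trades this bespoke countable-set construction for a monotone approximation $G_n\uparrow G$ by Lipschitz functions, which reduces the problem to the standard Carath\'eodory case and then requires a compactness-and-diagonal argument on minimizers to recover $H_n\uparrow H$. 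Both arguments are short; the paper's is slightly more elementary (no metric structure beyond separability is really used), while yours is more modular and would adapt unchanged to any situation where one already knows the result for continuous $G$.
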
   

\begin{proof}
Notice that $Y$ is separable as it is compact and metrizable. For any rational number $a$, the exists a sequence dense in $\{ y \in Y \ : \ G(y) \leqslant a \}$. Hence, we can construct a sequence $(y_n)_{n \in \N}$ such that for any rational number $a$ there is a subsequence of $(y_n)_{n \in \N}$ which is included and dense in $\{ y \in Y \ : \ G(y) \leqslant a \}$. 

Set $\tilde{H}(x) := \inf_n F(x,y_n) + G(y_n)$ which is measurable and larger than $H$. Let us prove that it is equal to $H$. Indeed, if $x \in X$, by standard arguments of calculus of variations, there exists $\bar{y}$ such that $H(x) = F(x,\bar{y}) + G(\bar{y})$. For any $a > G(\bar{y})$ rational, take a subsequence $(y_{n_k})_{k \in \N}$ which belongs to  $\{ y \in Y \ : \ G(y) \leqslant a \}$ and which converges to $\bar{y}$. By continuity of $F$, one has 
\begin{equation*}
\tilde{H}(x) \leqslant \liminf_{k \to + \infty} \left( F(x,y_{n_k}) + G(y_{n_k}) \right) \leqslant F(x, \bar{y}) + a. 
\end{equation*}     
As $a$ can be chosen arbitrary close to $G(\bar{y})$, we have that $\tilde{H}(x) \leqslant F(x,\bar{y}) + G(\bar{y}) = H(x)$.
\end{proof}

\begin{prop}
\label{proposition_measurable_selection_argmin}
Let $X$ be a measured space and $Y$ a compact metrizable space. Let $F : X \times Y \to \R$ a function such that $F(x, \cdot) : Y \to \R$ is continuous for every $x \in X$, and $F(\cdot, y) : X \to \R$ is measurable for every $y \in Y$; and let $G : Y \to \R$ a l.s.c. function. 

Then there exists $m : X \to Y$ a measurable function such that for any $x \in X$, 
\begin{equation*}
F(x,m(x)) + G(m(x)) = \min_y \{  F(x,y) + G(y) \ : \ y \in Y \}. 
\end{equation*}
\end{prop}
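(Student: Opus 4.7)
The plan is to mimic a Kuratowski--Ryll-Nardzewski style selection, performed concretely by iteratively refining finite partitions of $Y$ and exploiting Lemma \ref{lemma_min_continuous}. The first key observation is that Lemma \ref{lemma_min_continuous} applies not only with $Y$ as the domain of minimization, but with any nonempty closed subset $C \subset Y$: such a $C$ is itself compact metrizable, the restriction of $F$ still satisfies the hypotheses, and $G|_C$ is still l.s.c., so the map
\begin{equation*}
h_C(x) := \min_{y \in C} \bigl( F(x,y) + G(y) \bigr)
\end{equation*}
is measurable on $X$. Combined with the measurability of $H$ (the case $C = Y$), this provides the basic measurable building blocks.

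Next, I would fix a metric on $Y$ and build a nested sequence of finite Borel partitions $\mathcal{P}_n = \{C_{n,1}, \ldots, C_{n, N_n}\}$ of $Y$ with cells of diameter at most $2^{-n}$, such that $\mathcal{P}_{n+1}$ refines $\mathcal{P}_n$. Replacing each cell by its closure, the resulting sets may overlap on their boundaries, which is harmless. Then I would define recursively a measurable map $i_n^\star : X \to \{1, \ldots, N_n\}$ by taking, at each step, the smallest index $i$ for which $h_{\overline{C}_{n,i}}(x) = H(x)$ and such that $\overline{C}_{n+1, i_{n+1}^\star(x)} \subset \overline{C}_{n, i_n^\star(x)}$. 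Such an index exists, because from the identity
\begin{equation*}
\overline{C}_{n, i_n^\star(x)} = \bigcup_{j \,:\, C_{n+1, j} \subset C_{n, i_n^\star(x)}} \overline{C}_{n+1, j},
\end{equation*}
together with the fact that the infimum of the l.s.c. function $F(x, \cdot) + G$ over a finite union of compact sets is attained on at least one piece, one gets that $H(x) = h_{\overline{C}_{n, i_n^\star(x)}}(x)$ equals $h_{\overline{C}_{n+1, j}}(x)$ for some admissible $j$. Measurability of $i_n^\star$ then propagates from one step to the next because all defining conditions involve only finite comparisons between the measurable functions $H$ and the $h_{\overline{C}_{n,i}}$.

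Fixing an arbitrary $z_{n,i} \in \overline{C}_{n, i}$ for each $(n,i)$, I would set $m_n(x) := z_{n, i^\star_n(x)}$. Each $m_n$ is measurable, and by construction the sets $\overline{C}_{n, i^\star_n(x)}$ are nested with diameters going to $0$, so $(m_n(x))_{n \in \N}$ is Cauchy and converges to some point $m(x)$; the measurability of $m$ then follows from that of the $m_n$. To conclude that $m(x)$ realizes the minimum, for each $n$ pick $y_n \in \overline{C}_{n, i^\star_n(x)}$ attaining the minimum on that set, which by construction equals $H(x)$; since $y_n \to m(x)$ and $F(x, \cdot) + G$ is l.s.c.,
\begin{equation*}
F(x, m(x)) + G(m(x)) \leq \liminf_n \bigl[ F(x, y_n) + G(y_n) \bigr] = H(x),
\end{equation*}
and the reverse inequality is trivial. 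The main subtlety I anticipate is precisely the recursive propagation above, namely that after selecting $i^\star_n(x)$ there is always a nested sub-cell still achieving $H(x)$; beyond that everything is routine bookkeeping.
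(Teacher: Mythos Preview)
Your argument is correct. Both your proof and the paper's use the same key ingredient: Lemma~\ref{lemma_min_continuous} applied not just to $Y$ but to any closed subset, giving measurability of $x \mapsto \min_{y \in C}\{F(x,y)+G(y)\}$ for each closed $C$. From there, however, the paths diverge. The paper packages this as showing that the set-valued map $\Gamma(x) := \argmin_{y \in Y}\{F(x,y)+G(y)\}$ is measurable (via the equivalence $\Gamma(x)\cap Z \neq \emptyset \Leftrightarrow H(x) = H_Z(x)$ for closed $Z$) and then invokes the Kuratowski--Ryll-Nardzewski selection theorem as a black box. You instead carry out the selection by hand through iterative refinement of finite partitions, effectively reproving the relevant special case of that theorem. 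Your approach is more self-contained and elementary, at the cost of some additional bookkeeping; the paper's is shorter but relies on an external reference. The recursive step you flagged as the main subtlety is indeed the heart of the matter, and your justification---that the closure of a cell equals the finite union of closures of its children, so the minimum on the parent is attained on some child---is correct.
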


\begin{proof}
As in the previous lemma, define $H(x) := \min_y \{  F(x,y) + G(y) \ : \ y \in Y \}$, it is a measurable function valued in $\R$. Let $\Gamma$ be the mapping going from $X$ and valued in the compact subsets of $Y$ defined by $\Gamma(x) = \argmin_x (F(x, \cdot) + G(\cdot))$ which means 
\begin{equation*}
\Gamma(x) := \{ y \in Y \ : \ F(x,y) + G(y) = H(x) \}.
\end{equation*}
Notice that $\Gamma(x)$ is never empty thanks to standard arguments of calculus of variations. To prove the existence of a measurable selection of $\Gamma$, we rely on \cite[Theorem 18.13]{Aliprantis2006}: it is sufficient to show that $\Gamma$ is measurable, which means that $\{ x \in X \ : \ \Gamma(x) \cap Z \neq \emptyset \}$ is a measurable set of $X$ for any closed set $Z \subset Y$. But one can be convinced that, for a fixed $Z \subset Y$ closed, 
\begin{equation*}
\Gamma(x) \cap Z \neq \emptyset \ \Leftrightarrow \ H(x) = H_Z(x),   
\end{equation*}
where $H_Z(x) := \min_z \{  F(x,z) + G(z) \ : \ z \in Z \}$. Thanks to Lemma \ref{lemma_min_continuous}, both $H$ and $H_Z$ are measurable, thus the set on which they coincide is measurable, which concludes the proof.  
\end{proof}

\section{\texorpdfstring{$H^{1/2}$}{H1/2} determination of the square root}
\label{section_appendix_H1/2_square_root}

In this appendix we want to prove Lemma \ref{lemma_H1/2_no_jumps}, which states that, with $\Sph^1$ the unit circle of the complex plane $\C$ and $\U$ its unit disk, there is no function $f \in H^{1/2}(\Sph^1, \Sph^1)$ such that $f(\xi)^2 = \xi$ for a.e. $\xi \in \Sph^1$ (where the multiplication is understood as a complex multiplication). We take for granted that there is no continuous function $f \in C(\Sph^1, \Sph^1)$ such that $f(\xi)^2 = \xi$ for all $\xi \in \Sph^1$. Hence, it is enough to reason by contradiction and to prove that a function $f \in H^{1/2}(\Sph^1, \Sph^1)$ such that $f(\xi)^2 = \xi$ for a.e. $\xi \in \Sph^1$ admits a continuous representative. 

We start with some easy lemma which states that $H^{1/2}(\Sph^1, \U)$ is stable by composition with Lipschitz function. 

\begin{lm}
Let $u : \Sph^1 \to \R$ a Lipschitz function and $f \in H^{1/2}(\Sph^1, \Sph^1)$. Then $(u \circ f) \in H^{1/2}(\Sph^1, \R)$.  
\end{lm}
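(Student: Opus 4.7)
My plan is to use the Gagliardo–Slobodeckij (i.e.\ double-integral) characterization of the $H^{1/2}$ seminorm on $\Sph^1$, for which composition with a Lipschitz map is essentially automatic. Concretely, recall that for a measurable $g : \Sph^1 \to \R^k$ one has
\begin{equation*}
[g]_{H^{1/2}(\Sph^1)}^2 = \iint_{\Sph^1 \times \Sph^1} \frac{|g(\xi)-g(\eta)|^2}{|\xi-\eta|^2} \, \sigma(\ddr \xi) \, \sigma(\ddr \eta),
\end{equation*}
and $g \in H^{1/2}(\Sph^1)$ iff $g \in L^2(\Sph^1)$ and $[g]_{H^{1/2}(\Sph^1)} < +\infty$. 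Since $f \in H^{1/2}(\Sph^1, \Sph^1)$, both $\|f\|_{L^2}$ and $[f]_{H^{1/2}}$ are finite.

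First I would bound the $L^2$ norm of $u \circ f$. Since $u$ is Lipschitz on the compact set $\Sph^1$, it is bounded, say $|u| \leqslant M$ on $\Sph^1$; then $\|u \circ f\|_{L^2(\Sph^1)} \leqslant M \sqrt{\sigma(\Sph^1)} < + \infty$, so $u \circ f \in L^2(\Sph^1, \R)$.

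Next I would bound the Gagliardo seminorm. Let $L$ be the Lipschitz constant of $u$ (with respect to the ambient Euclidean distance on $\Sph^1 \subset \C$). For every $\xi, \eta \in \Sph^1$ one has
\begin{equation*}
|u(f(\xi)) - u(f(\eta))|^2 \leqslant L^2 |f(\xi) - f(\eta)|^2.
\end{equation*}
Dividing by $|\xi-\eta|^2$ and integrating over $\Sph^1 \times \Sph^1$ yields $[u\circ f]_{H^{1/2}}^2 \leqslant L^2 [f]_{H^{1/2}}^2 < + \infty$, which combined with the $L^2$ bound gives $u \circ f \in H^{1/2}(\Sph^1, \R)$.

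There is essentially no obstacle here, as the argument reduces to a pointwise Lipschitz estimate inside a double integral; the only mild point is to ensure that the author's convention for $H^{1/2}(\Sph^1, \Sph^1)$ is consistent with the Gagliardo definition used above (for manifold-valued Sobolev spaces one usually embeds $\Sph^1 \hookrightarrow \C$ and uses the induced norm, which is exactly what is done implicitly in Lemma~\ref{lemma_H1/2_no_jumps}). If the author prefers the Fourier (or trace) characterization of $H^{1/2}(\Sph^1)$, the equivalence of the two norms on $\Sph^1$ reduces the proof to the same estimate.
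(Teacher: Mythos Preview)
Your proof is correct and takes a genuinely different route from the paper's. The paper uses the trace characterization of $H^{1/2}(\Sph^1)$: it extends $f$ to some $\tilde{f}\in H^1(\U,\U)$ with trace $f$, observes that $u\circ\tilde{f}\in H^1(\U,\R)$ (composition with a Lipschitz function preserves $H^1$), and concludes that its trace $u\circ f$ lies in $H^{1/2}(\Sph^1,\R)$. Note that this implicitly requires extending $u$ from $\Sph^1$ to a Lipschitz function on $\U$, which is harmless (and in the application $u$ is already defined on $\U$). Your approach via the Gagliardo seminorm is more elementary and self-contained: the pointwise Lipschitz estimate plugs directly into the double integral with no detour through an extension. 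The paper's route has the advantage of fitting naturally into the trace framework used in the surrounding arguments, but your argument is arguably the cleaner way to prove this particular lemma.
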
 

\begin{proof}
It is well known (see \cite[Chapter 3]{McLean2000}) that there exists $\tilde{f} \in H^1(\U, \U)$ whose trace on $\Sph^1$ is $f$. Clearly, the function $u \circ \tilde{f}$ stays in $H^1(\U, \R)$, hence its trace, which is nothing else than $u \circ f$, is in $H^{1/2}(\Sph^1, \R)$.
\end{proof}

\noindent Then, let us prove that an $H^{1/2}$ function cannot have a jump. 

\begin{prop}
Let $f \in H^{1/2}([0,1], \R)$ such that $f(\xi) \in \{ 0,1 \}$ for a.e. $\xi \in [0,1]$. Then there is a representative of $f$ which is constant. 
\end{prop}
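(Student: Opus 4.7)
The plan is to invoke the Gagliardo seminorm representation
\begin{equation*}
[f]_{H^{1/2}}^2 \;\simeq\; \iint_{[0,1]^2} \frac{|f(\xi) - f(\eta)|^2}{(\xi - \eta)^2} \, \ddr \xi \, \ddr \eta
\end{equation*}
and argue by contradiction: assuming that neither $A := f^{-1}(\{1\})$ nor $B := f^{-1}(\{0\})$ is a null set, I would show that the seminorm above is infinite. For a $\{0,1\}$-valued $f$, the integrand reduces to $(\xi - \eta)^{-2}$ on $A \times B$ and its symmetric counterpart, so finiteness of $[f]_{H^{1/2}}$ is equivalent to $\iint_{A \times B} (\xi - \eta)^{-2} \, \ddr \xi \, \ddr \eta < +\infty$. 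The content of the proposition is that this integral diverges whenever $|A|, |B| > 0$; this forces one of the two to be a null set, so $f$ coincides almost everywhere with a constant and thus admits a constant representative.

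The first step is to localize by producing a \emph{mixing point} $c \in [0,1]$: every open neighborhood of $c$ should meet both $A$ and $B$ in sets of positive Lebesgue measure. To construct $c$, let $U := \{\xi : |A \cap (\xi - \epsilon, \xi + \epsilon)| = 0 \text{ for some } \epsilon > 0\}$ and define $V$ analogously for $B$. Both are open in $[0,1]$, they are disjoint (otherwise some small interval would have zero Lebesgue measure, absurd), and neither can exhaust $[0,1]$ because the assumption $|A|, |B| > 0$ forbids covering the interval by countably many intervals of zero $A$- or $B$-measure. Connectedness of $[0,1]$ then yields $c \in [0,1] \setminus (U \cup V)$.

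The central task is to prove that the contribution to the Gagliardo integral from pairs $(\xi, \eta)$ close to $c$ is infinite. The archetypal ``pure jump'' subcase is when, up to null sets, $A \supset (c, c+\epsilon)$ and $B \supset (c-\epsilon, c)$: the substitution $u = \xi - c$, $v = c - \eta$ reduces the integral in question to $\int_0^\epsilon \int_0^\epsilon (u+v)^{-2} \, \ddr u \, \ddr v$, which evaluates to $+\infty$ by an explicit computation (logarithmic blow-up at $v = 0$ after integrating out $u$). For a general mixing point, I would produce, along a dyadic sequence of scales $r_n \to 0$, a pair $(a_n, b_n)$ of Lebesgue density-$1$ points of $A$ and of $B$ respectively, with $|a_n - b_n| \leq 2 r_n$, and work in neighborhoods of $a_n$ and $b_n$ of radius comparable to $|a_n - b_n|$. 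On these neighborhoods $A$ and $B$ occupy nearly full Lebesgue density, which makes each pair contribute a fixed positive amount to the Gagliardo integral; by selecting a subsequence with pairwise disjoint neighborhoods, one sums to $+\infty$.

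The main obstacle is to execute the previous paragraph uniformly when the geometry of $A$ and $B$ near $c$ is complicated --- in particular when $A$ and $B$ both accumulate on the same side of $c$, so the local configuration reproduces the global one. I expect to handle this either by iterating the mixing-point construction on one-sided half-neighborhoods, thereby reducing to an honest ``pure jump'' scale after finitely many steps, or by a uniform density-based lower bound on each dyadic annulus around $c$ combined with a careful disjointness argument yielding a divergent series. Once the contradiction $[f]_{H^{1/2}} = +\infty$ is obtained, the conclusion follows.
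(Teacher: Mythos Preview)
Your overall strategy --- reduce to the Gagliardo seminorm and show $\iint_{A\times B}(\xi-\eta)^{-2}\,\ddr\xi\,\ddr\eta = +\infty$ whenever $|A|,|B|>0$ --- is exactly the paper's. Your construction of the mixing point is correct (both $U$ and $V$ are nonempty by the Lebesgue density theorem, so connectedness applies). The divergence you record in the pure-jump case is also fine.

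The difference, and the source of the obstacle you flag, is the localization. You fix one mixing point $c$ and try to extract a uniform contribution at every dyadic scale around it. That forces you to control the densities of $A$ and $B$ near $c$ simultaneously at all scales, and nothing in the definition of a mixing point gives you a \emph{lower bound} like $|A\cap(c-r,c+r)|\geqslant c_0 r$. Your proposed patches (density-$1$ points $a_n,b_n$ at scale $r_n$, or iterating the mixing-point search) run into the same issue: the radius at which $a_n$ looks like a full-density point of $A$ may be much smaller than $|a_n-b_n|$, so the contribution $\rho^2/|a_n-b_n|^2$ need not be bounded below, and the disjointness bookkeeping becomes delicate.

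The paper sidesteps this entirely by letting the location depend on the scale. For each large $t$, the continuous moving average $\xi\mapsto \sqrt{t}\int_{\xi-1/(2\sqrt t)}^{\xi+1/(2\sqrt t)} f$ has mean close to $|A|\in(0,1)$, so by the intermediate value theorem there is $\xi_t$ where both $A$ and $B$ occupy proportion at least some fixed $c>0$ of the window $[\xi_t-1/(2\sqrt t),\,\xi_t+1/(2\sqrt t)]$. On that window the product set $\{f(\eta)=1,\,f(\theta)=0\}$ has measure $\geqslant c^2/t$ and every such pair satisfies $|f(\eta)-f(\theta)|/|\eta-\theta|^2\geqslant t$; the layer-cake formula then gives $\iint |f(\eta)-f(\theta)|/|\eta-\theta|^2 \geqslant \int^\infty c^2/t\,\ddr t = +\infty$. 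No fixed point, no density-point juggling, no disjointness argument --- the averaging produces the right balance automatically at every scale.
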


\begin{proof}
We reason by contraposition: we assume that $f$ is not constant, which translates in $0 < \int_0^1 f < 1$ and we want to show that $f \notin H^{1/2}([0,1], \R)$. Recall that it is sufficient to prove, given the definition of the $H^{1/2}$ norm \cite[Chapter 3]{McLean2000}, that
\begin{equation*}
\iint_{[0,1] \times [0,1]} \frac{|f(\eta) - f(\theta)|}{|\theta - \eta|^2} \ddr \eta \ddr \theta = + \infty.
\end{equation*}
Take $t > 0$ large enough. The function 
\begin{equation*}
\xi \mapsto \frac{1}{\sqrt{t}} \int_{\xi - t^{-1/2}/2}^{\xi + t^{-1/2}/2} f(\eta) \ddr \eta
\end{equation*}  
is continuous on $[t^{-1/2}/2, 1- t^{-1/2}/2]$ and has a means which belongs to $[c, 1-c]$, where $0 < c < 1$ is independent of $t$ (provided it is large enough) and is related to $0 < \int_0^1 f < 1$. Hence, there exists $\xi_t$ such that
\begin{equation*}
\int_{\xi_t - t^{-1/2}/2}^{\xi_t + t^{-1/2}/2} f(\eta) \ddr \eta \in \left[ \frac{c}{\sqrt{t}}, 1 - \frac{c}{\sqrt{t}} \right].
\end{equation*}  
Heuristically, $\xi_t$ is close to a point where $f$ ``jumps''. In particular, it implies that 
\begin{equation*}
\Leb_{[0,1]} \otimes \Leb_{[0,1]} \left( \left\{ (\eta, \theta) \in \left[ \xi_t - \frac{1}{2\sqrt{t}}, \xi_t + \frac{1}{2\sqrt{t}} \right]^2  \ : \ f(\eta) = 1 \text{ and } f(\theta) = 0 \right\} \right) \geqslant \frac{c^2}{t}.
\end{equation*}
As a consequence,  
\begin{equation*}
\Leb_{[0,1]} \otimes \Leb_{[0,1]} \left( \left\{ (\eta, \theta) \in [0,1]^2 \ : \ \frac{|f(\eta) - f(\theta)|}{|\theta - \eta|^2} \geqslant t \right\} \right) \geqslant \frac{c^2}{t}.
\end{equation*}
This estimate leads to 
\begin{equation*}
\iint_{[0,1] \times [0,1]}  \frac{|f(\eta) - f(\theta)|}{|\theta - \eta|^2} \ddr \eta \ddr \theta  
 = \int_0^{+ \infty} \left[ \Leb_{[0,1]} \otimes \Leb_{[0,1]} \left( \left\{ (\eta, \theta) \in [0,1]^2  \ : \ \frac{|f(\eta) - f(\theta)|}{|\theta - \eta|^2} \geqslant t \right\} \right) \right] \ddr t = + \infty. \qedhere 
\end{equation*}
\end{proof}

\noindent With these two lemmas, we can easily arrive to our conclusion. 

\begin{proof}[Proof of Lemma \ref{lemma_H1/2_no_jumps}]
Let $f \in H^{1/2}(\Sph^1, \Sph^1)$ such that $f(\xi)^2 = \xi$ for a.e. $\xi \in \Sph^1$. We want to show that $f$ is continuous. Take $X$ an arc of circle of $\Sph^1$. If $X$ is small enough, there are two continuous functions $f_0$ and $f_1$ (the complex square roots) defined on $X$ such that for all $\xi \in X$, $z^2 = \xi$ if and only if $z \in \{ f_0(\xi), f_1(\xi) \}$. Moreover, if $X$ is small, the ranges of $f_0$ and $f_1$ are far apart, hence we can find a Lipschitz function $u : \U \to \{ 0,1 \}$ such that $u \circ f_0 = 0$ and $u \circ f_1 = 1$ on $X$. Thus, $(u \circ f)(\xi) \in \{ 0,1 \}$ for $\xi \in X$. The previous lemmas allow us to conclude that the function is in $H^{1/2}(X, \{ 0,1\})$, hence constant, which means that $f$ is continuous on $X$. As $X$ is arbitrary, $f$ is continuous on $\Sph^1$, which is a contradiction.    
\end{proof}

\section*{Acknowledgments}

The author acknowledges the financial support of the French ANR ISOTACE (ANR-12-MONU-0013) and benefited from the support of the FMJH ``Program Gaspard Monge for Optimization and operations research and their interactions with data science'' and EDF via the PGMO project VarPDEMFG.

He also thanks Yann Brenier, Guillaume Carlier, Filippo Santambrogio and Thierry De Pauw for fruitful discussions, precious advice and pointing him out relevant references. In particular, Theorem \ref{theorem_lipschitz_extension} was provided by Filippo Santambrogio and Thierry de Pauw.

\end{document}